\let\oldmarginpar\marginpar
\renewcommand\marginpar[1]
\newtheorem{theorem}{\bf Theorem}[section]
\newtheorem{lemma}[theorem]{\bf Lemma}
\newtheorem{remark}[theorem]{\bf Remark}
\newtheorem{prop}[theorem]{\bf Proposition}
\newtheorem{corollary}[theorem]{\bf Corollary}
\newtheorem{definition}[theorem]{\bf Definition}
\newtheorem{example}[theorem]{\bf Example}
\newtheorem{question}[theorem]{\bf Question}
\newcommand{\BB}{{\mathbb B}}
\newcommand{\CC}{{\mathbb C}}
\newcommand{\DD}{{\mathbb D}}
\newcommand{\EE}{{\mathbb E}}
\newcommand{\GG}{{\mathbb G}}
\newcommand{\NN}{{\mathbb N}}
\newcommand{\PP}{{\mathbb P}}
\newcommand{\QQ}{{\mathbb Q}}
\newcommand{\RR}{{\mathbb R}}
\newcommand{\TT}{{\mathbb T}}
\newcommand{\ZZ}{{\mathbb Z}}
\newcommand{\blie}{{\mathfrak b}}
\newcommand{\glie}{{\mathfrak g}}
\newcommand{\hlie}{{\mathfrak h}}
\newcommand{\plie}{{\mathfrak p}}
\newcommand{\tlie}{{\mathfrak t}}
\newcommand{\Ch}{\operatorname{Ch}}
\newcommand{\Coeff}{\operatorname{Coeff}}
\newcommand{\Diff}{\operatorname{Diff}}
\newcommand{\GL}{\operatorname{GL}}
\newcommand{\Ham}{\operatorname{Ham}}
\newcommand{\Hom}{\operatorname{Hom}}
\newcommand{\Homeo}{\operatorname{Homeo}}
\newcommand{\PGL}{\operatorname{PGL}}
\newcommand{\PU}{\operatorname{PU}}
\newcommand{\SO}{\operatorname{SO}}
\newcommand{\Sp}{\operatorname{Sp}}
\newcommand{\SU}{\operatorname{SU}}
\newcommand{\Sym}{\operatorname{Sym}}
\newcommand{\U}{\operatorname{U}}
\renewcommand{\exp}{\operatorname{exp}}
\newcommand{\curly}{\mathcal}
\newcommand{\LLL}{{\curly L}}
\newcommand{\OOO}{{\curly O}}
\newcommand{\PPP}{{\curly P}}
\newcommand{\ggreat}{>\kern-.7ex>}
\newcommand{\ssmall}{<\kern-.7ex<}
\newcommand{\qu}{/\kern-.7ex/}
\newcommand{\exh}{\to\kern-1.8ex\to}
\newcommand{\VP}{{\curly V}\kern-0.9ex\PPP}
\newcommand{\imag}{{\mathbf i}}
\newcommand{\w}{\omega}
\newcommand{\ZII}{Z_{I\kern-.3ex I}}
\newcommand{\ZIII}{Z_{I\kern-.3ex I\kern-.3ex I}}
\date{\today}
\title[Nontrivial bundles of coadjoint orbits over $S^2$]
{Nontrivial bundles of coadjoint orbits over $S^2$}
\author[D. Mart\'{\i}nez Torres]{David Mart\'{\i}nez Torres}
\address{Departamento de Matem\'atica, PUC-Rio, R. Mq. S. Vicente 225, Rio de Janeiro 22451-900, Brazil}
\email{dfmtorres@gmail.com}
\author[I. Mundet i Riera]{Ignasi Mundet i Riera}
\address{Facultat de Matem\`atiques i Inform\`atica, Universitat de Barcelona, Gran Via de les 
Corts Catalanes 585, 08007 Barcelona, Spain}
\email{ignasi.mundet@gmail.com}
\begin{document}

\begin{abstract}
Let $G$ be a compact connected semisimple Lie group with Lie algebra $\glie$.
Let $\OOO\subset\glie^*$ be a coadjoint orbit. The  action of $G$
on $\OOO$ induces a morphism $\rho:G\to \Homeo(\OOO)$. We prove that
the induced map $\pi_1(\rho):\pi_1(G)\to\pi_1(\Homeo(\OOO))$ is injective.
This strengthens a theorem of McDuff and Tolman (conjectured by Weinstein in 1989)
according to which
the analogous map $G\to\Ham(\OOO)$ is injective on fundamental groups, where
$\Ham(\OOO)$ is the group of Hamiltonian diffeomorphisms of the standard
symplectic structure on $\OOO$. To prove our theorem we associate to every
nontrivial element of $\pi_1(G)$ a bundle over $S^2$ with fiber $\OOO$, using
the standard patching construction. We then prove that the resulting bundle
is topologically nontrivial by studying its cohomology. For this, we prove that
it suffices to consider the case in which $G$ is simple and $\OOO$ is a minimal orbit, and then we prove 
our result for simple $G$ and minimal orbit $\OOO$ in a case by case analysis; the proof for the exceptional groups 
$E_6$ and $E_7$ relies on computer calculations, while all the other ones are
addressed by hand. A basic tool in some of our computations is a generalization 
of Chevalley's formula to bundles of coadjoint orbits over $S^2$ that we prove in
this paper.
\end{abstract}

\maketitle

\tableofcontents

\section{Introduction}

The action of a Lie group $G$ on itself by conjugation induces a canonical linear action on the dual of
its algebra,
the coadjoint representation. A coadjoint orbit $\OOO\subset \glie^*$ has a canonical symplectic structure
and the coadjoint action of $G$ on $(\OOO,\w)$ is Hamiltonian. This implies that
 the morphism  $G\rightarrow \Diff(\OOO)$  associated to the coadjoint action factors through the group
of Hamiltonian diffeomorphisms $\Ham(\OOO,\w)$:

\begin{equation}\label{eq:diag-manifolds}
\xymatrix{G\ar[rr] \ar[rd] & & \Diff(\OOO)  \\
& \Ham (\OOO,\w) \ar[ru] }
\end{equation}

Coadjoint orbits of compact, connected Lie groups are among the most basic Hamiltonian spaces, and for this reason the study of the properties of
the morphism
$G\rightarrow \Ham(\OOO,\w)$ is
a fundamental problem in symplectic geometry. In this respect,  Weinstein \cite[Section 4]{W} asked the following:

\begin{question}\label{q:ham} When $G$ is a compact, connected, semisimple Lie group, is the homomorphism of fundamental groups
$\rho_{H}\colon \pi_1(G)\rightarrow \pi_1(\Ham (\OOO,\w))$ induced by the coadjoint action
 injective?
\end{question}
Observe that such a Lie group $G$ has finite fundamental group --and therefore made of torsion elements--
whereas $\Ham (\OOO,\w)$ is an  infinite dimensional CW complex (we use the compact open topology). Thus
the injectivity of $\rho_{H}$ would be a remarkable result, since one might expect to have `enough space' in  $\Ham (\OOO,\w)$
to cap off the torsion loops coming from $G$.

In \cite[Theorem 6]{V}  Vi\~{n}a gave a partial answer to Weinstein's question by proving the injectivity of $\rho_{H}$ for some quantizable
coadjoint orbits of $\PU(n)$. Question \ref{q:ham} was settled in full generality by McDuff and  Tolman:

\begin{theorem}[\cite{MT}]\label{thm:injh} If $(\OOO,\w)$ is a coadjoint orbit of a compact, connected, semisimple Lie group $G$, then
the homomorphism $\rho_{H}\colon \pi_1(G)\rightarrow \pi_1(\Ham (\OOO,\w))$ is injective.
\end{theorem}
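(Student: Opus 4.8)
The plan is to recover this theorem from the \emph{Seidel representation}, following McDuff and Tolman; the loop inside $\pi_1(\Ham(\OOO,\w))$ is detected by a count of holomorphic curves rather than by soft topology. \emph{First, reduce to a nontrivial Hamiltonian circle action.} For a maximal torus $T\subset G$ the flag manifold $G/T$ is simply connected, so the fibration $T\to G\to G/T$ gives a surjection $\pi_1(T)\twoheadrightarrow\pi_1(G)$; thus any class $[\gamma]\in\pi_1(G)$ is represented by a one-parameter subgroup $\Lambda(e^{\imag\theta})=\exp(\theta\xi)$ with $\xi$ in the integral lattice of $T$, and $[\gamma]\neq0$ exactly when $\xi$ is not in the coroot lattice. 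Composing with the coadjoint action yields a Hamiltonian circle action on $(\OOO,\w)$ whose moment map is the component $H=\la\,\cdot\,,\xi\ra$ of the inclusion $\OOO\hookrightarrow\glie^*$. After discarding the simple factors of $G$ that act trivially on $\OOO$ (they contribute nothing to $\rho_H$, and with this done one is in the case --- to which the further reductions of this paper lead --- where $G$ is simple and acts on $\OOO$ with finite kernel) we may assume the action is effective; then $\xi\neq0$ acts nontrivially, $H$ is nonconstant, and the extremal fixed set $F_{\max}\subset\OOO$ on which $H$ attains its maximum is a \emph{proper} submanifold --- for a coadjoint orbit, the face of the moment polytope extremal in the direction $\xi$. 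In particular $\dim F_{\max}<\dim\OOO$, so $[F_{\max}]\neq[\OOO]$ in $H_*(\OOO;\QQ)$.

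It now suffices to prove that a nontrivial Hamiltonian circle action on a compact symplectic manifold $M$ generates a loop that is homotopically nontrivial in $\Ham(M,\w)$; applied to $\OOO$ this gives $\rho_H([\gamma])\neq0$. I would use the Seidel homomorphism $\SSS\colon\pi_1(\Ham(M,\w))\to QH_*(M)^{\times}$ into the units of the small quantum homology ring over a Novikov ring, and show $\SSS(\Lambda)\neq[M]$, the unit. By definition $\SSS(\Lambda)$ is a weighted count of $J$-holomorphic sections of the Hamiltonian fibre bundle $M\to P_\Lambda\to S^2$ obtained from $\Lambda$ by clutching. The key input --- the heart of McDuff--Tolman --- is the identification of the leading term of $\SSS(\Lambda)$ for the filtration of $QH_*(M)$ by symplectic area: with a fibrewise $S^1$-invariant $J$, the section of $P_\Lambda$ lying over $F_{\max}$ (where the bundle is trivial) realises the strictly smallest area among all section classes, it is regular, and it contributes exactly $[F_{\max}]\otimes\lambda_0$, where $\lambda_0$ is the Novikov monomial recording that area and the corresponding Chern number (the latter computed from the weights of the linearised action along $F_{\max}$, which are all negative). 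Every other section class differs from this one by a class of strictly positive area --- the class of a nonconstant holomorphic sphere in a fibre, or the difference to a section through a non-extremal fixed set --- and so contributes only to strictly higher order. Hence
\[
\SSS(\Lambda)\;=\;[F_{\max}]\otimes\lambda_0\;+\;(\text{terms of strictly larger area}),
\]
with extremal-area term $[F_{\max}]\otimes\lambda_0\neq0$. If $\SSS(\Lambda)$ were the unit $[M]$, its extremal-area term would be $[M]$ at area $0$, forcing in particular $[F_{\max}]=[M]$ in homology; this contradicts $\dim F_{\max}<\dim\OOO$. Therefore $\SSS(\Lambda)\neq[M]$ and $\Lambda$ is nontrivial in $\pi_1(\Ham(M,\w))$.

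\emph{The main obstacle} is the leading-order formula for the Seidel element of an arbitrary (not necessarily semifree) Hamiltonian circle action, together with its accompanying area-positivity statement. This requires a careful analysis of the moduli space of holomorphic sections of $P_\Lambda$: an index and Chern-number computation along $F_{\max}$; transversality for the invariant $J$ near the maximal section, where the negativity of the weights forces the relevant linearised $\bar\partial$-operator to be surjective, so that no abstract perturbation is needed along that section; and an energy estimate ruling out lower-area section classes, for which the relation between the symplectic area of a section and the coupling class of $P_\Lambda$, together with the normalisation of $H$, makes the inequality strict. One must also check that the monomial $\lambda_0$ carrying $[F_{\max}]$ is genuinely extremal and is not matched by a monomial carrying $[M]$ in another term of $\SSS(\Lambda)$; for a coadjoint orbit this is automatic from $\dim F_{\max}<\dim\OOO$, which is ultimately why the argument closes. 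A purely topological alternative --- which is what the present paper pursues and greatly extends --- is to forget $\w$ and instead prove directly that $P_\Lambda\to S^2$ is topologically nontrivial by computing its cohomology; that this always succeeds for coadjoint orbits is precisely the stronger statement proved here.
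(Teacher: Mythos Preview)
Your argument is essentially correct and is, in outline, the original McDuff--Tolman/Seidel approach that the paper \emph{cites} (as \cite{MT} and \cite{CMP}) but deliberately does not reproduce. The paper's own proof of Theorem~\ref{thm:injh} is instead to deduce it as an immediate corollary of the stronger Theorem~\ref{thm:injdiff}: since the map $\pi_1(G)\to\pi_1(\Diff(\OOO))$ factors through $\pi_1(\Ham(\OOO,\w))$ via diagram~(\ref{eq:diag-fundgroups}), injectivity into $\pi_1(\Diff(\OOO))$ forces injectivity into $\pi_1(\Ham(\OOO,\w))$.

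The two routes are thus genuinely different. Your Seidel-element argument uses hard symplectic machinery (Gromov--Witten moduli, quantum cohomology, the transversality and area-minimality analysis you correctly flag as the main obstacle) but is conceptually uniform and targets $\Ham$ directly. The paper's route replaces all of that with purely cohomological computations --- the characteristic isomorphism of Theorem~\ref{thm:uniqueness}, generalized Chevalley formulas, and Schubert calculus on the clutching bundles $Y_{\gamma,\OOO}$ --- at the cost of a case-by-case analysis over the simple types (including computer checks for $E_6$ and $E_7$). What the paper buys is a strictly stronger conclusion: the loop is nontrivial already in $\Diff(\OOO)$, indeed in $\Homeo(\OOO)$, so the injectivity of $\rho_H$ is not a ``symplectic'' phenomenon. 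Your approach cannot see this, since the Seidel homomorphism is only defined on $\pi_1(\Ham)$. Conversely, your approach avoids the lengthy type-by-type verification and the machine computations; it also makes transparent \emph{which} invariant detects the loop (the leading term $[F_{\max}]$ of the Seidel element), whereas in the paper the detecting class varies with the group and the orbit.
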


 A different proof of Theorem \ref{thm:injh} based on the injectivity of the Seidel morphism was given in \cite[Theorem 2]{CMP}.

It is natural to ask whether Theorem \ref{thm:injh} reflects a `symplectic property', or a property of different nature. More specifically,
the commutative diagram (\ref{eq:diag-manifolds}) induces a commutative diagram of morphisms of fundamental groups:
\begin{equation}\label{eq:diag-fundgroups}
\xymatrix{\pi_1(G)\ar_{\rho}[rr] \ar_{\rho_H}[rd] & & \pi_1(\Diff (\OOO)) \\
& \pi_1(\Ham (\OOO,\w)) \ar[ru] }
\end{equation}
and one asks the following:

\begin{question}\label{q:diff} When $G$ is a compact, connected, semisimple Lie group, is the morphism
$\rho\colon \pi_1(G)\rightarrow \pi_1(\Diff (\OOO))$
 injective?
\end{question}

As remarked  in \cite[Section 4]{W}, for $G=\SO(3)$ and $\OOO\cong S^2$, a well-known result of Smale asserts that
$\SO(3)\rightarrow \Diff(S^2)$ is
a homotopy equivalence, so in particular $\rho$ is injective on fundamental groups.

In this paper we address Question \ref{q:diff} and prove that the injectivity of $\rho$ for $G=\SO(3)$ and is not an isolated phenomenon.

\begin{theorem}\label{thm:injdiff} If $\OOO$ is a coadjoint orbit of a compact, connected, semisimple Lie group $G$, then the
homomorphism  $\rho\colon \pi_1(G)\rightarrow \pi_1(\Diff (\OOO))$ is injective.
\end{theorem}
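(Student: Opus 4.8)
## Proof Proposal

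The plan is to reduce Theorem~\ref{thm:injdiff} to a purely topological statement about the nontriviality of the coadjoint orbit bundle over $S^2$ associated to a given loop in $G$, and then to establish that nontriviality by a cohomological analysis, ultimately reducing to the case of a simple group and a minimal orbit where one can argue case by case.

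\textbf{Step 1: From $\pi_1$ to bundles over $S^2$.} Fix a nontrivial element $\gamma\in\pi_1(G)$, represented by a loop $g\colon S^1\to G$. Using the standard clutching (patching) construction, $\gamma$ and the coadjoint action give a fiber bundle $E_\gamma\to S^2$ with fiber $\OOO$ and structure group (a conjugate of) $\rho(G)\subset\Diff(\OOO)$: cover $S^2$ by two disks overlapping in an annulus and glue $D_+\times\OOO$ to $D_-\times\OOO$ along the transition map $(\theta,x)\mapsto(\theta,\rho(g(\theta))\cdot x)$. The key observation is that $\rho(\gamma)=0\in\pi_1(\Diff(\OOO))$ if and only if the bundle $E_\gamma\to S^2$ is isomorphic (as a fiber bundle with structure group $\Diff(\OOO)$) to the trivial bundle $S^2\times\OOO$. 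Indeed, the isomorphism classes of $\OOO$-bundles over $S^2$ are classified by $\pi_1(\Diff(\OOO))$ (since $S^2$ is obtained from two contractible pieces glued along a circle, and $\pi_0(\Diff(\OOO))$ acts on this set with the trivial bundle as a fixed point, but more carefully one works with the identity component since $g$ is a loop based at $e$). So it suffices to show: for every nontrivial $\gamma\in\pi_1(G)$, the bundle $E_\gamma\to S^2$ is topologically nontrivial, e.g.\ its total space $E_\gamma$ is not homotopy equivalent --- or better, not cohomologically equivalent --- to $S^2\times\OOO$.

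\textbf{Step 2: Reduction to simple $G$ and minimal orbit $\OOO$.} Since $G$ is compact connected semisimple, its universal cover $\wt G$ is a product $\wt G=\prod_i \wt G_i$ of simple simply connected factors, and $\pi_1(G)=Z/\Gamma$ for a subgroup of the center $Z=\prod_i Z(\wt G_i)$. A nontrivial $\gamma\in\pi_1(G)$ has nontrivial projection to $\pi_1$ of at least one factor $G_i$, and the orbit $\OOO$ fibers over a product of coadjoint orbits $\OOO_i$ of the simple factors. If $\OOO_i$ is the coadjoint orbit of $\xi=(\xi_i)$, then $\pi_1(\Diff(\OOO))$ receives a contribution from each $\pi_1(\Diff(\OOO_i))$, and naturality of the clutching construction under the projection $\OOO\to\OOO_i$ reduces the problem to a single simple factor. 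Within a simple group, every coadjoint orbit $\OOO=G/C$ (with $C$ the centralizer of a point) fibers $G$-equivariantly over the minimal orbit $G/P$ for a suitable maximal parabolic (flag manifold corresponding to a single node of the Dynkin diagram), or more precisely over a product of such generalized flag manifolds, and the bundle $E_\gamma$ over $S^2$ fibers over the corresponding bundle with minimal-orbit fiber; nontriviality downstairs forces nontriviality upstairs. Hence it is enough to prove: for $G$ simple and $\OOO$ a minimal coadjoint orbit (equivalently, the flag manifold $G/P$ for $P$ a maximal parabolic subgroup omitting one simple root $\alpha$ with nontrivial image in $\pi_1(G)$), the bundle $E_\gamma\to S^2$ is topologically nontrivial for all nontrivial $\gamma\in\pi_1(G)$.

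\textbf{Step 3: Cohomological nontriviality via a Leray--Hirsch / Chevalley computation, case by case.} To detect nontriviality of $E_\gamma\to S^2$ I would study its integral (or rational, then refined to torsion) cohomology using the Leray--Hirsch theorem: since $H^*(\OOO)$ is free and generated by Schubert/Chern classes, $H^*(E_\gamma)$ is a free $H^*(S^2)$-module, and the bundle is trivial iff the ring structure agrees with that of $S^2\times\OOO$, i.e.\ iff the classes extending a basis of $H^*(\OOO)$ can be chosen to satisfy the same relations. The deviation is measured by how the characteristic classes of $E_\gamma$ (obtained by extending the line-bundle-type generators over the $4$-cell of $S^2$) interact: concretely, one computes, for a generator $c\in H^2(\OOO)$ associated to a fundamental weight, the class $c^2\in H^4(E_\gamma)$ and checks whether its $S^2$-component (a multiple of $[\OOO]^\vee\otimes[S^2]$) vanishes. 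This is exactly where the generalization of Chevalley's formula to bundles of coadjoint orbits over $S^2$ enters: it computes the cup product of a divisor class with a Schubert class in $H^*(E_\gamma)$ in terms of combinatorial data of the root system and the loop $\gamma$ (the pairing of the cocharacter defining $\gamma$ against the relevant weights), producing an explicit obstruction coefficient. For the classical series $A_n, B_n, C_n, D_n$ and for $G_2, F_4$ one evaluates this coefficient by hand; for $E_6$ and $E_7$ the root-system bookkeeping is large enough that a computer calculation is needed. In each case one shows the obstruction is nonzero (typically it is a nonzero integer modulo the order of $\gamma$, reflecting the torsion nature of $\pi_1(G)$), so $E_\gamma$ is not isomorphic to the trivial bundle, hence $\rho(\gamma)\neq 0$.

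\textbf{Main obstacle.} The conceptual framework --- clutching, reduction to simple/minimal, Leray--Hirsch --- is routine. The genuine difficulty is twofold: first, proving the generalized Chevalley formula for these $S^2$-families of flag manifolds with enough precision to extract an \emph{integral} (not merely rational) obstruction, since $\pi_1(G)$ is all torsion and a rational computation will see nothing; and second, verifying in every case --- especially the exceptional groups $E_6$, $E_7$ where the Weyl group and the $27$- or $56$-dimensional minimal representations make the Schubert calculus unwieldy --- that the resulting obstruction coefficient is actually nonzero modulo the relevant torsion. That case-by-case nonvanishing, including the computer-assisted part for $E_6$ and $E_7$, is the heart of the proof.
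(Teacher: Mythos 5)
Your overall architecture coincides with the paper's (clutching, reduction to simple groups and minimal orbits, a generalized Chevalley formula, case-by-case work with computer assistance for the exceptional groups), but the two steps you dismiss as routine are exactly where the substance lies, and as written they contain genuine gaps. The main one is in Step 3: your obstruction is not well defined. A Leray--Hirsch lift $c\in H^2(E_\gamma)$ of a degree-two class of the fiber is only determined up to adding multiples of $b=\pi^*[S^2]$, and an abstract bundle trivialization need not carry your chosen lifts to the product lifts; consequently ``the $S^2$-component of $c^2$'' changes under $c\mapsto c+kb$ and under re-choices of basis, so its nonvanishing does not by itself obstruct triviality of $E_\gamma$. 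What rescues this in the paper is Theorem \ref{thm:uniqueness}: over $\QQ$ there is a \emph{unique} compatible ring isomorphism $H^\bullet(Y_{e,\OOO};\QQ)\to H^\bullet(Y_{\gamma,\OOO};\QQ)$ (uniqueness proved via the hard Lefschetz property of the K\"ahler class on $\OOO$), and it is realized canonically by Chern--Weil theory for homogeneous bundles. Hence any compatible diffeomorphism is forced to induce this particular map $\kappa$, and the obstruction becomes a canonical, choice-free statement: $\kappa$ fails to carry the integral lattice to the integral lattice (detected by non-integer pairings of Chern--Weil classes with fibered Schubert classes), a \emph{non-integrality} rather than a nonvanishing statement. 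Without some such uniqueness or canonicity input, your computation cannot exclude that a cleverer choice of lifts produces a ring isomorphism with $H^\bullet(S^2\times\OOO)$.

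The second gap is in Step 2. There is no natural homomorphism $\Diff(\OOO)\to\Diff(\OOO_i)$ or $\Diff(\OOO)\to\Diff(\OOO'')$ for the projections onto a simple factor's orbit or onto a minimal orbit, and a trivialization of $Y_{\gamma,\OOO}$ need not respect the fibration $Y_{\gamma,\OOO}\to Y_{\gamma,\OOO''}$; so ``nontriviality downstairs forces nontriviality upstairs'' does not follow by naturality of clutching. The paper proves both reductions cohomologically: Lemma \ref{lem:invcohom} (via the Wang sequence and the Bernstein--Gelfand--Gelfand description) identifies $H^\bullet(Y_{\gamma,\OOO};\ZZ)$ with the $W_P$-invariants inside the regular-orbit bundle, which combined with the uniqueness theorem yields Proposition \ref{pro:min-orbits}, while the reduction to simple factors (Proposition \ref{pro:simplegroups}) itself requires the generalized Chevalley formula. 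A minor point: after reducing to adjoint simple groups only those with nontrivial center occur, so $G_2$, $F_4$ and $E_8$ need not be treated at all, contrary to your list of hand computations.
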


Of course, since the diagram (\ref{eq:diag-fundgroups}) is commutative, Theorem \ref{thm:injh} becomes
a straightforward consequence of Theorem \ref{thm:injdiff},  so the former result is by
no means a manifestation of a `symplectic property'.

Theorem \ref{thm:injdiff} provides relevant information on the topology
of the identity component of the Frechet Lie group $\Diff (\OOO)$.
Recall that a homotopy equivalence between infinite dimensional
Frechet manifolds is homotopic to a homeomorphism \cite[Theorem 0.1]{BH}.
The morphism $G\to \Diff(\OOO)$ is continuous in the Frechet topology, which is finer than the compact open one; hence
the injectivity of $\rho$ holds as well for the Frechet topology.
So we can look at Theorem \ref{thm:injdiff} as the first steps in understanding the
homotopy type (or, equivalently by \cite{BH}, the homeomorphism type)
of the identity component of $\Diff(\OOO)$. From this perspective,
Theorem \ref{thm:injdiff} is complemented (in the case $G=\SU(n)$) by the
 work of K\c{e}dra and McDuff \cite[Proposition 4.8]{KM}, where the injectivity of
\[\rho\otimes_\ZZ \mathrm{Id}\colon \pi_k(\SU(n))\otimes_\ZZ \QQ\rightarrow \pi_k(\Diff(\OOO))\otimes_\ZZ \QQ\]
is proved for  all $k\geq 1$.

 There are classical results describing torsion elements in the groups $\pi_{i}(\Diff(S^{n}))$, for many pairs $(i,n)$ \cite{ABK}. Our methods
are radically different from those in  \cite{ABK}, since they rely heavily on coadjoint orbits having an integral cohomology ring with very rich structure.

To be more precise about our methods, to prove our main result we will look at the fundamental group of $\Diff(\OOO)$ from a geometric perspective via
 the classical clutching construction. To any loop $\gamma:S^1\to \Diff(\OOO)$
based at the identity, the clutching construction assigns
\begin{itemize}
 \item a bundle with fiber $\OOO$ \[\pi\colon Y_{\gamma,\OOO}\rightarrow S^2,\]
\item  a trivialization of the fiber over a point $p\in S^2$,
\[i\colon \OOO\overset {\cong}{\longrightarrow} \pi^{-1}(p),\]
\end{itemize}
 such that   $\gamma,\gamma'$ are homotopic if and only if there exists a bundle diffeomorphism
\begin{equation}\label{eq:fibrationisom}
\xymatrix{Y_{\gamma,\OOO}\ar[rr]^{\psi} \ar[rd]_{\pi} & & Y_{\gamma',\OOO}  \ar[ld]^{\pi'}\\
&  S^2}
\end{equation}
compatible with the fiber trivializations, i.e.  $\psi\circ i=i'$, where $i$ and $i'$ are the
trivializations over $p$ corresponding to $\gamma$ and $\gamma'$ respectively. We will refer to such $\psi$ as a compatible
diffeomorphism.

Recall that for any choice of base point we can canonically identify the homotopy classes of based loops in the identity component of $\Diff(\OOO)$ with the homotopy classes of free loops in the same space, because
$\Diff(\OOO)$ is a topological group. Hence 
we will often ignore the requirement that the loops in our discussion are based at the identity, and only assume that they take values in the identity component.

In this more geometric setting, Theorem \ref{thm:injdiff}  is equivalent to the following:

\begin{theorem}\label{thm:injdiffbun} Let $\OOO$ be a coadjoint orbit of a compact, connected, semisimple Lie group $G$,
let $\gamma,\gamma':S^1\to G$, and let $(Y_{\gamma,\OOO},i),(Y_{\gamma',\OOO},i')$ be the bundles obtained by the clutching
construction.

There exists a compatible diffeomorphism
\[
\xymatrix{Y_{\gamma,\OOO}\ar[rr]^{\psi} \ar[rd] & & Y_{\gamma',\OOO} \ar[ld] \\
& S^2  }
\]
if and only if
$\gamma$ and $\gamma'$ are homotopic loops in $G$.
\end{theorem}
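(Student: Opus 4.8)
The plan is to reduce Theorem \ref{thm:injdiffbun} to a statement about topological triviality of clutching bundles, and then to prove that statement by a cohomological obstruction argument. First I would observe that, by the standard properties of the clutching construction recalled in the excerpt, the existence of a compatible diffeomorphism $\psi\colon Y_{\gamma,\OOO}\to Y_{\gamma',\OOO}$ is equivalent to the loops $\rho\circ\gamma$ and $\rho\circ\gamma'$ being homotopic in $\Diff(\OOO)$, i.e.\ to $[\gamma]-[\gamma']$ lying in the kernel of $\rho\colon\pi_1(G)\to\pi_1(\Diff(\OOO))$. Since $\pi_1(G)$ is a finite abelian group and everything is natural, it suffices to show that a loop $\gamma$ with $\rho([\gamma])=0$ must be null-homotopic in $G$; equivalently, that for $0\neq[\gamma]\in\pi_1(G)$ the bundle $Y_{\gamma,\OOO}\to S^2$ is \emph{not} isomorphic (as a bundle with fiber $\OOO$, forgetting the marking) to the trivial bundle $S^2\times\OOO$. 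The marking $i$ can be discarded at this stage because a free homotopy and a based homotopy of loops in a topological group carry the same information, as noted just before the statement.

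Next I would carry out two reductions. The first is a reduction to $G$ simple: a general compact connected semisimple $G$ is, up to finite cover, a product of simple factors, $\pi_1(G)$ decomposes compatibly, and a coadjoint orbit of a product is a product of coadjoint orbits of the factors; a class in $\pi_1(G)$ is nonzero iff one of its components is, and the corresponding $S^2$-bundle restricts (via pullback along a projection) to the bundle built from that component, so nontriviality for simple groups implies it in general. The second reduction is to $\OOO$ a \emph{minimal} orbit: for any orbit $\OOO$ there is a $G$-equivariant fibration or embedding relating $\OOO$ to a minimal orbit (the closure of a generic orbit fibers over, or contains, the minimal one via the natural projection of flag manifolds $G/T\to G/P$ for a maximal parabolic $P$), giving a morphism of $S^2$-bundles over the identity of $S^2$; pulling back cohomology then shows that if the minimal-orbit bundle is nontrivial so is the given one. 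These reductions are exactly the ones flagged in the abstract, so I would only need to make the equivariant maps and the pullback-of-bundles bookkeeping precise.

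Finally, with $G$ simple and $\OOO$ minimal, I would prove nontriviality of $Y_{\gamma,\OOO}\to S^2$ for a generator $[\gamma]$ of (a cyclic summand of) $\pi_1(G)$ by computing the cohomology ring $H^*(Y_{\gamma,\OOO};\ZZ)$ — or a suitable piece of it, such as the image of the restriction map to a fiber, or a characteristic class — and exhibiting a discrepancy with $H^*(S^2\times\OOO;\ZZ)=H^*(S^2)\otimes H^*(\OOO)$. Concretely, one uses the Leray–Hirsch/Serre spectral sequence of $\pi\colon Y_{\gamma,\OOO}\to S^2$: because $S^2$ is $2$-dimensional the sequence collapses at $E_2$ additively, so the only obstruction to triviality of the ring is a nontrivial extension, detected by whether the classes in $H^*(\OOO)$ extend over $Y_{\gamma,\OOO}$ with the ``same'' products, or equivalently by a degree-$2$ class pulled back from $H^2(S^2)$ appearing as a correction term in some cup product; this correction term is governed by the clutching data, i.e.\ by how the loop $\gamma$ acts on $H^*(\OOO)$ and, more finely, by a secondary invariant. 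Here I would invoke the generalized Chevalley formula promised in the abstract to evaluate the relevant cup products on $Y_{\gamma,\OOO}$ in terms of root-theoretic data (weights of the isotropy representation, the highest root, etc.), reducing nontriviality to a concrete statement about weights that can be checked per Dynkin type — by hand for the classical series and $G_2$, $F_4$, and by machine computation for $E_6$, $E_7$. The main obstacle, and the real content of the theorem, is precisely this last type-by-type verification: one must show that the weight combinatorics never conspires to make the obstruction class vanish, and there is no uniform reason for this, which is why a case analysis (and computer algebra for the large exceptional groups) is unavoidable.
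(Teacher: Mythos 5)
Your overall architecture (clutching, reduction to simple groups and minimal orbits, root-theoretic computations case by case with computer help for the exceptional groups) is the paper's, and your opening reduction is sound: comparing with the \emph{trivial} bundle, the marked and unmarked statements coincide, so injectivity of $\rho$ is indeed equivalent to nontriviality of $Y_{\gamma,\OOO}$ for $[\gamma]\neq 0$. The genuine gap is in your detection mechanism. You propose to exhibit a discrepancy between $H^\bullet(Y_{\gamma,\OOO};\ZZ)$ and $H^\bullet(S^2\times\OOO;\ZZ)$ via a ``correction term'' in the cup products of lifted fiber classes. But rationally there is \emph{no} discrepancy at all: by Theorem \ref{thm:char-isom} the rational cohomology ring of $Y_{\gamma,\OOO}$ is compatibly isomorphic to that of $Y_{e,\OOO}$, so over $\QQ$ any such correction term can be absorbed by re-choosing lifts $x\mapsto x+\lambda b$. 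The obstruction is purely an integrality phenomenon attached to a \emph{specific} isomorphism, and for that to be well defined you must know which isomorphism a hypothetical diffeomorphism would induce. This is exactly what the paper supplies and your sketch omits: the notion of compatible isomorphism ($\phi(b)=b$, $i^*\circ\phi=i'^*$) together with its \emph{uniqueness} over $\QQ$, proved via the hard Lefschetz property of $\OOO$ (Theorem \ref{thm:uniqueness}); only then does non-integrality of the characteristic isomorphism become an obstruction (Proposition \ref{pro:nonint}). Without this, your ``secondary invariant'' is not pinned down, and the stronger goal you set yourself --- that the integral rings are abstractly non-isomorphic --- would force you to rule out all ring isomorphisms, including those twisting by automorphisms of $H^\bullet(\OOO)$; neither your sketch nor the paper's computations do that, and it is not needed.

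The two reductions are also not the formal pullback statements you describe. Triviality of $Y_{\gamma,\OOO}$ does not formally imply triviality of $Y_{\gamma,\OOO''}$ for a minimal quotient $\OOO\to\OOO''$ (the induced map $Y_{\gamma,\OOO}\to Y_{\gamma,\OOO''}$ goes the wrong way for pulling back nontriviality, and different associated bundles of one principal bundle can behave differently), and likewise triviality of a fiber product over $S^2$ does not formally give triviality of a factor in the semisimple-to-simple step. The paper proves both reductions cohomologically: Lemma \ref{lem:invcohom} (Wang sequence plus the BGG description) identifies $H^\bullet(Y_{\gamma,\OOO};\ZZ)$ with the $W_P$-invariants inside the regular-orbit bundle, which is what makes diagram (\ref{eq:charcomm0}) and Proposition \ref{pro:min-orbits} work, and Proposition \ref{pro:simplegroups} needs the generalized Chevalley formula to check that the detecting integral class and its pairing survive passage to a product orbit. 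So ``bookkeeping'' understates real arguments, and as written your reduction step has a hole. (Minor points: $G_2$, $F_4$, $E_8$ need not be considered at all, since their adjoint forms are simply connected; and in the paper $E_6$ is done by hand, with computer assistance only for two minimal orbits of $E_7$.)
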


Of course, the only nontrivial part of Theorem \ref{thm:injdiffbun} is the `only if'.

The proof of Theorem \ref{thm:injdiff} will rely on the analysis of how the integral cohomology rings of the bundles $Y_{\gamma,\OOO}$ behave
under compatible isomorphisms, which are the cohomological counterparts of compatible diffeomorphisms.
So in fact our strategy will not just prove Theorem \ref{thm:injdiff}, but also that
$G\rightarrow \Homeo(\OOO)$ induces a monomorphism on fundamental
groups.

In many situations it will be important to look at coadjoint orbits from an algebro-geometric
perspective. If $\mathbb{G}$ denotes the complexification of $G$, then a coadjoint orbit of $G$ is diffeomorphic
to the homogeneous space $\mathbb {G}/\PP$, where $\PP$ is a parabolic subgroup. The most relevant
 example is when $G=\PU(n)$ and $\mathbb{G}=\PGL(n,\CC)$,  where a classical result on algebraic groups
 identifies $\PGL(n,\CC)/\PP$ with a variety of (partial) flags of $\CC^n$.
 In this context, Theorem \ref{thm:injdiffbun} can be rewritten as follows:

\begin{theorem}\label{thm:injdiffflags}Let $E$ and $E'$ be rank $n$ holomorphic vector bundles over $\PP^1$ of degrees $d,d'$,
let $r=(r_1,\dots,r_k)$ be a tuple of integers with $0<r_1<\cdots<r_k<n$, and let $Y_{d,r},Y_{d',r}$ be the bundles of
flags of type $r$ associated to $E$ and $E'$ respectively.

There exists a compatible diffeomorphism
\[
\xymatrix{Y_{d,r}\ar[rr]^{\psi} \ar[rd] & & Y_{d',r}  \ar[ld] \\
& \PP^1 }
\]
if and only if $d-d'$ is divisible by $n$.
\end{theorem}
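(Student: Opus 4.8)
The plan is to translate the statement into a comparison of the integral cohomology \emph{rings} of the two flag bundles and to extract the degree modulo $n$ from the ring structure. The ``if'' direction is the easy half of the clutching construction: if $d-d'$ is a multiple of $n$, twist $E$ by a line bundle of degree $(d'-d)/n$, obtaining a bundle topologically isomorphic to $E'$; tensoring by a line bundle induces a fibrewise biholomorphism $\Flag_r(E)\to\Flag_r(E')$, and over the marked point $p$ the two trivializations $i,i'$ differ by an element of $\PGL(n,\CC)$ acting on $\Flag_r(\CC^n)$, hence by an element of the identity component of $\Diff(\OOO)$, which can be absorbed into a bundle automorphism supported in a disk around $p$ so as to make the biholomorphism compatible. (Equivalently, the two clutching loops represent $d\bmod n$ and $d'\bmod n$ in $\pi_1(\PU(n))\cong\ZZ/n\ZZ$, so they are homotopic, and one quotes the defining property of the clutching construction.)

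For the ``only if'' direction I would first invoke the reduction, established later in the paper, from an arbitrary coadjoint orbit to a minimal one: for $G=\PU(n)$ the minimal orbit is $\CP^{n-1}=\Flag_{(1)}(\CC^n)$, so it suffices to treat $r=(1)$, where $Y_{d,r}=\PP(E)$. Since $H^{2i}(\PP^1;\ZZ)=0$ for $i\ge 2$ the total Chern class is $c(E)=1+dt$ with $t$ the positive generator of $H^2(\PP^1;\ZZ)$, and the projective bundle theorem gives
\[
H^*(\PP(E);\ZZ)\;\cong\;\ZZ[t,\xi]\big/\big(t^2,\ \xi^n+dt\,\xi^{n-1}\big),
\]
where $\xi$ is the first Chern class of the relative hyperplane bundle; as an abelian group this is free on $\{\xi^j,\,t\xi^j:0\le j\le n-1\}$, so $H^{2n}(\PP(E);\ZZ)$ is generated by $t\xi^{n-1}$, and similarly for $E'$ with generators $t',\xi'$.

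Now let $\psi\colon Y_{d,r}\to Y_{d',r}$ be a compatible diffeomorphism and consider $\psi^*\colon H^*(\PP(E');\ZZ)\to H^*(\PP(E);\ZZ)$. Because $\psi$ covers the identity of $S^2=\PP^1$ we get $\psi^*t'=t$; writing $\psi^*\xi'=\gamma t+\delta\xi$, invertibility of $\psi^*$ on $H^2$ forces $\delta=\pm1$; and because $\psi$ restricts over $p$ to the identity of $\OOO=\CP^{n-1}$ under the marked identifications, the restrictions of $\psi^*\xi'$ and of $\xi$ to the fibre over $p$ coincide (both equal the hyperplane class), which forces $\delta=1$. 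Applying $\psi^*$ to the relation $(\xi')^n+d't'(\xi')^{n-1}=0$, expanding with $t^2=0$, substituting $\xi^n=-dt\xi^{n-1}$, and reading off the coefficient of the generator $t\xi^{n-1}$ of $H^{2n}$ gives $-d+d'+n\gamma=0$, so $n\mid d-d'$.

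The step I expect to be the main obstacle is the reduction to the minimal orbit. Bypassing it, one would have to analyse $H^*(\Flag_r(E);\ZZ)$ for general $r$, which by Leray--Hirsch is again free over $H^*(\PP^1;\ZZ)$ but whose multiplicative structure is controlled by the Chern classes of the tautological sub/quotient bundles, and isolating $d\bmod n$ from it is exactly what the generalization of Chevalley's formula proved in this paper is designed to do. Two further points should not be skipped: the hypotheses $\pi'\circ\psi=\pi$ and $\psi\circ i=i'$ are genuinely used — without them the same computation yields only $d\equiv\pm d'\pmod n$, matching the fact that $\PP(E)$ and $\PP(\overline E)$ are abstractly diffeomorphic — and one must check that $t\xi^{n-1}$ actually generates $H^{2n}(\PP(E);\ZZ)\cong\ZZ$ rather than being a proper multiple of a generator, which is immediate from the Leray--Hirsch basis.
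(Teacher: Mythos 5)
Your ``if'' direction is fine (it is the trivial half of the clutching construction, as in the paper), and your projective--bundle computation is correct and is indeed a more elementary substitute for the paper's argument \emph{in the special case} $r=(1)$. The genuine gap is the reduction you invoke: you assert that ``for $G=\PU(n)$ the minimal orbit is $\CP^{n-1}$'' and that it therefore suffices to treat $r=(1)$. This is not so. The minimal coadjoint orbits of $\PU(n)$ are \emph{all} the Grassmannians $\Gr_s(\CC^n)$, $1\le s\le n-1$ (one for each maximal parabolic), and they are pairwise incomparable in the order defined by existence of equivariant maps. The reduction established in the paper (Proposition \ref{pro:min-orbits}) only replaces the orbit $\OOO$ by a minimal orbit lying \emph{below} it, i.e.\ one receiving an equivariant map from $\OOO$; for the flag manifold of type $r=(r_1,\dots,r_k)$ these are exactly the Grassmannians $\Gr_{r_i}(\CC^n)$, and $\CP^{n-1}$ occurs among them only when $r_1=1$. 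So already for $Y_{d,r}=\Gr_2(E)$ with $n\ge 4$ (a bundle whose fibre is itself a minimal orbit) your argument proves nothing: there is no compatible map to $\PP(E)$, and your cohomological computation covers only $\PP(E)$. The step you flag as ``the main obstacle'' is therefore not merely an obstacle to be bypassed; as stated it is false, and the missing case is precisely the one the paper's machinery is built to handle.

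What the paper actually does is prove non-integrality of the characteristic isomorphism for \emph{every} Grassmannian bundle (Theorem \ref{thm:ngrass}): using the generalized Chevalley formula it pairs the degree-two class $c_1=\epsilon_1+\cdots+\epsilon_s$ with $[bX_e]$ and the degree-four class $c_2=\sum_{i<j\le s}\epsilon_i\epsilon_j$ with $[bX_{w_{\alpha_s}}]$, obtaining (up to sign) $ds/n$ and $d(s-1)/n$; if the characteristic isomorphism were integral both would be integers, hence so would their difference $d/n$. Combining this with Proposition \ref{pro:min-orbits} and Proposition \ref{pro:nonint}, and translating the degrees $d,d'$ into the homotopy classes of the clutching loops under $\pi_1(\GL(n,\CC))\to\pi_1(\PGL(n,\CC))\cong\ZZ/n\ZZ$, gives the theorem for every flag type. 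To repair your proof along your own lines you would have to redo your projective-bundle calculation on $\Gr_s(E)$ for arbitrary $s$ (e.g.\ via $c_1$ and $c_2$ of the tautological subbundle, which is morally the same computation as the paper's), rather than only on $\PP(E)$; your treatment of the compatibility conditions (forcing $\psi^*t'=t$ and $\delta=1$) would then carry over and is the part of your argument worth keeping.
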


This paper is organized as follows.  Section \ref{sec:char} sets up the strategy to prove Theorem \ref{thm:injdiff}. We start by recalling basic results on coadjoint orbits, the clutching
construction and the cohomology of homogeneous bundles. This is used to prove our main result of the section which
asserts that for any loops $\gamma,\gamma'$ in a compact, connected semisimple Lie group and for any coadjoint orbit $\OOO$ of the group,
there exist a unique compatible isomorphism  between the rational cohomology rings of the bundles
$Y_{\gamma,\OOO}$ and $Y_{\gamma',\OOO}$. We refer to this isomorphism as the {\em characteristic isomorphism}.
This immediately implies that $\gamma$ and $\gamma'$ will represent different homotopy classes in the group of diffeomorphisms of $\OOO$ if
the characteristic isomorphism does not map the {\em integral} cohomology of $Y_{\gamma,\OOO}$ into the integral cohomology of $Y_{\gamma,\OOO'}$,
i.e., if the characteristic isomorphism is not integral. We finish the section by showing that to prove our main Theorem is it enough
to show that the characteristic isomorphism is not integral in some key cases, namely, for the minimal coadjoint orbits of adjoint simple groups.

Section \ref{sec:genSchubert} develops the tools that we will use to perform the cohomological/homological
computations needed to show the non-integrality of the characteristic isomorphism for minimal orbits
of adjoint simple groups. First, we use an algebraic torus action on the bundles $Y_{\gamma,\OOO}$
to produce a generalized Bruhat/Schubert decomposition. The core of the section describes two ways to
compute pairings between certain Schubert varieties of $Y_{\gamma,\OOO}$ and arbitrary rational equivariant cohomology classes on $Y_{\gamma,\OOO}$: The first one is via a generalized Chevalley formula,
which is valid for any Schubert variety. The second
one is via explicit localization formulas, which we give for smooth Schubert varieties which come from appropriate Dynkin subdiagrams.

Section \ref{sec:unitary} contains the computation that show that the characteristic isomorphism is not integral
for the minimal coadjoint orbits of $\PU(n)$ and $\Sp(n)/Z$. These computations only require looking at degree two and four integral cohomology classes,
and are performed using the generalized Chevalley formula.

In sections \ref{s:orthogonal-I} and \ref{s:orthogonal-II} we prove the non-integrality
of the characteristic isomorphism for the minimal coadjoint orbits of the odd and even orthogonal groups, respectively.
These computations present more difficulties than the ones for the projective unitary and symplectic groups. On the one hand,
we need to use integral cohomology classes given by polynomials which do not have integral coefficients of weight variables. On the
other hand, we have to use the full power of our localization formulas to compute the corresponding integrals.

Finally, in Section \ref{s-exotic}  we prove the non-integrality
of the characteristic isomorphism for the minimal coadjoint orbits of $\mathrm{E}_6/Z$ and $\mathrm{E}_7/Z$. The new feature of this section
is the use of computer assisted calculations for two of the minimal orbits of $\mathrm{E}_7$. In fact,
we use computers to solve two different kinds of problems. The first one is the implementation of the generalized Chevalley formula
for appropriate Schubert varieties/Weyl group elements. The second one is the calculation of polynomials representing integral
cohomology classes of these two minimal coadjoint orbits.

{\bf Acknowledgments.} The authors are grateful to C. Arias Abad, M. Crainic, R. L. Fernandes, G. Granja, E. Miranda,
Y. Mitsumatsu, J. Mour\~ao and
C. Woodward for valuable suggestions.

\section{The characteristic isomorphism}\label{sec:char}
In this section we set up a strategy to prove Theorem \ref{thm:injdiff}. Briefly, the clutching construction transfers a problem of algebraic
topology in an infinite CW complex --- deciding whether $\gamma,\gamma':S^1\to G$ are homotopic inside $\Diff(\OOO)$ --- into a problem of differential topology of (finite dimensional) manifolds: deciding
whether there exists a compatible diffeomorphism  $\psi:Y_{\gamma,\OOO}\to Y_{\gamma',\OOO}$ (Theorem \ref{thm:injdiffbun}).  Such a diffeomorphism should
induce the corresponding compatible isomorphism $\psi^*$ between the cohomology rings (for any coefficient ring).
The central point of our strategy is an algebraic study of abstract compatible isomorphisms for rational coefficients/cohomology (see Definition \ref{def:compatible-isomorphism} below). This will possible because of the following:
\begin{enumerate}
 \item[(i)] for any loop $\gamma:S^1\to G$,  $Y_{\gamma,\OOO}$
  is a homogeneous bundle (i.e., the quotient
 of a principal $G$-bundle by a closed subgroup of $G$) and hence its rational cohomology ring can be handled using Chern-Weil theory;
 \item[(ii)] the hard Lefschetz property for $H^\bullet(\OOO;\QQ)$ (which follows from the existence of Kaehler structures on $\OOO$) implies the uniqueness of compatible isomorphisms in rational cohomology.
\end{enumerate}

\subsection{Projections between coadjoint orbits and the order in the set of orbits}
\label{ss:order-orbits}
Here we review some well known facts from a convenient perspective.
One can define a partial order on the set of coadjoint orbits of $G$ by
the condition that for any pair of orbits $\OOO,\OOO'$ we have $\OOO\geq\OOO'$
if and only if there exists a $G$-equivariant map $\OOO\to\OOO'$ (which is necessarily
smooth and surjective). Using this order one can speak of minimal and maximal orbits.
Maximal orbits are also called regular orbits.

Let us fix for the rest of the section and the following one a maximal torus
$$T\subset G$$
and let $\tlie$ be the Lie algebra of $T$.

All regular orbits are $G$-equivariantly diffeomorphic to $G/T$. Indeed suppose that
$\OOO$ is a coadjoint orbit and let $x\in\OOO$ be any element. Let $P$ be the centralizer
of $x$, so that the map $G\ni g\mapsto gx\in\OOO$ induces a $G$-equivariant diffeomorphism $G/P\to\OOO$. The isomorphism induced by the Killing pairing
$f:\glie^*\to\glie$ is $G$-equivariant, so the stabilizer of $x\in\glie^*$ is equal to
the stabilizer of $x'=f(x)\in\glie$. Now, $x'$ is contained in the Lie algebra of
some maximal torus $T'\subset G$, and clearly $T'$ is contained in the stabilizer of $x'$,
hence in that of $x$: $T'\subset P$. Since all maximal
tori are conjugate, it follows that there is a $G$-equivariant projection
$G/T\to\OOO$.

To identify $G/T$ with a coadjoint orbit, let $y'\in\tlie$ be an element in the interior of any Weyl chamber. Then the centralizer of $y'$
is equal to $T$. Letting $y=f^{-1}(y')\in\glie^*$ it follows that the coadjoint orbit through $y$
can be identified with $G/T$.

These notions can also be understood in classical holomorphic terms
(see e.g. \cite[Page 6]{Bott}). Any coadjoint orbit
admits $G$-equivariant Kaehler structures, with respect to which the $G$ action extends to
a transitive holomorphic action of the complexification $\GG$ of $G$. In general, any orbit is $G$-equivariantly diffeomorphic to $\GG/\PP$ for some parabolic subgroup $\PP\subset\GG$,
the maximal orbits are the ones for which $\PP$ can be taken to be a Borel subgroup,
and the minimal orbits are the ones for which $\PP$ is
a maximal parabolic subgroup. This perspective will be adopted in this paper in
Section \ref{sec:genSchubert} and the next ones.

Recall that the Weyl group of $T$ is by definition $W:=N(T)/T$.
There is a natural right action of $W$ on $G/T$.
Suppose that $\OOO$ is any coadjoint orbit. Let us take a $G$-equivariant map
$G/T\to\OOO$, let $x$ be the image of $eT$, and let $P\subset G$ be the stabilizer of
$x$. Then $T\subset P$, and we define the residual Weyl group of $P$ as
$$W_P:=W\cap P.$$
Note that $W_P$ depends on the choice of equivariant map $q:G/T\to\OOO$, so it is not intrinsically
associated to $\OOO$.
There are finitely many such maps $q$. Once one such $q$ has been chosen, $W_P$ can be identified
with the subgroup of $W$ consisting of those elements whose action on $G/T$ preserves each fiber of
$q$. It is a classical fact (see \cite[Theorem 5.5] {BGG}) that
$q^*:H^{\bullet}(\OOO;\ZZ)\to H^{\bullet}(G/T;\ZZ)$
is injective, and that the image of $q^*$ can be identified with
$H^{\bullet}(G/T;\ZZ)^{W_P}$.

\subsection{The clutching construction}
Let $F$ be a manifold and $\gamma:S^1\to \Diff(F)$ a loop based at the identity. The clutching construction associates to $\gamma$ a bundle $\pi:Y_{\gamma,F}\to S^2$ with fiber $F$ and a trivialization
$i:\pi^{-1}(p)\longrightarrow F$, for some fixed $p\in S^2$.

Consider two copies $\DD^2_+\times F$, $\DD^2_-\times F$,
of the the trivial bundle with fiber $F$ over the unit closed disk centered at the origin in $\CC$,
and glue them along the boundary using the diffeomorphism induced by $\gamma$:
\begin{equation}\label{eq:clutching2}
Y_{\gamma,F}:= (\DD^2_+\times F)\sqcup (\DD^2_-\times F)/(\exp^{2\pi it},x)\sim (\exp^{-2\pi it},\gamma(\exp^{2\pi it})x).
\end{equation}

The projections $\DD^2_{\pm}\times F\to \DD^2_{\pm}$ induce the fibration $\pi:Y_{\gamma,F}\to S^2$.
To fix ideas, let us take $p\in S^2$ to be the point corresponding to $0\in\DD^2_+$. Then the trivialization
$i:\pi^{-1}(p)\longrightarrow F$ is induced by the natural
inclusion $\DD^2_+\times F\hookrightarrow Y_{\gamma,F}$.

Our first aim is describing the effect of a compatible diffeomorphism
$\psi:Y_{\gamma,F}\to Y_{\gamma',F}$  at the cohomological level.
Consider (here and everywhere else in the paper) the orientation of $S^2$ extending the natural orientation on $\DD_+^2$ given by its inclusion in $\CC$.
For any coefficient ring $A$ we denote by $b\in H^2(Y_{\gamma,F};A)$
the pullback via $\pi$ of the orientation class in $H^2(S^2;A)$.

\begin{definition}
\label{def:compatible-isomorphism}
Let  $\gamma,\,\gamma':S^1\to G$ be loops
and let $(Y_{\gamma,F},i),\,(Y_{\gamma',F},i')$ be the bundles over $S^2$ obtained
by the clutching construction (\ref{eq:clutching2}).
We say that \[\phi\colon H^\bullet(Y_{\gamma',F};A)\rightarrow H^\bullet(Y_{\gamma,F};A)\] is a compatible isomorphism
 if $\phi$ is a ring isomorphism such that:
 \begin{enumerate}
\item $\phi(b)=b$;
\item $ i^*\circ \phi= {i'}^*$.
\end{enumerate}
\end{definition}

Of course, any compatible diffeomorphism $\psi\colon (Y_{\gamma,F},i)\rightarrow (Y'_{\gamma',F},i')$
induces a compatible isomorphism $H^\bullet(Y_{\gamma',F};A)\rightarrow H^\bullet(Y_{\gamma,F};A)$ for any coefficient ring $A$, in particular for integer and rational coefficients.

The main result of this section is the following:

\begin{theorem}\label{thm:uniqueness} Let $\OOO$ be a coadjoint orbit of a compact, connected, semisimple Lie group $G$, and let
$\gamma,\,\gamma':S^1\to G$  be loops.

There exists a unique compatible isomorphism in rational cohomology:
\begin{equation}\label{eq:charisom}
 \kappa\colon H^\bullet(Y_{\gamma',\OOO};\QQ)\rightarrow H^\bullet(Y_{\gamma,\OOO};\QQ).
 \end{equation}
\end{theorem}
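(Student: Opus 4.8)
The plan is to establish existence and uniqueness separately, using the structure of $Y_{\gamma,\OOO}$ as a homogeneous bundle for existence and the hard Lefschetz property of $\OOO$ for uniqueness.

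For \emph{existence}, I would first exhibit a natural model for the rational cohomology ring of $Y_{\gamma,\OOO}$ that does not depend on $\gamma$. Observe that the clutching bundle $Y_{\gamma,\OOO}$ can be realized as the quotient $EG\times_{...}$ — more precisely, since $\gamma:S^1\to G$ and $\OOO=G/P$, the bundle $Y_{\gamma,\OOO}\to S^2$ is the associated $\OOO$-bundle of a principal $G$-bundle $P_\gamma\to S^2$ classified by $[\gamma]\in\pi_1(G)=\pi_2(BG)$. Thus $Y_{\gamma,\OOO}=P_\gamma\times_G (G/P)=P_\gamma/P$ is a homogeneous bundle, so by Chern–Weil / the Leray–Hirsch theorem its rational cohomology is a free module over $H^\bullet(S^2;\QQ)$ with fiber $H^\bullet(G/P;\QQ)=H^\bullet(G/T;\QQ)^{W_P}$. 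Concretely, using the Borel picture, $H^\bullet(Y_{\gamma,\OOO};\QQ)$ is presented as a quotient of $H^\bullet_T(\pt;\QQ)\otimes H^\bullet(S^2;\QQ)$ where the characteristic classes of $P_\gamma$ (living in $H^2(S^2)$, i.e. multiples of $b$) enter the relations. The point is that the ring depends on $[\gamma]$ only through these degree-two characteristic classes, and — crucially — after tensoring with $\QQ$ and using the fact that $\pi_1(G)$ is finite, these characteristic classes are torsion, hence \emph{vanish rationally}. Therefore $H^\bullet(Y_{\gamma,\OOO};\QQ)\cong H^\bullet(S^2;\QQ)\otimes H^\bullet(G/P;\QQ)$ as rings, naturally in a way that sends $b\mapsto b$ and is compatible with the fiber restriction $i^*$. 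Composing the iso for $\gamma$ with the inverse of the one for $\gamma'$ produces the desired compatible isomorphism $\kappa$.

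For \emph{uniqueness}, suppose $\phi,\phi'$ are two compatible isomorphisms $H^\bullet(Y_{\gamma',\OOO};\QQ)\to H^\bullet(Y_{\gamma,\OOO};\QQ)$. Then $\theta:=\phi^{-1}\circ\phi'$ is a ring automorphism of $H^\bullet(Y_{\gamma',\OOO};\QQ)$ fixing $b$ and commuting with $i^*$; I must show $\theta=\Id$. By Leray–Hirsch, $H^\bullet(Y_{\gamma',\OOO};\QQ)$ is generated as a ring by $b$ together with lifts of a set of ring generators of the fiber cohomology $H^\bullet(\OOO;\QQ)$, and any class $x$ decomposes uniquely as $x = \tilde x + b\,y$ where $\tilde x$ is a chosen lift and $y$ is lower in the $b$-filtration. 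Since $\theta$ fixes $b$ and $i^*\theta=i^*$, for a generator $x$ in degree $2$ we get $i^*(\theta x - x)=0$, so $\theta x - x = b\,c$ for a scalar $c$ (as $H^0$ is one-dimensional); here I would invoke that $b^2=0$ in $H^\bullet(S^2)$ pulled back, hence $\theta(b\,c)=b\,c$ and one propagates the equality $\theta x = x + b c_x$ up through higher degrees by multiplicativity. The remaining task is to kill the correction terms $b\,c_x$. This is exactly where hard Lefschetz enters: $\OOO$ carries a Kähler class, so multiplication by a suitable power of a degree-two class is an isomorphism $H^{d-k}(\OOO;\QQ)\to H^{d+k}(\OOO;\QQ)$, which rigidifies the ring enough that the only grading-preserving, $b$-fixing ring automorphism reducing to the identity on the fiber is the identity — concretely, comparing top-degree pairings (the fiber integral over $\OOO$, detected by the coefficient of $b\cdot[\text{fiber fundamental class}]$) forces all $c_x=0$. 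I would phrase this as: the pairing $H^k(Y_{\gamma',\OOO};\QQ)\times H^{2d+2-k}(Y_{\gamma',\OOO};\QQ)\to\QQ$ given by cup product and evaluation on the fundamental class is nondegenerate (Poincaré duality), $\theta$ preserves it, $\theta$ fixes $b$, and on the associated graded for the $b$-filtration $\theta$ induces the identity (by hard Lefschetz on $\OOO$, which pins down ring automorphisms of $H^\bullet(\OOO;\QQ)$ acting trivially on $H^2$ and $H^{2d}$); a short induction on the $b$-filtration degree then gives $\theta=\Id$.

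The main obstacle I expect is the uniqueness argument, specifically making precise how hard Lefschetz on the fiber rules out nontrivial $b$-linear correction terms: one needs that a ring automorphism of $H^\bullet(G/P;\QQ)$ which is the identity in degree $2$ is automatically the identity (this uses that $H^2(G/P;\QQ)$ generates the ring together with the hard Lefschetz relations — true for minimal orbits where $H^2$ is one-dimensional, but for general $\OOO$ one must argue via the $W_P$-invariant subring of $H^\bullet(G/T;\QQ)$ and the fact that degree-two classes generate up to relations determined by the Weyl group). Everything else — the Chern–Weil presentation, the vanishing of rational characteristic classes, Leray–Hirsch — is standard; the delicate point is the rigidity of $H^\bullet(\OOO;\QQ)$ as a graded ring under automorphisms trivial in low degree, and organizing the filtration induction cleanly so that the correction terms are eliminated degree by degree.
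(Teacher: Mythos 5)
Your existence argument is essentially the paper's: realize $Y_{\gamma,\OOO}$ as a homogeneous bundle over $S^2$, use the Chern--Weil presentation of its rational cohomology, and observe that for semisimple $G$ the relevant rational characteristic classes die on $S^2$ (the paper argues by degree -- invariant polynomials start in degree $2$, so the classes live in degree $\geq 4$ -- while you argue via torsion of $\pi_1(G)$; both work), then pass to $W_P$-invariants for a general orbit. The genuine problem is in the uniqueness half. After composing with the existence isomorphisms one must show that a compatible ring automorphism $\theta$ of $H^{\bullet}(S^2\times\OOO;\QQ)$ is the identity. The conditions $\theta(b)=b$ and $i^*\theta=i^*$ already force $\theta=\Id$ modulo $b\cdot H^{\bullet}$, so the entire content is to kill the correction term, i.e.\ to show that the degree $-2$ map $\psi=i_!\circ\theta\circ\pi^*:H^{\bullet}(\OOO;\QQ)\to H^{\bullet-2}(\OOO;\QQ)$ (equivalently, the ``$c_x$'' in your $\theta x=x+b\,c_x$, which multiplicativity makes into a derivation) vanishes. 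Neither of the two mechanisms you propose accomplishes this. ``Comparing top-degree pairings'' gives nothing: since $b^2=0$, any compatible automorphism automatically fixes the top class $b\cdot[\text{top of }\OOO]$ and hence preserves the Poincar\'e pairing, so no constraint on the $c_x$ is extracted. And the rigidity statement you lean on -- that a ring automorphism of $H^{\bullet}(G/P;\QQ)$ which is the identity in degree $2$ is the identity -- is both not the relevant statement (the induced automorphism of the fiber quotient is the identity for free, by compatibility) and false as stated in general, because $H^{\bullet}(G/P;\QQ)$ is not generated in degree $2$ (already for Grassmannians $H^4$ is larger than $\operatorname{Sym}^2H^2$). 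You flag this as the main obstacle but do not supply an argument that closes it.

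What is actually needed, and what the paper does, is the following chain: first prove that $\theta$ fixes (a lift of) a rational K\"ahler class $\omega$, which is not automatic -- a priori $\theta(\omega)=\omega+\lambda b$, and one kills $\lambda$ using $\omega^{n+1}=0$ together with $\omega^n b\neq 0$; then deduce $\psi(\omega\alpha)=\omega\,\psi(\alpha)$ from multiplicativity and the projection formula; then, for a primitive class $\alpha\in H^{n-k}(\OOO;\QQ)$, use $\omega^{k+1}\alpha=0$ and the hard Lefschetz injectivity of multiplication by $\omega^{k+1}$ on $H^{n-k-2}(\OOO;\QQ)$ to get $\psi(\alpha)=0$; finally invoke the Lefschetz decomposition to conclude $\psi\equiv 0$, hence $\theta=\Id$. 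Your sketch gestures at hard Lefschetz but omits the $\omega$-fixing step and never runs the primitive-class induction, so as written the uniqueness proof has a gap precisely at its crux.
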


Theorem \ref{thm:uniqueness} is a purely algebraic result which reduces the proof of Theorem \ref{thm:injdiffbun}  to analyzing
in which cases $\kappa$ comes form a compatible diffeomorphism, i.e when $\kappa=\psi^*$. In turn, the core of this analysis will amount to
studying how the integral cohomology behaves under $\kappa$ or, more precisely, to determine whether
$\kappa$ restricts to an isomorphism between the lattices defined by the integral cohomology (see Subsection \ref{ss:strategy}). Remarkably, $\kappa$ will turn out to be compatible with integral cohomology precisely whenever
$\gamma$ and $\gamma'$ are homotopic inside $G$.

The proof of Theorem \ref{thm:uniqueness} is given in Subsection \ref{ss:proof-thm:uniqueness}.
Before that we introduce some notation and prove some preliminary results.

\subsection{The rational cohomology ring of homogeneous bundles}

Let $G$ be a Lie group and let $X\to M$ be a principal $G$-bundle. Each (closed) subgroup $H\subset G$ defines a homogeneous bundle:
\[Y:=X/H\to M.\]
This is a bundle with fiber the homogeneous space $G/H$ and structural group $G$ (acting by left multiplication).

These are the kind of bundles that appear in our discussion:
\begin{lemma}\label{lem:homog}
Let $G$ be a compact connected group and let $\gamma:S^1\to G$ be a loop. If $\OOO$ is a coadjoint orbit of $G$,
then the  bundle $Y_\gamma$ coming from the clutching construction (\ref{eq:clutching2}) is a  homogeneous bundle over $S^2$.
\end{lemma}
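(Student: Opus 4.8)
The plan is to realize the clutching bundle $Y_\gamma = Y_{\gamma,\OOO}$ as a quotient $X/H$ of a principal $G$-bundle $X \to S^2$ by a closed subgroup $H \subset G$, where $H$ is the stabilizer of a chosen point of $\OOO$. First I would fix $x \in \OOO$ and let $H = \{g \in G : gx = x\}$ be its stabilizer (a closed, hence compact, subgroup of $G$), so that the orbit map $g \mapsto gx$ induces a $G$-equivariant diffeomorphism $G/H \overset{\cong}{\to} \OOO$; this is the standard fact recalled in Subsection \ref{ss:order-orbits}. Under this identification, the action of $\gamma(\exp^{2\pi i t}) \in G$ on $\OOO$ used in the gluing (\ref{eq:clutching2}) becomes left translation by $\gamma(\exp^{2\pi i t})$ on $G/H$.

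Next I would build the principal $G$-bundle $X \to S^2$ by the very same clutching recipe applied to $G$ acting on itself by left multiplication: take two copies $\DD^2_+ \times G$ and $\DD^2_- \times G$ and glue
\[
X := (\DD^2_+\times G)\sqcup (\DD^2_-\times G)\big/(\exp^{2\pi it},g)\sim (\exp^{-2\pi it},\gamma(\exp^{2\pi it})\,g).
\]
Here the gluing uses \emph{left} multiplication by $\gamma$, while $G$ acts on $X$ on the \emph{right} by right multiplication on the $G$-factor; since left and right multiplications commute, this right $G$-action is well defined on the quotient and is free and fiberwise transitive, so $X \to S^2$ is a principal $G$-bundle (it is the clutching bundle of the loop $\gamma$ viewed in $G \hookrightarrow \Diff(G)$ via left translations). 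Then I would check that the quotient $X/H$, where $H$ acts on the right, is canonically diffeomorphic to $Y_{\gamma,\OOO}$ over $S^2$: on each chart, $(\DD^2_\pm \times G)/H = \DD^2_\pm \times (G/H) = \DD^2_\pm \times \OOO$, and the identification of gluing maps is exactly the statement that left translation by $\gamma(\exp^{2\pi i t})$ on $G/H$ corresponds to the coadjoint action of $\gamma(\exp^{2\pi i t})$ on $\OOO$. Thus $Y_{\gamma,\OOO} \cong X/H$ is a homogeneous bundle, with structural group $G$ acting by left multiplication on the fiber $G/H \cong \OOO$.

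The only genuinely delicate point — and the step I would spell out most carefully — is the compatibility of the two gluing descriptions, i.e.\ that passing to the quotient by the right $H$-action intertwines the clutching of $\gamma$-acting-on-$G$-by-left-translation with the clutching of $\gamma$-acting-on-$\OOO$-by-the-coadjoint-action. Everything else (freeness and transitivity of the right $G$-action, local triviality, smoothness of the quotient by a compact group) is routine. I would also note that $\gamma$ being a loop in $G$ based at the identity is used only to get an honest bundle over $S^2$ rather than over an interval with identified ends; the construction does not require $\gamma$ to be null-homotopic. One subtlety worth a remark: the subgroup $H$, and hence the presentation $Y_{\gamma,\OOO} = X/H$, depends on the choice of basepoint $x \in \OOO$, but any two choices are conjugate in $G$, so the homogeneous-bundle structure is canonical up to the obvious isomorphism.
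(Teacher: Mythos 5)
Your proposal is correct and follows essentially the same route as the paper: the paper's proof also fixes a point of $\OOO$ with stabilizer $P$, identifies $\OOO\cong G/P$, takes $X=Y_{\gamma,G}$ obtained by clutching the left translation action of $\gamma$ on $G$, and identifies $Y_{\gamma,\OOO}$ with $X/P$. You merely spell out in more detail the verifications (right $G$-action making $X$ a principal bundle, chart-by-chart compatibility of the two gluings) that the paper leaves implicit.
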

\begin{proof}
Choose an element of $\OOO$ and let $P\subset G$ be its stabilizer.
The action of $G$ on $\OOO$ gives a diffeomorphism $\phi:\OOO\cong G/P$.
Let $X=Y_{\gamma,G}$ be the principal $G$-bundle obtained by applying the clutching
construction, using the action of $G$ on itself on the left. Then we can
identify $Y_{\gamma,\OOO}$ with $X/P$ using $\phi$.
\end{proof}

The rational cohomology ring of homogeneous bundles for compact, connected groups is well understood.
Let us discuss for simplicity the case in which $H$ is a maximal torus (for the more
general case of stabilizers of arbitrary elements in $\glie^*$ see the proof of Theorem \ref{thm:char-isom}).

Let $X\to M$ be a principal $G$-bundle and let $\pi\colon Y=X/T\to M$. Observe
that $X$ is both a principal $G$-bundle (over $M$) and a principal $T$-bundle (over $Y$). First, the cohomology ring of $Y$ is a module over the cohomology
ring of $M$. Second, to get `fiber classes' one considers the Chern-Weil homomorphism for both the principal
$T$-action and the principal $G$-action on $X$:  for
$f\in \mathrm{Sym}(\tlie^*)$ and $F\in  \mathrm{Sym}(\glie^*)^G$ one gets cohomology classes $c(f), \pi^*c(F)\in H^\bullet(Y;\RR)$.
 There will be redundancy among these fiber classes exactly when we consider $f$ and $F$ in correspondence under
  the Chevalley restriction isomorphism:
\[\Ch:\mathrm{Sym}(\tlie^*)^W\rightarrow \mathrm{Sym}(\glie^*)^G.\]
We have (see \cite[Theorem 6.8.4]{GS}):
\begin{theorem}
\label{thm:gs}
Let $G$ be a compact, connected Lie group and
let $\pi\colon Y\rightarrow M$ be a homogeneous bundle with fiber $G/T$. The Chern-Weil homomorphism induces an isomorphism
 \begin{equation}\label{eq:homog}
 \kappa\colon H^\bullet(M;\QQ)[\tlie]/I^M_+\rightarrow H^\bullet(Y;\QQ),
 \end{equation}
 where the  $I^M_+\subset H^\bullet(M;\QQ)[\tlie]$ is the ideal generated by polynomials of the form $f-c(\Ch(f))$, where  $f \in {\mathrm{Sym}_\QQ(\tlie^*)}^W$ vanishes at zero and
 $\Ch(f)\in {\mathrm{Sym}_\QQ(\glie^*)}^G$ is the image of $f$ by the Chevalley restriction isomorphism.
\end{theorem}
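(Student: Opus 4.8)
The plan is to deduce the statement from the Leray--Hirsch theorem together with a dimension count, Borel's computation of $H^\bullet(G/T;\QQ)$ being the main input. Write $\mathrm{pr}\colon X\to Y$ for the free $T$-action. Chern--Weil theory for the principal $T$-bundle $X\to Y$ gives a ring homomorphism $c\colon \mathrm{Sym}_\QQ(\tlie^*)\to H^\bullet(Y;\QQ)$, with rational (indeed integral) values since the class of a weight is the first Chern class of a line bundle; together with $\pi^*$ these assemble into a ring homomorphism
\[\tilde\kappa\colon H^\bullet(M;\QQ)[\tlie]=H^\bullet(M;\QQ)\otimes_\QQ\mathrm{Sym}_\QQ(\tlie^*)\longrightarrow H^\bullet(Y;\QQ),\qquad \alpha\otimes f\longmapsto \pi^*\alpha\cdot c(f),\]
which is the map that should descend to the $\kappa$ of (\ref{eq:homog}). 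The whole theorem then reduces to: (i) $\tilde\kappa$ is surjective; (ii) $I^M_+\subseteq\ker\tilde\kappa$; (iii) the resulting surjection $\kappa\colon H^\bullet(M;\QQ)[\tlie]/I^M_+\to H^\bullet(Y;\QQ)$ is injective. For (iii) I would not try to describe the kernel explicitly but instead compare graded dimensions: granted (i) and (ii), it suffices to show that in each degree the source of $\kappa$ has dimension at most that of the target.

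For (i), and to pin down the target, I would use Leray--Hirsch. The geometric point is that over a fibre $F=\pi^{-1}(m)$ the restricted $T$-bundle is $\mathrm{pr}^{-1}(F)\to F$, and $\mathrm{pr}^{-1}(F)$ is exactly the fibre $X_m$ of $X\to M$ (being a single $G$-orbit, $X_m$ is $T$-invariant); choosing a point of $X_m$ identifies $X_m\to X_m/T=F$ with the tautological bundle $G\to G/T$. Hence the composite of $c$ with restriction to $F$ is the Borel characteristic homomorphism $\mathrm{Sym}_\QQ(\tlie^*)\to H^\bullet(G/T;\QQ)$, which by Borel's theorem is surjective with kernel the ideal $J\subseteq\mathrm{Sym}_\QQ(\tlie^*)$ generated by the $W$-invariant polynomials vanishing at $0$, so $H^\bullet(G/T;\QQ)\cong\mathrm{Sym}_\QQ(\tlie^*)/J$ (in particular it lives in even degrees). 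Picking homogeneous $f_1,\dots,f_N\in\mathrm{Sym}_\QQ(\tlie^*)$ lifting a $\QQ$-basis of $\mathrm{Sym}_\QQ(\tlie^*)/J$, the classes $c(f_1),\dots,c(f_N)\in H^\bullet(Y;\QQ)$ restrict to a basis of the cohomology of every fibre; Leray--Hirsch (for which it is more than enough that $M$ be a finite CW complex) then gives that $H^\bullet(Y;\QQ)$ is free over $H^\bullet(M;\QQ)$ on these classes. This yields (i) at once, and shows the Poincar\'e series of $H^\bullet(Y;\QQ)$ is the product of those of $H^\bullet(M;\QQ)$ and $H^\bullet(G/T;\QQ)$.

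For (ii) the crux is the identity $c(f)=\pi^*c(\Ch(f))$ in $H^\bullet(Y;\QQ)$ for every $W$-invariant $f$, which makes $\tilde\kappa$ kill the generators $f-c(\Ch(f))$ of $I^M_+$. I would prove it via classifying maps: let $c_Y\colon Y\to BT$ classify $X\to Y$ and $c_M\colon M\to BG$ classify $X\to M$. Extending the structure group of $X\to Y$ along $T\hookrightarrow G$ yields $X\times_T G$, and $[x,g]\mapsto(\mathrm{pr}(x),x\cdot g)$ identifies it with $\pi^*X$; hence the square with edges $c_Y$, $\beta\colon BT\to BG$, $\pi$ and $c_M$ commutes up to homotopy. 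On rational cohomology $\beta^*\colon \mathrm{Sym}_\QQ(\glie^*)^G\to\mathrm{Sym}_\QQ(\tlie^*)$ is restriction of polynomials, so $\beta^*\Ch(f)=f$ by the definition of $\Ch$; and since Chern--Weil classes are the pullbacks of the universal classes under the classifying maps, $c(f)=c_Y^*f=c_Y^*\beta^*\Ch(f)=\pi^*c_M^*\Ch(f)=\pi^*c(\Ch(f))$.

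Finally, for (iii) I would filter $H^\bullet(M;\QQ)[\tlie]$ by degree in the $\tlie$-variables. The ideal $I^M_+$ is generated by the elements $f_k-c(\Ch(f_k))$ attached to homogeneous algebra generators $f_1,\dots,f_\ell$ of $\mathrm{Sym}_\QQ(\tlie^*)^W$ (because $g\mapsto g-c(\Ch(g))$ is additive and multiplicative modulo the ideal it generates), and in this filtration the top-degree part of $f_k-c(\Ch(f_k))$ is $f_k$, as $c(\Ch(f_k))$ has $\tlie$-degree $0$. Thus the associated graded of $H^\bullet(M;\QQ)[\tlie]/I^M_+$ is a quotient of $H^\bullet(M;\QQ)\otimes_\QQ\bigl(\mathrm{Sym}_\QQ(\tlie^*)/(f_1,\dots,f_\ell)\bigr)=H^\bullet(M;\QQ)\otimes_\QQ\bigl(\mathrm{Sym}_\QQ(\tlie^*)/J\bigr)\cong H^\bullet(M;\QQ)\otimes_\QQ H^\bullet(G/T;\QQ)$, so $\dim_\QQ\bigl(H^\bullet(M;\QQ)[\tlie]/I^M_+\bigr)^n\le\dim_\QQ H^n(Y;\QQ)$ for all $n$; with (i) this forces $\kappa$ to be an isomorphism. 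I expect the steps requiring the most care to be the fibrewise identification in (i) and the naturality in (ii) --- making precise that the relevant Chern--Weil classes are the pullbacks of the universal classes and that they sit in the homotopy-commutative square, the Chevalley isomorphism entering as the restriction map $H^\bullet(BG;\QQ)\to H^\bullet(BT;\QQ)$; none of this is deep, but it is where one must keep track of which connection or classifying map is being used.
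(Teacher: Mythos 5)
Your argument is correct, but it is worth pointing out that the paper does not prove Theorem \ref{thm:gs} at all: it is quoted from Guillemin--Sternberg \cite[Theorem 6.8.4]{GS}, whose proof runs through equivariant de Rham theory --- one identifies $H^\bullet(Y;\RR)=H^\bullet_T(X;\RR)$ (the $T$-action on $X$ being free), computes it via the Cartan/Weil model, and uses $H^\bullet_G(X;\RR)\cong H^\bullet(M;\RR)$ to exhibit $H^\bullet(Y)$ as $H^\bullet(M)\otimes_{\mathrm{Sym}(\glie^*)^G}\mathrm{Sym}(\tlie^*)$, which is exactly the quotient by $I^M_+$. Your route is purely topological: surjectivity of $\tilde\kappa$ from Leray--Hirsch applied to the fibrewise Borel isomorphism (the restriction of the $T$-bundle $X\to Y$ over a fibre is indeed $X_m\to\pi^{-1}(m)\cong G\to G/T$, and naturality of Chern--Weil classes does the rest); the relation $c(f)=\pi^*c(\Ch(f))$ from the homotopy-commutative square of classifying maps, using that $X\times_TG\cong\pi^*X$ and that $(B\iota)^*\colon\mathrm{Sym}_\QQ(\glie^*)^G\to\mathrm{Sym}_\QQ(\tlie^*)$ is restriction, i.e.\ the inverse of $\Ch$; and injectivity from the filtration by $\tlie$-degree, where the leading forms of the generators $f_k-c(\Ch(f_k))$ generate $H^\bullet(M;\QQ)\otimes J$, so the associated graded is a quotient of $H^\bullet(M;\QQ)\otimes H^\bullet(G/T;\QQ)$ and the Leray--Hirsch count closes the gap. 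All three steps check out (the reduction of $I^M_+$ to the finitely many Chevalley generators via $gh-c(\Ch(gh))=g\bigl(h-c(\Ch(h))\bigr)+c(\Ch(h))\bigl(g-c(\Ch(g))\bigr)$ is fine, and the filtration is compatible with the total grading since each generator is homogeneous). What your approach buys is independence from the de Rham machinery and a proof entirely in singular cohomology with field coefficients; what it costs is the mild finiteness hypothesis on $M$ needed for Leray--Hirsch and the degreewise dimension comparison (finite-dimensional $H^n(M;\QQ)$ in each degree), which is harmless here since the paper only ever applies the theorem with $M=S^2$, and which is implicit in the Guillemin--Sternberg setting as well.
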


\begin{remark}\label{rem:Borel}
When $M$ is a point Theorem \ref{thm:gs} recovers the well-known Borel isomorphism \cite[Chapter 6]{B}, identifying the rational cohomology
of $G/T$ with the ring of coinvariants
$\QQ[\tlie]/I_+$. This ring is generated by homogeneous polynomials with rational coefficients, where the notion of rationality uses the weight/root
lattice of $\tlie^*$.
\end{remark}

\subsection{The characteristic isomorphism}

We start by proving an important consequence of the structure of the rational cohomology ring of homogeneous bundles with fiber a coadjoint orbit:

\begin{theorem}\label{thm:char-isom} Let $\OOO$ be a coadjoint orbit of a compact, connected, semisimple Lie group $G$, let
$\gamma:S^1\to G$  be a loop, $e:S^1\to G$  be the constant loop based at the identity.
Applying the isomorphism (\ref{eq:homog}) to the homogeneous bundle $Y_{\gamma,\OOO}\to S^2$ we get
a compatible isomorphism
 \begin{equation}\label{eq:characteristicisom}
\kappa\colon H^\bullet(Y_{e,\OOO};\QQ)\rightarrow H^\bullet(Y_{\gamma,\OOO};\QQ),
  \end{equation}
which we refer to as the characteristic isomorphism.
\end{theorem}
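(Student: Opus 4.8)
The plan is to apply Theorem \ref{thm:gs} to both bundles $Y_{e,\OOO} \to S^2$ and $Y_{\gamma,\OOO} \to S^2$, obtaining two isomorphisms from a common model ring, and then to compose them (after inverting one). Concretely, since $Y_{e,\OOO} = S^2 \times \OOO$ is the trivial bundle, applying (\ref{eq:homog}) to it gives a canonical isomorphism $\kappa_e: H^\bullet(S^2;\QQ)[\tlie]/I^{S^2}_+ \xrightarrow{\cong} H^\bullet(Y_{e,\OOO};\QQ)$, and applying it to $Y_{\gamma,\OOO}$ gives $\kappa_\gamma: H^\bullet(S^2;\QQ)[\tlie]/I^{S^2}_+ \xrightarrow{\cong} H^\bullet(Y_{\gamma,\OOO};\QQ)$. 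Here I first need to address the mild point that Theorem \ref{thm:gs} as stated is for fiber $G/T$, whereas a general coadjoint orbit is $G/P$ for a stabilizer $P \supset T$; as the statement anticipates, the argument of Lemma \ref{lem:homog} together with the identification (from Subsection \ref{ss:order-orbits}) of $H^\bullet(G/P;\ZZ)$ with the $W_P$-invariants $H^\bullet(G/T;\ZZ)^{W_P}$ lets one pass $W_P$-invariants through the whole construction, so that one gets the analogue of (\ref{eq:homog}) with $\tlie$ replaced by the appropriate $W_P$-invariant subring; I will spell this out but it is routine. Then I set $\kappa := \kappa_\gamma \circ \kappa_e^{-1}$, which is manifestly a ring isomorphism $H^\bullet(Y_{e,\OOO};\QQ) \to H^\bullet(Y_{\gamma,\OOO};\QQ)$.

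It remains to check the two compatibility conditions of Definition \ref{def:compatible-isomorphism}. For condition (1), $\phi(b) = b$: the class $b$ is the pullback of the orientation class of $S^2$, hence lies in the image of $H^\bullet(S^2;\QQ) \hookrightarrow H^\bullet(S^2;\QQ)[\tlie]/I^{S^2}_+$, and under both $\kappa_e$ and $\kappa_\gamma$ this image is exactly the module structure over $H^\bullet(S^2;\QQ)$, i.e. the image of $\pi^*$. Since the clutching construction for the constant loop $e$ and for $\gamma$ produce bundles with the same projection to $S^2$ (both glued from $\DD^2_\pm$), the element of the model ring corresponding to $b$ is the same for both, so $\kappa(b) = \kappa_\gamma(\kappa_e^{-1}(b)) = b$. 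For condition (2), $i^* \circ \phi = (i')^*$: both trivializations $i, i'$ are over the same point $p \in S^2$ (the origin of $\DD^2_+$), and restriction to the fiber over $p$ corresponds, under (\ref{eq:homog}), to setting the $H^{>0}(S^2;\QQ)$ part to zero — i.e. to the quotient map onto the Borel model $\QQ[\tlie]/I_+$ of $\OOO = G/P$ (Remark \ref{rem:Borel}) — and this quotient is the same for both bundles because the fiber and its identification are intrinsic to the clutching construction near $p$. Hence $i^* \circ \kappa_\gamma = (i')^* \circ \kappa_e$ as maps from the model ring, which is exactly $i^* \circ \kappa = (i')^*$.

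The uniqueness of $\kappa$ is the assertion of Theorem \ref{thm:uniqueness}, whose proof is deferred; here I only need existence, so I will not prove uniqueness in establishing Theorem \ref{thm:char-isom} — though I will remark that it follows from the hard Lefschetz property of $H^\bullet(\OOO;\QQ)$ as indicated in point (ii) of the section introduction. I expect the main obstacle to be purely bookkeeping: making precise, via naturality of the Chern--Weil construction in Theorem \ref{thm:gs}, that the "model ring" $H^\bullet(S^2;\QQ)[\tlie]/I^{S^2}_+$ and the restriction-to-fiber map really are identified compatibly for the two bundles — in other words, that $\kappa_e$ and $\kappa_\gamma$ differ only in the target, not in the source or in how $b$ and the fiber restriction sit in the source. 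Once the functoriality of (\ref{eq:homog}) in the principal bundle $X$ is set up carefully, conditions (1) and (2) are immediate, and the identification of $\kappa_e$ with the trivial-bundle Künneth decomposition handles the base point of the construction.
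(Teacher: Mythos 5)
Your overall architecture (apply Theorem \ref{thm:gs} to both bundles, compose one isomorphism with the inverse of the other, check the two compatibility conditions, and reduce the general orbit $G/P$ to the regular case by passing to $W_P$-invariants) is the same as the paper's. But there is a genuine gap at the step you dismiss as ``bookkeeping/naturality'': you assume that $Y_{e,\OOO}$ and $Y_{\gamma,\OOO}$ have a \emph{common} model ring $H^\bullet(S^2;\QQ)[\tlie]/I^{S^2}_+$. The ideal $I^{S^2}_+$ in Theorem \ref{thm:gs} is generated by the elements $f-c(\Ch(f))$, and the classes $c(\Ch(f))\in H^\bullet(S^2;\QQ)$ are characteristic classes of the principal $G$-bundle $Y_{\gamma,G}\to S^2$, so a priori they depend on $\gamma$; if they did, the two quotient rings would differ and the composition $\kappa_\gamma\circ\kappa_e^{-1}$ would not be defined. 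Naturality of Chern--Weil cannot repair this, because for non-homotopic loops there is no bundle map between $Y_{e,G}$ and $Y_{\gamma,G}$ over $S^2$ — indeed the whole paper is devoted to showing these bundles are inequivalent.

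The missing ingredient is precisely the point the paper's proof turns on: since $G$ is semisimple, the $W$-invariant polynomials on $\tlie$ vanishing at zero start in degree $2$ (multiples of the Killing form), so every class $c(\Ch(f))$ lies in $H^{\geq 4}(S^2;\QQ)=0$. Hence $I^{S^2}_+=H^\bullet(S^2;\QQ)\otimes I_+$ for \emph{every} loop, the model ring is bundle-independent, and by K\"unneth together with the Borel isomorphism it is canonically $H^\bullet(Y_{e,\OOO};\QQ)$ — after which your verification of conditions (1) and (2) and your reduction to $W_P$-invariants (using the equivariance of the Chern--Weil map and the identification of $H^\bullet(Y_{\gamma,\OOO};\QQ)$ with $H^\bullet(Y_{\gamma,\OOO'};\QQ)^{W_P}$) go through essentially as in the paper. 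Without this degree/semisimplicity argument the claim is not routine: for a compact group with degree-one invariants (e.g.\ a unitary group) the classes $c(\Ch(f))$ sit in $H^2(S^2;\QQ)$, do depend on the clutching loop, and the ideals genuinely differ. So the structure of your proof is fine, but you must add this argument explicitly; it is the substantive content of the existence statement, not a formality.
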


The use of the same symbol $\kappa$ for (\ref{eq:homog}) and for (\ref{eq:characteristicisom})
will hopefully lead to no confusion because in this paper only (\ref{eq:characteristicisom})
will be used systematically.

\begin{proof}
 We first give the proof for a regular orbit $\OOO\cong G/T$.

 Because $G$ is semisimple, the homogeneous $W$-invariant polynomials of smallest degree which
 vanish at zero are  multiples of the Killing
 form, and thus have degree 2. Therefore for any $W$-invariant polynomial $f$ the characteristic class $c(\Ch(f))\in H^\bullet(S^2;\QQ)$
 has degree at least four, and hence it is trivial. Thus the isomorphism (\ref{eq:homog}) can be rewritten:
 \[\kappa\colon H^\bullet(S^2;\QQ)\otimes (\QQ[\tlie]/I_+)\rightarrow H^\bullet(Y_\gamma;\QQ).\]
 By  Kunneth's theorem the left hand side is canonically isomorphic to $H^\bullet(Y_e;\QQ)$, so $\kappa$ is an isomorphism
 $H^\bullet(Y_e;\QQ)\rightarrow H^\bullet(Y_\gamma;\QQ)$. It is also clear by construction that $\kappa^*b=b$ and that
 $ i_\gamma^*\circ \kappa= {i_e}^*$, and therefore $\kappa$  is a compatible isomorphism, as asserted.

Now we consider the case of a general orbit $\OOO$. Let $\OOO'\cong G/T$ be a regular orbit.
By the general considerations in Subsection \ref{ss:order-orbits}
there exists a $G$-equivariant diffeomorphism $\OOO\cong G/P$, with $T\subset P$,
and a $G$-equivariant map $q:\OOO'\to\OOO$. The choice of $P$ allows to define the subgroup $W_P\subset W$.
The map $q$
induces a projection
 $Y_{\gamma,\OOO'}\rightarrow Y_{\gamma,\OOO}$, which on its turn
 induces a monomorphism on rational cohomology (see for example \cite[Lemma 4]{Tu})
 \[H^\bullet(Y_{\gamma,\OOO};\QQ)\rightarrow H^\bullet(Y_{\gamma,\OOO'};\QQ)\] with image $H^\bullet(Y_{\gamma,\OOO'};\QQ)^{W_P}$.

 Since the Chern-Weil homomorphism  is equivariant  with respect to  the actions of the Weyl group on  $\QQ[\tlie]/I_+$ and on $H^\bullet(Y_{\gamma,\OOO'};\QQ)$,
 it preserves  $W_P$-invariant subspaces, and therefore it restricts to a compatible isomorphism
 \[H^\bullet(Y_{e,\OOO};\QQ)\rightarrow H^\bullet(Y_{\gamma,\OOO};\QQ).\]
\end{proof}

\begin{remark}
\label{rmk:existence-comp-isom}
Clearly, for any two loops $\gamma,\gamma':S^1\to G$ we can combine the characteristic isomorphisms given by the previous theorem to obtain a compatible isomorphism
$H^\bullet(Y_{\gamma',\OOO};\QQ)\rightarrow H^\bullet(Y_{\gamma,\OOO};\QQ)$.
\end{remark}

\subsection{Proof of Theorem \ref{thm:uniqueness}}
\label{ss:proof-thm:uniqueness}
The existence of a compatible isomorphism is justified by Remark \ref{rmk:existence-comp-isom}, so it suffices to prove uniqueness.
For that, it is enough to prove that any compatible isomorphism
$\phi:H^{\bullet}(S^2\times\OOO;\QQ)\to H^{\bullet}(S^2\times\OOO;\QQ)$ is equal to the identity
(recall that $S^2\times\OOO=Y_{e,\OOO}$).

It is well known that any coadjoint orbit $\OOO$ admits
Kaehler structures (see for example \cite[Page 6]{Bott}).
Let $\omega\in H^2(\OOO;\QQ)$ be the cohomology class of a (rational) Kaehler form on $\OOO$ and
let $\phi:H^{\bullet}(S^2\times\OOO;\QQ)\to H^{\bullet}(S^2\times\OOO;\QQ)$ be a compatible
isomorphism. We implicitly identify $H^{\bullet}(\OOO;\QQ)$ with a subspace of
$H^{\bullet}(S^2\times \OOO;\QQ)$ via the inclusion given by the projection.

Let $n$ be the complex dimension of $\OOO$. Since $\omega$ is a Kaehler class,
we have $\omega^n\neq 0$.

In general, we may have $\phi(\omega)=\omega+\lambda b$ for some $\lambda\in\QQ$. Since
$\omega^{n+1}=0$ and $\phi$ is a morphism of rings, we have $0=\phi(\omega^{n+1})=
\phi(\omega)^{n+1}=(\omega+\lambda b)^{n+1}=(n+1)\lambda \omega^nb$, which implies $\lambda=0$.
Hence $\phi(\omega)=\omega$.

Let $i:\OOO\to S^2\times \OOO$ and $\pi:S^2\times\OOO\to\OOO$ be the inclusion
of one fiber and the projection respectively. Define the map
$$\psi=i_!\circ\phi\circ\pi^*:H^{\bullet}(\OOO;\QQ)\to H^{\bullet-2}(\OOO;\QQ),$$
where $i_!:H^{\bullet}(S^2\times \OOO;\QQ)\to H^{\bullet-2}(\OOO;\QQ)$ is integration over $S^2$
(i.e., capping with $b$).
Since by assumption $\phi$ fixes $b$ and it
satisfies $i^*\circ\phi=i^*$, to prove that $\phi$ is the identity it suffices to prove that
$\psi=0$.

Let $\alpha\in H^{\bullet}(\OOO;\QQ)$ be any class. Since $\phi$ and $\pi^*$ are morphisms of
rings we have
\begin{align*}
\psi(\omega\alpha) &= i_!\circ\phi\circ\pi^*(\omega\alpha)
=i_!(\phi(\pi^*\omega)\,\phi(\pi^*\alpha))
=i_!(\pi^*\omega\, \phi(\pi^*\alpha)) \\
&=\omega \, i_!(\phi(\pi^*\alpha)) \qquad\text{by the product formula}\\
&=\omega\psi(\alpha).
\end{align*}
Now suppose that $\alpha\in H^{n-k}(\OOO;\QQ)$ is a primitive class (see e.g.
\cite[Chapter V, \S 6]{We}), so that
$\omega^{k+1}\alpha=0$. By the previous formula we have
$$0=\psi(\omega^{k+1}\alpha)=\omega^{k+1}\psi(\alpha).$$
Now, $\psi(\alpha)\in H^{n-k-2}(\OOO;\QQ)$, and multiplication by $\omega^{k+1}$
gives an injective map $$H^{n-k-2}(\OOO;\QQ)\to H^{n+k}(\OOO;\QQ).$$
It follows that $\psi(\alpha)=0$.
Finally, since any cohomology class in $H^{\bullet}(\OOO;\QQ)$ can be written as a polynomial
in $\omega$ with primitive classes as coefficients, it follows that $\psi$ is identically $0$.

\subsection{Strategy for proving Theorem \ref{thm:injdiff}: integrality of the characteristic isomorphism}
\label{ss:strategy}
Our aim is to show that for non-homotopic loops $\gamma,\gamma':S^1\to G$ there is no compatible diffeomorphism between $Y_{\gamma,\OOO}$ and $Y_{\gamma',\OOO}$.
The proof will rely on Theorem \ref{thm:uniqueness}. In concrete terms we will prove that if $\gamma$ and $\gamma'$ are not homotopic then the (unique) characteristic isomorphism
$H^{\bullet}(Y_{\gamma',\OOO};\QQ)\to H^{\bullet}(Y_{\gamma,\OOO};\QQ)$
is not compatible with the integral lattices inside the rational cohomology groups.

It is well known that the integral cohomology of any coadjoint orbit $\OOO$ is torsion free and concentrated in even degrees (see e.g. \cite[Proposition 5.2]{BGG}). Applying Wang's exact sequence (or Serre's spectral sequence), one proves that the same properties are enjoyed by $Y_{\gamma,\OOO}$. Therefore, we can (and will) identify $H^{\bullet}(Y_{\gamma,\OOO};\ZZ)$ with a lattice in $H^{\bullet}(Y_{\gamma,\OOO};\QQ)$
(this is what we call the integral lattice).

\begin{definition}  Let $\OOO$ be a coadjoint orbit of a compact, connected, semisimple Lie group $G$,
let $\gamma,\gamma':S^1\to G$  be  loops. We say that the characteristic isomorphism
 \[\kappa\colon H^\bullet(Y_{\gamma',\OOO};\QQ)\rightarrow H^\bullet(Y_{\gamma,\OOO};\QQ)\]
 is integral if and only if $\kappa(H^\bullet(Y_{\gamma',\OOO};\ZZ))= H^\bullet(Y_{\gamma,\OOO};\ZZ)$.
\end{definition}

\begin{remark}
\label{rmk:PD-integrality}
Poincar\'e duality over the integers implies that $\kappa$ is integral if and only if
 \[\kappa(H^\bullet(Y_{\gamma',\OOO};\ZZ))\not\subset H^\bullet(Y_{\gamma,\OOO};\ZZ).\]
\end{remark}

Here is the key result for our strategy to prove Theorem \ref{thm:injdiffbun}:

\begin{prop}\label{pro:nonint}  Let $\OOO$ be a coadjoint orbit of a compact, connected, semisimple Lie group $G$,
let $\gamma,\gamma':S^1\to G$  be  loops.
If the characteristic isomorphism $\kappa\colon H^\bullet(Y_{\gamma',\OOO};\QQ)\rightarrow H^\bullet(Y_{\gamma,\OOO};\QQ)$ is not integral,
then $\gamma$ and $\gamma'$ are not homotopic in $\Diff(\OOO)$.
\end{prop}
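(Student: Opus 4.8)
The plan is to argue by contraposition. Suppose $\gamma$ and $\gamma'$ are homotopic in $\Diff(\OOO)$; we must show that the characteristic isomorphism $\kappa$ is integral. The homotopy gives, via the clutching construction (Theorem \ref{thm:injdiffbun} as discussed, or rather its elementary direction), a compatible diffeomorphism $\psi\colon Y_{\gamma,\OOO}\to Y_{\gamma',\OOO}$. Pulling back cohomology classes, $\psi$ induces a ring isomorphism $\psi^*\colon H^\bullet(Y_{\gamma',\OOO};A)\to H^\bullet(Y_{\gamma,\OOO};A)$ for every coefficient ring $A$, and since $\psi$ is compatible (i.e. $\psi\circ i = i'$ and $\psi$ covers the identity of $S^2$, so $\psi^*b=b$), the map $\psi^*$ with $A=\QQ$ is a compatible isomorphism in rational cohomology in the sense of Definition \ref{def:compatible-isomorphism}.

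Next I invoke the uniqueness part of Theorem \ref{thm:uniqueness}: there is exactly one compatible isomorphism $H^\bullet(Y_{\gamma',\OOO};\QQ)\to H^\bullet(Y_{\gamma,\OOO};\QQ)$ in rational cohomology, namely the characteristic isomorphism $\kappa$. Hence $\psi^*_\QQ = \kappa$. But $\psi^*$ also acts on integral cohomology, and because $\psi$ is a diffeomorphism, $\psi^*_\ZZ\colon H^\bullet(Y_{\gamma',\OOO};\ZZ)\to H^\bullet(Y_{\gamma,\OOO};\ZZ)$ is an isomorphism of abelian groups; under the identification of integral cohomology with a lattice in rational cohomology (which is legitimate here since all these cohomology groups are torsion-free, as recalled in Subsection \ref{ss:strategy}), the map $\psi^*_\ZZ$ is simply the restriction of $\psi^*_\QQ=\kappa$ to the integral lattice. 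Therefore $\kappa$ maps $H^\bullet(Y_{\gamma',\OOO};\ZZ)$ isomorphically onto $H^\bullet(Y_{\gamma,\OOO};\ZZ)$, i.e. $\kappa$ is integral. Taking the contrapositive: if $\kappa$ is not integral, then no such $\psi$ exists, so $\gamma$ and $\gamma'$ are not homotopic in $\Diff(\OOO)$.

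There is really no hard step here — the statement is essentially a formal consequence of the uniqueness theorem. The only point requiring a small amount of care is the compatibility between the rational and integral pictures: one must check that the naturality of the universal-coefficient / change-of-coefficients map $H^\bullet(-;\ZZ)\to H^\bullet(-;\QQ)$ makes $\psi^*_\ZZ$ literally the restriction of $\psi^*_\QQ$, which is immediate once torsion-freeness guarantees that $H^\bullet(-;\ZZ)\hookrightarrow H^\bullet(-;\QQ)$ is injective. All the substance has already been absorbed into Theorem \ref{thm:uniqueness}; this proposition merely packages it so that the rest of the paper can concentrate on exhibiting, for suitable $G$ and $\OOO$, a pair $\gamma,\gamma'$ for which $\kappa$ fails to be integral.
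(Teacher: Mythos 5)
Your proposal is correct and coincides with the paper's own argument: assume the loops are homotopic, get a compatible diffeomorphism from the clutching construction, pass to the induced compatible isomorphism in rational cohomology, invoke the uniqueness in Theorem \ref{thm:uniqueness} to identify it with $\kappa$, and conclude that $\kappa$ preserves the integral lattices, contradicting non-integrality. The remark about torsion-freeness justifying the lattice identification is exactly the point the paper settles in Subsection \ref{ss:strategy}, so nothing is missing.
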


\begin{proof} If $\gamma$ and $\gamma'$ were homotopic in $\Diff(\OOO)$, then there would exists a compatible diffeomorphism
$\psi\colon Y_{\gamma,\OOO}\rightarrow Y_{\gamma',\OOO}$ inducing a compatible isomorphism
$\psi^*\colon H^\bullet(Y_{\gamma',\OOO};\ZZ)\rightarrow  H^\bullet(Y_{\gamma,\OOO};\ZZ)$.
The tensor product with the rationals would define
a compatible isomorphism $\psi^*_\QQ: H^\bullet(Y_{\gamma',\OOO};\QQ)\rightarrow  H^\bullet(Y_{\gamma,\OOO};\QQ)$. By the uniqueness in Theorem \ref{thm:uniqueness} $\psi^*_\QQ=\kappa$,
but this would contradict the non-integrality of the characteristic isomorphism.
\end{proof}

The analysis of how the characteristic isomorphism acts on integral cohomology will be necessarily linked to problems in Schubert calculus.
Depending on the
problem at hand, it is not always possible to find conceptual proofs valid for all semisimple groups and all coadjoint orbits; rather, one is often forced to use different strategies
for different simple groups. For that reason, it becomes important to analyze whether the proof of Proposition \ref{pro:nonint} can be reduced
to some key cases. To that end, we need to generalize another property of the integral cohomology of coadjoint orbits to the bundles $Y_\gamma$:

As we discussed at the end of the proof of Theorem \ref{thm:char-isom},
if $\OOO'\cong G/T$ denotes a regular orbit,
then there is a geometric action of the Weyl group on $Y_{\gamma,\OOO'}$ such that the map induced in rational cohomology by any projection
 $Y_{\gamma,\OOO'}\to  Y_{\gamma,\OOO}$ induced by a $G$-equivariant
 map $\OOO'\to\OOO$ corresponds to the inclusion:
\[H^\bullet(Y_{\gamma,\OOO};\QQ)\cong H^\bullet(Y_{\gamma,\OOO'};\QQ)^{W_{P}}\subset H^\bullet(Y_{\gamma,\OOO'};\QQ).\]

\begin{lemma}\label{lem:invcohom} The projection $Y_{\gamma,\OOO'}\to  Y_{\gamma,\OOO}$ induces a monomorphism in integral cohomology with
image the $W_P$-invariant subspace:
\[H^\bullet(Y_{\gamma,\OOO};\ZZ)\cong H^\bullet(Y_{\gamma,\OOO'};\ZZ)^{W_{P}}
=H^{\bullet}(Y_{\gamma,\OOO'};\QQ)^{W_P}\cap H^\bullet(Y_{\gamma,\OOO'};\ZZ)
\subset H^\bullet(Y_{\gamma,\OOO'};\ZZ).\]
\end{lemma}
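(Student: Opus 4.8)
The plan is to reduce the statement to the already-known injectivity of $q^*$ on integral cohomology for a single coadjoint orbit (the classical fact from \cite{BGG} recalled in Subsection \ref{ss:order-orbits}), combined with a fiberwise application of that fact over $S^2$. Concretely, let $q:\OOO'\to\OOO$ be the chosen $G$-equivariant map, with $\OOO'\cong G/T$ and $\OOO\cong G/P$, and let $Q:Y_{\gamma,\OOO'}\to Y_{\gamma,\OOO}$ be the induced map of bundles over $S^2$. Since $\gamma$ takes values in $G$ and $Q$ is built by applying the clutching construction to the $G$-equivariant map $q$, the map $Q$ is a fiber bundle whose fibers are copies of $q:G/T\to G/P$. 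The goal is to show $Q^*$ is injective on $H^\bullet(-;\ZZ)$ with image exactly the $W_P$-invariant part of the integral cohomology.

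First I would record the integrality structure already available: by Subsection \ref{ss:strategy}, $H^\bullet(Y_{\gamma,\OOO};\ZZ)$ and $H^\bullet(Y_{\gamma,\OOO'};\ZZ)$ are torsion-free and concentrated in even degrees, hence embed as lattices in their rationalizations, and by Theorem \ref{thm:char-isom} (more precisely the general-orbit case of its proof) the rational map $Q^*_\QQ$ is injective with image $H^\bullet(Y_{\gamma,\OOO'};\QQ)^{W_P}$. Injectivity of $Q^*$ on integral cohomology is then immediate from injectivity on rational cohomology together with torsion-freeness, and the inclusion $Q^*(H^\bullet(Y_{\gamma,\OOO};\ZZ))\subseteq H^\bullet(Y_{\gamma,\OOO'};\QQ)^{W_P}\cap H^\bullet(Y_{\gamma,\OOO'};\ZZ)$ is automatic. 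So the entire content is the reverse inclusion: every integral class on $Y_{\gamma,\OOO'}$ that is $W_P$-invariant (equivalently, pulled back rationally from $Y_{\gamma,\OOO}$) is actually the $Q^*$-image of an \emph{integral} class on $Y_{\gamma,\OOO}$.

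To prove this reverse inclusion I would argue fiberwise via the Leray–Serre spectral sequences of $\pi:Y_{\gamma,\OOO}\to S^2$ and $\pi':Y_{\gamma,\OOO'}\to S^2$ with $\ZZ$ coefficients. Because $S^2$ has cohomology only in degrees $0$ and $2$ and the fibers $\OOO$, $\OOO'$ have torsion-free cohomology concentrated in even degrees, both spectral sequences degenerate at $E_2$, and in fact the local systems are trivial (the structure group $G$ is connected, so it acts trivially on the cohomology of the fiber); thus there are short exact sequences
\[
0\to H^{\bullet-2}(\OOO;\ZZ)\otimes H^2(S^2;\ZZ)\to H^\bullet(Y_{\gamma,\OOO};\ZZ)\to H^\bullet(\OOO;\ZZ)\to 0,
\]
and similarly for $\OOO'$, compatible with $Q$. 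The map $Q$ induces on each of the three terms (via restriction to a fiber and the identity on $S^2$) the classical fiber map $q^*:H^\bullet(\OOO;\ZZ)\to H^\bullet(\OOO';\ZZ)$, whose image is exactly $H^\bullet(\OOO';\ZZ)^{W_P}$ by \cite[Theorem 5.5]{BGG}. A diagram chase (five-lemma-style, using that the subgroup $W_P$ acts on both short exact sequences and commutes with them, so that taking $W_P$-invariants is exact here because everything is torsion-free and the relevant $H^1(W_P;-)$ contributions vanish rationally and there is no torsion to obstruct the integral statement) then identifies $Q^*H^\bullet(Y_{\gamma,\OOO};\ZZ)$ with $H^\bullet(Y_{\gamma,\OOO'};\ZZ)^{W_P}$.

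The main obstacle I anticipate is precisely the exactness of $W_P$-invariants at the integral level: unlike the rational case (where taking invariants under a finite group is always exact because the averaging projector exists), over $\ZZ$ one must genuinely use that all groups in sight are torsion-free \emph{and} that the extension classes are $W_P$-equivariantly split, or else show directly that a $W_P$-invariant integral class, written via the splitting as a pair (a class on $S^2$-part, a class on $\OOO'$-part) with both components $W_P$-invariant and integral, lifts because each component individually lifts by \cite{BGG}. I would handle this by fixing a $W_P$-equivariant splitting of the short exact sequence for $Y_{\gamma,\OOO'}$ over $\QQ$ (which exists by the Künneth-type description in the proof of Theorem \ref{thm:char-isom}) and checking it restricts to the integral lattices; alternatively, the cleanest route is to avoid splittings altogether and instead run the five-lemma on the commuting ladder whose rows are the integral Wang/Gysin sequences for $\pi$ and $\pi'$ and whose three relevant vertical maps are $q^*$ in successive degrees, each of which is injective with image the $W_P$-invariant integral classes by \cite[Theorem 5.5]{BGG}. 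Either way, the only genuinely new ingredient beyond \cite{BGG} is the bookkeeping of the $S^2$-direction, which is harmless because $H^\bullet(S^2;\ZZ)$ is free of rank one in each nonzero degree and carries trivial $W_P$-action.
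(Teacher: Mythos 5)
Your proposal follows essentially the same route as the paper: the integral Serre spectral sequence over $S^2$ degenerating at $E_2$ is exactly the split Wang sequence the paper uses, the fiberwise input is the same BGG fact $H^\bullet(\OOO;\ZZ)\cong H^\bullet(\OOO';\ZZ)^{W_P}$, the conclusion is drawn by a five-lemma/diagram chase on the resulting ladder, and the final displayed equality is the elementary observation $\Lambda^{W_P}=(\Lambda\otimes\QQ)^{W_P}\cap\Lambda$, which you handle implicitly.

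One caveat. The justification you give in your first variant --- that taking $W_P$-invariants of the integral short exact sequences is exact ``because everything is torsion-free and the relevant $H^1(W_P;-)$ contributions vanish rationally and there is no torsion to obstruct the integral statement'' --- is not valid as stated: for a finite group acting on a free abelian group, $H^1$ can be nonzero (it is torsion, annihilated by the group order, but not zero; e.g.\ $\ZZ/2$ acting by $-1$ on $\ZZ$ has $H^1=\ZZ/2$), so torsion-freeness of the modules does not by itself make invariants right exact over $\ZZ$. The paper needs no such vanishing: right-exactness of the $W_P$-invariant Wang complex for $Y_{\gamma,\OOO'}$ is extracted from the ladder itself --- a $W_P$-invariant integral class on the fiber $\OOO'$ is $q^*$ of an integral class on $\OOO$ by BGG, that class lifts to $H^\bullet(Y_{\gamma,\OOO};\ZZ)$ because the Wang sequence for $Y_{\gamma,\OOO}$ splits (even cohomology of $\OOO$), and its image under $Q^*$ restricts to the given class while lying in $H^\bullet(Y_{\gamma,\OOO'};\ZZ)^{W_P}$. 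With exactness of the invariant row in hand, the five lemma applied to the map of short exact sequences, whose outer vertical maps are the BGG isomorphisms, gives the middle isomorphism. Your second, ``cleanest'' route is precisely this argument once you replace the bottom row by its $W_P$-invariant part and supply the surjectivity step just described; with that correction the proof is complete.
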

\begin{proof}
The Weyl group acts by automorphisms of the Wang sequence for the integral cohomology of $Y_{\gamma,\OOO'}$
\begin{equation}\label{eq:wang}
\cdots \longrightarrow H^{k-2}(\OOO';\ZZ)\overset{b\cup}{\longrightarrow}H^k(Y_{\gamma,\OOO'};\ZZ)
\overset{i^*}{\longrightarrow}H^k(\OOO';\ZZ)\longrightarrow \cdots,
\end{equation}
and similarly for $Y_{\gamma,\OOO}$.
Therefore the $W_{P}$-invariant
part
\begin{equation}\label{eq:wanginv}
\cdots \longrightarrow H^{k-2}(\OOO';\ZZ)^{W_{P}}\overset{b\cup}{\longrightarrow}H^k(Y_{\gamma,\OOO'};\ZZ)^{W_{P}}
\overset{i^*}{\longrightarrow}H^k(\OOO';\ZZ)^{W_{P}}\longrightarrow \cdots
\end{equation}
is a chain complex. The projection $Y_{\gamma,\OOO'}\rightarrow Y_{\gamma,\OOO}$ induces a map between of chain complexes
from the Wang sequence for $Y_{\gamma,\OOO}$ to the invariant part (\ref{eq:wanginv}). Because the
cohomologies of $\OOO'$ and $\OOO$ are concentrated at even degrees, the Wang sequences
for $Y_{\gamma,\OOO'}$ and $Y_{\gamma,\OOO}$ split in short exact sequences. Combining this
with the fact that $H^\bullet(\OOO';\ZZ)\rightarrow H^\bullet(\OOO;\ZZ)^{W_{P'}}$ is
 an isomorphism, it follows that (\ref{eq:wanginv}) is
exact. Finally, the five lemma implies that
\[H^\bullet(Y_{\gamma,\OOO};\ZZ)\rightarrow H^\bullet(Y_{\gamma,\OOO'};\ZZ)^{W_{P}}\] is an isomorphism.
The remaining equality
$$H^\bullet(Y_{\gamma,\OOO'};\ZZ)^{W_{P}}
=H^{\bullet}(Y_{\gamma,\OOO'};\QQ)^{W_P}\cap H^\bullet(Y_{\gamma,\OOO'};\ZZ)$$
follows from this general fact: if $\Lambda$ is a free abelian group
on which a finite group $\Gamma$ acts by automorphisms, then $\Lambda^{\Gamma}=(\Lambda\otimes\QQ)^{\Gamma}\cap\Lambda$.
\end{proof}

Now we can identify the first key cases of Theorem \ref{thm:injdiff}:

\begin{prop}\label{pro:min-orbits}
Suppose that for any non-contractible loop $\gamma$ in a compact, connected, semisimple Lie group $G$ and for any minimal coadjoint orbit $\OOO''$ of $G$, the characteristic isomorphism
\[\kappa\colon H^\bullet(Y_{e,\OOO''};\QQ)\rightarrow H^\bullet(Y_{\gamma,\OOO''};\QQ)\]
is not integral.
 Then \[\rho:\pi_1(G)\rightarrow \pi_1(\Diff(\OOO))\] is injective for every coadjoint orbit $\OOO$ of $G$.
 \end{prop}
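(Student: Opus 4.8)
\textbf{Proof proposal for Proposition \ref{pro:min-orbits}.} The plan is to reduce the general case to the hypothesis in two steps: first pass from a general coadjoint orbit $\OOO$ to a minimal orbit lying below it in the order of Subsection \ref{ss:order-orbits}, and then pass from a general semisimple $G$ to one of its simple factors. Let $\gamma$ be a non-contractible loop in $G$; we must show $\gamma$ is not contractible in $\Diff(\OOO)$, and by Proposition \ref{pro:nonint} it suffices to show that the characteristic isomorphism $\kappa\colon H^\bullet(Y_{e,\OOO};\QQ)\to H^\bullet(Y_{\gamma,\OOO};\QQ)$ is not integral.

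For the first reduction, pick a minimal orbit $\OOO''$ with $\OOO\geq\OOO''$, so that there is a $G$-equivariant map $\OOO\to\OOO''$ and hence a map of bundles $Y_{\gamma,\OOO}\to Y_{\gamma,\OOO''}$ over $S^2$ (and likewise for $e$). By Lemma \ref{lem:invcohom}, applied with the regular orbit $\OOO'\cong G/T$ dominating both $\OOO$ and $\OOO''$, the integral cohomology of each $Y_{\gamma,\OOO}$, $Y_{\gamma,\OOO''}$ embeds as the $W_P$-, resp.\ $W_{P''}$-invariant sublattice of $H^\bullet(Y_{\gamma,\OOO'};\ZZ)$, with $W_P\subset W_{P''}$ (choosing the equivariant maps compatibly). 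The characteristic isomorphism is induced by the Chern--Weil homomorphism, which is Weyl-equivariant, so the various $\kappa$'s are all restrictions of the single isomorphism on $H^\bullet(Y_{\gamma,\OOO'};\QQ)$ and form a commutative square with the pullback maps. Now the point is: if $\kappa$ were integral for $\OOO$, I claim it would be integral for $\OOO''$. Indeed $H^\bullet(Y_{\gamma,\OOO''};\ZZ)=H^\bullet(Y_{\gamma,\OOO'};\ZZ)^{W_{P''}}=H^\bullet(Y_{\gamma,\OOO};\ZZ)^{W_{P''}}$ (using $W_{P''}\supset W_P$ and the identification of $H^\bullet(Y_{\gamma,\OOO};\ZZ)$ with the $W_P$-invariants), and $\kappa$ preserves both the rational $W_{P''}$-invariant subspace and, by hypothesis, the integral lattice $H^\bullet(Y_{\gamma,\OOO};\ZZ)$; by the elementary fact $\Lambda^\Gamma=(\Lambda\otimes\QQ)^\Gamma\cap\Lambda$ recalled in Lemma \ref{lem:invcohom}, it therefore preserves their intersection, which is exactly $H^\bullet(Y_{\gamma,\OOO''};\ZZ)$. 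Contrapositively, non-integrality for $\OOO''$ forces non-integrality for $\OOO$, which is what we want.

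For the second reduction, write $G$ (up to finite cover, which does not change $\pi_1$ being a subgroup issue — more precisely, work directly with $G$) with $\glie=\glie_1\oplus\cdots\oplus\glie_r$ the decomposition into simple ideals; correspondingly any minimal coadjoint orbit of $G$ is a minimal orbit for exactly one simple factor $G_j$ (times a point in the others), since a minimal parabolic is maximal and the coinvariant algebra factors. Given a non-contractible loop $\gamma$ in $G$, its class in $\pi_1(G)=\bigoplus_j\pi_1(G_j)$ is non-zero in some factor $\pi_1(G_j)$; projecting, we get a non-contractible loop $\gamma_j$ in $G_j$. Choosing $\OOO''$ to be a minimal orbit of $G_j$ (pulled back to $G$), the bundle $Y_{\gamma,\OOO''}$ over $S^2$ only depends on $\gamma_j$, because the clutching construction (\ref{eq:clutching2}) factors through the action of $G_j$ on $\OOO''$; hence $Y_{\gamma,\OOO''}\cong Y_{\gamma_j,\OOO''}$ compatibly, and the same for $e$. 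So the characteristic isomorphism for $(G,\gamma,\OOO'')$ is identified with the one for $(G_j,\gamma_j,\OOO'')$, and the hypothesis (applied to the simple group $G_j$) gives its non-integrality. Finally, to upgrade from simple to \emph{adjoint} simple: a minimal orbit $\OOO''$ of $G_j$ is the same as a minimal orbit of its adjoint quotient $G_j^{\mathrm{ad}}$, and a non-contractible loop in $G_j$ maps to a loop in $G_j^{\mathrm{ad}}$ whose class lies in the kernel of $\pi_1(G_j^{\mathrm{ad}})\to\pi_0(Z)$ only if it was contractible in $G_j$; in any case the bundle $Y_{\gamma_j,\OOO''}$ over $S^2$ depends only on the image of $\gamma_j$ in $\Diff(\OOO'')$, which factors through $G_j^{\mathrm{ad}}$, so it is $Y_{\bar\gamma_j,\OOO''}$ for the induced loop $\bar\gamma_j$ in $G_j^{\mathrm{ad}}$, and $\bar\gamma_j$ is non-contractible because $\pi_1(G_j)\to\pi_1(G_j^{\mathrm{ad}})$ is injective. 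Thus the hypothesis for adjoint simple groups applies.

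Chaining these identifications: the characteristic isomorphism for $(G,\gamma,\OOO)$ is non-integral because the one for $(G_j^{\mathrm{ad}},\bar\gamma_j,\OOO'')$ is, by hypothesis; hence by Proposition \ref{pro:nonint} $\gamma$ is not contractible in $\Diff(\OOO)$, i.e.\ $\rho$ is injective. The main obstacle in making this rigorous is the careful bookkeeping of the several $W_P$'s and of the choices of $G$-equivariant maps between the orbits, so that all the pullback maps on (integral) cohomology fit into a single commutative diagram of sublattices of $H^\bullet(Y_{\gamma,\OOO'};\ZZ)$; once that diagram is in place, the integrality transfer is the purely formal consequence of $\Lambda^\Gamma=(\Lambda\otimes\QQ)^\Gamma\cap\Lambda$ together with Weyl-equivariance of Chern--Weil, both already available from Lemma \ref{lem:invcohom} and Theorem \ref{thm:char-isom}.
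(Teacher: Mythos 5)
Your first reduction is, in substance, the paper's own proof of Proposition \ref{pro:min-orbits}, just phrased contrapositively: choose $G$-equivariant maps $\OOO'=G/T\to\OOO\to\OOO''$ with $\OOO''$ minimal, use Lemma \ref{lem:invcohom} to identify the integral cohomologies of $Y_{\gamma,\OOO}$ and $Y_{\gamma,\OOO''}$ with the sublattices $H^\bullet(Y_{\gamma,\OOO'};\QQ)^{W_P}\cap H^\bullet(Y_{\gamma,\OOO'};\ZZ)$ and $H^\bullet(Y_{\gamma,\OOO'};\QQ)^{W_{P''}}\cap H^\bullet(Y_{\gamma,\OOO'};\ZZ)$, and use the Weyl-equivariance of Chern--Weil (diagram (\ref{eq:charcomm0})) so that integrality transfers through the intersection. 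Two small points of bookkeeping: to promote ``$\kappa_{\OOO''}$ maps the lattice into the lattice'' to actual integrality (equality) you should either invoke Remark \ref{rmk:PD-integrality} or note that a bijection of the $\OOO$-lattice which also preserves the rational $W_{P''}$-invariant subspace restricts to a bijection of their intersection; and the hypothesis of the proposition is stated for minimal orbits of the semisimple group $G$ itself, so once you have the transfer you simply apply it to the pair $(G,\gamma,\OOO'')$ and conclude with Proposition \ref{pro:nonint}. With that, your first paragraph is a complete proof.

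Your second reduction (to simple, adjoint factors) is therefore unnecessary for this proposition, and as written it should not be leaned on. What you are attempting there is really the content of Proposition \ref{pro:simplegroups} and of the covering argument that follows Proposition \ref{pro:min-orbits} in the paper, and it is deferred to Section \ref{sec:genSchubert} precisely because it needs the generalized Chevalley formula. Concretely: (a) in your chained argument the minimal orbit must simultaneously lie below $\OOO$ (for step one) and belong to the simple factor $G_j$ on which $\gamma$ is non-contractible (for step two); if $\OOO$ is trivial on that factor no such orbit exists and the chain breaks, so the two choices of $\OOO''$ cannot in general be reconciled; (b) the genuine difficulty in the semisimple-to-simple reduction is not identifying $Y_{\gamma,\OOO''}$ with $Y_{\gamma_j,\OOO''}$ when the fiber is an orbit of a single factor --- that identification is easy, as you say, by uniqueness (Theorem \ref{thm:uniqueness}) --- but transferring non-integrality from a bundle with fiber $\OOO_1$ to a bundle whose fiber is a nontrivial product $\OOO_1\times\OOO_2$, which the paper does by checking that $\kappa(f_1)$ remains integral on the product and that the fibered Schubert pairing is unchanged, via the classical and generalized Chevalley formulas. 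So keep your first paragraph, apply the hypothesis directly to $(G,\gamma,\OOO'')$, and drop the rest.
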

 \begin{proof}
Let $\OOO$ be any coadjoint orbit of $G$. Choose $G$-equivariant maps
$$\OOO'=G/T\to\OOO\to\OOO'',$$
where $\OOO''$ is minimal (and of course $\OOO'$ is maximal). Let
$x\in\OOO$ be the image of $eT\in\OOO'$ and let $x''\in\OOO''$ be the image of $x$.
Let $P$ and $P''$ be the stabilizers of $x$ and $x''$ respectively. We have inclusions
$T\subset P\subset P''$ and we denote as always the residual Weyl groups
$W_P=W\cap P$ and $W_{P''}=W\cap P''$.

The $G$-equivariant maps above give rise to a sequence of projections
\[Y_{\gamma,\OOO'}\to Y_{\gamma,\OOO}\to Y_{\gamma,\OOO''}.\]
 First, the maps induced in rational cohomology allow us to identify
the rational cohomologies of both $Y_{\gamma,\OOO''}$ and $Y_{\gamma,\OOO}$ with the following subspaces of
the rational cohomology of $Y_{\gamma,\OOO'}$:
\begin{equation}\label{eq:inc-rat}
 H^\bullet(Y_{\gamma,\OOO'};\QQ)^{W_{P''}}\subset H^\bullet(Y_{\gamma,\OOO'};\QQ)^{W_{P}}
\end{equation}
Second, the maps induced in rational cohomology by the $G$-equivariant projections $Y_{e,\OOO}\to Y_{e,\OOO''}$ and $Y_{\gamma,\OOO}\to Y_{\gamma,\OOO''}$,  and the characteristic isomorphisms
for $Y_{\gamma,\OOO}$ and $Y_{\gamma,\OOO''}$, fit into a commutative diagram.
If we use the identification of rational cohomologies in (\ref{eq:inc-rat}), then this commutative diagram becomes:
 \begin{equation}\label{eq:charcomm0}
\xymatrix{H^\bullet(Y_{e,\OOO'};\QQ)^{W_{P''}}\ar^{\kappa_{\OOO''}}[r]\ar@{^{(}->}[d] & H^\bullet(Y_{\gamma,\OOO'};\QQ)^{W_{P''}}\ar@{^{(}->}[d] \\
 H^\bullet(Y_{e,\OOO'};\QQ)^{W_{P}} \ar^{\kappa_{\OOO}}[r] &H^\bullet(Y_{\gamma,\OOO'};\QQ)^{W_{P}}.}
 \end{equation}
Here $\kappa_{\OOO''}$ and $\kappa_{\OOO}$ denote the characteristic isomorphisms
for the clutching construction applied to $\OOO''$ and $\OOO$ respectively.

Now assume that the characteristic isomorphism $\kappa_{\OOO''}$ is not integral.
By Lemma \ref{lem:invcohom} we can also identify the integral cohomologies of $Y_{\gamma,\OOO''}$
and $Y_{\gamma,\OOO}$ with the following two subgroups of $H^{\bullet}(Y_{\gamma,\OOO'};\ZZ)$:
\begin{equation}\label{eq:incint}
 H^\bullet(Y_{\gamma,\OOO'};\QQ)^{W_{P''}}
 \cap
 H^\bullet(Y_{\gamma,\OOO'};\ZZ)
 \subset
 H^\bullet(Y_{\gamma,\OOO'};\QQ)^{W_{P}}
 \cap
 H^\bullet(Y_{\gamma,\OOO'};\ZZ),
\end{equation}
so the non-integrality of $\kappa_{\OOO''}$ implies
(see Remark \ref{rmk:PD-integrality}):
\begin{equation}
 \label{eq:non-integrability-kappa-ooo''}
 \kappa_{\OOO''} ( H^\bullet(Y_{e,\OOO'};\QQ)^{W_{P''}}\cap H^\bullet(Y_{e,\OOO'};\ZZ))
 \not\subset
 H^\bullet(Y_{\gamma,\OOO'};\QQ)^{W_{P''}}\cap H^\bullet(Y_{\gamma,\OOO'};\ZZ).
 \end{equation}
Diagram (\ref{eq:charcomm0}) now implies that
$$
 \kappa_{\OOO} ( H^\bullet(Y_{e,\OOO'};\QQ)^{W_{P''}}\cap H^\bullet(Y_{e,\OOO'};\ZZ))
 \not\subset
 H^\bullet(Y_{\gamma,\OOO'};\QQ)^{W_{P''}}\cap H^\bullet(Y_{\gamma,\OOO'};\ZZ),
$$
from which it follows immediately (using
$
 \kappa_{\OOO} ( H^\bullet(Y_{e,\OOO'};\QQ)^{W_{P''}})
=H^\bullet(Y_{\gamma,\OOO'};\QQ)^{W_{P''}}$)
that
$$
 \kappa_{\OOO} ( H^\bullet(Y_{e,\OOO'};\QQ)^{W_{P}}\cap H^\bullet(Y_{e,\OOO'};\ZZ))
 \not\subset
 H^\bullet(Y_{\gamma,\OOO'};\QQ)^{W_{P}}\cap H^\bullet(Y_{\gamma,\OOO'};\ZZ).
$$
Therefore,
the characteristic
morphism for $Y_{\gamma,\OOO}$ is not integral. It then follows, by Proposition \ref{pro:nonint}, that
$\rho:\pi_1(G)\rightarrow \pi_1(\Diff(\OOO))$ is injective.
\end{proof}

As for the group $G$, it is elementary to check that it can be assumed to be adjoint. Indeed, let $G(\glie)$ be the simply
 connected Lie group with Lie algebra $\glie$. This is a compact group
 with finite center $Z$  \cite[Chapter V,\S 7]{BD}, and, by definition, $G(\glie)/Z\subset \mathrm{Aut}(\glie,[\cdot,\cdot ])$ is the adjoint group of the Lie algebra $\glie$,
 so the adjoint action of $G$ on $\glie$ gives a covering map $G\rightarrow G(\glie)/Z$ with kernel the (finite) center of $G$.
 It follows that there is a commutative diagram
\[
\xymatrix{G\ar[rr] \ar[rd] & & \Diff(\OOO)  \\
& G(\glie)/Z \ar[ru] }
\]
Thus, if  $\rho\colon \pi_1(G(\glie)/Z)\rightarrow \pi_1(\Diff(\OOO))$ is a monomorphism, then
  $\rho\colon \pi_1(G)\rightarrow \pi_1(\Diff(\OOO))$ is the composition of two monomorphisms, and therefore a monomorphism.

If $G$ is adjoint, then it is the product of compact, simple, adjoint groups \cite[Chapter V,\S 8]{BD}. The following result --- whose proof is deferred
until Section \ref{sec:genSchubert} as it requires
some basics of Schubert calculus on $Y_{\gamma,\OOO}$ ---
shows that it is enough to consider the case of simple groups:
\begin{prop}\label{pro:simplegroups}
If for any non-contractible loop in a compact, connected, simple Lie group and for any
coadjoint orbit of the group the characteristic isomorphism  is not integral, then
 \[\rho\colon \pi_1(G)\rightarrow \pi_1(\Diff(\OOO))\]
is injective for any $\OOO$ coadjoint orbit of a compact, connected,  semisimple Lie group $G$.
 \end{prop}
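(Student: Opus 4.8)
The plan is to reduce the semisimple case to the simple case by combining three observations. First I would reduce to $G$ adjoint, exactly as in the paragraph preceding the statement: since $G$ is covered by $G(\glie)/Z$ with finite kernel, and the coadjoint action factors through $G(\glie)/Z$, injectivity of $\rho$ for the adjoint group implies it for $G$. Hence I may assume $G=G_1\times\cdots\times G_m$ is a product of compact, connected, simple adjoint groups, so that $\pi_1(G)=\prod_i\pi_1(G_i)$, and a loop $\gamma$ in $G$ is the same as a tuple of loops $\gamma_i$ in $G_i$, with $\gamma$ contractible iff every $\gamma_i$ is contractible.

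The key step is to understand coadjoint orbits of a product. If $\OOO\subset\glie^*=\bigoplus_i\glie_i^*$ is a coadjoint orbit of $G$, then $\OOO=\prod_i\OOO_i$ where each $\OOO_i\subset\glie_i^*$ is a coadjoint orbit of $G_i$ (a coadjoint orbit through $(x_1,\dots,x_m)$ is the product of the orbits through the $x_i$, since the action is componentwise). Consequently the clutching construction applied to $\OOO$ splits as a fibered product over $S^2$:
\[
Y_{\gamma,\OOO}\;\cong\;Y_{\gamma_1,\OOO_1}\times_{S^2}\cdots\times_{S^2}Y_{\gamma_m,\OOO_m},
\]
because the transition function $\gamma(\exp^{2\pi i t})$ acts diagonally on $\OOO=\prod_i\OOO_i$. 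Pulling everything back to one factor $Y_{\gamma_i,\OOO_i}$ and using that the rational cohomology of each $Y_{\gamma_j,\OOO_j}$ is a free module over $H^\bullet(S^2;\QQ)$ concentrated in even degrees, the relative Künneth theorem gives a canonical identification of $H^\bullet(Y_{\gamma,\OOO};\QQ)$ with the fiber product of the rings $H^\bullet(Y_{\gamma_i,\OOO_i};\QQ)$ over $H^\bullet(S^2;\QQ)$, and likewise for integral coefficients (here one uses that everything is torsion free and even, so the Künneth and universal-coefficient maps are isomorphisms of lattices). Under this identification the characteristic isomorphism $\kappa_{\OOO}$ for $Y_{\gamma,\OOO}$ is the fiber product of the characteristic isomorphisms $\kappa_{\OOO_i}$ for $Y_{\gamma_i,\OOO_i}$: indeed $\kappa_{\OOO}$ is characterized uniquely by Theorem \ref{thm:uniqueness}, and the fiber product of the $\kappa_{\OOO_i}$ manifestly fixes $b$ and is compatible with the fiber trivializations, hence equals $\kappa_{\OOO}$.

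Now suppose $\gamma$ is non-contractible, so some $\gamma_{i_0}$ is non-contractible in $G_{i_0}$. By hypothesis (applied to the simple group $G_{i_0}$ and the orbit $\OOO_{i_0}$) the characteristic isomorphism $\kappa_{\OOO_{i_0}}$ is not integral: there is an integral class $\alpha\in H^\bullet(Y_{e,\OOO_{i_0}};\ZZ)$ with $\kappa_{\OOO_{i_0}}(\alpha)$ not integral. Pulling $\alpha$ back to $Y_{e,\OOO}$ via the projection to the $i_0$-th fiber factor yields an integral class whose image under $\kappa_{\OOO}$ is, by the fiber-product description, the pullback of $\kappa_{\OOO_{i_0}}(\alpha)$ — and this is not integral, because the projection $Y_{\gamma,\OOO}\to Y_{\gamma_{i_0},\OOO_{i_0}}$ (over $S^2$) induces an injection of integral cohomology lattices realizing $H^\bullet(Y_{\gamma_{i_0},\OOO_{i_0}};\ZZ)$ as a direct summand, so a non-integral class cannot become integral after pullback. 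Hence $\kappa_{\OOO}$ is not integral, and Proposition \ref{pro:nonint} gives that $\gamma$ is not null-homotopic in $\Diff(\OOO)$. Since $\gamma$ was an arbitrary non-contractible loop, $\rho\colon\pi_1(G)\to\pi_1(\Diff(\OOO))$ is injective.

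I expect the main technical obstacle to be the bookkeeping of lattices: one must check carefully that the relative Künneth isomorphism identifies the integral cohomology of the fibered product with the fiber product of the integral cohomologies (not merely the rational ones), and that the projection onto a single factor splits off a lattice direct summand integrally. Both follow from torsion-freeness and even-degree concentration of $H^\bullet(Y_{\gamma_i,\OOO_i})$ — established in Subsection \ref{ss:strategy} via the Wang sequence — but the argument does use Schubert-calculus input for the base cases, which is why the statement is deferred to Section \ref{sec:genSchubert}; in particular the fact that $Y_{\gamma_i,\OOO_i}\to S^2$ has a cohomology that is free over the base with an explicit monomial basis is most transparent from the Bruhat-type decomposition.
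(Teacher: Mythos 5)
Your proposal is correct, but it takes a genuinely different route from the paper. The paper's own proof stays inside its Schubert-calculus framework (which is why the proposition is deferred to Section \ref{sec:genSchubert}): writing $G=G_1\times G_2$ with $G_1$ simple and $z=z_1+z_2$ with $z_1$ not a coroot, it picks a homogeneous $f_1\in\QQ[\tlie_1]$ and a fibered Schubert class $[bX_{w_1}]$ with $\int_{[bX_{w_1}]}\kappa(f_1)\notin\ZZ$ on $Y_{z_1,\OOO_1}$, then (a) shows $\kappa(f_1)$ is still an \emph{integral} class on $\OOO_1\times\OOO_2$ using the classical Chevalley formula and the fact that the Bruhat order on $W_1\times W_2$ is the product order, and (b) shows the pairing $\int_{[bX_{w_1}]}\kappa(f_1)$ computed in $Y_{z,\OOO}$ equals the one in $Y_{z_1,\OOO_1}$ via the generalized Chevalley formula, since $\zeta_1(z_1+z_2)=\zeta_1(z_1)$ for monomials in $\tlie_1^*$. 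You instead argue structurally: you split $Y_{\gamma,\OOO}$ as a fibered product over $S^2$, identify $\kappa_\OOO$ with the tensor product of the factor characteristic isomorphisms via the uniqueness in Theorem \ref{thm:uniqueness}, and propagate non-integrality through the projection onto the $i_0$-th factor by a split-lattice argument (integral Leray--Hirsch, available from torsion-freeness, evenness and the split Wang sequences of Subsection \ref{ss:strategy}). Your route avoids the generalized Chevalley formula altogether and is in that sense more elementary and more conceptual; the paper's route requires no K\"unneth/Leray--Hirsch bookkeeping and fits the machinery it has already set up for the case-by-case computations. One slip to fix: cohomology takes the fibered product of the bundles to the \emph{tensor product} of the rings $H^\bullet(Y_{\gamma_i,\OOO_i};\QQ)$ over $H^\bullet(S^2;\QQ)$, not their fiber product; the rest of your argument (free module with a basis containing $1$, split injection of integral lattices, pullback of a non-integral class stays non-integral) uses exactly the tensor-product structure, so this is a terminological error rather than a gap.
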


Consequently, Theorem \ref{thm:injdiff} will follow from the non-integrality of the characteristic isomorphism for the minimal orbits
of the simple adjoint groups $\PU(n+1)$, $\Sp(n)/Z$, $\SO(n)/Z$, $\mathrm{E}_6/Z$ and $\mathrm{E}_7/Z$ (note that the simply connected simple groups
$E_8$, $F_4$ and $G_2$
have trivial center \cite[Page 516, Table IV]{He}).

\section{Generalized Schubert calculus}\label{sec:genSchubert}

The rational cohomology ring of a coadjoint orbit is described algebraically as the ring of coinvariants;
this is no longer true in general for the integral cohomology ring \cite[Theorem 2.1]{To}. The
understanding of the integral cohomology is attained --- in a dual fashion --- by a description of a basis of the integral homology in terms of Schubert classes.
The subtle interaction between the algebraic and geometric descriptions of the cohomology is the subject of Schubert calculus.

The algebraic description of the rational cohomology of the bundles $Y_{\gamma,\OOO}$
is nothing but the characteristic isomorphism (\ref{eq:characteristicisom}). Therefore  the study of its integrality is necessarily linked to a description of the
integral homology of the bundles. This is the reason why we need to generalize some basic results of Schubert calculus to our bundle setting.

We shall start by recalling a presentation of our bundles $Y_{\gamma,\OOO}$ which incorporates the linearized/algebraic action of a complex torus. Then we
shall look at the module structure at fixed points of the action and its relation to the line bundles defined by roots. This
will be needed both to present a Bruhat/Schubert decomposition of $Y_{\gamma,\OOO}$, and to produce explicit formulas  for the integration
of polynomials (equivariant classes) on appropriate Schubert varieties via localization.

Next, we shall prove
 a  generalized Chevalley formula to integrate any polynomial
against any Schubert class. Compared to the localization formulas, the drawback of the
generalized Chevalley formula  will be
its complexity in terms of the Hasse diagram of the cell.
 However, this formula will be very useful for us from both
a theoretical and a computational viewpoint. On the one hand, it will allow us to prove Proposition \ref{pro:simplegroups} reducing
Theorem \ref{thm:injdiff} to the case of minimal orbits. On the other hand, we will be able to implement  the generalized Chevalley formula in an algorithm
which we will use to prove
the non-integrality of the characteristic isomorphisms for certain minimal orbits of $E_7$.
Interestingly enough, our main theorem   --- but just for regular orbits ---
will follow immediately from the generalized Chevalley formula:

\begin{corollary}\label{cor:regular} Let $G$ be a compact, connected, semisimple Lie group, let $\OOO'$ be a regular coadjoint orbit, and let
$\gamma\subset G$ be a non-contractible loop. Then  $\gamma\subset \Diff(\OOO')$ is not contractible.
 \end{corollary}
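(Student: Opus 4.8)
The plan is to show directly that the characteristic isomorphism
$\kappa\colon H^\bullet(Y_{e,\OOO'};\QQ)\to H^\bullet(Y_{\gamma,\OOO'};\QQ)$
for a regular orbit $\OOO'\cong G/T$ fails to be integral whenever $\gamma$ is non-contractible in $G$, and then invoke Proposition \ref{pro:nonint}. Since $\pi_1(G)$ is generated by loops coming from $\SU(2)$-subgroups (or, more concretely, since it suffices by Proposition \ref{pro:min-orbits} and the reductions following it to treat the simple case), the first step is to pin down what the obstruction looks like. Write $H^\bullet(Y_{e,\OOO'};\QQ)=H^\bullet(S^2;\QQ)\otimes(\QQ[\tlie]/I_+)$ as in the proof of Theorem \ref{thm:char-isom}, so that an integral class is, up to the $b$-factor, a class in $H^\bullet(G/T;\ZZ)$. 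The point is that $\kappa$ is the tautological identification coming from the Chern--Weil/Borel presentation (\ref{eq:homog}): it sends a polynomial $f\in\QQ[\tlie]$ representing a class on the fiber to the same polynomial $f$ viewed inside $H^\bullet(Y_{\gamma,\OOO'};\QQ)$ via $c(f)$. So I would compute $\kappa$ on a degree-two generator $f=\lambda\in\tlie^*$ corresponding to a weight, and show that the resulting class, while rational, is not integral on $Y_{\gamma,\OOO'}$ unless $\gamma$ bounds.

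The second step is the actual cohomological computation, which is exactly the content of the generalized Chevalley formula from Section \ref{sec:genSchubert}. I would pair $\kappa$ of an appropriate integral class against a suitable Schubert class in $H_\bullet(Y_{\gamma,\OOO'};\ZZ)$ — the natural candidate being the Schubert curve dual to a simple root $\alpha$ that pairs nontrivially with the class in $\pi_1(T)=\pi_1(G)$ represented by $\gamma$ (after passing to the torus, a loop in $G$ is homotopic to a loop $\exp(2\pi i t\,\xi)$ for some coweight $\xi$, and non-contractibility means $\xi$ is not in the coroot lattice). The generalized Chevalley formula expresses this pairing as the ordinary Chevalley pairing $\langle f,\alpha^\vee\rangle$ plus a correction term linear in the winding, of the shape $\langle f,\xi\rangle\cdot[\text{fiber multiplicity}]$. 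Choosing $f$ to be an integral generator of $H^2(G/T;\ZZ)$ (a fundamental weight, in the adjoint-group normalization) and $\xi$ a coweight not in the coroot lattice, this correction term is a non-integer, which exhibits $\kappa$ as non-integral. I would then note that this also immediately proves Proposition \ref{pro:simplegroups}'s hypothesis for regular orbits, although the corollary as stated only needs the non-integrality to conclude via Proposition \ref{pro:nonint}.

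The last step is bookkeeping: by Proposition \ref{pro:nonint}, non-integrality of $\kappa$ forces $\gamma$ and the constant loop $e$ to be non-homotopic in $\Diff(\OOO')$, i.e.\ $\gamma$ is not contractible there; and since this works for every non-contractible $\gamma$ and the ambient group reduces to the adjoint simple case by the discussion preceding Proposition \ref{pro:simplegroups}, we are done. The main obstacle I anticipate is making the generalized Chevalley computation clean enough that the correction term is manifestly the pairing $\langle\text{fundamental weight},\xi\rangle$ with $\xi$ ranging over a set of coweight representatives for $\pi_1(G)$; one has to be careful that the integral cohomology of the adjoint group's flag variety is generated in degree two by weights that are \emph{not} in the root lattice (that is precisely why the adjoint normalization matters), and that the Schubert curve chosen is genuinely an integral homology class on the total space $Y_{\gamma,\OOO'}$ — this is where the torus action and the Bruhat decomposition of Section \ref{sec:genSchubert} are needed, rather than just the abstract isomorphism (\ref{eq:homog}). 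Everything else is a routine unwinding of definitions.
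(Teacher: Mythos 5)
Your overall route is exactly the paper's: reduce via Proposition \ref{pro:nonint} to showing that the characteristic isomorphism fails to be integral already in degree two, represent $\gamma$ by a coweight $z\in\Lambda_w^\vee$ with $z\notin\Lambda_r^\vee$, use the Borel identification of $H^2(G/T;\ZZ)$ with the full weight lattice $\Lambda_w$, and evaluate with the generalized Chevalley formula. The one step that would fail as written is your choice of homology class. There is no simple root $\alpha$ that pairs non-integrally with a coweight (the coweight lattice is by definition dual to the root lattice), and the vertical Schubert curves $X_{s_\alpha}$ lie in a single fiber, so capping $\kappa(\zeta)$ against them only returns the ordinary Chevalley integers $\zeta(h_\alpha)$ and cannot detect the winding. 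The class you must cap against is the horizontal one: the unique fibered degree-two Schubert class $[bX_e]$, the section through the Bruhat point of the identity coset, for which the generalized Chevalley formula (\ref{eq:cap}) degenerates to a single term, $\int_{[bX_e]}\kappa(\zeta)=-\zeta(z)$, with no ``ordinary Chevalley'' summand at all. Choosing a weight $\zeta$ with $\zeta(z)\notin\ZZ$ --- possible precisely because $z\notin\Lambda_r^\vee$ --- gives the non-integrality, and Proposition \ref{pro:nonint} concludes. Your formula ``ordinary pairing plus a winding correction $\langle f,\xi\rangle$'' is what one gets for a general section-type class $[bX_e]+\sum_\alpha n_\alpha[X_{s_\alpha}]$, so the substance of your computation is correct once the class is named correctly; but indexing it by a root, or locating the obstruction in a root--coweight pairing, is not repairable as stated.

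Two smaller points. No reduction to adjoint or simple groups, and certainly not to minimal orbits, is needed for this corollary: the argument works directly for any compact connected semisimple $G$ and any regular orbit, and Proposition \ref{pro:min-orbits} in fact goes in the wrong direction here, since it reduces to minimal rather than regular orbits. Also, $\pi_1(T)\to\pi_1(G)$ is surjective but not an isomorphism in general; surjectivity (every class is represented by some $\gamma_z$ with $z$ in the integral lattice of $T$, and non-contractibility of $\gamma_z$ in $G$ forces $z\notin\Lambda_r^\vee$) is all that your argument, and the paper's, actually uses.
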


We shall close this section describing some special Dynkin subdiagrams
to which we can associate a smooth Schubert variety with a very simple module structure  at fixed points.

\subsection{The algebraic torus action on the bundles $Y_{\gamma,\OOO}$.}

The fundamental group of a compact, connected, semisimple Lie group $G$ can be described in terms of lattices \cite[Chapter V,\S7]{BD}.
 Recall that we denote by $\tlie$ the Lie algebra of the maximal torus $T\subset G$.
 The Lie algebra $\tlie$ can be naturally identified with the Lie algebra of the maximal torus in $G(\glie)$ covering $T$ and also with the Lie algebra of
 the maximal torus in the adjoint group $G(\glie)/Z$ covered by $T$.

 The kernels of the exponential maps for the maximal tori in $G(\glie)$ and $G(\glie)/Z$
 can be identified with the coroot and coweight lattices respectively:
\[\Lambda_r^\vee<\Lambda_w^\vee<\tlie\]
The quotient $\Lambda_w^\vee/\Lambda_r^\vee$ is a finite  group which is canonically isomorphic to the fundamental group of the adjoint group:
\begin{eqnarray*}
      \xi:\Lambda_w^\vee/\Lambda_r^\vee & \longrightarrow &\pi_1(G(\glie)/Z)\\
            z+\Lambda_r^\vee &\longmapsto  & [\mathrm{exp}(tz)],\, t\in [0,1]
\end{eqnarray*}
For any $z\in  \Lambda_w^\vee$ we shall write
$$Y_{z,\OOO}=Y_{\gamma_z,\OOO},$$
where
$\gamma_z$ is the loop $e^{2\pi\imag t}\mapsto\exp (tz)$ representing $\xi(z)$.
Since $\xi$ is an isomorphism, it will suffice for our purposes to study the fibrations
$Y_{z,\OOO}\to S^2$ as $z$ runs over a set of representatives of the cosets in $\Lambda_w^\vee/\Lambda_r^\vee$.

We fist recall how coadjoint orbits can be endowed with an algebraic torus action. This will we useful to discuss the corresponding problem
for the bundles $Y_{z,\OOO}$, and also to fix some notation.

Choose $q:G/T\to \OOO$ a $G$-equivariant projection and let $P$ be the centralizer of $x=q(eT)$,
so that $T\subset P$.

Let $\GG$ denote the adjoint complex Lie group
integrating the complexified Lie algebra $\glie_\CC$ and let $\hlie\subset \glie_\CC$ be the complexification of $\tlie$. Let $\TT\subset\GG$ be the complex torus with Lie algebra $\hlie$.
Fix positive roots $\Delta^+\subset \Delta$
for the adjoint action of $\hlie$, so that:
\[\mathfrak{g}_\CC=\hlie\oplus \sum_{\alpha\in \Delta^+}\mathfrak{g}_\alpha\oplus
 \sum_{\alpha\in \Delta^+}\mathfrak{g}_{-\alpha},\]
Then any subset   $J$ of the set $\Pi\subset \Delta^+$ of simple roots such determines a
parabolic subgroup $\PP\subset\GG$ defined
as the normalizer of the subalgebra
 \[
\mathfrak{p}=\mathfrak{h}\oplus \sum_{\alpha\in \Delta^+}\mathfrak{g}_\alpha\oplus
 \sum_{\alpha\in \Delta^+_J} \mathfrak{g}_{-\alpha},
\]
where $\Delta^+_J$ are those positive roots which are sums of roots in $J$. A fundamental result in Lie theory (see e.g. \cite[Section 4]{Bott2}
shows the existence of a
subset $J\subset\Pi$ whose associated parabolic subgroup $\PP$
has the property that
the natural inclusion $G\hookrightarrow\GG$ induces a $G$-equivariant diffeomorphism
$G/P\cong\GG/\PP$.
This endows $\OOO$ with a canonical $G$-invariant holomorphic structure with respect to which
$\GG$ acts holomorphically, transitively, and in such a way that the stabilizer of $eP\in G/P$
is $\PP$.

To exhibit the algebraic torus action  the complex group $\GG$ is replaced by the algebraic group $\mathrm{Aut}(\glie_\CC)$,
and $\PP$ is replaced by $N(\plie)$, the normalizer of $\plie$ in $\mathrm{Aut}(\glie_\CC)$. It is well-known (see for example
\cite[Page 6]{Bott})
that there is
a linear representation   $K:\mathrm{Aut}(\glie_\CC)\to \GL(V)$ and a vector
$v$ such that the stabilizer of the line $[v]\in \PP(V)$ is $N(\plie)$. Therefore the representation furnishes a projective embedding
\[G/P\cong \GG/\PP\cong K(\mathrm{Aut}(\glie_\CC))\cdot [v]\subset \PP(V),\]
for which the action of the complex torus $\TT$ on $\GG/\PP$ corresponds to
the restriction to $K(\mathrm{Aut}(\glie_\CC))\cdot [v]$ of the algebraic action of $\TT$
 on $\PP(V)$ induced by $K$.

Next we will see that the fibration $Y_{z,\OOO}\to \PP^1\cong S^2$ can be given a  complex structure,
with respect to which it supports a holomorphic action of $\CC^*\times\TT$. This can be done in a way
compatible with the algebraic structure of the fibers and with the action of $\TT$ on them.

The vector  $z\in  \Lambda_w^\vee\subset \tlie$ determines upon exponentiation the cocharacter
\begin{equation}\label{eq:weight-morphism}
 \phi_z: \CC^*\to \TT.
\end{equation}
The corresponding $\CC^*$-action on $\GG/\PP$ can be used to make the clutching construction
compatible with complex structures.
Rather than gluing two trivial bundles over disks, we take two copies of the
trivial holomorphic bundle over the complex line and identify the
complements of the fiber over zero by a biholomorphism:
\begin{equation}\label{eq:associatedbcx}
Y_{z,\OOO}=(\CC\times \GG/\PP) \sqcup (\CC\times \GG/\PP)/
 (u,g\PP)\sim (u^{-1}, \phi_{z}(u)g\PP),\quad u\in \CC^*,\,g\in \GG,
\end{equation}

It is clear that the left action of $\TT$ on $\GG/\PP$ induces an action of $\TT$
on $Y_{z,\OOO}$ by (holomorphic) bundle automorphisms. In fact, more is true.
\begin{lemma}\label{lem:torus-action} There is a $\CC^*\times \TT$-action on $Y_{z,\OOO}$ by bundle automorphisms.
 \end{lemma}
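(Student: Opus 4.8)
The plan is to exhibit the $\CC^*$-action directly from the gluing description \eqref{eq:associatedbcx} and then check that it commutes with the already-evident $\TT$-action. First I would recall that the cocharacter $\phi_z\colon\CC^*\to\TT$ of \eqref{eq:weight-morphism} has image in the torus $\TT$, which is abelian; this is the key algebraic input. Now consider the two charts $\CC\times\GG/\PP$ appearing in \eqref{eq:associatedbcx}, which I will call the $+$ chart (coordinate $u$) and the $-$ chart (coordinate $u'=u^{-1}$ on the overlap). Given $\lambda\in\CC^*$, define an action on the $+$ chart by $\lambda\cdot(u,g\PP)=(\lambda u,\,\phi_z(\lambda)g\PP)$ and on the $-$ chart by $\lambda\cdot(u',g\PP)=(\lambda^{-1}u',\,g\PP)$. (One could equally distribute the twist symmetrically; what matters is that the total twist across the two charts matches the clutching cocycle $\phi_z(u)$.) Each of these is manifestly a holomorphic automorphism of the respective trivial bundle covering the rotation $[u_0:u_1]\mapsto[\lambda u_0:u_1]$, resp. its inverse, of $\PP^1$; in particular it maps fibers to fibers and acts on each fiber by an element of $\TT$, hence by a bundle automorphism of the fiberwise $\GG/\PP$-structure.

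The main point is to verify that this prescription is well defined on the quotient \eqref{eq:associatedbcx}, i.e.\ that it respects the identification $(u,g\PP)\sim(u^{-1},\phi_z(u)g\PP)$ for $u\in\CC^*$. Applying $\lambda\cdot$ to the left-hand representative gives $(\lambda u,\phi_z(\lambda)g\PP)$ in the $+$ chart; applying $\lambda\cdot$ to the right-hand representative gives $(\lambda^{-1}u^{-1},\phi_z(u)g\PP)$ in the $-$ chart. These two are glued precisely when $(\lambda u)^{-1}=\lambda^{-1}u^{-1}$ (true) and $\phi_z(u)g\PP=\phi_z(\lambda u)\cdot\big(\text{image of }(\lambda u,\phi_z(\lambda)g\PP)\big)$, which unwinds to the identity $\phi_z(u)g\PP=\phi_z((\lambda u))\phi_z(\lambda)^{-1}\,g\PP=\phi_z(u)\,g\PP$ using multiplicativity of $\phi_z$ and commutativity of $\TT$; so the gluing is respected. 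Hence $\lambda\cdot$ descends to a well-defined holomorphic bundle automorphism of $Y_{z,\OOO}$, and $(\lambda\mu)\cdot=\lambda\cdot(\mu\cdot)$ follows from the corresponding identities chart by chart, again using that $\phi_z$ is a homomorphism. This produces the $\CC^*$-action.

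It remains to combine this with the $\TT$-action and see that together they give a $\CC^*\times\TT$-action. The $\TT$-action is the one from the remark preceding the lemma: $t\cdot(u,g\PP)=(u,tg\PP)$ in either chart (well defined because $t$ commutes with $\phi_z(u)\in\TT$). Since in each chart the $\CC^*$-action twists only by elements of $\TT$ (namely $\phi_z(\lambda)$ in the $+$ chart and nothing in the $-$ chart) and acts on the base coordinate, while the $\TT$-action acts purely on the $\GG/\PP$ factor by left multiplication, the two actions commute in each chart because $\TT$ is abelian; hence they commute on $Y_{z,\OOO}$ and assemble into an action of the product group $\CC^*\times\TT$ by holomorphic bundle automorphisms. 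I do not anticipate a serious obstacle here: the only subtlety is bookkeeping the cocycle in the gluing, and the abelianness of $\TT$ makes every compatibility check collapse to multiplicativity of $\phi_z$. I would simply remark at the end that, by construction, this $\CC^*\times\TT$-action restricts on each fiber to the $\TT$-action on $\GG/\PP$ fixed earlier, and covers the standard $\CC^*$-action (rotation) on $\PP^1\cong S^2$, which is what the later sections require.
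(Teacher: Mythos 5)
Your overall strategy---defining the action chart by chart on the clutching description (\ref{eq:associatedbcx}) and checking compatibility with the gluing---is viable, but the specific formula you wrote down does not descend to the quotient, and your verification hides this behind an algebra slip. With your definition (twist by $\phi_z(\lambda)$ on the chart whose base coordinate is multiplied by $\lambda$, no twist on the other chart), take a glued pair $(u,g\PP)\sim(u^{-1},\phi_z(u)g\PP)$, $u\in\CC^*$. Acting in the $+$ chart and then passing to the $-$ chart gives $\bigl((\lambda u)^{-1},\phi_z(\lambda u)\phi_z(\lambda)g\PP\bigr)=\bigl(\lambda^{-1}u^{-1},\phi_z(\lambda)^{2}\phi_z(u)g\PP\bigr)$, whereas acting directly in the $-$ chart gives $\bigl(\lambda^{-1}u^{-1},\phi_z(u)g\PP\bigr)$; these differ by $\phi_z(\lambda)^{2}$, which acts nontrivially on $\GG/\PP$ whenever $\phi_z$ is nontrivial, so your action is not well defined. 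In your check the factor $\phi_z(\lambda)^{-1}$ appears where your own definition produces $\phi_z(\lambda)$---that unexplained inverse is exactly what makes the computation appear to close up. The general constraint is: if the charts scaled by $\lambda$ and by $\lambda^{-1}$ are twisted by $a(\lambda)$ and $b(\lambda)$ respectively, compatibility with the cocycle forces $b(\lambda)=\phi_z(\lambda)\,a(\lambda)$, not that the ``total twist'' $a(\lambda)b(\lambda)$ match $\phi_z$. So the repair is immediate: twist by $\phi_z(\lambda)^{-1}=\phi_{-z}(\lambda)$ on the chart scaled by $\lambda$, or equivalently put the twist $\phi_z(\lambda)$ on the chart scaled by $\lambda^{-1}$; the latter is precisely the paper's chart description (\ref{eq:actionclutching})--(\ref{eq:actionclutching-2}).

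Once the twist is corrected, the rest of your argument (commutation with the left $\TT$-action because $\TT$ is abelian, the group law from multiplicativity of $\phi_z$, holomorphy, and the fact that the action covers the rotation of $\PP^1$) goes through. Note that the paper sidesteps this bookkeeping entirely: it realizes $Y_{z,\OOO}$ as the associated bundle $(\CC^2\backslash\{0\})\times_{\CC^*}\GG/\PP$, with $\CC^*$ acting diagonally on $\CC^2\backslash\{0\}$ and on $\GG/\PP$ through $\phi_{-z}$, and observes that the obvious action of $\CC^*\times\{1\}\times\TT$ on the product commutes with the defining $\CC^*$-action and hence descends; the chart formulas are then read off afterwards. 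That route makes well-definedness automatic, and the appearance of $\phi_{-z}$ there is exactly the sign your chartwise construction needs.
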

 \begin{proof}
We use a different construction of $Y_{z,\OOO}$  (see for example \cite[Section 4]{CMP}): we
consider the diagonal action of $\CC^*$ on $\CC^2\backslash \{0\}\times \GG/\PP$ associated
to the the standard (diagonal) action on $\CC^2\backslash \{0\}$
and the left action  on $\GG/\PP$ defined by
 the cocharacter $\phi_{-z}$. Then it is clear that we have a biholomorphic map:
 \[Y_{z,\OOO}\cong \CC^2\backslash \{0\}\times_{\CC^*}\GG/\PP.\]
We have an obvious product action of
$(\CC^*)^2\times \TT$ on $\CC^2\backslash \{0\}\times \GG/\PP$. As the action of the subtorus
 $\CC^*\times \{1\}\times \TT$ commutes with the above $\CC^*$-action, it descends to the quotient $Y_{z,\OOO}$.

It is useful to know how the $\CC^*\times \TT$-action reads in  the  (complex) clutching construction (\ref{eq:associatedbcx}).
Given $(\lambda,\lambda')\in \CC^*\times \TT$  we have:
\begin{equation}\label{eq:actionclutching}
(\lambda,\lambda')\cdot (u,g\PP)=(\lambda u,\lambda'g\PP)
\end{equation}
on the first copy of $\CC\times \GG/\PP$, and
\begin{equation}\label{eq:actionclutching-2}
(\lambda,\lambda')\cdot (u,g\PP)=(\lambda^{-1}u,\phi_{z}(\lambda)\lambda' g\PP)
\end{equation}
on the second one.
\end{proof}

The next proposition shows that both the complex structure on $Y_{z,\OOO}$ and the holomorphic
action of $\CC^*\times\TT$ defined in Lemma \ref{lem:torus-action} are algebraic.

\begin{prop}\label{pro:linaction} The $\CC^*\times \TT$-manifold $Y_{z,\OOO}$ admits projective embeddings that linearize the action.
 \end{prop}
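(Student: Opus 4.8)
The plan is to realize $Y_{z,\OOO}$ as a closed, $\CC^*\times\TT$-invariant subvariety of a projective bundle over $\PP^1$ that is built from a $\CC^*\times\TT$-linearized vector bundle, and then to invoke the standard fact that such projective bundles admit equivariant projective embeddings. I would keep throughout the presentation (\ref{eq:associatedbcx}) of $Y_{z,\OOO}$ and match everything against the explicit chart formulas (\ref{eq:actionclutching})--(\ref{eq:actionclutching-2}).

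First normalize coordinates on the base so that the $\CC^*$-action on it is linear: write $\PP^1=\PP(\CC^2)$ with $\CC^*$ acting on $\CC^2$ with weights $(1,0)$, so that in the two charts $u,\ u^{-1}$ of (\ref{eq:associatedbcx}) the action reads $u\mapsto\lambda u$ and $u^{-1}\mapsto\lambda^{-1}u^{-1}$, in agreement with (\ref{eq:actionclutching})--(\ref{eq:actionclutching-2}). Let $P\to\PP^1$ be the principal $\TT$-bundle obtained by clutching with the transition function $u\mapsto\phi_z(u)$ of (\ref{eq:weight-morphism}), so that $Y_{z,\OOO}=P\times_{\TT}(\GG/\PP)$ with $\TT$ acting on $\GG/\PP$ on the left. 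A direct check with the transition function shows that the $\CC^*$-action on $\PP^1$ lifts to $P$ (in the two charts $\CC\times\TT$ of $P$ by $(u,t)\mapsto(\lambda u,t)$ and $(u^{-1},t)\mapsto(\lambda^{-1}u^{-1},\phi_z(\lambda)t)$), is well defined on the overlap, and commutes with the structure $\TT$-action; combined with the left $\TT$-action on $\GG/\PP$ it induces a $\CC^*\times\TT$-action on $P\times_{\TT}(\GG/\PP)$ which is, chart by chart, exactly the one of Lemma \ref{lem:torus-action}.

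Next use the $\TT$-equivariant projective embedding $\GG/\PP\cong K(\mathrm{Aut}(\glie_\CC))\cdot[v]\hookrightarrow\PP(V)$ recalled above, and replace the fiber $\GG/\PP$ by $\PP(V)$ in the associated bundle: this gives a $\TT$-equivariant closed embedding $Y_{z,\OOO}=P\times_{\TT}(\GG/\PP)\hookrightarrow P\times_{\TT}\PP(V)=\PP(\EE)$, where $\EE:=P\times_{\TT}V$; closedness is a fiberwise statement and holds because $\GG/\PP$ is closed in $\PP(V)$. Decomposing $V=\bigoplus_{\chi}V_\chi$ into $\TT$-weight spaces identifies $\EE$ with a direct sum of line bundles on $\PP^1$ (namely $\bigoplus_\chi L_\chi^{\oplus\dim V_\chi}$ with $L_\chi=P\times_{\TT}\CC_\chi$, which by inspection of the transition function is some $\OOO_{\PP^1}(d)$). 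Since $\CC^*$ acts linearly on $\CC^2$, every line bundle on $\PP(\CC^2)$ admits a $\CC^*$-linearization; together with the tautological $\TT$-action on each $V_\chi$ this makes $\EE$ a $\CC^*\times\TT$-linearized vector bundle over $\PP^1$ (the $\CC^*$-action covering the one on the base, the $\TT$-action vertical), and the induced $\CC^*\times\TT$-action on $\PP(\EE)$ restricts on the closed subvariety $Y_{z,\OOO}$ to the action of Lemma \ref{lem:torus-action}.

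Finally, linearize $\PP(\EE)$ itself. Let $\pi\colon\PP(\EE)\to\PP^1$ be the projection. The relative hyperplane bundle $\OOO_{\PP(\EE)}(1)$ inherits a $\CC^*\times\TT$-linearization from that of $\EE$, and for $N\gg0$ the line bundle $\LLL:=\OOO_{\PP(\EE)}(1)\otimes\pi^*\OOO_{\PP^1}(N)$ is ample (the standard criterion for a relatively ample bundle twisted by an ample bundle on a projective base) and still $\CC^*\times\TT$-linearized, since $\OOO_{\PP^1}(N)$ is $\CC^*$-linearizable. For $m\gg0$ the space $W:=H^0(\PP(\EE),\LLL^{\otimes m})$ is a finite-dimensional $\CC^*\times\TT$-representation and the associated closed embedding of $\PP(\EE)$ into the projective space of $W$ is $\CC^*\times\TT$-equivariant; restricting to $Y_{z,\OOO}$ yields the desired linearization. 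The only delicate point in the whole argument is bookkeeping rather than geometry: one must choose the linearizations along the way (on $\OOO_{\PP^1}(\pm1)$, on the weight spaces $V_\chi$, and on $\OOO_{\PP(\EE)}(1)$) compatibly, so that the torus action induced on $\PP(\EE)$ genuinely restricts to the one of Lemma \ref{lem:torus-action}; this is precisely what the explicit formulas (\ref{eq:actionclutching})--(\ref{eq:actionclutching-2}) are for, and once they are matched everything else is formal.
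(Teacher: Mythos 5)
Your proposal is correct and follows essentially the same route as the paper: both realize $Y_{z,\OOO}$ as a closed $\CC^*\times\TT$-invariant subvariety of the projectivization of the vector bundle over $\PP^1$ clutched from the representation $V$ via $\phi_z$, lift the action to $\OOO_{\PP(\cdot)}(1)$, and twist by $\pi^*\OOO_{\PP^1}(k)$ to obtain an ample linearized line bundle (the paper proves the ampleness by a curvature/Kodaira argument rather than citing the relative-ampleness criterion). The only presentational difference is that the paper defines the $\CC^*\times\TT$-action on the bundle $V_z$ directly by the same quotient recipe as in Lemma \ref{lem:torus-action}, which makes the compatibility you defer to ``bookkeeping'' (your choice of linearizations on the weight-space line bundles) automatic rather than something to be matched against the chart formulas.
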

\begin{proof}
Let   $K:\GG\to \GL(V)$ be a linear representation which defines a projective embedding of $\OOO$ in $\PP(V)$.
If we regard $\CC^2\backslash \{0\}\times V $ as the trivial bundle over $\CC^2\backslash \{0\}$,
then its quotient by the $\CC^*$-action defined by the composition
of the cocharacter $\phi_{-z}$ with $K$
is a holomorphic vector bundle $V_z\to \PP^1$. Its description in terms of
trivializations is:
\begin{equation}\label{eq:clutching-vector}V_z=(\CC\times V)\sqcup(\CC\times V)/
 (u,v)\sim (u^{-1}, K(\phi_{z}(u))v),\, u\in \CC^*,\,v\in V.
 \end{equation}
The holomorphic bundle $V_z$ supports an action of $\CC^*\times \TT$ defined as in the proof of Lemma \ref{lem:torus-action} (but replacing the $\CC^*$-space  $\GG/\PP$
by $V$). This action descends to the projectivized bundle $\PP(V_z)\to \PP^1$. By construction $Y_{z,\OOO}\subset \PP(V_z)$ is an embedding
of $\CC^*\times \TT$-manifolds, so it is enough to show that we can linearize the action
of $\CC^*\times \TT$ on $\PP(V_z)$, which is a standard result:
as the action on $\PP(V_z)$ comes from a linear action on $V_z$, it lifts to an action on the hyperplane line bundle $\OOO_{\PP(V_z)}(1)$. This
bundle might not be ample, but
\[\pi^*\OOO_{\PP^1}(k)\otimes \OOO_{\PP(V_z)}(1)\] is ample for $k$ large enough.
This is a standard fact, but for completeness we sketch a proof. Let $L_b=\OOO_{\PP^1}(1)\to \PP^1$
and $L_f=\OOO_{\PP(V_z)}(1)\to\PP(V_z)$. The Fubini--Study metric $h_b$ on $L_b$
is positive, meaning that the curvature
$\omega_b\in\Omega^{1,1}(\PP^1)$ satisfies $\imag\omega_b(u,\overline{u})>0$ for every holomorphic
$u\in T\PP^1\otimes\CC$ (see e.g. \cite[p. 29]{GH}). Similarly, the fiberwise Fubini--Study metric
$h_f$ on $L_f$ has curvature $\omega_f\in\Omega^{1,1}(\PP(V_z))$ which is positive when restricted
to the fibers of $\PP(V_z)\to\PP^1$. One then proves that for any big enough 
natural number $k$  the form $k\pi^*\omega_b+\omega_f$ is positive on the entire holomorphic
tangent bundle of $\PP(V_z)$.  Now $k\pi^*\omega_b+\omega_f$ is the curvature of the metric
$(\pi^*h_b)^{\otimes k}\otimes h_f$ on $\pi^*L_b^{\otimes k}\otimes L_f$.
By Kodaira's embedding theorem, it follows that
$\pi^*L_b^{\otimes k}\otimes L_f=\pi^*\OOO_{\PP^1}(k)\otimes \OOO_{\PP(V_z)}(1)$ is ample.

The action of the first factor
$\CC^*\subset \CC^*\times \TT$ on $\PP^1$ lifts to $\OOO_{\PP^1}(k)$, and we use this
lifted action to define the following $\CC^*\times \TT$-action on $\pi^*\OOO_{\PP^1}(k)$:
\[(\lambda,\lambda')\cdot \pi^*w=\pi^*\lambda w,\quad (\lambda,\lambda')\in \CC^*\times \TT,\,w\in \OOO_{\PP^1}(k).\]
Its tensor product with the linear action on  $\OOO_{\PP(V_z)}(1)$ defines a lift of the $\CC^*\times \TT$-action to $\pi^*\OOO_{\PP^1}(k)\otimes \OOO_{\PP(V_z)}(1)$,
and this proves the proposition.

\end{proof}

\subsection{Fixed points and the generalized Schubert decomposition}

To obtain the classical Schubert decomposition of a regular  coadjoint orbit $\OOO'\cong \GG/\BB$ one
considers the left action of the positive Borel subgroup  $\BB$ on $\GG/\BB$.

The resulting partition of $\GG/\BB$ into orbits has some standard properties which we are
going to recall. Before that, let us recall that once a set of simple
positive roots
$\Pi\subset\Delta^+$ has been chosen, one can define a partial order on the Weyl group $W$ (see e.g. \cite[Section 2]{BGG})
by saying that, given $w,w'\in W$, $w'\leq w$ if $l(w)-l(w')=k$, where $l$ is the length function in the Weyl group,
and there exist elements $s_1,\dots,s_k\in W$
such that each $s_i$ is a reflection associated to a positive root and
$w=s_1\dots s_kw'$. These are the properties of the action of $\BB$ on $\GG/\BB$
which we are going to use \cite{BGG}:
\begin{itemize}
 \item Assigning to each $w\in W$ the orbit $B_w:=\BB w\BB\subset\GG/\BB$
defines a bijection between the set of orbits and the Weyl group. The orbits
$B_w$ are called Bruhat cells.
 \item Each Bruhat cell $B_w$ is canonically biholomorphic to $\CC^{l(w)}$.
 \item The Schubert variety associated to $w$ is by definition $X_w:=\overline{B_w}$. We have
  $B_{w'}\subset X_w$ if and only if $w'\leq w$.
 \item The Schubert varieties are the closures of the cells of a CW decomposition of $\GG/\BB$
 with even dimensional cells, so in particular $\{[X_w]\}_{w\in W}$ is an
 additive basis of the integral homology of $\GG/\BB$.
 \end{itemize}

The left action of $\BB$ on $\GG/\PP$ defines a similar stratification \cite[Section 5]{BGG}, but this time the Bruhat
cells are parametrized by the right cosets $W/W_P$.
 More precisely, on each equivalence class in $W$ modulo $W_P$ there exist a unique element of minimal length,
 and its Bruhat cell is the only one which is not shrunk (i.e., it does not lose dimension)
 under the projection $\GG/\BB\to \GG/\PP$.

Our aim is to define an analogue of the Schubert decomposition for the fibrations $Y_{z,\OOO}$.
In doing so, one encounters the difficulty that
the spaces $Y_{z,\OOO}$ do not carry in any natural way an action of $\BB$ extending the action
on the fibers. However, the Bruhat decomposition of $\OOO$ can be defined in terms of the action of
the Cartan subgroup $\TT\subset\BB$ (see  e.g. \cite[II, \S 4.1]{BCG}).
This is good news, because the action of $\TT$ does extend to an algebraic action of $\CC^*\times\TT$ on $Y_{z,\OOO}$.
Hence, to generalize the Schubert decomposition to the bundles $Y_{z,\OOO}$ we will use the $\CC^*\times \TT$-action. To that end
we need to lift the action to the line bundles over $Y_{z,\OOO}$ defined by roots, and to determine the
$\CC^*\times \TT$-module structure of both the tangent space to
the fixed points of the action on $Y_{z,\OOO}$ and the fibers over these fixed points.

We start by fixing some notation for our bundles $Y_{z,\OOO}$. We single out the origin and point at infinity in the base:
\[0:=[0:1]\in \PP^1,\quad \infty:=[1:0]\]
We  also identify the fiber over $0$ and $\infty$ with $\GG/\PP$ using the product structure in the charts
of the clutching construction (\ref{eq:associatedbcx}):
\begin{equation}\label{eq:triv-fibers}
{Y_{z,\OOO}}|_{0}\cong \GG/\PP\cong {Y_{z,\OOO}}|_{\infty}
 \end{equation}
We denote by $W^P\subset W$ those elements with minimal length in its right coset in $W/W_P$. If $w\in W^P$, then
we denote the points corresponding to $w\PP$ under (\ref{eq:triv-fibers}) by:
\begin{equation}\label{eq:fibered-fixed--points}
 w(0)\in {Y_{z,\OOO}}|_{0},\, w(\infty)\in {Y_{z,\OOO}}|_{\infty}.
\end{equation}
By formulas (\ref{eq:actionclutching}) and (\ref{eq:actionclutching}) in Lemma \ref{lem:torus-action} these are exactly
the fixed points of the action of $\CC^*\times \TT$ on $Y_{z,\OOO}$.

Let $\zeta\in \Lambda_r^{W_P}$ be a root invariant by $W_P$. It integrates into a character:
\[\chi_\zeta\colon \PP\to \CC^*.\]
We let $\PP$ act  on  by the diagonal action associated to
the trivial action on $\CC^2\backslash \{0\}$, the right action on $\GG$, and
the action on $\CC^*$ defined by $\chi_\zeta$.
We also consider the diagonal action of  $\CC^*$  associated to the
diagonal action on $\CC^2\backslash \{0\}$, the (left) action on $\GG$ induced
 by the cocharacter $\phi_{-z}$, and the trivial action on $\CC$.
We use the product action of $\CC^*\times \PP$ to define the quotient space
\[L_\zeta:= (\CC^2\backslash \{0\}\times \GG\times \CC)/\CC^*\times \PP.\]
This space is a line bundle over $Y_{z,\OOO}$ with the following properties:
\begin{enumerate}
 \item[(i)]  Its restriction to both $ {Y_{z,\OOO}}|_{0}$ and ${Y_{z,\OOO}}|_{\infty}$ becomes under the identification (\ref{eq:triv-fibers})
 the classical line bundle defined by the character $\chi_\zeta$ (see e.g. \cite[Page 7]{Bott})
\[\LLL_\zeta:=\GG\times_{\chi_\zeta} \CC\]
on which $\TT$ acts by line bundle automorphisms.
\item[(ii)] By definition of the Chern-Weil map:
\[\kappa(\zeta)=c_1(L_\zeta).\]
\end{enumerate}
The line bundle $L_\zeta$ is a $\CC^*\times \TT$-space in a natural way:
\begin{lemma}\label{lem:bundle-action} There is a linear lift  of the $\CC^*\times \TT$-action on $Y_{z,\OOO}$ to the line bundle  $L_\zeta$
 which extends the $\TT$-action on  $\LLL_\zeta$.
\end{lemma}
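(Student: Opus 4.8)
The plan is to build the lift directly on the presentation
$L_\zeta=(\CC^2\setminus\{0\}\times\GG\times\CC)/(\CC^*\times\PP)$, by exhibiting an action of $(\CC^*)^2\times\TT$ on $\CC^2\setminus\{0\}\times\GG\times\CC$ that commutes with the $\CC^*\times\PP$-action defining $L_\zeta$, and then restricting it to a suitable subtorus, in exact parallel with the proof of Lemma \ref{lem:torus-action}.

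First I would let $(\CC^*)^2\times\TT$ act on $\CC^2\setminus\{0\}\times\GG\times\CC$ as the product of the coordinatewise action of $(\CC^*)^2$ on $\CC^2\setminus\{0\}$, the left multiplication action of $\TT$ on $\GG$, and the trivial action of $(\CC^*)^2\times\TT$ on the last factor $\CC$. Then I would check that this action commutes with the $\CC^*\times\PP$-action used to form $L_\zeta$: the "diagonal $\CC^*$" acts diagonally on $\CC^2\setminus\{0\}$ (commuting with the coordinatewise $(\CC^*)^2$), by $\phi_{-z}$ on $\GG$ on the left (commuting with the left $\TT$-action, since $\TT$ is abelian and $\phi_{-z}$ takes values in $\TT$), and trivially on $\CC$; while $\PP$ acts trivially on $\CC^2\setminus\{0\}$, on $\GG$ on the right (commuting with the left $\TT$-action), and on $\CC$ through $\chi_\zeta$ (commuting with the trivial action of $(\CC^*)^2\times\TT$ on $\CC$). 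Hence the $(\CC^*)^2\times\TT$-action descends to $L_\zeta$, and it is linear on the fibres of $L_\zeta\to Y_{z,\OOO}$ because upstairs it is linear --- in fact trivial --- on the $\CC$-factor.

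Next I would restrict to the subtorus $\CC^*\times\{1\}\times\TT$, with $\CC^*$ the first coordinate factor. The projection $\CC^2\setminus\{0\}\times\GG\times\CC\to\CC^2\setminus\{0\}\times\GG/\PP$ is equivariant for this subtorus, is invariant under the right $\PP$-action, and intertwines the diagonal $\CC^*$-action on source and target; therefore it descends to the bundle projection $L_\zeta\to Y_{z,\OOO}$ and shows that the descended action covers the $\CC^*\times\TT$-action on $Y_{z,\OOO}$ of Lemma \ref{lem:torus-action}. Finally I would identify the lift over the fixed fibres: under the trivializations (\ref{eq:triv-fibers}) the fibre of $L_\zeta$ over $0$ (resp.\ over $\infty$) is $\GG\times_{\chi_\zeta}\CC=\LLL_\zeta$, and since $\{1\}\times\TT$ acts trivially on $\CC^2\setminus\{0\}$ and on the last $\CC$ and by left multiplication on $\GG$, the induced action on $\LLL_\zeta$ is exactly the standard $\TT$-action by line-bundle automorphisms, matching property (i). This yields the desired linear lift.

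I expect no serious obstacle; the only point requiring care is the bookkeeping of which factor of each torus acts on which slot, so that both the descent and the compatibility with Lemma \ref{lem:torus-action} go through. One might a priori fear that a twist by a character of $\CC^*\times\TT$ is needed to match the $\TT$-action on $\LLL_\zeta$ over the fibre at $0$ or $\infty$, but the choice of the trivial action on the last $\CC$-factor makes this match exact without any correction.
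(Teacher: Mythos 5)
Your proposal is correct and follows essentially the same route as the paper: one lets $\CC^*$ act on the first coordinate of $\CC^2\setminus\{0\}$ and $\TT$ act by left multiplication on $\GG$ (trivially on the $\CC$-factor), checks commutation with the $\CC^*\times\PP$-action defining $L_\zeta$, and descends; your passage through the full $(\CC^*)^2\times\TT$-action followed by restriction to $\CC^*\times\{1\}\times\TT$ is only a cosmetic repackaging of the same construction, and your verification over the fibres at $0$ and $\infty$ matches the paper's conclusion.
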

 \begin{proof}
 Consider the diagonal action
of $\CC^*$ on $\CC^2\backslash \{0\}\times \GG\times \CC$ associated to the standard action on the first factor of $\CC^2\backslash \{0\}$ and the trivial
actions on $\GG$ and $\CC$. Consider also the diagonal action of $\TT$ associated to the trivial action on the first and third factors, and the left
action on $\GG$. As the product action of $\CC^*\times \TT$
commutes with the action of $\CC^*\times \PP$, it descends to an action on $L_\zeta$. This action clearly lifts the $\CC^*\times \TT$-action
defined in Lemma \ref{lem:torus-action} and extends the $\TT$-action on $\LLL_\zeta\cong L_\zeta|_{0}$.
\end{proof}

As one would naturally expect
 the $\CC^*\times \TT$-module structure  on the fibers of $L_\zeta$
 over fixed points $w(0),w(\infty)$ is related
to the $\TT$-module structure of the fibers of $\LLL_\zeta$ over $w\PP$.
Before proving that we will introduce some notation:
If $\zeta$ is a weight of a complex torus,
then we denote by $\CC_{\zeta}$ the module structure on $\CC$ defined by
the character $\chi_\zeta$. If  $z$ is a cocharacter of $\TT\subset \GG$, then we denote the morphism
\[\CC^*\times \TT\to \TT,\quad  (\lambda,\lambda')\mapsto \phi_z(\lambda)\lambda'\]
 by
\[\Phi_z:\CC^*\times \TT\to \TT.\]

\begin{prop}\label{pro:rootlocalization} Let $z\in \Lambda^\vee_w$ and $\PP\subset \GG$ a parabolic subgroup. Then for any root
$\zeta\in \Lambda_r^{W_P}$ and any $w\in W^P$  we have the
following identifications of  $\CC^*\times \TT$-modules:
\begin{equation}\label{eq:fibermodule}
{L_\zeta}|_{w(0)}\cong \CC_{{\pi_{2}}^*(w\cdot \zeta)},\quad
 {L_\zeta}|_{w(\infty)}\cong \CC_{\Phi_z^*(w\cdot \zeta)},
\end{equation}
where $\pi_2\colon  \CC^*\times \TT\to \TT$ is the second projection.
\end{prop}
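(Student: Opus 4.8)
The plan is to unwind the definitions so that the computation of these two weights reduces to the classical identification of the weight of $\LLL_\zeta$ at a $\TT$-fixed point of $\GG/\PP$, with the extra $\CC^*$-factor simply carried along. First I would fix a lift $\dot w\in N(\TT)$ of $w$, so that under the trivializations (\ref{eq:triv-fibers}) both $w(0)$ and $w(\infty)$ correspond to the point $\dot w\PP\in\GG/\PP$, and recall the classical fact that ${\LLL_\zeta}|_{\dot w\PP}\cong\CC_{w\cdot\zeta}$ as $\TT$-modules, i.e.\ the weight of $\LLL_\zeta=\GG\times_{\chi_\zeta}\CC$ at $\dot w\PP$ is $w\cdot\zeta$ (see e.g.\ \cite[Page 7]{Bott}); since $\zeta$ is $W_P$-invariant this depends only on the coset $wW_P$, hence only on $w\in W^P$, so the statement is well posed.

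The first step I would carry out is to record an equivariant gluing presentation of $L_\zeta$ parallel to (\ref{eq:associatedbcx}). Running the clutching construction with the $\TT$-line bundle $\LLL_\zeta\to\GG/\PP$ in place of the fiber $\GG/\PP$ presents $L_\zeta$ as two copies of $\CC\times\LLL_\zeta$ glued over $u\in\CC^*$ by $(u,\ell)\sim(u^{-1},\phi_z(u)\cdot\ell)$, where $\phi_z(u)\cdot\ell$ denotes the $\TT$-action on $\LLL_\zeta$; and, in these charts, the $\CC^*\times\TT$-action of Lemma \ref{lem:bundle-action} reads $(\lambda,\lambda')\cdot(u,\ell)=(\lambda u,\lambda'\cdot\ell)$ on the first copy (the one around $0$) and $(\lambda,\lambda')\cdot(u,\ell)=(\lambda^{-1}u,\phi_z(\lambda)\lambda'\cdot\ell)$ on the second. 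This is nothing but the lift to $\LLL_\zeta$ of (\ref{eq:associatedbcx}), (\ref{eq:actionclutching}) and (\ref{eq:actionclutching-2}), obtained by the same manipulation of the double quotient $(\CC^2\backslash\{0\}\times\GG\times\CC)/(\CC^*\times\PP)$ that yields (\ref{eq:associatedbcx}); property (i) above is exactly the compatibility of this presentation with the $\TT$-structure.

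With this in hand the two identifications would be immediate. For $w(0)$: this point sits in the first chart at $u=0$, where by the formulas above the factor $\CC^*$ acts trivially and $\TT$ acts through its action on $\LLL_\zeta$; hence $\CC^*\times\TT$ acts on ${L_\zeta}|_{w(0)}\cong{\LLL_\zeta}|_{\dot w\PP}\cong\CC_{w\cdot\zeta}$ with the $\CC^*$-factor trivial, i.e.\ through $\pi_2$, yielding ${L_\zeta}|_{w(0)}\cong\CC_{\pi_2^*(w\cdot\zeta)}$. For $w(\infty)$: this point sits in the second chart at $u=0$, where an element $(\lambda,\lambda')$ acts through the torus element $\phi_z(\lambda)\lambda'=\Phi_z(\lambda,\lambda')$ acting on $\LLL_\zeta$; so the $\CC^*\times\TT$-action on ${L_\zeta}|_{w(\infty)}\cong{\LLL_\zeta}|_{\dot w\PP}\cong\CC_{w\cdot\zeta}$ is the composite of $\Phi_z\colon\CC^*\times\TT\to\TT$ with the $\TT$-action, which is $\CC_{\Phi_z^*(w\cdot\zeta)}$.

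The main obstacle is notational rather than conceptual: pinning down the gluing cocycle for $L_\zeta$ and the two action formulas exactly---in particular which chart is the one around $0$ and which around $\infty$, that the twist is $\phi_z(u)$ (and not $\phi_{-z}$ or $\phi_z(u^{-1})$), and the sign convention in ${\LLL_\zeta}|_{\dot w\PP}\cong\CC_{w\cdot\zeta}$. The one genuinely non-formal point is that over $w(\infty)$ the clutching cocycle forces the $\CC^*$-factor to act on the fiber through $\phi_z$, composed with $w\cdot\zeta$, which is the whole reason $\Phi_z$ and not $\pi_2$ appears there. As a cross-check I would also run the computation directly in the double quotient: a point of $L_\zeta$ over $w(0)$ (resp.\ over $w(\infty)$) has a representative $[((0,1),\dot w,v)]$ (resp.\ $[((1,0),\dot w,v)]$) with $v\in\CC$, and applying the $\CC^*\times\TT$-action of Lemma \ref{lem:bundle-action} and re-normalizing by the residual $\CC^*\times\PP$-gauge reproduces $\pi_2^*(w\cdot\zeta)$ and $\Phi_z^*(w\cdot\zeta)$ respectively, the factor $\phi_z(\lambda)=\phi_{-z}(\lambda^{-1})$ entering exactly when one re-normalizes $(\lambda,0)$ back to $(1,0)$.
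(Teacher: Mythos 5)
Your proposal is correct and takes essentially the same approach as the paper: the paper's proof is exactly the representative computation in the double quotient $(\CC^2\setminus\{0\}\times\GG\times\CC)/(\CC^*\times\PP)$ that you describe as your cross-check, absorbing $\lambda'w$ (resp.\ $\phi_z(\lambda)\lambda'w$, after renormalizing the $\CC^2$-coordinate) into the right $\PP$-action to read off the weights $\pi_2^*(w\cdot\zeta)$ and $\Phi_z^*(w\cdot\zeta)$. Your main route through the equivariant clutching charts of $L_\zeta$ and the classical identification ${\LLL_\zeta}|_{\dot w\PP}\cong\CC_{w\cdot\zeta}$ is only a repackaging of that same computation.
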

\begin{proof}
Using the description of ${L_\zeta}$ as a quotient the $\CC^*\times \TT$-action at fixed points is:
\begin{align}\label{eq:action-clutching0}
 (\lambda,\lambda')(0,u_2,w,\varsigma)\CC^*\times \PP &=(0,u_2,\lambda'w,\varsigma)\CC^*\times \PP=(0,u_2,w(0),\zeta(w^{-1}\lambda'w)\varsigma)\CC^*\times \PP,
 \\\nonumber
 (\lambda,\lambda')(u_1,0,w,\varsigma)\CC^*\times \PP & =(\lambda u_1,0,\lambda'w,\varsigma)\CC^*\times \PP=
 (u_1,0,\phi_z(\lambda)\lambda'w,\varsigma)\CC^*\times \PP =\\ \nonumber
 &= (u_1,0,w,\zeta(w^{-1}\phi_z(\lambda)\lambda'w)\varsigma)\CC^*\times \PP,
\end{align}
 and therefore $\CC^*\times \TT$-module structure
 of  ${L_\zeta}|_{w(0)}$ and  ${L_\zeta}|_{w(\infty)}$ is given by (\ref{eq:fibermodule}).
\end{proof}

Next we want to determine the module structure on the tangent space of fixed points for the $\CC^*\times \TT$-action on $Y_{z,\OOO}$.
Each such module will have a `vertical' submodule --- coming via pullback
as in Proposition  \ref{pro:rootlocalization} from the
$\TT$-module structure of $T_{w\PP}\GG/\PP$ --- and an  additional 1-dimensional `horizontal' submodule, this coming from the $\CC^*$-action on $\PP^1$:

\begin{prop}\label{pro:tangentspace} Let $z\in \Lambda^\vee_w$ and $\PP\subset \GG$ a parabolic subgroup. If $w\in W^P$, then
the following  identifications of  $\CC^*\times \TT$-modules hold:
\begin{align}\label{eq:normalmodule0}
T_{w(0)}Y_{z,\OOO} &\cong \CC_{\pi_1^*(0)}\oplus \bigoplus_{\alpha\in \Delta^-\backslash{\Delta_J^-}}\CC_{{\pi_{2}}^*(w\cdot \alpha)},\\ \nonumber
T_{w(\infty)}Y_{z,\OOO} &\cong  \CC_{-\pi_1^*(0)}\oplus \bigoplus_{\alpha\in \Delta^-\backslash{\Delta_J^-}} \CC_{\Phi_z^*(w\cdot \alpha)},
 \end{align}
 where $\Delta_J^-$ is the subset of negative roots generated by the subset of simple roots $J\subset \Pi$ and
 $\pi_1\colon \CC^*\times  \TT\to \CC^*$ is the first projection.
\end{prop}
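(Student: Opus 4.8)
The plan is to compute the $\CC^*\times\TT$-module structure of $T_{w(0)}Y_{z,\OOO}$ and $T_{w(\infty)}Y_{z,\OOO}$ by splitting each tangent space into a vertical part (tangent to the fiber $\GG/\PP$) and a horizontal part (tangent to the base $\PP^1$), using the fact that both decomposition factors are $\CC^*\times\TT$-invariant since the fixed points $w(0),w(\infty)$ lie over the $\CC^*$-fixed points $0,\infty\in\PP^1$. First I would recall the classical computation of the vertical part: the $\TT$-module $T_{w\PP}(\GG/\PP)$ decomposes as $\bigoplus_{\alpha\in\Delta^-\backslash\Delta_J^-}\CC_{w\cdot\alpha}$. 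Indeed $T_{e\PP}(\GG/\PP)\cong\glie_\CC/\plie\cong\bigoplus_{\alpha\in\Delta^-\backslash\Delta_J^-}\glie_\alpha$ as a $\TT$-module, and translating by $w$ twists each weight by the Weyl group action, giving weights $w\cdot\alpha$. Then, exactly as in the proof of Proposition \ref{pro:rootlocalization}, I would transfer this to the fibers of $Y_{z,\OOO}$ over the fixed points using the two charts of the clutching construction (\ref{eq:associatedbcx}): over $0$ the chart is the first copy of $\CC\times\GG/\PP$, on which by (\ref{eq:actionclutching}) the group acts as $(\lambda,\lambda')\cdot(u,g\PP)=(\lambda u,\lambda' g\PP)$, so the vertical weights at $w(0)$ are $\pi_2^*(w\cdot\alpha)$; over $\infty$ the chart is the second copy, on which by (\ref{eq:actionclutching-2}) the group acts as $(\lambda,\lambda')\cdot(u,g\PP)=(\lambda^{-1}u,\phi_z(\lambda)\lambda' g\PP)$, so the vertical weights at $w(\infty)$ are $\Phi_z^*(w\cdot\alpha)$, with $\Phi_z$ as defined before Proposition \ref{pro:rootlocalization}.

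Next I would handle the horizontal summand, which is $T_0\PP^1$ at $w(0)$ and $T_\infty\PP^1$ at $w(\infty)$. The $\CC^*$-action on $\PP^1=\CC\cup\{\infty\}$ coming from the first factor is, in the charts, $u\mapsto\lambda u$ near $0$ and $u\mapsto\lambda^{-1}u$ near $\infty$ (the sign flip being forced by the gluing $u\mapsto u^{-1}$ in (\ref{eq:associatedbcx})), while the $\TT$-factor acts trivially on the base. Hence $T_0\PP^1\cong\CC_{\pi_1^*(0)}$ — here the notation $\pi_1^*(0)$ denotes the standard weight of the $\CC^*$-action, i.e. the identity character of $\CC^*$ pulled back along $\pi_1$ — and $T_\infty\PP^1\cong\CC_{-\pi_1^*(0)}$. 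Assembling the vertical and horizontal pieces gives precisely (\ref{eq:normalmodule0}).

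The only point requiring genuine care — and the main potential obstacle — is bookkeeping the sign conventions consistently: the direction of the cocharacter ($\phi_z$ versus $\phi_{-z}$) used in the clutching construction, the $u\mapsto u^{-1}$ gluing that reverses the $\CC^*$-weight on the base between the two charts, and the left- versus right-translation conventions that determine whether the Weyl twist produces $w\cdot\alpha$ or $w^{-1}\cdot\alpha$. To pin these down I would track a single explicit tangent vector through each chart exactly as in the computation (\ref{eq:action-clutching0}) in the proof of Proposition \ref{pro:rootlocalization}, from which the action on the quotient — hence the weights in (\ref{eq:fibermodule}) — was read off; the tangent-space computation is the formally identical statement with $\CC_\zeta$ replaced by $\glie_\alpha$ and with the extra base direction appended. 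Once the conventions match those already fixed in Proposition \ref{pro:rootlocalization}, the result follows immediately.
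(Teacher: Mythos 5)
Your proposal is correct and follows the same overall strategy as the paper: split $T_{w(0)}Y_{z,\OOO}$ and $T_{w(\infty)}Y_{z,\OOO}$ into horizontal and vertical $\CC^*\times\TT$-submodules, read the horizontal weights $\pm\pi_1^*(0)$ off the chart formulas (\ref{eq:actionclutching}) and (\ref{eq:actionclutching-2}), and identify the vertical weights with the root weights twisted by $w$, pulled back via $\pi_2$ over $0$ and via $\Phi_z$ over $\infty$. The one place where you take a genuinely different (and simpler) route is the vertical part in the general parabolic case. The paper does not argue pointwise: it first treats $\OOO'\cong\GG/\BB$ using the global splitting $T(\GG/\BB)=\bigoplus_{\alpha\in\Delta^-}\LLL_\alpha$ together with Lemma \ref{lem:bundle-action} and Proposition \ref{pro:rootlocalization}, and then, because $T(\GG/\PP)=\GG\times_\PP(\glie_\CC/\plie)$ does \emph{not} decompose into root line bundles over $\GG/\PP$, it pulls back along the $\TT$-equivariant projection $q\colon\GG/\BB\to\GG/\PP$, using $q^*T(\GG/\PP)\cong\GG\times_\BB(\glie_\CC/\plie)\cong\bigoplus_{\alpha\in\Delta^-\backslash\Delta_J^-}\LLL_\alpha$ and the identification $T_{w\PP}(\GG/\PP)\cong q^*T(\GG/\PP)|_{w\BB}$. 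Your pointwise statement $T_{w\PP}(\GG/\PP)\cong\bigoplus_{\alpha\in\Delta^-\backslash\Delta_J^-}\CC_{w\cdot\alpha}$ (via translation by a representative $n_w\in N(\TT)$, which intertwines the $\TT$-actions up to conjugation by $w$) sidesteps this global issue entirely and is a standard fact, so no gap results; it buys you a shorter argument, while the paper's bundle-level formulation is what it reuses later (e.g.\ in Proposition \ref{pro:graphword}) when it needs the weights of whole invariant line subbundles rather than just tangent weights at fixed points. Your remaining caveat about sign and convention bookkeeping is exactly the point the paper settles by the computation (\ref{eq:action-clutching0}), so resolving it as you propose reproduces the paper's conclusion.
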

\begin{proof}
Let us first consider the regular case $\OOO'\cong \GG/\BB$. Under this identification the tangent bundle to $\OOO'$ corresponds to
the bundle associated to the adjoint action on $\glie_\CC/\blie$, which splits into a direct sum of line bundles associated to negative roots:
\begin{equation}\label{eq:tangent-roots}
T(\GG/\BB)=\GG\times_{\BB} (\glie_\CC/\blie)=\bigoplus_{\alpha\in \Delta^-} \LLL_\alpha.
\end{equation}
Using the charts of the clutching construction for $Y_{z,\OOO'}$ and the identification $\OOO'\cong \GG/\BB$,
the fibers ${Y_{z,\OOO'}}|_{0}$ and ${Y_{z,\OOO'}}|_{\infty}$ are also identified with $\GG/\BB$. So the $\TT$-module structure
of the their tangent spaces is given by (\ref{eq:tangent-roots}). Lemma \ref{lem:torus-action} implies that
${Y_{z,\OOO'}}|_{0}$ and ${Y_{z,\OOO'}}|_{\infty}$
are $\CC^*\times \TT$-spaces. It follows from Lemma \ref{lem:bundle-action} that for the $\CC^*\times \TT$-module structure on $\GG\times_{\BB} (\glie_\CC/\blie)$
induced by (\ref{eq:tangent-roots}) each line bundle $\LLL_\alpha$ is a $\CC^*\times \TT$-submodule, and that at fixed
points we obtain the identification of $\CC^*\times \TT$-modules:
\[{\LLL_{\alpha}}|_{w(0)}\cong {L_\alpha}|_{w(0)},\quad {\LLL_{\alpha}}|_{w(\infty)}\cong {L_\alpha}|_{w(\infty)}\]
Therefore by (\ref{eq:fibermodule}) in Proposition \ref{pro:rootlocalization} we obtain that as $\CC^*\times \TT$-modules:
\[T_{w(0)}(Y_{z,\OOO'}|_0)\cong \bigoplus_{\alpha\in \Delta^-} \CC_{{\pi_{2}}^*(w\cdot \alpha)},\quad
 T_{w(\infty)}(Y_{z,\OOO'}|_\infty)\cong \bigoplus_{\alpha\in \Delta^-} \CC_{\Phi_z^*(w\cdot \alpha)}.
 \]
The $\CC^*\times \TT$-action described in (\ref{eq:actionclutching}) and (\ref{eq:actionclutching-2}) implies
that we have a `horizontal' one dimensional submodules in $T_{w(0)}Y_{z,\OOO'}$ and $T_{w(\infty)}Y_{z,\OOO'}$  complementary
to the vertical tangent spaces and diffeomorphic to $\CC_{\pi_1^*(0)}$ and $\CC_{-\pi_1^*(0)}$, respectively. This completes the proof for
the regular orbit.

Let us fix $q: \OOO'\cong \GG/\BB\to \OOO$ a $\GG$-equivariant projection, so $\OOO\cong \GG/\PP$, $\BB\subset \PP$.
The proof for $\OOO\cong \GG/\PP$ only requires a technical adjustment. We have the identification $T(\GG/\PP)=\GG\times_{\PP} (\glie_\CC/\plie)$,
but the right hand side cannot be written as a sum of line bundles associated to roots  $\alpha\in \Delta^-\backslash \Delta_J^-$, as $\alpha$ may not be $W_P$-
invariant. However, the $\TT$-equivariant  projection $\GG/\BB\to \GG/\PP$  furnishes a canonical identification of $\TT$-modules
(see e.g. \cite[Proposition 17]{Tu}):
\[q^*T(\GG/\PP)\cong \GG\times_\BB (\glie_\CC/\plie).\]
Since one in the right hand side is isomorphic to $\bigoplus_{\alpha\in \Delta^-\backslash\Delta_J^-} \LLL_\alpha$ and since we can identify
as $\TT$-modules $T_{w\PP}(\GG/\PP)$ with $q^*T(\GG/\PP)|_{w\BB}$, the result follows for $\GG/\PP$.

Passing from the  orbit case to $Y_{z,\OOO}$ is done as for the regular orbit.

\end{proof}

Now we are ready to discuss the generalized Schubert decomposition for $Y_{z,\OOO}$:

\begin{definition}\label{def:Bruhat} Let $Y_{z,\OOO}$ be the holomorphic bundle determined by
the orbit $\OOO\cong \GG/\PP$ and  $z\in \Lambda_w^\vee$. Then for each $w\in W^P$  we define:
\begin{enumerate}
 \item The (vertical) Bruhat cell
 \[B_w:=B_w\subset \GG/\PP\cong   {Y_{z,\OOO}}|_{ 0}\subset Y_{z,\OOO}.\]
 \item The (fibered) Bruhat cell
 \[bB_w:=\CC\times B_w\subset \CC\times \GG/\PP\subset Y_{z,\OOO},\]
 where we are using the chart centered at $\infty\in \PP^1$.
\end{enumerate}

 We define the Schubert varieties  $X_w$ and $bX_w$ to be the closure of the corresponding Bruhat cells. By construction $bX_w$ has the stratification:
 \[bX_w=\bigsqcup_{w'\leq w}B_{w'}\sqcup \bigsqcup_{w''<w}bB_{w''}\]

\end{definition}

\begin{prop} The Bruhat cells of $Y_{z,\OOO}$
define a holomorphic stratification whose closure --- the Schubert varieties --- define a CW complex decomposition
\end{prop}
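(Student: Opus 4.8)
The plan is to build everything out of the two familiar ingredients that are already in hand: the classical Bruhat/Schubert stratification of $\GG/\PP$ (recalled in the bulleted list and in \cite{BGG}), and the explicit description of $Y_{z,\OOO}$ via the two holomorphic charts $\CC\times\GG/\PP$ glued by $(u,g\PP)\sim(u^{-1},\phi_z(u)g\PP)$ in (\ref{eq:associatedbcx}). First I would observe that the two families of cells in Definition \ref{def:Bruhat} genuinely do partition $Y_{z,\OOO}$: the fiber over $0$ decomposes as $\bigsqcup_{w\in W^P}B_w$ by the classical theory, and the complement of that fiber, which is exactly the image of the chart $\CC^*\times\GG/\PP$ centered at $\infty$ (equivalently $\CC\times\GG/\PP$ with the fiber over $\infty$ deleted would be the other possibility — here one keeps the $\CC$-factor so as to get the whole complement of the $0$-fiber, whence $bB_w=\CC\times B_w$), decomposes fiberwise as $\bigsqcup_{w\in W^P}\CC\times B_w$. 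So every point of $Y_{z,\OOO}$ lies in exactly one cell, each $B_w$ is biholomorphic to $\CC^{l(w)}$ and each $bB_w$ to $\CC^{l(w)+1}$, and in particular all cells are even-dimensional affine cells.

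Next I would check the closure (frontier) condition, i.e. that $\overline{B_w}$ and $\overline{bB_w}$ are unions of cells, and identify which ones, thereby proving the stratification formula $bX_w=\bigsqcup_{w'\le w}B_{w'}\sqcup\bigsqcup_{w''<w}bB_{w''}$ asserted in the definition. Inside a single fiber this is precisely the classical statement $X_w=\overline{B_w}=\bigsqcup_{w'\le w}B_{w'}$ for $\GG/\PP$, with the Bruhat order restricted to $W^P$. For the fibered cell $bB_w=\CC\times B_w$ sitting in the $\infty$-chart, taking closure in the $\CC$-direction adjoins the fiber over $0$; on that fiber one must understand the limit, as $u\to 0$ in the $\infty$-chart (equivalently $u^{-1}\to\infty$ in the $0$-chart), of the point $(u^{-1},g\PP)$, which by the gluing is $(u,\phi_z(u^{-1})g\PP)$ — so the limit set is governed by the $\Gm$-action through the cocharacter $\phi_z$ (equivalently $\phi_{-z}$, depending on the chart convention) acting on $\overline{B_w}\subset\GG/\PP$. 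Since $B_w$ is a $\TT$-stable cell and $\phi_z$ lands in $\TT$, this $\Gm$-orbit closure stays inside $\overline{B_w}=\bigsqcup_{w'\le w}B_{w'}$, and combined with the $\CC$-closure of the other fibered cells $bB_{w''}$ with $w''<w$ one gets exactly the claimed decomposition. This also shows each closure is a closed analytic (indeed algebraic, by Proposition \ref{pro:linaction}) subvariety.

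Finally I would assemble these facts into the CW statement. Because all cells are affine spaces $\CC^d$, attached along their boundaries which are finite unions of lower-dimensional cells, and because there are only finitely many cells, the filtration of $Y_{z,\OOO}$ by unions of cells of bounded dimension is a (finite) CW filtration with only even-dimensional cells; one reads off characteristic maps from closures of the affine charts $\overline{\CC^d}\cong$ (a closed disk of real dimension $2d$) mapping onto each $X_w$ or $bX_w$, using that the classical Schubert varieties already carry such characteristic maps \cite{BGG} and that $bX_w$ is the closure of $\CC\times B_w$. The resulting CW complex has cells $\{X_w,\,bX_w\}_{w\in W^P}$. As a bonus, since the cell structure has no odd-dimensional cells, $\{[X_w],[bX_w]\}_{w\in W^P}$ is automatically an additive basis of $H_\bullet(Y_{z,\OOO};\ZZ)$, consistent with the Wang-sequence computation in Subsection \ref{ss:strategy}.

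\textbf{Expected main obstacle.} The one genuinely delicate point is the closure computation for the fibered cells $bB_w$: one must pin down precisely how the fiber-over-$0$ boundary of $\CC\times B_w$ is produced by the clutching gluing twisted by $\phi_z$, and verify it does not escape $\bigsqcup_{w'\le w}B_{w'}$. Everything else — the partition, the affineness of the cells, the passage from "finite collection of even-dimensional affine cells with algebraic closures" to "CW decomposition" — is standard once that frontier condition is in place, and can be handled by invoking the classical $\GG/\PP$ theory \cite{BGG} fiberwise together with the explicit $\Gm$-action of Lemma \ref{lem:torus-action}.
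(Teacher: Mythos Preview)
Your approach is correct but takes a genuinely different route from the paper's. You work directly with the clutching charts (\ref{eq:associatedbcx}): the key observation---that each classical Bruhat cell $B_w$ is $\TT$-invariant and $\phi_z$ lands in $\TT$---means that in \emph{both} charts $bB_w$ over $\CC^*$ is literally $\CC^*\times B_w$, so the frontier condition and the stratification formula for $bX_w$ drop out of the classical $\GG/\PP$ theory. The paper instead invokes Bialynicki--Birula: since the $\CC^*\times\TT$-action on $Y_{z,\OOO}$ is algebraic (Proposition \ref{pro:linaction}), a generic one-parameter subgroup yields a (minus-)BB decomposition into affine cells whose closures form a CW complex by the general theory \cite{BCG}; one then checks, via the weight data of Proposition \ref{pro:tangentspace}, that the BB cells coincide with the Bruhat cells of Definition \ref{def:Bruhat}. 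Your route is more elementary---it bypasses Proposition \ref{pro:linaction} and the BB machinery altogether and makes the geometry transparent---but your final step, producing honest characteristic maps $D^{2d}\to bX_w$ by ``reading them off'' from the classical ones, is a bit informal: an affine paving satisfying the frontier condition is not automatically a CW structure, and extending the attaching maps across the $\phi_z$-twist needs a word more than you give it. The paper buys exactly this by outsourcing the CW claim to the BB theorem. That said, for everything the paper actually \emph{uses} downstream---that $\{[X_w],[bX_w]\}_{w\in W^P}$ is a $\ZZ$-basis of integral homology---your affine paving already suffices.
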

\begin{proof}
  By Proposition \ref{pro:linaction} the action of $\CC^*\times \TT$ on $Y_{z,\OOO}$ is algebraic.
  Results of Bialinicky-Birula on algebraic actions of tori (see e.g \cite[II,\S 4.1]{BCG}) imply that
  any one dimensional subtorus $\CC^*\subset \CC^*\times \TT$ gives rise to a (minus) Bialinicky-Birula decomposition of $Y_{z,\OOO}$, and that
  one can always find one dimensional subtori whose fixed point set is $Y^{\CC^*\times \TT}_{z,\OOO}$. In that case the Bialinicky-Birula decomposition
  has the following properties:
  \begin{enumerate}
   \item[(i)] There is one cell for each fixed point.
   \item[(ii)] Each cell is characterized as the $\CC^*\times \TT$-invariant submanifold of $Y_{z,\OOO}$ containing the fixed point and whose tangent
  space at the fixed point is the submodule of negative weights.
  \item[(iii)] The cells are biholomorphic to affine spaces and their closures define a CW-complex decomposition of $Y_{z,\OOO}$.
  \end{enumerate}

  We start by noting that by Definition \ref{def:Bruhat} there is one Bruhat cell containing each fixed point, and that such cell is a $\CC^*\times \TT$-invariant
  submanifold.
  Therefore, we just need to find a subtorus $\CC^*\subset \CC^*\times \TT$ whose fixed point set is $Y^{\CC^*\times \TT}_{z,\OOO}$,
   and such for each fixed point its submodule of negative weights is
  exactly the tangent space of the corresponding Bruhat cell.

  The Lie algebra of the maximal compact subgroup (torus) of $\CC^*\times \TT$ is
  $i\RR\oplus \tlie\subset\CC\oplus \hlie$.
  We pick $x\in \tlie$ in the interior of the negative Weyl chamber generating a compact 1-dimensional subgroup of $T$. We define $\CC^*$
  to be the subtorus whose Lie algebra contains $i\oplus x$.

  For the action on $\GG/\PP$ of the 1-dimensional subtorus of $\TT$ determined by $x$
  we have (see \cite[II,Example 4.2]{BCG}\footnote{The result
  in \cite[II,Example 4.2]{BCG}
  is  stated just for $\GG/\BB$ (and for the plus Bialinicky-Birula decomposition). The general case follows from how the Bruhat decompositions behave with respect to the $\TT$-equivariant
  submersion $\GG/\BB\to \GG/\PP$}):
   \begin{itemize}
   \item the set of fixed points is  $(\GG/\PP)^\TT$, i.e., $w\PP$, $w\in W^P$;
   \item the negative submodule at $w\PP$ is exactly $T_{w\PP}B_w$.
  \end{itemize}
  This, together with (\ref{eq:actionclutching}), (\ref{eq:actionclutching-2}) in Lemma
  \ref{lem:torus-action} and (\ref{eq:normalmodule0}) in Proposition \ref{pro:tangentspace}, implies
  that the fixed point set of the $\CC^*$-action defined by $i\oplus x$ is
  $Y^{\CC^*\times \TT}_{z,\OOO}$ ($w(0),w(\infty)$, $w\in W^P$),
  and that at each fixed point the negative submodule is the tangent space to
  the corresponding Bruhat cell.
\end{proof}

\subsection{Generalized Chevalley formula}
Let  $\OOO\cong \GG/\PP$ be a coadjoint orbit. The classical Chevalley formula computes
 for $\zeta \in \Lambda_w^{W_P}$ and $w\in W^P$ the cap product of the integral cohomology class $\kappa(\zeta)$
with the integral homology class $[X_w]$ in terms of information on the
Bruhat order of the Weyl group (see for example \cite[\S 4, Proposition 3]{BGG}):
\begin{equation}\label{eq:capclass}
  \kappa(\zeta)\cap [X_w]=
\sum_{w'\overset{s}{\rightarrow}w,\,w'\in W^P} (w'\cdot \zeta)(h_{\alpha})[X_{w'}],
\end{equation}
where the expression `$w'\overset{s}{\rightarrow}w,\,w'\in W^P$' below the summation
sign means (here and everywhere else in the paper)
that we sum over all reflections $s=s_{\alpha}\in W$ such that:
\begin{enumerate}
\item $\alpha$ is a positive root,
\item $w':=sw\in W^P$,
\item $l(w')=l(w)-1$,
\end{enumerate}
where $l$ is the length function on $W$, and  $h_\alpha$ denotes the coroot associated to $\alpha$, which recall is defined as
\begin{equation}
\label{eq:def-h}
h_\alpha:=2\frac{\langle \alpha,\cdot\rangle}{\langle \alpha,\alpha\rangle}.
\end{equation}

We present below a generalization to our setting (the sign change is because in our convention
to define a line bundle out of a character is the opposite as in \cite{BGG}):

\begin{prop}\label{pro:cap} Let $Y_{z,\OOO}$ be the bundle defined
by $z\in \Lambda_w^\vee$ and the coadjoint orbit $\OOO\cong \GG/\PP$.
Then for any  $\zeta\in  \Lambda_w^{W_P}$ and
any $w\in W^P$ we have the following cap product formula for the class $\kappa(\zeta)$
and the integral class $[bXw]$:
\begin{equation}\label{eq:cap}
\kappa(\zeta)\cap [bX_w]=-(w\cdot\zeta)(z)[X_w]-
\sum_{w'\overset{s}{\rightarrow}w,\,w'\in W^P} (w'\cdot \zeta)(h_{\alpha})[bX_{w'}].
\end{equation}

\end{prop}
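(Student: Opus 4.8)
The plan is to compute the cap product by localization on the fixed points of the $\CC^*\times\TT$-action, using the module-theoretic data assembled in Propositions \ref{pro:rootlocalization} and \ref{pro:tangentspace}, and then to compare the resulting expression with the known classical Chevalley formula \eqref{eq:capclass} applied fiberwise. First I would recall that $bX_w$ is a Schubert variety of complex dimension $l(w)+1$ whose boundary stratification is $\bigsqcup_{w'\le w}B_{w'}\sqcup\bigsqcup_{w''<w}bB_{w''}$; in particular the codimension-one strata are $X_w$ (a vertical Schubert variety, sitting in the fiber over $0$) and the $bX_{w'}$ with $w'\overset{s}{\rightarrow}w$, $w'\in W^P$, exactly as in the classical length-drop rule. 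So $\kappa(\zeta)\cap[bX_w]$ must be a $\ZZ$-linear combination of $[X_w]$ and the $[bX_{w'}]$, and the whole problem is to pin down the coefficients.

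The coefficient of $[bX_{w'}]$ is a purely fiberwise computation: near a generic point of $bB_{w'}$ the bundle $Y_{z,\OOO}$ looks like $\CC\times(\text{fiber})$ and $bX_w$ looks like $\CC\times X_w$ there, while $L_\zeta$ restricts to the classical line bundle $\LLL_\zeta$ on the fiber; hence the multiplicity with which $bX_{w'}$ appears in $c_1(L_\zeta)\cap[bX_w]$ is the same as the multiplicity with which $X_{w'}$ appears in $\kappa(\zeta)\cap[X_w]$ inside the fiber, namely $(w'\cdot\zeta)(h_\alpha)$ by \eqref{eq:capclass}; the overall sign is $-1$ because (as noted in the remark preceding the proposition) our convention for associating a line bundle to a character is opposite to that of \cite{BGG}. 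This already accounts for the sum on the right-hand side of \eqref{eq:cap}, so the only genuinely new quantity is the coefficient of $[X_w]$. To compute it I would use the $\TT$-equivariant (or localization) description: the class $\kappa(\zeta)=c_1(L_\zeta)$ is represented equivariantly, and integrating over $bX_w$ the difference between the equivariant Chern class and its restriction to the fiber $\{0\}\times X_w$ picks up precisely the "horizontal" contribution. Concretely, restricting the $\CC^*\times\TT$-equivariant first Chern class of $L_\zeta$ to the two fixed points $w(0)$ and $w(\infty)$ of the subvariety $bX_w$ and using Proposition \ref{pro:rootlocalization}, one gets the weights $\pi_2^*(w\cdot\zeta)$ at $w(0)$ and $\Phi_z^*(w\cdot\zeta)$ at $w(\infty)$; since $\Phi_z^*(w\cdot\zeta)=\pi_2^*(w\cdot\zeta)+(w\cdot\zeta)(z)\,\pi_1^*(0)$, the discrepancy is exactly $(w\cdot\zeta)(z)$ in the $\CC^*$-direction. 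Running this through the Atiyah–Bott–Berline–Vergne localization formula on $bX_w$ (whose fixed locus for the appropriate one-parameter subgroup consists of the fixed points of $X_w$ together with their $\infty$-counterparts, with tangent weights given by Proposition \ref{pro:tangentspace}), the $\CC^*$-equivariant degree of $c_1(L_\zeta)$ over the fiber class contributes the coefficient $-(w\cdot\zeta)(z)$ of $[X_w]$, the sign again coming from our line-bundle convention together with the chosen orientation of $\PP^1$ (the one extending the complex orientation of $\DD^2_+$, i.e. the chart around $0$).

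I expect the main obstacle to be making the "horizontal contribution" argument rigorous without assuming smoothness of $bX_w$: the Schubert variety $bX_w$ is singular in general, so one cannot naively integrate Chern–Weil forms over it, and the ABBV formula must be invoked in its version for singular fixed-point schemes (or, equivalently, one works in equivariant homology / with the equivariant fundamental class $[bX_w]$ and uses that the localization map is injective after inverting the relevant weights). A clean way around this is to avoid localization on $bX_w$ directly and instead argue intersection-theoretically: express $[bX_w]$ via the Borel–Moore / CW structure, use that $\kappa(\zeta)$ is Poincaré dual to an explicit divisor (a section of $L_\zeta$ vanishing on a sum of codimension-one Schubert varieties of $Y_{z,\OOO}$), and count intersection multiplicities stratum by stratum; the vertical strata $bX_{w'}$ reproduce the classical Chevalley coefficients, while the single vertical-fiber stratum $X_w$ acquires the extra term $-(w\cdot\zeta)(z)$ measuring the "twisting" of $L_\zeta$ across the clutching, which is detected by the cocharacter $\phi_z$ exactly as in \eqref{eq:clutching-vector}. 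Either route reduces the statement to the classical formula \eqref{eq:capclass} plus a bookkeeping of one extra scalar, so once the singular-variety technicality is handled the computation is routine.
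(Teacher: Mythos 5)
Your second (``intersection-theoretic'') route is essentially the paper's proof, and your first (localization) route is not the one the paper uses -- for good reason. The paper argues as follows: by linearity it reduces to $\zeta$ a regular dominant weight, passes to the regular-orbit bundle $Y_{z,\OOO'}$, embeds it in $\PP(V_z)$ via the representation with highest weight $\zeta$ exactly as in (\ref{eq:clutching-vector}), and writes down an explicit \emph{meromorphic} section $\tau_w$ of $\OOO_{\PP(V_z)}(1)|_{Y_{z,\OOO'}}=L_{-\zeta}$: in the chart at $\infty$ it is the holomorphic section $v_{w\cdot\zeta}^*$, whose fiberwise divisor on $X'_w$ reproduces the classical BGG coefficients $(w'\cdot\zeta)(h_{\alpha})$, while across the clutching it picks up the factor $u^{(w\cdot\zeta)(z)}$, which contributes $(w\cdot\zeta)(z)[X'_w]$ along the fiber over $0$ (possibly with a pole, i.e.\ negative order -- your phrase ``a section of $L_\zeta$ vanishing on a sum of Schubert divisors'' glosses over this). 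The codimension-$\geq 2$ singularities of Schubert varieties supply the Poincar\'e duality needed to read the cap product off this divisor, and the statement for general $\OOO$ is then obtained by pushing forward along $q\colon Y_{z,\OOO'}\to Y_{z,\OOO}$ using $q_*[bX'_{w'}]=[bX_{w'}]$ or $0$ as in (\ref{eq:pushhom}). This reduction to the regular orbit is not cosmetic: after the linearity reduction $\zeta$ is regular dominant, hence not $W_P$-invariant, so the divisor computation must be done upstairs for arbitrary $w\in W$ and only afterwards pushed down (this is also the content of Remark \ref{rem:order}).

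The genuine gap is in your preferred localization route, and you half-acknowledge it: $bX_w$ is in general singular, so the ABBV formula with the tangent weights of Proposition \ref{pro:tangentspace} (which are weights of the ambient $Y_{z,\OOO}$, not of $bX_w$) does not apply, and the proposed fix via equivariant fundamental classes and injectivity of the localization map is not carried out. There is a second, structural, problem: localization computes numbers (equivariant integrals), whereas (\ref{eq:cap}) is an identity in $H_{2l(w)}(Y_{z,\OOO})$; even for smooth $bX_w$ you would still have to pair against a dual basis to extract the coefficients of $[X_w]$ and the $[bX_{w'}]$, which is extra machinery the paper never needs here. Indeed, in the paper localization is reserved for the later sections, where it is applied only to the smooth Schubert varieties coming from projective subdiagrams, precisely to avoid these issues; the Chevalley-type formula itself is proved by the divisor argument sketched above.
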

\begin{remark}\label{rem:rational -vs-integral}
 Observe that if $\zeta$ is a root then $\kappa(\zeta)$ is  the integral class $c_1(L_\zeta)$,
but if $\zeta$ is a weight then $\kappa(\zeta)$ may be a non-integral rational class.
\end{remark}

\begin{proof}
The proof for the orbit $\OOO\cong \GG/\PP$  \cite[\S 4, Proposition 3]{BGG} goes by constructing an appropriate projective embedding of $\OOO$ w.r.t. which
$\LLL_\zeta^\vee=\LLL_{-\zeta}$
is the restriction of the hyperplane bundle\footnote{Note the difference with \cite{BGG} due to the different convention for the definition of $\LLL_\zeta$.}.
Chevalley's formula is obtained by looking at a natural holomorphic section
of ${\LLL_{-\zeta}}|_{X_w}$ whose divisor of zeroes can easily
be related to the stratification
of $X_w$ by Bruhat cells. A crucial fact is that
the singularities of the Schubert variety $X_w$ have complex codimension
at least 2, which implies the existence of a Poincar\'e duality isomorphism $H_{\dim_{\RR}X_w-2}(X_w)\simeq H^2(X_w)$ identifying the first Chern class of $\LLL_{-\zeta}|_{X_w}$ 
and the homology class represented by the vanishing locus of a holomorphic section intersected with $X_w$.
Our proof is very similar, with the only difference that in our bundle setting we will
have to consider a natural  meromorphic (not necessarily holomorphic) section of a certain line bundle over $bX_w$.

It will be convenient for our argument to work with the regular coadjoint orbit $\OOO'\cong \GG/\BB$, $\BB\subset \PP$. So we consider the submersion
$$q\colon Y_{z,\OOO'}\to Y_{z,\OOO}$$
and we denote for every $w\in W$ by
$$[X_w']\in H_{2l(w)}(\OOO';\ZZ),\qquad [bX_w']\in H_{2l(w)+2}(Y_{z,\OOO'};\ZZ)$$
the homology classes associated to $w$.
A straightforward extension of Corollary 5.3 in \cite{BGG}
implies that:
\begin{equation}\label{eq:pushhom}
 q_*[bX'_w]=\begin{cases}
        [bX_w]& \mathrm{if}\,\, w\in W^P\\
0 & \mathrm{if}\,\, w\notin W^P
           \end{cases},
           \qquad
 q_*[X'_w]=\begin{cases}
        [X_w]& \mathrm{if}\,\, w\in W^P\\
0& \mathrm{if}\,\, w\notin W^P
           \end{cases}
\end{equation}

We shall now compute $q^*\kappa(\zeta)\cap [bX'_w]\in H_{2l(w)}(Y_{z,\OOO'};\QQ)$ for an arbitrary
$w\in W$.

Because
(\ref{eq:cap}) is linear on $\zeta$, we may assume that $\zeta$ is a root and that it is a regular dominant weight.
Let us denote by $K:\GG\rightarrow \GL(V)$
 the representation of $\GG$ with maximal
 weight $\zeta$. Because $\zeta$ is regular the $\GG$-orbit of  the line $[V_\zeta]$ corresponding to the 1-dimensional
eigenspace of $\zeta$ defines an embedding  $\OOO'\hookrightarrow \mathbb{P}(V)$. Moreover, the restriction of
  the hyperplane bundle $\OOO_{\PP(V)}(1)$ to (the image of) $\OOO'$ can be identified with
  $\LLL_{-\zeta}$  \cite[\S 4, Proposition 3]{BGG}:
  \[\OOO_{\PP(V)}(1)|_{\OOO'}\equiv\GG\times_{\BB}V_\zeta=\LLL_{-\zeta}.\]

Next, we construct a holomorphic vector bundle $V_z\to \PP^1$ gluing two copies
 of $\CC\times V$ as in (\ref{eq:clutching-vector}) in  Proposition \ref{pro:linaction}.
 If we let $\PP(V_z)$ denote its projectivization, then we obtain an embedding
 \[Y_{z,\OOO'}\hookrightarrow \PP(V_z),\]
 so that the restriction of $\OOO_{\PP(V_z)}(1)$ to (the image of) $Y_{z,\OOO'}$
 is identified with $L_{-\zeta}$.

We define a meromorphic section $\tau_{w}$ of $\OOO_{\PP(V_z)}(1)$ as follows: let $v_{\zeta_i}$, $i\in I$, be a basis of $V$ of eigenvectors.
On the domain of the chart of $L_{-\zeta}$ centered at $\infty\in \PP^1$ the
section is given by restricting the 1-form $v_{w\cdot \zeta}^*\in V^*$:
\[ (u,[v])\mapsto {v_{w\cdot\zeta}^*|}_{\CC v},\quad (u,v)\in \CC\times V.\]
There is just one choice of meromorphic section in the chart centered at $0\in \PP^1$ so that
it agrees with the above holomorphic section in the overlap:
\begin{equation}\label{eq:secoverinfty}(u,[v])\mapsto {u^{\w\cdot \zeta(z)}v_{w\cdot \zeta}^*|}_{\CC v},\quad (u,v)\in \CC\times V.
 \end{equation}

To describe the divisor of zeros and poles of ${\tau_w}|_{bX_w}$ we first restrict our attention to the chart centered at $\infty$. As $\tau_w$  is holomorphic there,
it has no poles. For each fixed $u\in \CC$
the restriction of $\tau_w$ to $\{u\}\times X_w$
is non-vanishing exactly on vectors over the Bruhat cell $B_w$ and its divisor of
zeroes is \cite[\S 4, Proposition 3]{BGG}:
\[\sum_{w'\overset{s}{\rightarrow}w} (w'\cdot\zeta)(h_{\alpha})[X_{w'}'],\]
where the expression "$w'\overset{s}{\rightarrow}w$" below the summation
sign means  that we sum over all reflections $s=s_{\alpha}\in W$ such that:
(1) $\alpha$ is a positive root, and (2)
$w':=sw$ satisfies $l(w')=l(w)-1$.
Therefore it follows that the divisor of zeros of ${\tau_w}|_{bX_w}$ contains:
\[\sum_{w'\overset{s}{\rightarrow}w} (w'\cdot\zeta)(h_{\alpha})[bX_{w'}'].\]
By (\ref{eq:secoverinfty}) the remaining part of the divisor of zeroes and poles of ${\tau_w}|_{bX_w}$ is:
\[(w\cdot\zeta)(z)[X_w'].\]
Therefore we conclude
\begin{equation}
\label{eq:Chev-red}
q^*\kappa(\zeta)\cap [bX_w']=-(w\cdot \zeta)(z)[X_w']-
\sum_{w'\overset{s}{\rightarrow}w} (w'\cdot \zeta)(h_{\alpha})[bX_{w'}'].
\end{equation}
We now translate this into a formula for the general coadjoint orbit $\OOO$. Take
some $w\in W^P$. By (\ref{eq:pushhom}) we have $[bX_w]=q_*[bX_w']$, so by
the product formula we have:
\begin{align*}
\kappa(\zeta)\cap [bX_w] &= \kappa(\zeta)\cap q_*[bX_w']=q_*(q^*\kappa(\zeta)\cap [bX_w']) \\
&=q_*\left(-(w\cdot \zeta)(z)[X_w']-
\sum_{w'\overset{s}{\rightarrow}w} (w'\cdot \zeta)(h_{\alpha})[bX_{w'}']\right) \\
&=-(w\cdot \zeta)(z)q_*[X_w']-
\sum_{w'\overset{s}{\rightarrow}w} (w'\cdot \zeta)(h_{\alpha})q_*[bX_{w'}'] \\
&=-(w\cdot\zeta)(z)[X_w]-
\sum_{w'\overset{s}{\rightarrow}w,\,w'\in W^P} (w'\cdot \zeta)(h_{\alpha})[bX_{w'}],
\end{align*}
where in the last equality we have used again (\ref{eq:pushhom}).
\end{proof}

\begin{remark}\label{rem:order}
If $f\in \Sym_\QQ(\Lambda_w)$ represents a cohomology class in $\OOO\cong \GG/\PP$, then
is possible to apply (\ref{eq:cap}) recursively to cap  $\kappa(f)$  against a (fibered) Schubert class.
Note that such a polynomial will be $W_P$-invariant in the quotient ring $\Sym_\QQ(\Lambda_w)/I_+$, so it may
not be itself $W_P$-invariant, and therefore its monomials need not be $W_P$-invariant. This implies
that we have to do the cap product of each monomial
inside a regular orbit. The price to pay is the use the Bruhat order of $W$ and not the much simpler order induced on $W/W_P$.
\end{remark}

A first interesting consequence of the generalized Chevalley formula is the proof of our main theorem
in the case of regular orbits:

 \begin{proof}[Proof of Corollary \ref{cor:regular}]
 By Proposition \ref{pro:nonint} it is enough to show that if $\OOO'$ is a regular orbit of $G$ and $\gamma\subset G$ is a non-contractible loop, then
  the characteristic isomorphism for $Y_{\gamma,\OOO}$ is not integral.

  Let us fix a $G$-equivariant identification $\OOO'\cong G/T$ and let us pick
  $z\in \Lambda_w^\vee$ so that the loop $\gamma_z$ is homotopic to $\gamma$. Because $\gamma$ is not contractible $z\notin \Lambda^\vee_r$.
  This is equivalent to the existence of a weight $\zeta\in \Lambda_w$
 so that $\zeta(z)\notin \ZZ$.

 The Borel isomorphism identifies $H^2(\OOO';\ZZ)$ with the lattice of weights $\Lambda_w$, so $\kappa$ will be non-integral if
 $\kappa(\zeta)\in H^2(Y_{z,\OOO'};\QQ)$
 is not an integral class.  The only fibered degree two Schubert class is $[bX_e]$. If we apply
 the generalized Chevalley formula (\ref{eq:cap}) to $[bX_e]$ and $\zeta$, then we obtain:
 \[\int_{[bX_e]}\kappa(\zeta)=-\zeta(z)\notin \ZZ,\]
 which proves the corollary.
   \end{proof}

It is also a consequence of the generalized Chevalley formula that to prove the non-integrality of the characteristic isomorphism
is it enough to consider the case of simple groups:
\begin{proof}[Proof of Proposition \ref{pro:simplegroups}]
Let us assume that the characteristic isomorphism is not integral
for every coadjoint orbit of a simple group and for every non-contractible loop in the group.

Let $\OOO\cong G/P$, $T\subset P$, be a coadjoint orbit of be an (adjoint) semisimple Lie group $G$,  and let $z\in \tlie$ be a coweight with is not a coroot.
We can assume without loss of generality that $G=G_1\times G_2$ ($T=T_1\times T_2$),  with $G_1$ simple, that $z=z_1+z_2\in \tlie=\tlie_1\oplus \tlie_2$, with $z_1$
a coweight which is not a co-root, and that $\OOO=\OOO_1\times \OOO_2$ is a product of coadjoint orbits on each factor group,
$\OOO_1\cong G_1/P_1$, $T_1\subset P_1$.

As $z_1$ is not a co-root, by assumption the characteristic isomorphism is not integral for $Y_{z_1,\OOO_1}$. Therefore there exist  $w_1\in {W_1}^{P_1}$
and $f_1\in \QQ[\tlie_1]$ homogeneous with $\kappa(f_1)\in H^*(\OOO_1;\ZZ)$, such that:
\begin{equation}\label{eq:contractionorbit}
 \int_{[bX_{w_1}]}\kappa(f_1)\notin \ZZ.
 \end{equation}

If we regard $f_1\in \QQ[\tlie]$, then we claim that $\kappa(f_1)\in H^\bullet(\OOO_1\times \OOO_2;\QQ)$  is an integral class.
The reason is that the classical Chevalley formula for coadjoint orbits (\ref{eq:capclass})  and the fact
that the Bruhat order on $W=W_1\times W_2$ is the product order  imply:
\[\int_{[X_{w'}]}\kappa(f_1)=\int_{[X_{w'_1}]}\kappa(f_1),\, w'\in W,\, w'=w'_1w_2'.\]

Let us also regard $bX_{w_1}$ as a fibered Schubert variety of $Y_{z,\OOO}$. By the generalized Chevalley formula we also get that the pairing
$\int_{[bX_{w_1}]}\kappa(f_1)$ in $Y_{z,\OOO}$ equals (\ref{eq:contractionorbit}) (this is because
for any monomial $\zeta_1\subset \tlie^*_1$ of $f_1$ we have $\zeta_1(z_1+z_2)=\zeta_1(z_1)$).
Thus, we conclude  that the characteristic isomorphism  for $Y_{z,\OOO}$ is not integral.
\end{proof}

\subsection{Smooth Schubert varieties and Dynkin subdiagrams}
Our goal is finding appropriate polynomials and Schubert varieties so that the
the cap products
\begin{equation}\label{eq:cap-product-Schubert}
\kappa(f)\cap [bX_w]:=\int_{[bX_w]}\kappa(f)
 \end{equation}
allow us to prove the non-integrality of the characteristic isomorphism. Schubert varieties are in general
non-smooth $\CC^*\times \TT$-spaces. However, for smooth ones the cap product (\ref{eq:cap-product-Schubert})
is actually an integral that can be computed by localization.
To that end we will be describing
a collection of smooth Schubert varieties in $Y_{z,\OOO}$ (in $\OOO$) for which the module
structure of the tangent spaces at the fixed points can be easily determined, and that will be enough
for our purposes \footnote{It is very likely that the smoothness of the varieties we will describe is a
consequence of the known smoothness criteria
(see e.g. \cite{BP}), although these often apply just to Schubert varieties inside regular orbits.}.

\begin{definition}\label{def:projgraph}
Let $\GG$ be a simple (connected) Lie group, let $\Pi$ be a set of simple roots and
let $J$ be a subset of $\Pi$. Denote by $\PP$ the parabolic subgroup determined by $J$.
We say that a subdiagram $\Gamma\subset \Gamma_\Pi$
 is projective (relative to $J$) if $\Gamma$ is isomorphic to $A_k$, for some $k\in \NN$ (in particular
 all roots have the same norm so there are no multiple edges), and the only root of $\Gamma$ not in $J$ is in the
boundary of $\Gamma$.
\end{definition}

We collect the following list of projective subdiagrams for latter use.

\begin{example}\label{ex:subgraphs} We refer to Dynkin diagrams of classical simple Lie groups as appearing in \cite[Page 293]{OV}.
\begin{enumerate}
 \item For any $\Gamma_\Pi$, $J\subset \Gamma_\Pi$, and $\alpha \notin J$  the subdiagram $\{\alpha\}$ is projective.
\item If $\Gamma_\Pi=B_n$ and  $J=\Gamma_\Pi\backslash \{\alpha_r\}$, $r<n$, then any connected subdiagram  having
as leftmost node the $r$-th root and as rightmost node $\alpha_l$, $l<n$, is projective.
\item  If $\Gamma_\Pi=D_n$ and $J=\Gamma_\Pi\backslash \{\alpha_r\}$, $r<n$,  then projective subdiagrams
 are those connected subdiagrams with bivalent nodes having the $r$-th root at their boundary.
 \end{enumerate}
\end{example}

Projective subdiagrams determine smooth Schubert varieties with simple module structure at fixed points:

\begin{prop}\label{pro:graphword} Using the notation of Definition \ref{def:projgraph}, let
$\Gamma\subset \Pi$ be a projective subdiagram relative to $J$. Let
$\alpha_{(1)},\dots,\alpha_{(k)}$ be the only ordering of the roots of $\Gamma$ so that
$\alpha_{(j)}$ and $\alpha_{(j+1)}$, $1\leq j\leq k-1$, are joined
by an edge and $\alpha_{(1)}\notin J$.
If we define $w_\Gamma:=s_{(k)}\circ \cdots \circ s_{(1)}$, where $s_{(j)}$ is the reflection
 defined by $\alpha_{(i)}$, then the following holds:
\begin{enumerate}
 \item The Schubert variety $X_{w_\Gamma}\subset \GG/\PP\cong \OOO$ is biholomorphic to projective space $\PP^k$.
 \item The only elements of $W^P$ smaller or equal than  $w_\Gamma$ are:
 \[w_{(j)}:=s_{(j)}\circ \cdots \circ s_{(1)},\,j=0,\dots,k,\,\, (w_{(0)}=e).\]
\item For any coweight $z\in\Lambda_w^\vee$ the (smooth) Schubert variety $bX_{w_\Gamma}\subset Y_{z,\OOO}$ has fixed points
 \[w_{(j)}(0),w_{(j)}(\infty),\,  0\leq j\leq k,\]
 (see equation (\ref{eq:fibered-fixed--points}))
 with $\CC^*\times \TT$-module structure on tangent spaces isomorphic to:
 \begin{align}\label{eq:schubertproj0}
T_{w_{(j)}(0)}bX_{w_\Gamma} &\cong  \CC_{\pi_1^*(0)}\oplus\bigoplus_{h=1}^j\CC_{\pi_2^*(\alpha_{(h)}+\cdots+\alpha_{(j)})}
\oplus \bigoplus_{h=j+1}^k\CC_{-\pi_2^*(\alpha_{(j+1)}+\cdots+\alpha_{(h)})},\\ \nonumber
T_{w_{(j)}(\infty)}bX_{w_\Gamma} &\cong \CC_{-\pi_1^*(0)}\oplus\bigoplus_{h=1}^j\CC_{\Phi_z^*(\alpha_{(h)}+\cdots+\alpha_{(j)})}
\oplus \bigoplus_{h=j+1}^k\CC_{-\Phi_z^*(\alpha_{(j+1)}+\cdots+\alpha_{(h)})}.
\end{align}

\end{enumerate}

\end{prop}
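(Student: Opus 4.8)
The plan is to recognize $X_{w_\Gamma}$ as a genuine projective space sitting inside $\OOO$ by descending to the rank-$k$ subgroup attached to $\Gamma$, and then to read off all the module structures from the standard representation of that subgroup. Since $\Gamma$ is projective relative to $J$, its unique node outside $J$ is an end of the chain, which by the chosen ordering is $\alpha_{(1)}$; hence $\alpha_{(2)},\dots,\alpha_{(k)}\in J$ and span an $A_{k-1}$ subdiagram. Let $\GG_\Gamma\subset\GG$ be the connected semisimple subgroup generated by the root subgroups for $\pm\alpha$, $\alpha\in\Gamma$; it has type $A_k$, it is normalized by $\TT$, and its Weyl group $W_\Gamma=\langle s_{(1)},\dots,s_{(k)}\rangle\cong S_{k+1}$ contains all the $w_{(j)}$. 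From the standard identity $W_\Gamma\cap W_P=W_{\Gamma\cap J}=\langle s_{(2)},\dots,s_{(k)}\rangle$ it follows that $W_\Gamma/(W_\Gamma\cap W_P)$ has $k+1$ elements whose minimal-length representatives are exactly $w_{(0)},\dots,w_{(k)}$, with $l(w_{(j)})=j$ and $s_{(k)}\cdots s_{(1)}$ a reduced word, so $w_\Gamma=w_{(k)}$; and since any element of $W_\Gamma$ sends a simple root lying outside $\Gamma$ to a positive root, each $w_{(j)}$ is also $J$-reduced, i.e.\ $w_{(j)}\in W^P$.

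\emph{Claims (1) and (2).} By the standard description of the Schubert variety of a Weyl group element lying in a parabolic subgroup $W_\Gamma$ --- pass to the flag variety of the Levi $L_\Gamma$ of the standard parabolic $\PP_\Gamma$ attached to $\Gamma$, then project to $\GG/\PP$ --- the variety $X_{w_\Gamma}\subset\GG/\PP$ is the closed $\GG_\Gamma$-orbit of $e\PP$, hence $\GG_\Gamma$-equivariantly isomorphic to $\GG_\Gamma/(\GG_\Gamma\cap\PP)$. Here $\GG_\Gamma$ has type $A_k$ and $\GG_\Gamma\cap\PP$ is the maximal parabolic of $\GG_\Gamma$ whose Weyl group is $\langle s_{(2)},\dots,s_{(k)}\rangle$, so after labelling the $A_k$ root system with $\alpha_{(i)}=\epsilon_i-\epsilon_{i+1}$ we identify $\GG_\Gamma/(\GG_\Gamma\cap\PP)$ with $\PP(U)$, where $U\cong\CC^{k+1}$ is the standard representation and $e\PP$ corresponds to the highest weight line; this proves (1). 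Since $X_{w_\Gamma}=\bigsqcup_{w'\le w_\Gamma,\,w'\in W^P}B_{w'}$ and $\PP(U)\cong\PP^k$ has precisely $k+1$ Schubert cells --- which must therefore be the $B_{w_{(j)}}$ --- we get $\{w'\in W^P:w'\le w_\Gamma\}=\{w_{(0)},\dots,w_{(k)}\}$, which is (2).

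\emph{Claim (3).} Being a Schubert variety, $X_{w_\Gamma}$ is $\TT$-invariant, hence $\phi_z(\CC^*)$-invariant, so in either chart of the clutching construction (\ref{eq:associatedbcx}) the closure of $\CC\times B_{w_\Gamma}$ is $\CC\times X_{w_\Gamma}$; thus $bX_{w_\Gamma}$ is the bundle over $\PP^1$ obtained by clutching $X_{w_\Gamma}\cong\PP(U)$ along $\phi_z$, i.e.\ a fibration of exactly the kind studied in this section, with $\PP(U)$ in place of $\GG/\PP$. In particular $bX_{w_\Gamma}$ is smooth, and its $\CC^*\times\TT$-fixed points are the points over $0$ and over $\infty$ lying above the $\TT$-fixed points of $\PP(U)$, namely $w_{(j)}(0)$ and $w_{(j)}(\infty)$, $0\le j\le k$. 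The tangent modules are then computed exactly as in Propositions \ref{pro:rootlocalization} and \ref{pro:tangentspace}: the horizontal summand contributes $\CC_{\pi_1^*(0)}$ at $w_{(j)}(0)$ and $\CC_{-\pi_1^*(0)}$ at $w_{(j)}(\infty)$, while the vertical summand is $\pi_2^*$ (resp.\ $\Phi_z^*$) of the $\TT$-module $T_{w_{(j)}\PP}X_{w_\Gamma}=T_{[v_j]}\PP(U)$. Writing $v_j$ for the $\TT$-weight vector whose weight is $\mu_0-(\alpha_{(1)}+\cdots+\alpha_{(j)})$ ($\mu_0$ the weight of the highest weight line), one verifies from $w_{(j)}=s_{(j)}\cdots s_{(1)}$ and $\langle\omega_1,\alpha_{(i)}^\vee\rangle=\delta_{1i}$ that $w_{(j)}\PP=[v_j]$; hence the weights of $T_{[v_j]}\PP(U)$ are $\{\mathrm{wt}(v_m)-\mathrm{wt}(v_j):m\neq j\}$, and these telescope to $\alpha_{(h)}+\cdots+\alpha_{(j)}$ for $h=1,\dots,j$ and to $-(\alpha_{(j+1)}+\cdots+\alpha_{(h)})$ for $h=j+1,\dots,k$. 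This is precisely (\ref{eq:schubertproj0}).

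\emph{Expected main obstacle.} The only delicate point is the group-theoretic bookkeeping in the second step: to identify $X_{w_\Gamma}$ with $\GG_\Gamma\cdot e\PP$ and that orbit with $\PP(U)$ for the \emph{standard} (and not the dual) representation, one must keep straight $\GG_\Gamma$, the standard parabolic $\PP_\Gamma$ attached to $\Gamma$, its Levi, and the parabolic $\GG_\Gamma\cap\PP$, and check that under the chosen labelling of the $A_k$ root system $\GG_\Gamma\cap\PP$ is the line-stabilizer. Once that normalization is in place, Claims (1) and (2) are formal, and (3) is obtained by running the proof of Proposition \ref{pro:tangentspace} on the sub-bundle $bX_{w_\Gamma}$.
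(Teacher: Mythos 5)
Your argument is correct, but it follows a genuinely different route from the paper's. The paper never introduces the subgroup $\GG_\Gamma$: it fixes the representation $K:\GG\to\GL(V)$ with highest weight $\mu$ satisfying $\mu(h_\alpha)=0$ for $\alpha\in J$ and $\mu(h_\alpha)=1$ otherwise, computes $w_{(j)}\cdot\mu=\mu-\sum_{h\le j}\alpha_{(h)}$, and proves by hand (integrating the nilpotent action as in \cite[Proposition 1.1]{BGG}, after the explicit length computation based on the equal-norm identity (\ref{eq:equal-norm})) that $X_{w_\Gamma}$ is the linear subspace $\PP(V_{w_\Gamma})=\PP(\oplus_j V_{w_{(j)}\cdot\mu})$ with Bruhat cells $\BB\cdot[w_{(j)}\cdot v]$; this yields (1), (2) and the $\TT$-module identification (\ref{eq:projtmod}) in one stroke, and (3) is then obtained, as in your last step, by clutching via $\phi_{-z}$. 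You instead pass to the type-$A_k$ subgroup $\GG_\Gamma$, identify $X_{w_\Gamma}$ with the closed orbit $\GG_\Gamma\cdot e\PP\cong\GG_\Gamma/(\GG_\Gamma\cap\PP)\cong\PP^k$, and get (2) from the Coxeter facts $W_\Gamma\cap W_P=W_{\Gamma\cap J}$ plus a cell count, and (3) by telescoping weights of the standard representation. Your route is more structural and makes smoothness transparent (a closed orbit of a reductive subgroup), at the cost of two small steps you leave implicit but which are easy: that $X_{w_\Gamma}$ is all of the closed orbit (e.g.\ because $w_\Gamma=w_{(k)}$ is the longest element of $W_\Gamma\cap W^P$, so both are irreducible of dimension $k$), and that $X_{w_\Gamma}\cong\PP^k$ meets exactly $k+1$ ambient Bruhat cells (e.g.\ by Betti numbers, or because $R_u(\PP_\Gamma)$ acts trivially on the closed orbit so the ambient $\BB$-cells are the Levi's Schubert cells). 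What the paper's more computational proof buys is the explicit linear span $\PP(V_{w_\Gamma})$ inside the ambient $\PP(V)$ and, as pointed out in the remark following the proposition, the fact that the argument uses only (\ref{eq:equal-norm}) and hence applies verbatim to $C_k$-chains, whereas your identification with the projectivized standard representation of an $A_k$ subgroup (and the $S_{k+1}$ coset combinatorics) is type-$A$-specific and would need adapting there.
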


\begin{proof}

For an arbitrary Dynkin diagram any word in $k$ distinct simple roots
has length $k$ \cite{Ti} (see also Lemma \ref{lem:tits}). We provide the argument for $w_\Gamma$ because it contains
some useful computations that we will need to prove other items of this proposition. What makes the computations for $w_\Gamma$ simpler is that this is
a word in consecutive roots and the identities:
\begin{equation}\label{eq:equal-norm}2\frac{\langle \alpha_{(j+1)},\alpha_{(j)}\rangle}{\langle \alpha_{(j+1)},\alpha_{(j+1)}\rangle}=-1.
 \end{equation}
By \cite[Lemma 2.2]{BGG} we need to find $k$ positive roots which $\w_{\Gamma}^{-1}$ sends to negative ones.
Let  $\gamma_i:=s_{(k)}\circ\cdots\circ s_{(i+1)}(\alpha_{(i)})$
$1\leq i\leq k$.  An induction argument shows that:
\begin{equation}\label{eq:negroots0}
 \gamma_i=\sum_{h=i}^k\alpha_{(h)},
\end{equation}
and hence $\gamma_i$, $1\leq j\leq k$, are distinct roots. An analogous calculation shows
\[
 w^{-1}_\Gamma\cdot \gamma_i=-\left(\sum_{h=1}^{i}\alpha_{(h)}\right),
\]
and thus $w_\Gamma$ has length $k$.

Next, we want to identify $X_{w_\Gamma}$ with $\PP^k$ and to describe its partition into Bruhat cells.
Consider the weight
\[
\mu(h_\alpha)=\frac{2\langle \alpha,\mu\rangle}{\langle \alpha,\alpha\rangle}:=\begin{cases}0 & \mathrm{if}\,\, \alpha\in J\\
                                                                   1& \mathrm{if}\,\,\alpha\in \Pi\backslash J
    \end{cases}
    \]
(see (\ref{eq:def-h}) for the definition of the coroot $h_{\alpha}$).
The representation $K:\GG\to \GL(V)$ with maximal weight $\mu$ embeds $\GG/\PP$  in $\PP(V)$
as the orbit of the eigenline $[V_\mu]$.
We fix a non-zero $v\in V_\mu$  and consider the vectors $w_{(j)}\cdot v\in V_{w_{(j)}\cdot \mu}$, $j=0,\dots,k$.
Because $\alpha_{(1)}\notin J$ we get:
\begin{equation}\label{eq:maxweightact}
 w_{(j)}\cdot \mu=\mu-\sum_{h=1}^j\alpha_{(h)}.
\end{equation}
As these are different weights, the vectors $w_{(j)}\cdot v$, $0\leq j\leq k$, are linearly independent and span
$V_{w_\Gamma}:=\oplus_{j=0}^k V_{w_{(j)}\cdot \mu}$.
We claim that we have the disjoint union
\begin{equation}\label{eq:bruhat-for-projective}
 \PP(\oplus_{h=0}^{j}V_{w_h\cdot \mu})=\BB\cdot [\w_{(j)}\cdot v]\sqcup \PP(\oplus_{h=0}^{j-1} V_{w_h\cdot \mu}).
 \end{equation}
Note that this would automatically imply item (2) in the proposition.

As the complex torus $\TT\subset \BB$ stabilizes  $[\w_{(j)}\cdot v]$, we only need to look at the action
of the nilpotent factor $\BB/\TT$. A nilpotent group
is diffeomorphic to the product of the 1-parameter subgroups associated to any basis of its Lie algebra.
Regarding the infinitesimal action of the nilpotent Lie algebra,
by \cite[Proposition 1.1]{BGG} it is enough to consider the action of positive roots sent to negative ones by $w_{(j)}$. It follows from (\ref{eq:negroots0})
that these roots are \[\gamma^j_i=\sum_{h=i}^j\alpha_{(h)},\, 0\leq i\leq j.\]
As we have the identity $\gamma_{i}^j+w_{(j)}\cdot\mu=w_{(i-1)}\cdot \mu$, we obtain
\[\gamma_{i}^j\cdot V_{w_{(j)}\cdot\mu}= V_{w_{(i-1)}\cdot \mu}.\]
The decomposition in (\ref{eq:bruhat-for-projective}) follows immediately by integrating the infinitesimal
action of these root spaces.

By  (\ref{eq:fibered-fixed--points}) the set of fixed points  $X_{w_\Gamma}^{\TT}$ is $[w_{(j)}\cdot v]$, $0\leq j\leq k$. To describe the $\TT$-module structure of their tangent spaces we note
that the basis $w_{(j)}\cdot v\in V_{w_{(j)}\cdot \mu}$, $j=0,\dots,k$  furnishes an isomorphism
of $\TT$-modules:
\begin{equation}\label{eq:projtmod}
\bigoplus_{j=0}^k\CC_{w_{(j)}\cdot \mu}\cong \bigoplus_{j=0}^k V_{w_{(j)}\cdot \mu},
 \end{equation}
and these implies:
\begin{equation}\label{eq:mod-orbit}T_{w_{(j)}}X_{w_\Gamma}\cong \bigoplus_{h=1}^j\CC_{\alpha_{(h)}+\cdots+\alpha_{(j)}}
\oplus \bigoplus_{h=j+1}^k\CC_{-(\alpha_{(j+1)}+\cdots+\alpha_{(h)})}
\end{equation}

The formulas
for the $\CC^*\times \TT$-module structure at $T_{w_{(j)}(0)}bX_{w_\Gamma},T_{w_{(j)}(\infty)}bX_{w_\Gamma}\subset Y_{z,\OOO}$ described
in (\ref{eq:schubertproj0}), are easily deduced from (\ref{eq:mod-orbit})
 by working with the bundle
over $\PP^1$ associated to the representation $K$ and the $\CC^*$-action determined by the cocharacter $\phi_{-z}$.

\end{proof}
\begin{remark} The only property of $A_k$ used in the proof of Proposition \ref{pro:graphword} is equation
(\ref{eq:equal-norm}). Thus if $\Gamma$ is a subdiagram isomorphic to $C_k$ with
root ordering $\alpha_{(1)},\dots,\alpha_{(k)}$ as in Proposition \ref{pro:graphword},
then the same conclusions hold.
\end{remark}

\section{Grassmann and isotropic Grassmann fibrations}\label{sec:unitary}

The minimal coadjoint orbits of the projective unitary group $\PU(n+1)$ and of the compact symplectic group $\Sp(2n)$ can be identified with
the Grassmannians and isotropic Grassmannians, respectively: $\mathrm{Gr}_r(\CC^{n+1})$, $\mathrm{IGr}_r(\CC^{2n})$, $1\leq r\leq n$.
The goal of this section is to prove the following result:

\begin{theorem}\label{thm:ngrass} Let $G$ be either $\PU(n+1)$ or $\Sp(n)/Z$ and let $\gamma\subset G$ be a non-contractible loop based at
the identity. Let $Y_{\gamma,r}$
denote the bundle coming from the clutching construction applied to the minimal coadjoint orbit $\mathrm{Gr}_r(\CC^{n+1})$ or $\mathrm{IGr}_r(\CC^{2n})$, $1\leq r\leq n+1$.

The characteristic isomorphism
\[\kappa:H^\bullet(Y_{e,r};\QQ)\to H^\bullet(Y_{\gamma,r};\QQ)\]
 is not integral.
\end{theorem}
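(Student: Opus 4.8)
By Propositions \ref{pro:min-orbits} and \ref{pro:simplegroups} we may restrict to non-contractible loops, and by Remark \ref{rmk:PD-integrality}, together with the observation that the pull-back to $Y_{e,r}=S^2\times\OOO_r$ of an integral cohomology class of the fibre $\OOO_r$ is integral, the theorem is reduced to the following: for each non-contractible $\gamma$, exhibit a class $f\in H^\bullet(\OOO_r;\ZZ)$ and a fibered Schubert cycle $[bX_w]\in H_\bullet(Y_{\gamma,r};\ZZ)$ with $\langle\kappa(f),[bX_w]\rangle\notin\ZZ$ --- for were $\kappa$ integral, $\kappa(\pi^*f)=\kappa(f)$ would pair integrally with every integral cycle. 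Here $\OOO_r$ denotes the relevant minimal orbit ($\mathrm{Gr}_r(\CC^{n+1})$ or $\mathrm{IGr}_r(\CC^{2n})$), and we represent $\gamma$ by a coweight $z\in\Lambda_w^\vee$. All of the pairings above will be computed with the generalized Chevalley formula, Proposition \ref{pro:cap}, applied recursively after lifting the computation to the regular orbit $G/T$ as explained in Remark \ref{rem:order}.

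\textbf{Degree two.} Take first $f=\sigma_1$, the ample generator of $H^2(\OOO_r;\ZZ)$, which in the Borel description is the fundamental weight $\varpi_r$, and the unique degree-two fibered Schubert cycle $[bX_e]$. Exactly as in the proof of Corollary \ref{cor:regular}, Proposition \ref{pro:cap} yields $\langle\kappa(\varpi_r),[bX_e]\rangle=-\langle\varpi_r,z\rangle$. For $G=\PU(n+1)$, if $\gamma$ represents the class $k\in\ZZ/(n+1)\cong\pi_1(G)$ then $\langle\varpi_r,z\rangle\equiv-\tfrac{rk}{n+1}\pmod{\ZZ}$, which is not an integer whenever $(n+1)\nmid rk$; for $G=\Sp(n)/Z$ one has $\pi_1(G)\cong\ZZ/2$ and $\langle\varpi_r,z\rangle\equiv\tfrac r2\pmod{\ZZ}$, not an integer when $r$ is odd. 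In all these (generic) cases the characteristic isomorphism is already non-integral.

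\textbf{Degree four, and the main difficulty.} There remain the residual cases --- $(n+1)\mid rk$ for $\PU(n+1)$, and $r$ even for $\Sp(n)/Z$ --- in which the degree-two class gives no obstruction. For these we use $f=c_2(S)\in H^4(\OOO_r;\ZZ)$, the second Chern class of the tautological subbundle over $\OOO_r$ (for the isotropic Grassmannian one restricts the corresponding class from the ambient $\mathrm{Gr}_r(\CC^{2n})$), represented by the second elementary symmetric polynomial in the Chern roots of $S$, and we pair it with the complex two-dimensional fibered Schubert cycle $[bX_{s_r}]$ attached to the unique length-one element $s_r\in W^P$. The one-node subdiagram $\{\alpha_r\}$ is projective in the sense of Definition \ref{def:projgraph}, so by Proposition \ref{pro:graphword} the variety $X_{s_r}$ is a projective line and $bX_{s_r}$ is smooth. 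Since the monomials of the polynomial representing $c_2(S)$ need not be $W_P$-invariant, one lifts to the regular orbit, expands by Proposition \ref{pro:cap}, and pushes forward; the result is an explicit rational number which turns out not to be an integer exactly in the residual cases (already for $\OOO_r=\mathrm{Gr}_2(\CC^4)$ and $\gamma$ of order $2$ this pairing equals $-\tfrac12$). The hard part of the whole theorem is this computation: organising the recursive application of Proposition \ref{pro:cap} --- in particular keeping track of the signs and of which intermediate classes are fibered as opposed to vertical Schubert classes --- and then carrying out the divisibility check; for $\Sp(n)/Z$ this is complicated further by the two root lengths of $C_n$ and by the fact that the relevant degree-four integral classes of $\mathrm{IGr}_r(\CC^{2n})$ are most conveniently handled through the embedding into $\mathrm{Gr}_r(\CC^{2n})$.
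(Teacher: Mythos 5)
Your strategy coincides with the paper's proof of Theorem \ref{thm:ngrass}: the same two integral classes ($c_1=\sum_{i\leq r}\epsilon_i$ in degree two and $c_2=\sum_{i<j\leq r}\epsilon_i\epsilon_j$ in degree four), the same cycles ($[bX_e]$ and the fibered Schubert surface attached to the one-node projective subdiagram $\{\alpha_r\}$), and the generalized Chevalley formula for both pairings. The differences are organizational plus one unexecuted step: the paper does not split into ``residual'' cases but records both values, $\int_{[bX_e]}\kappa(c_1)=-\tfrac{dr}{n+1}$ (resp.\ $-\tfrac{dr}{2}$) and $\int_{[bX_{w_{\alpha_r}}]}\kappa(c_2)=\pm\tfrac{d(r-1)}{n+1}$ (resp.\ $\pm\tfrac{d(r-1)}{2}$), and concludes that if $\kappa$ were integral their difference $\tfrac{d}{n+1}$ (resp.\ $\tfrac{d}{2}$) would be an integer, contradicting non-contractibility; your case split is logically equivalent. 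The degree-four pairing, which you flag as the main difficulty and only assert, is in fact short: after a single application of Proposition \ref{pro:cap}, the relation $\epsilon_i(h_{\alpha_r})=0$ for $i<r$ (equation (\ref{eq:evaluation-dual-root})) kills the vertical contribution and all monomials of $c_2$ not containing $\epsilon_r$, leaving $\int_{[bX_{w_{\alpha_r}}]}\kappa(c_2)=-\bigl(\epsilon_1(z)+\cdots+\epsilon_{r-1}(z)\bigr)$, which confirms your sample value $\pm\tfrac12$ for $\mathrm{Gr}_2(\CC^4)$ and your claim that this pairing is non-integral in the residual cases (it need not be non-integral \emph{only} there --- e.g.\ it vanishes for $r=1$ --- but that is harmless, since the degree-two obstruction already handles the non-residual cases).
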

As we will see, to detect the non-integrality of $\kappa$ will be enough to consider cohomology classes
of degree less or equal than four.

Observe that Theorem \ref{thm:ngrass} for $G=\PU(n+1)$  implies Theorem \ref{thm:injdiffflags} in the Introduction.

We need a precise description of the quotient of coweight and coroot lattices,  the weight lattice, and the action of the Weyl group
on the weight lattice, for both $\PU(n+1)$ and $\Sp(n)$. This material is drawn from
 \cite[Pages 292--289]{OV}\footnote{In some cases our generators for $\Lambda_w^\vee/\Lambda_r^\vee$ will differ
 from those in \cite{OV}}. It will be presented simultaneously for both groups with common notation
 that hopefully will lead to no confusion.
 We warn the reader that when the data for both group differs the information for
$\PU(n+1)$ will come in the first place.

Let $T_{\U}\subset \U(n+1)$ be determinant one diagonal matrices with entries in $S^1$ and let $T\subset \PU(n+1)$ be the
image of $T_{\U}$ under the projection $\U(n+1)\to \PU(n+1)$. Let
 $e_i\in \tlie_{\U}$, $1\leq i\leq n+1$, denote the matrix with 1 in the position $(ii)$ and 0 elsewhere. We denote by
\begin{equation}\label{eq:generators-lattice-1}
 \epsilon_1,\dots,\epsilon_{n+1}\in \tlie^*_{\U}
\end{equation}
the basis dual to $e_1,\dots,e_{n+1}$. This
a basis of the rational lattice of $\tlie^*_{\U}$  so that
$\tlie^*$ is the quotient by the line spanned by $\sum_{k=1}^{n+1}\epsilon_k=0$. A multiple of the Killing form on $\tlie^*$
is induced by the following degenerate pairing on $\tlie^*_{\U}$:
\begin{equation}
\langle \epsilon_i,\epsilon_j\rangle=\begin{cases} \tfrac{n}{n+1} &\mathrm{if}\,\, i=j\\
                                         -\tfrac{1}{n+1} &\mathrm{if}\,\, i\neq j
                                       \end{cases}
  \end{equation}

We realize $\Sp(n)$ as the intersection $\SU(2n)\cap \mathrm{Sp}(2n,\CC)$, where the latter is the group of
symmetries of the standard complex symplectic
form on $\CC^{2n}$.  Our maximal torus $T$ consists of diagonal matrices
with entries in $S^1$, i.e. elements of the form
\[\theta_1e_1+\cdots\theta_ne_n+\theta_1^{-1}e_{n+1}+\cdots +\theta_n^{-1}e_{2n},\]
where $\theta_i\in S^1$ and $e_i$ is the matrix whose only non-zero entry is 1 in the position $(ii)$.
Let
\begin{equation}\label{eq:generators-lattice-2}
\epsilon_1,\dots,\epsilon_n \in \tlie^*
\end{equation}
be the basis dual to $e_1,\dots,e_n$. This is an orthonormal basis for a multiple of the Killing form.

We take as sets of simple roots for $\PU(n+1)$ and $\Sp(n)$:
\begin{equation}\label{eq:simple-roots-basis}
 \PU(n+1):\,\alpha_i=\epsilon_i-\epsilon_{i+1},\,i=1,\dots,n,\qquad\qquad \Sp(n):\,\alpha_i=\epsilon_i-\epsilon_{i+1},\,i=1,\dots,n,\,\alpha_n=2\epsilon_n
\end{equation}
For both groups the corresponding fundamental weights are:
\begin{equation}\label{eq:suweightparab}
 \zeta_i=\sum_{k=1}^i\epsilon_i,\,i=1,\dots,n.
\end{equation}
The quotients of coweight and coroot lattices are the cyclic groups
\begin{equation}\label{eq:groupgrass}
 \PU(n+1):\, \Lambda_w^\vee/\Lambda_r^\vee \cong \ZZ_{n+1},\qquad\qquad  \Sp(n):\,\Lambda_w^\vee/\Lambda_r^\vee \cong \ZZ_{2},
 \end{equation}
and we select the generators:
\begin{equation}\label{eq:gen-unitary}
 \PU(n+1):\,  z_0=\frac{1}{n+1}\left(-ne_{n+1}+\sum_{k=1}^n e_k\right),   \qquad\qquad    \Sp(n):\,   z_0=\frac{1}{2}\sum_{k=1}^{n}e_k.
\end{equation}
The Weyl groups for $\PU(n+1)$ and $\Sp(n)$  are symmetric group and the hyperoctahedral group
in the generators (\ref{eq:generators-lattice-1}) and (\ref{eq:generators-lattice-2}), respectively. So far we have denoted
the reflection on a root by $s$. From this point on, we will modify
 our notation for reflections. If a set of generators of
the rational lattice of $\tlie^*$ has been fixed, then notation $\sigma$ will be used
when the simple reflection acts as a transposition on the fixed set of generators.
More specifically,  for $\PU(n+1)$ the Weyl group is generated by the simple reflections $\sigma_1,\dots,\sigma_n$, where the $\sigma_i$ transposes
$\epsilon_i$ and $\epsilon_{i+1}$. The reflections on the simple roots of $\Sp(n)$ are
$\sigma_1,\dots,\sigma_{n-1},s_n$, where $\sigma_i$ transposes
$\epsilon_i$ and $\epsilon_{i+1}$ and
\[s_n(\epsilon_i)=\begin{cases}\epsilon_i &\mathrm{if}\,\, i\neq n\\
                         -\epsilon_n &\mathrm{if}\,\, i=n.
                        \end{cases}
\]

Let  $\PP_r$ be the maximal parabolic subgroup of either $\PGL(n+1,\CC)$ or $\Sp(n,\CC)$
determined by the subset $\Pi\backslash \{\alpha_r\}$, $1\leq r\leq n$. The corresponding
flag variety is either the Grassmannian  $\mathrm{Gr}_r(\CC^{n+1})$, or  the isotropic Grassmannian $\mathrm{I}\mathrm{Gr}_r(\CC^{2n})$ (the Grassmannian of
isotropic subspaces of
dimension $r$ in $\CC^{2n}$ with its standard complex symplectic form).
The corresponding residual Weyl Groups $W_r$ are generated by:
 \begin{equation}\label{eq:residual-Weyl-unit}
\PU(n+1):\,\sigma_1,\dots,\sigma_{r-1},\sigma_{r+1},\dots,\sigma_n,\qquad\qquad \Sp(n):\, \sigma_1,\dots,\sigma_{r-1},\sigma_{r+1},\dots,\sigma_{n-1},s_n.
 \end{equation}

\begin{proof}[Proof of Theorem \ref{thm:ngrass}]
For both $\PU(n+1)$ and $\Sp(n)$ we consider the coweight $z=dz_0$, $d\in \ZZ$.
By (\ref{eq:gen-unitary}) and (\ref{eq:groupgrass})  it is enough to show
that the characteristic isomorphism for $Y_{dz_0,r}$ is not integral if $d$ is not is a
multiple of $n+1$ for $\PU(n+1)$ and  of 2 for $\Sp(n)$.

For both $\PU(n+1)$ and $\Sp(n)$ equation (\ref{eq:residual-Weyl-unit}) implies that
any symmetric polynomial in the variables $\epsilon_1,\dots,\epsilon_r$ corresponds to an integral cohomology class of the
minimal coadjoint orbit (note that if $r<n$, then $s_n$ acts trivially in the variables $\epsilon_1,\dots,\epsilon_r$). Equation (\ref{eq:suweightparab}) implies that
the weight lattice is generated by $\epsilon_1,\dots,\epsilon_n$. Therefore
by  \cite[Theorem 2.1]{To} the integral cohomology up to degree four of
the minimal orbit is generated by the following symmetric polynomials in the variables $\epsilon_1,\dots,\epsilon_r$:
\[c_1=\sum_{i=1}^r\epsilon_i,\quad c_1^2,\quad c_2=\sum_{1\leq i<j\leq r}\epsilon_i\epsilon_j.\]

Applying the generalized Chevalley formula (\ref{eq:cap})  to $c_1$ and $[bX_e]$, and using (\ref{eq:gen-unitary}) we obtain:
\begin{equation}\label{eq:cap-symmetric-one}
\PU(n+1):\,\int_{[bX_e]}\kappa(c_1)=-c_1(z)=-\frac{dr}{n+1},\qquad\qquad \Sp(n):\, \int_{[bX_e]}\kappa(c_1)=-c_1(z)=-\frac{dr}{2}.
 \end{equation}

The subgraph with a single node $\alpha_r$ is projective relative to  $\Pi\backslash \{\alpha_r\}$ (see Example \ref{ex:subgraphs}, (1)),
so $bX_{w_{\alpha_r}}$ is a complex submanifold of the minimal orbit of (complex) dimension two. To cap $\kappa(c_2)$ against $[bX_{w_{\alpha_r}}]$ we will not use localization, but rather the generalized
Chevalley formula again. The reason is that
only $e$ precedes $\alpha_r$ in the Bruhat order of the Weyl group
(see Remark \ref{rem:order} and Lemma \ref{lem:tits}). To apply the generalized Chevalley formula we note that the choice of simple roots  (\ref{eq:simple-roots-basis})
together with the fixed inner products imply the following relation between the $\epsilon_i$'s
and coroots (see (\ref{eq:def-h})):
\begin{equation}\label{eq:evaluation-dual-root}
 \epsilon_i(h_{\alpha_r})=\begin{cases} 0 &\mathrm{if}\,\, i<r\\
                                        1 &\mathrm{if}\,\, i=r.
                          \end{cases}
\end{equation}
If we apply once the generalized Chevalley formula  (\ref{eq:cap}) to $\kappa(c_2)$ and $[bX_{w_{\alpha_r}}]$,
then we get a first summand which is a linear
polynomial in the variables $\epsilon_1,\dots,\epsilon_{r-1}$ multiplying the vertical Schubert variety $[X_{\alpha_r}]$.
When we cap again that linear polynomial against $[X_{\alpha_r}]$,  by equation (\ref{eq:evaluation-dual-root})  the result is zero.  The same argument
 proves that in the second summand in (\ref{eq:cap})  monomials
not containing $\epsilon_r$ do not contribute to the integral. Therefore the integral becomes:
\[\int_{[bX_{w_{\alpha_r}}]}{\kappa(c_2)}=
\epsilon_r(h_{\alpha_r})\int_{[bX_e]}\kappa(\epsilon_1+\cdots +\epsilon_{r-1})=-(\epsilon_1(z)+\cdots +\epsilon_{r-1}(z))\]
Thus by (\ref{eq:gen-unitary}):
\begin{equation}\label{eq:cap-symmetric-two}
\PU(n+1):\,\int_{[bX_{w_{\alpha_r}}]}{\kappa(c_2)}=\frac{d(r-1)}{n+1},\qquad\qquad \Sp(n):\,\int_{[bX_{w_{\alpha_r}}]}{\kappa(c_2)}=-\frac{d(r-1)}{2}.
\end{equation}
Equations (\ref{eq:cap-symmetric-one}) and (\ref{eq:cap-symmetric-two}) for $\PU(n+1)$ imply that
$\frac{dr}{n+1},\frac{d(r-1)}{n+1}\in \ZZ$, and so does their difference $\frac{d}{n+1}$. The same applies to $\Sp(n)$ to show that  $\frac{d}{2}\in\ZZ$,
and this finishes the proof of the theorem.
\end{proof}

We finish this section proving Theorem \ref{thm:injdiffflags} in the Introduction. There, the starting point
is  a rank $n+1$ holomorphic vector bundle  $E\to\PP^1$. The assertion is that for two such vector bundles, there exists a compatible diffeomorphism between
the associated bundles of flags of type
$r=(r_1,\dots,r_l)$ if and only if the degrees of the vector bundles differ by a multiple of $n+1$.

The frame bundle associated to $E$ is classified by a loop $\gamma: S^1\to \GL(n+1,\CC)$. The bundle of flags of type $r$ associated to $E$ comes from the principal bundle
classified by the loop $p(\gamma)$, where $p$ is the natural projection $p:\GL(n+1,\CC)\to \PGL(n+1,\CC)$.

The degree of $E$ is the homotopy class of $\gamma$ with the normalization provided by the determinant map.
More precisely, it coincides with the degree of $\mathrm{det}\circ \gamma:S^1\to S^1$.  Also, the kernel of the map induced by $p$ on fundamental groups
consists of homotopy classes of loops whose degree is a multiple of $n+1$.

Therefore, by Theorem \ref{thm:ngrass} (for $\PU(n+1)$), Proposition \ref{pro:min-orbits} (reduction to minimal orbits) and
Proposition \ref{pro:nonint} (the non-integrality
of $\kappa$ obstructs the existence of compatible isomorphisms),
there is a compatible diffeomorphism between the bundles of flags of type $r$ associated to the rank $n+1$ vector bundles $E,E'$ if and only if the loops $p(\gamma),p(\gamma')$
are homotopic. Equivalently, if the degrees of $E,E'$ differ by a multiple of $n+1$, as we wanted to prove.

\section{Orthogonal Grassmann fibrations I}
\label{s:orthogonal-I}

The minimal coadjoint orbits of the odd dimensional orthogonal group $\SO(2n+1)$ are
the orthogonal Grassmannians $\mathrm{OGr}_r(\CC^{2n+1})$, $1\leq r\leq n$. The
purpose of this section is to prove the following:

\begin{theorem}\label{thm:orthoggrass} Let $\gamma\subset \SO(2n+1)$ be a non-contractible loop based at the identity and let $Y_{\gamma,r}$
denote the bundle coming from the clutching construction applied to the coadjoint orbit $\mathrm{OGr}_r(\CC^{2n+1})$, $1\leq r\leq n$.

The characteristic isomorphism
\[\kappa:H^\bullet(Y_{e,r};\QQ)\to H^\bullet(Y_{\gamma,r};\QQ).\]
is not integral.
\end{theorem}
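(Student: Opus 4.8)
The plan is to run the argument of the proof of Theorem~\ref{thm:ngrass} with the root datum of $B_n$, the crucial new feature being that degree--two cohomology no longer detects the non--integrality of $\kappa$. First I would fix notation as in \cite[Pages 292--289]{OV}: maximal torus the diagonal matrices, $\epsilon_1,\dots,\epsilon_n$ an orthonormal basis of $\tlie^*$ for a multiple of the Killing form (with $e_1,\dots,e_n\in\tlie$ the dual basis), simple roots $\alpha_i=\epsilon_i-\epsilon_{i+1}$ for $i<n$ and $\alpha_n=\epsilon_n$, fundamental weights $\zeta_i=\epsilon_1+\cdots+\epsilon_i$ for $i<n$ and $\zeta_n=\tfrac12(\epsilon_1+\cdots+\epsilon_n)$, Weyl group $W$ the group of signed permutations of the $\epsilon_i$, and, for the maximal parabolic $\PP_r$ attached to $\Pi\setminus\{\alpha_r\}$ (so that $\GG/\PP_r=\mathrm{OGr}_r(\CC^{2n+1})$), residual Weyl group $W_r=\langle\sigma_1,\dots,\sigma_{r-1},\sigma_{r+1},\dots,\sigma_{n-1},s_n\rangle$. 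Here $\Lambda_w^\vee=\ZZ e_1\oplus\cdots\oplus\ZZ e_n$ and $\Lambda_r^\vee$ is the sublattice of those $\sum a_ie_i$ with $\sum a_i$ even, so $\Lambda_w^\vee/\Lambda_r^\vee\cong\ZZ_2$ with generator $z_0:=e_1$; by the isomorphism $\xi$ of Section~\ref{sec:genSchubert} it therefore suffices to prove that $\kappa$ is not integral for $Y_{z_0,r}$, $1\le r\le n$.

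The point that distinguishes this case from those of $\PU(n+1)$ and $\Sp(n)$ is that every $W_r$--invariant weight is an integer multiple of $\epsilon_1+\cdots+\epsilon_r$, hence takes an integer value on $z_0$; consequently, applying the generalized Chevalley formula~(\ref{eq:cap}) recursively (as in Remark~\ref{rem:order}) to $\kappa(f)$ with $f$ a $W_r$--invariant polynomial with \emph{integer} coefficients always yields an integer, so degree--two classes (and more generally integer--coefficient classes) cannot do the job. The next step is therefore to exhibit a class $\eta_r\in H^\bullet(\mathrm{OGr}_r(\CC^{2n+1});\ZZ)$ represented by a $W_r$--invariant polynomial carrying a genuine factor $\tfrac12$ in the $\epsilon$--variables; such classes exist by the known description of the integral cohomology of orthogonal Grassmannians (cf.\ \cite[Theorem 2.1]{To}). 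Concretely I would take $\eta_r$ in degree $2(n-r+1)$ with $2\eta_r=\kappa(g_r)$ for an explicit homogeneous symmetric polynomial $g_r\in\ZZ[\epsilon_1,\dots,\epsilon_r]$ of degree $n-r+1$ (the relevant special Schubert class): for $r=n$ one has $g_n=\epsilon_1+\cdots+\epsilon_n$, so $\eta_n=\kappa(\zeta_n)$, and for $r=1$ one has $g_1=\epsilon_1^{\,n}$, so $\eta_1=\tfrac12\kappa(\epsilon_1^{\,n})$ is the class of a maximal linear subspace of the quadric $Q^{2n-1}$.

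With $\eta_r$ in hand, I would pair $\kappa(\eta_r)$ against the fibered Schubert class $[bX_{w_\Gamma}]$ attached by Proposition~\ref{pro:graphword} to the projective subdiagram $\Gamma=\{\alpha_r,\alpha_{r+1},\dots,\alpha_{n-1}\}$ relative to $J=\Pi\setminus\{\alpha_r\}$, which is projective by Example~\ref{ex:subgraphs}(2); then $X_{w_\Gamma}\cong\PP^{n-r}$, $bX_{w_\Gamma}$ is smooth of complex dimension $n-r+1$, and its $\CC^*\times\TT$--module structure at the fixed points is given explicitly by~(\ref{eq:schubertproj0}). Since $2\eta_r=\kappa(g_r)$ with $g_r$ integral, $\int_{[bX_{w_\Gamma}]}\kappa(g_r)\in\ZZ$, and the heart of the argument is to show this integer is \emph{odd} for $z=z_0$ --- equivalently, that its parity changes as $z$ moves from a coroot to a non--coroot coweight --- whence $\int_{[bX_{w_\Gamma}]}\kappa(\eta_r)\in\tfrac12+\ZZ\not\subset\ZZ$ and $\kappa$ is not integral. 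This computation I would carry out either by unwinding~(\ref{eq:cap}) monomial by monomial inside the regular orbit, the $z$--dependence residing entirely in the terms $-(w\cdot\zeta)(z)$, or, more efficiently, by the localization formula of Section~\ref{sec:genSchubert} with the weights read off from~(\ref{eq:schubertproj0}); for instance, when $r=1$ the variety $bX_{w_\Gamma}$ is a $\PP^{n-1}$--subbundle of $\PP(V_{z_0})$ and $\int_{[bX_{w_\Gamma}]}\kappa(\epsilon_1)^n$ is, up to sign, the degree of the corresponding rank--$n$ subbundle of $V_{z_0}$, which is odd exactly when $z_0$ is not a coroot. Finally, Proposition~\ref{pro:nonint} converts the non--integrality of $\kappa$ into Theorem~\ref{thm:orthoggrass}.

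The step I expect to be the main obstacle is precisely this last one: pinning down the half--integral class $\eta_r$ (the polynomial $g_r$) uniformly in $r$ --- which rests on the \emph{integral}, not merely rational, cohomology of $\mathrm{OGr}_r(\CC^{2n+1})$ --- and then evaluating $\int_{[bX_{w_\Gamma}]}\kappa(g_r)$ with enough control of the signs and of the ``horizontal'' twist by $z$ in~(\ref{eq:schubertproj0}) to determine its parity. The interference between the vertical weights $w\cdot\alpha$ and the twist by $z$ is delicate, which is presumably why the even orthogonal groups are handled separately in Section~\ref{s:orthogonal-II}.
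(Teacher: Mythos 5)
Your strategy is the same as the paper's: exhibit a $W_r$-invariant polynomial $g_r$ of degree $n-r+1$ whose half represents an \emph{integral} class on $\mathrm{OGr}_r(\CC^{2n+1})$, pair it against the fibered Schubert variety $bX_{w_\Gamma}$ for the projective subdiagram running from $\alpha_r$ to $\alpha_{n-1}$ (with the $r=n$ case handled separately by $\zeta_n$ and $bX_e$), and show the pairing is a half-integer; your choice of generator $z_0=e_1$ instead of the paper's $e_n$ is immaterial. But the two steps you yourself flag as ``the main obstacle'' are precisely the content of the paper's proof, and as written they are genuine gaps. First, you never pin down $g_r$ for general $r$, and the integrality of $\tfrac12\kappa(g_r)$ does not follow formally from \cite[Theorem 2.1]{To}: identifying the class as ``the relevant special Schubert class'' is not enough, because the whole Chevalley/localization machinery of Section \ref{sec:genSchubert} needs an explicit polynomial representative in the coinvariant ring. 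The paper takes $g_r=\delta_{n-r+1}(\epsilon_1,\dots,\epsilon_r)$, the complete symmetric polynomial, and Proposition \ref{pro:intpolynbn} proves divisibility by $2$ using the Toda--Watanabe/Nakagawa presentation of $H^\bullet(\SO(2n+1)/T;\ZZ)$ \cite{TW,N2} together with the generating-function identity expressing $\delta_{n-r+1}(\epsilon_1,\dots,\epsilon_r)$ through the complete symmetric polynomials $\delta_j(\epsilon_1,\dots,\epsilon_n)$ (each of which equals $2Q_j(\varpi)$ integrally) and the elementary symmetric polynomials in $\epsilon_{r+1},\dots,\epsilon_n$, the key point being that the elementary symmetric polynomial of degree $n-r+1$ in those $n-r$ variables vanishes. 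Some argument of this kind must be supplied.

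Second, the parity claim $\int_{[bX_{w_\Gamma}]}\kappa(g_r)\equiv d \pmod 2$ is asserted but not proved: your sketch covers only $r=1$, and even there ``the degree of the corresponding subbundle is odd exactly when $z_0$ is not a coroot'' is a statement that itself requires the computation. The paper's Subsection \ref{ss:proof-thm:orthoggrass} obtains the exact value $\int_{[bX_{w_\Gamma}]}2\kappa(f)=(-1)^{n-r+1}d$ by localization at the $2(n-r+1)$ fixed points of $bX_{w_\Gamma}$, writing the sum over common denominator $R$, choosing a monomial $\tau$ with $\Coeff_\tau R=1$, and showing (by degree counting in $\epsilon_n$) that only the two fixed points $w_\Gamma(0),w_\Gamma(\infty)$ contribute to $\Coeff_\tau Q$; controlling exactly this interference between the vertical weights and the twist by $z$ is where the work lies, and it is absent from the proposal. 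A minor slip besides: your statement that every $W_r$-invariant weight is an integer multiple of $\epsilon_1+\cdots+\epsilon_r$ fails for $r=n$, where $\zeta_n=\tfrac12(\epsilon_1+\cdots+\epsilon_n)$ is $W_n$-invariant; this is harmless, since that case is settled by the degree-two computation $\int_{[bX_e]}\kappa(\zeta_n)=-\zeta_n(z)=\pm\tfrac d2$, exactly as in the paper.
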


The analogous theorem  for even dimensional orthogonal groups is treated in the next section,
 as it presents additional technical complications.

The algebraic structure of the orthogonal groups is far more involved that the one of the special unitary and symplectic groups.
For example, the action of the Weyl group  on any basis of the weight lattice is quite different from that of the symmetric group.
Our strategy will boil down to
careful choices of both a polynomial $f$ representing an integral class in the orthogonal
Grassmannian $\mathrm{O}\mathrm{Gr}_r(\CC^{2n+1})$ and
 a projective subdiagram $\Gamma$, so that we can compute by localization the integral $\int_{bX_{w_\Gamma}}\kappa(f)\notin \ZZ$:

\begin{theorem}\label{thm:integralorthoggrass} Let $\gamma\subset \SO(2n+1)$ be a non-contractible loop based at the identity. Choose a set of simple
roots $\Pi=\{\alpha_1,\dots,\alpha_n\}$ with Dynkin diagram $\Gamma_\Pi$. For any $1\leq r\leq n$  there exists
\begin{enumerate}
\item a polynomial $f$ in $\mathrm{Sym}_\QQ(\Lambda_w)$ which by the Borel isomorphism corresponds to a class in
$H^{2(n-r+1)}(\mathrm{O}\mathrm{Gr}_r(\CC^{2n+1});\ZZ)$, and
\item a projective subdiagram $\Gamma$ relative to $\Gamma_\Pi\backslash \{\alpha_r\}$ such that:
\end{enumerate}
\begin{equation}\label{eq:intprojodd}
 \int_{bX_{w_\Gamma}}{\kappa(f)}=(-1)^{n-r+1}\frac{1}{2}.
\end{equation}
\end{theorem}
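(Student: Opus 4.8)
The plan is to make everything explicit in the $\epsilon$-coordinates for $\SO(2n+1)$. First I would fix the standard simple roots $\alpha_i = \epsilon_i - \epsilon_{i+1}$ for $i<n$ and $\alpha_n = \epsilon_n$, with fundamental weights $\zeta_i = \epsilon_1+\cdots+\epsilon_i$ for $i<n$ and $\zeta_n = \tfrac12(\epsilon_1+\cdots+\epsilon_n)$, and identify $\Lambda_w^\vee/\Lambda_r^\vee \cong \ZZ_2$ with generator $z_0$ the coweight dual to (a suitable multiple of) $\epsilon_1+\cdots+\epsilon_n$, so that a non-contractible $\gamma$ is $\gamma_{z}$ with $z = z_0$. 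The subtlety, as announced in the section's introduction, is that the relevant integral cohomology class of $\mathrm{OGr}_r(\CC^{2n+1})$ is \emph{not} a polynomial with integral coefficients in the $\epsilon_i$: by \cite[Theorem 2.1]{To} the integral cohomology ring of the orthogonal Grassmannian contains classes with half-integer coefficients. Concretely I expect the right choice for (1) is the Schubert-type class living in degree $2(n-r+1)$ whose polynomial representative is, up to sign, $\tfrac12\, e_{n-r+1}(\epsilon_r,\epsilon_{r+1},\dots,\epsilon_n)$ together with a correction term that makes it $W_r$-invariant modulo $I_+$ — i.e. the class dual to the smallest "isotropic" Schubert cell. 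One has to verify this polynomial does represent an integral class; that is exactly the content of the Graham/Totaro-type description of $H^\bullet(\mathrm{OGr}_r;\ZZ)$ and I would quote it.

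For (2), the projective subdiagram I would take is the $A$-type chain $\Gamma$ consisting of the nodes $\alpha_r, \alpha_{r+1}, \dots, \alpha_{n-1}, \alpha_n$ — which is projective relative to $\Pi\setminus\{\alpha_r\}$ by Example \ref{ex:subgraphs}(1) extended along the tail (one must check $\alpha_n$ has the same norm situation; for $B_n$ the last edge is a double edge, so in fact the correct chain is $\alpha_r,\dots,\alpha_{n-1}$ of length $n-r$, giving $bX_{w_\Gamma}$ of complex dimension $n-r+1$, matching the degree of $f$). Then $X_{w_\Gamma}\cong \PP^{n-r}$ and $bX_{w_\Gamma}$ is a smooth $\CC^*\times\TT$-variety whose fixed points $w_{(j)}(0), w_{(j)}(\infty)$, $0\le j\le n-r$, carry the explicit tangent module structure given by Proposition \ref{pro:graphword}, formula (\ref{eq:schubertproj0}). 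The key computational step is then the Atiyah–Bott/Berline–Vergne localization formula applied to $\kappa(f)\in H^\bullet_{\CC^*\times\TT}(bX_{w_\Gamma};\QQ)$:
\[
\int_{bX_{w_\Gamma}}\kappa(f) \;=\; \sum_{j=0}^{n-r}\left(\frac{\kappa(f)|_{w_{(j)}(0)}}{e(T_{w_{(j)}(0)}bX_{w_\Gamma})} + \frac{\kappa(f)|_{w_{(j)}(\infty)}}{e(T_{w_{(j)}(\infty)}bX_{w_\Gamma})}\right),
\]
where the restrictions of $\kappa(f)$ at the fixed points are computed by Proposition \ref{pro:rootlocalization} (the weight $w_{(j)}\cdot$ of the representing polynomial, pulled back by $\pi_2^*$ at $0$ and by $\Phi_z^*$ at $\infty$), and the Euler classes are the products of the tangent weights from (\ref{eq:schubertproj0}). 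The dependence on $z$ enters only through the $\infty$-fixed points via $\Phi_z^* = \pi_2^* + (\,\cdot\,)(z)\,\pi_1^*$, and after specializing $\pi_1^*(0)$ appropriately (the equivariant parameter of the $\PP^1$-base) the $\pi_1^*$-dependence cancels, leaving a rational number.

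The main obstacle — and the reason this cannot be done "by hand" as cleanly as the $\PU$/$\Sp$ cases — is the bookkeeping in that localization sum: the tangent weights in (\ref{eq:schubertproj0}) are sums of consecutive roots $\alpha_{(h)}+\cdots+\alpha_{(j)}$, which in the $\epsilon$-coordinates for $B_n$ become expressions like $\epsilon_h - \epsilon_{j+1}$ (and, once the chain reaches $\alpha_n$, degenerate into $\epsilon_h$), and the half-integer coefficient in $f$ must survive the division by Euler classes to produce exactly $(-1)^{n-r+1}\tfrac12$ rather than an integer or some other fraction. I would organize this by first computing the sum \emph{with $f$ replaced by its integer-coefficient "bulk" term}, showing it is an integer (this is essentially the classical fact that the analogous Schubert pairing in the orbit is integral, combined with the generalized Chevalley formula's $-(w\cdot f)(z)$ correction being integral since the bulk term pairs the coroot lattice), and then showing the half-integer correction term contributes exactly $\pm\tfrac12\cdot(\text{an odd integer})$. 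Tracking the parity of that odd integer, and hence pinning down the sign $(-1)^{n-r+1}$, is where the real care is needed; a cleaner alternative that I would try first is to \emph{avoid localization entirely} and iterate the generalized Chevalley formula (\ref{eq:cap}) down the Hasse chain $e = w_{(0)} < w_{(1)} < \cdots < w_{(n-r)} = w_\Gamma$, exactly as in the proof of Theorem \ref{thm:ngrass}, since for this particular $\Gamma$ the Bruhat interval below $w_\Gamma$ is totally ordered (Proposition \ref{pro:graphword}(2)); the recursion then terminates in the single term $-(w_\Gamma\cdot f)(z)$, and evaluating $(w_\Gamma\cdot f)(z_0)$ using $w_\Gamma\cdot\zeta = \zeta - (\text{sum of the }\alpha_{(h)})$ and the half-integer coefficient of $f$ gives $(-1)^{n-r+1}\tfrac12$ directly. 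I expect this Chevalley-recursion route to be the one that actually goes through with manageable calculation, with localization kept in reserve as a cross-check.
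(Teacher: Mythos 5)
Your proposal has the right overall shape (a half\-integral $W_r$-invariant polynomial, the $A$-type chain $\alpha_r,\dots,\alpha_{n-1}$ as projective subdiagram, evaluation of the pairing on $bX_{w_\Gamma}$), but both substantive halves of the theorem are left unproved, and in one place the plan rests on a false claim. First, clause (1): the class you name, $\tfrac12 e_{n-r+1}(\epsilon_r,\dots,\epsilon_n)=\tfrac12\,\epsilon_r\epsilon_{r+1}\cdots\epsilon_n$, is not $W_r$-invariant (it changes sign under $s_n$ and is moved by $\sigma_{r-1}\in W_r$), so the unspecified ``correction term'' is doing all the work; and the integrality of a polynomial with half-integral coefficients is precisely the delicate point --- there is no off-the-shelf statement one can simply quote. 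The paper instead takes $f=\tfrac12\delta_{n-r+1}(\epsilon_1,\dots,\epsilon_r)$, the complete symmetric polynomial in the \emph{first} $r$ variables (honestly $W_r$-invariant, since $W_r$ permutes $\epsilon_1,\dots,\epsilon_r$ and acts separately on the remaining variables), and its integrality is a real argument, Proposition \ref{pro:intpolynbn}, using the Toda--Watanabe/Nakagawa relations $c_i(\epsilon)=2\varpi_i$ together with the generating-function identity (\ref{eq:elemcomplete}); nothing of this sort appears in your proposal. (A further slip: no multiple of $e_1+\cdots+e_n$ generates $\Lambda_w^\vee/\Lambda_r^\vee$ when $n$ is even; the paper's generator is $z_0=e_n$.)

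Second, the computation you would ``try first'' does not go through. The generalized Chevalley formula (\ref{eq:cap}) does not collapse to the single term $-(w_\Gamma\cdot f)(z)$: each application produces a vertical term \emph{and} fibered terms, so the recursion branches at every step and terminates in a sum of products of degree-one evaluations; indeed $(w_\Gamma\cdot f)(z)$ is not even a number once $\deg f=n-r+1>1$. Moreover, since the monomials of $f$ are not $W_P$-invariant, Remark \ref{rem:order} forces the recursion to be run in the regular orbit with the full Bruhat order of $W$; Proposition \ref{pro:graphword}(2) only says the interval below $w_\Gamma$ \emph{inside $W^P$} is the chain $w_{(0)}<\cdots<w_{(n-r)}$, which is not the relevant Hasse diagram. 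Your fallback, localization at the $2(n-r+1)$ fixed points, is exactly how the paper argues for $r<n$, but the step you postpone (``the bookkeeping'') is the entire content: the paper evaluates the localization sum exactly by putting it over the common denominator $R$, extracting the coefficient of the monomial $\tau=\epsilon_n^{2(n-r)}\epsilon_{n-1}^{n-r-1}\cdots\epsilon_{r+1}\epsilon_0$, and showing that only the contributions of $w_\Gamma(0)$ and $w_\Gamma(\infty)$ survive, which yields $\int_{bX_{w_\Gamma}}2\kappa(f)=(-1)^{n-r+1}d$. There is also no ``integer bulk plus half-integer correction'' decomposition of $f$ to exploit, since the factor $\tfrac12$ multiplies the whole polynomial. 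Finally, the case $r=n$, where no chain $\alpha_r,\dots,\alpha_{n-1}$ exists, is not covered by your argument; the paper treats it separately with $bX_e$ and $f=\tfrac12\delta_1(\epsilon)=\zeta_n$ via the Chevalley formula.
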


Of course, Theorem \ref{thm:orthoggrass} is an immediate consequence of Theorem \ref{thm:integralorthoggrass}.

The choice of $f$ will be non-trivial because another manifestation of the complexity of the orthogonal groups is that
the symmetric $\ZZ$-algebra spanned by the weight lattice does not describe the full integral cohomology of the orthogonal Grassmannian \cite[Theorem 2.1]{To}.
And, naturally, a polynomial with fractional coefficients in weights is more likely to detect the non-integrality of the characteristic isomorphism.
Our choice of $f$ will be based on the analysis of the Borel isomorphism up to degree four --- where it detects integral cohomology \cite[Theorem 2.1]{To}.
The subdiagram $\Gamma$ will be an obvious projective subdiagram of $\Gamma_\Pi$ relative to $\Pi\backslash \{\alpha_r\}$ of length $n-r$.

\subsection{An integral cohomology class in the orthogonal Grassmannian}

We shall describe a polynomial in weight variables with non-integral coefficients  whose image by the Borel isomorphism defines an integral cohomology class
in the orthogonal Grassmannian $\mathrm{O}\mathrm{Gr}_r(\CC^{2n+1})$.
Our main ingredients are a presentation of the inverse image by the Borel isomorphism of the integral cohomology of the regular orbit \cite{TW,N2},
together with basic properties of the $\ZZ$-algebra of symmetric polynomials.

We start with a precise description of the quotient of coweight and coroot lattices,  the weight lattice the action of the Weyl group
on the weight lattice $\SO(2n+1)$
 \cite[Pages 292--289]{OV}.

 We realize $\mathrm{SO}(2n+1)$ as the intersection  $\SU(2n+1)\cap \mathrm{SO}(2n+1,\CC)$, where the latter is the group of matrices leaving the
quadratic form $z_1z_{n+1}+\cdots z_nz_{2n}+z_{2n+1}^2$ invariant.
Let  $T$ be the diagonal matrices with entries in $S^1$, i.e., matrices of the form
\[\theta_1e_1+\cdots\theta_ne_n+\theta_1^{-1}e_{n+1}+\cdots +\theta_n^{-1}e_{2n}+e_{2n+1},\]
where  $e_i$ denotes the matrix with 1 in the position $(ii)$ and 0 elsewhere.
Let   $\epsilon_1,\dots,\epsilon_n\in \tlie^*$ be the basis dual to $e_1,\dots,e_n$. This is an orthonormal basis for a multiple of the Killing form.
We take as set of simple roots:
\begin{equation}\label{eq:simple-roots-basis-odd}
\alpha_i=\epsilon_i-\epsilon_{i+1},\,i=1,\dots,n-1,\,\alpha_n=\epsilon_n
\end{equation}
The corresponding fundamental weights are:
\begin{equation}\label{eq:suweightparab-odd}
 \zeta_i=\sum_{k=1}^i\epsilon_i,\,i=1,\dots,n-1,\,\zeta_n=\frac{1}{2}\sum_{k=1}^n \epsilon_k.
\end{equation}
We fix as generator of
\begin{equation}\label{eq:groupgrass-odd}
 \Lambda_w^\vee/\Lambda_r^\vee \cong \ZZ_{2}
 \end{equation}
the coweight
\begin{equation}\label{eq:gen-unitary-odd}
   z_0=e_n.
\end{equation}
The Weyl group for $\SO(2n+1)$ is  the hyperoctahedral group  in $\epsilon_1,\dots,\epsilon_n$ (each reflection associated to a simple root
acting as in the case of $\Sp(n)$). Note, however,
that $\epsilon_1,\dots,\epsilon_n$ is not a basis of the weight lattice. In particular, by (\ref{eq:suweightparab-odd}) the action
of the reflection defined by $\alpha_n$ on the $n$-th fundamental weight is:
\begin{equation}\label{eq:weylbn}
s_n(\zeta_n)=\zeta_{n-1}-\zeta_n.
\end{equation}

Let  $\PP_r\subset \SO(2n+1)$ be the maximal parabolic subgroup determined by the subset $\Pi\backslash \{\alpha_r\}$. The corresponding
flag variety can be identified with the orthogonal Grassmannian $\mathrm{O}\mathrm{Gr}_r(\CC^{2n+1})$
(the Grassmannian of isotropic subspaces of dimension $r$ in $\CC^{2n+1}$ for the fixed quadratic form).
The residual Weyl Group $W_r$ is generated by:
 \begin{equation}\label{eq:residual-Weyl}
 \sigma_1,\dots,\sigma_{r-1},\sigma_{r+1},\dots,\sigma_{n-1},s_n.
 \end{equation}

Let us denote by $\delta_i$ the complete symmetric polynomial of degree $i$.
If we set $\epsilon=(\epsilon_1,\dots,\epsilon_n)$ and $\epsilon'=(\epsilon_1,\dots,\epsilon_r)$, then by (\ref{eq:residual-Weyl})
the polynomial $\delta_{n-r+1}(\epsilon')$ is $W_r$-invariant. Since it is a polynomial on roots with integer coefficients  its
image by the Borel isomorphism represents a class in $H^{2(n-r+1)}(\mathrm{O}\mathrm{Gr}_r(\CC^{2n+1});\ZZ)$. Looking at low degrees we see
that the integral class represented by this polynomial is divisible by 2:
\begin{itemize}
 \item If $r=n$, then by definition
\[\delta_1(\epsilon')=2\zeta_n.\]
\item If $r=n-1$, then a short computation gives
\[\delta_2(\epsilon')=2\zeta_n(\zeta_{n-1}-\zeta_n)+\frac{1}{2}(\epsilon_1^2+\cdots +\epsilon_n^2).\]
By (\ref{eq:weylbn}) the first summand in the RHS is $W_{n-1}$-invariant; the second one is $W$-invariant (it defines the trivial class in the ring of coinvariants).
\end{itemize}

In fact, this divisibility holds in full generality:
\begin{prop}\label{pro:intpolynbn}
Let $n,r$ be non-zero natural numbers with $r\leq n$. The image of the polynomial $\frac{1}{2}\delta_{n-r+1}(\epsilon')$
 by the Borel isomorphism belongs to
$H^{2(n-r+1)}(\mathrm{O}\mathrm{Gr}_r(\CC^{2n+1});\ZZ)$.
\end{prop}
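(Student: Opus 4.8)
The plan is to reduce the statement about $\mathrm{OGr}_r(\CC^{2n+1})$ to an integrality statement on the regular orbit $\mathrm{SO}(2n+1)/T$, where the image of $H^\bullet(\mathrm{SO}(2n+1)/T;\ZZ)$ under the inverse Borel isomorphism is explicitly known, and then to produce the desired half-class from a Vieta-type identity among symmetric functions.

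First I would record that $\delta_{n-r+1}(\epsilon')$ is $W_r$-invariant. By (\ref{eq:residual-Weyl}) the group $W_r$ is generated by $\sigma_1,\dots,\sigma_{r-1}$, which permute $\epsilon_1,\dots,\epsilon_r$ among themselves, together with $\sigma_{r+1},\dots,\sigma_{n-1},s_n$, which act only on $\epsilon_{r+1},\dots,\epsilon_n$ and fix each of $\epsilon_1,\dots,\epsilon_r$; since $\delta_{n-r+1}$ is a symmetric function of $\epsilon_1,\dots,\epsilon_r$, invariance follows. Hence $\tfrac12\delta_{n-r+1}(\epsilon')$ is a $W_r$-invariant rational class, so by the identification $H^\bullet(\mathrm{OGr}_r(\CC^{2n+1});\ZZ)=H^\bullet(\mathrm{SO}(2n+1)/T;\QQ)^{W_r}\cap H^\bullet(\mathrm{SO}(2n+1)/T;\ZZ)$ (cf.\ \cite[Theorem 5.5]{BGG} together with the lattice remark at the end of the proof of Lemma \ref{lem:invcohom}), it suffices to show that $\tfrac12\delta_{n-r+1}(\epsilon')$ represents a class in $H^\bullet(\mathrm{SO}(2n+1)/T;\ZZ)$.

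Next I would split the $n$ weight variables as $\epsilon'=(\epsilon_1,\dots,\epsilon_r)$, $\epsilon''=(\epsilon_{r+1},\dots,\epsilon_n)$ and compare coefficients of $t^{n-r+1}$ in the formal identity $\prod_{j=1}^r(1-\epsilon_j t)^{-1}=\big(\prod_{j=1}^n(1-\epsilon_j t)^{-1}\big)\prod_{j=r+1}^n(1-\epsilon_j t)$, which gives
\[
\delta_{n-r+1}(\epsilon')=\sum_{a=1}^{n-r+1}(-1)^{\,n-r+1-a}\,\delta_a(\epsilon_1,\dots,\epsilon_n)\,e_{\,n-r+1-a}(\epsilon_{r+1},\dots,\epsilon_n),
\]
where $e_b$ is the elementary symmetric polynomial and the sum starts at $a=1$ because $e_b(\epsilon'')=0$ for $b>n-r$. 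The weights $\epsilon_{r+1},\dots,\epsilon_n$ lie in $\Lambda_w$, so each $e_{n-r+1-a}(\epsilon'')$ represents an integral class. The one nonformal ingredient is that for $1\le a\le n$ the polynomial $\gamma_a:=\tfrac12\delta_a(\epsilon_1,\dots,\epsilon_n)$ represents an integral class $\gamma_a\in H^{2a}(\mathrm{SO}(2n+1)/T;\ZZ)$; this is part of the description of $H^\bullet(\mathrm{SO}(2n+1)/T;\ZZ)$ inside $\mathrm{Sym}_\QQ(\Lambda_w)/I_+$ provided by \cite{TW,N2} (that ring is torsion free and generated over $\ZZ$ by the $\epsilon_j$ together with classes $\gamma_a$ satisfying $2\gamma_a=\delta_a(\epsilon_1,\dots,\epsilon_n)$; up to sign $\gamma_a$ is the $a$-th special Schubert class on the maximal orthogonal Grassmannian $\mathrm{SO}(2n+1)/\PP_n$). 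Since $n-r+1\le n$, dividing the displayed identity by $2$ yields
\[
\tfrac12\,\delta_{n-r+1}(\epsilon')=\sum_{a=1}^{n-r+1}(-1)^{\,n-r+1-a}\,\gamma_a\,e_{\,n-r+1-a}(\epsilon_{r+1},\dots,\epsilon_n),
\]
a $\ZZ$-linear combination of products of integral classes, hence integral; by the previous paragraph this proves the proposition.

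I expect the main obstacle to be the bookkeeping around the input from \cite{TW,N2}: one must extract from the presentation given there the precise statement that $\tfrac12\delta_a(\epsilon_1,\dots,\epsilon_n)$ (equivalently $\tfrac12 e_a(\epsilon_1,\dots,\epsilon_n)$, since $\delta_a\equiv e_a$ modulo $I_+$ in type $B_n$) is integral for every $1\le a\le n$; if a self-contained argument is preferred, this can instead be verified by pulling back from $\mathrm{SO}(2n+1)/\PP_n$. Once this is granted, the remaining steps are routine symmetric-function manipulations.
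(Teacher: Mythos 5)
Your proof is correct and follows essentially the same route as the paper: reduce to the regular orbit $\SO(2n+1)/T$, use the generating-function identity $\boldsymbol\delta_{\epsilon'}(t)={\mathbf c}_{\epsilon''}(-t)\boldsymbol\delta_{\epsilon}(t)$ (with the sum starting in degree one because the top elementary symmetric polynomial in $\epsilon''$ vanishes), and invoke the Toda--Watanabe/Nakagawa presentation with relations $c_i(\epsilon)=2\varpi_i$ to see that each $\tfrac12\delta_a(\epsilon)$ is integral. The only cosmetic difference is how that last divisibility is justified (you use $\delta_a\equiv c_a$ modulo $I_+$, the paper expresses $\delta_a$ as an integer polynomial without constant term in the $c_i$'s), which amounts to the same input.
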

\begin{proof}
As we noticed the $W_r$-invariant polynomial $\frac{1}{2}\delta_{n-r+1}(\epsilon')$ corresponds by the Borel isomorphism to a rational class.
Because  $H^\bullet(\mathrm{SO}(2n+1)/P_r;\ZZ)=H^\bullet(\mathrm{SO}(2n+1)/T;\ZZ)^{W_r}$, we just need to show that the image of $\frac{1}{2}\delta_{n-r+1}(\epsilon')$
 by the Borel isomorphism for the regular orbit is an integral cohomology class. Polynomials corresponding to integral classes in the regular orbit
were originally described by Toda-Watanabe \cite[Theorem 2.1]{TW}. The more recent description in \cite[Theorem 4.1]{N2} is as follows:

The quotient ring
\[\ZZ[\epsilon_1,\dots,\epsilon_n,\varpi_1,\dots,\varpi_{2n}]/I,\]
where $I$ is the ideal spanned by the polynomials
\begin{equation}\label{eq:cohom-relations-odd}
c_i(\epsilon)-2\varpi_i,\,1\leq i\leq n,\quad \varpi_j,\, n<j\leq 2n,\quad \varpi_{2k}+\sum_{j=1}^{2k-1}(-1)^j\varpi_j\varpi_{2k-j},\,1\leq k\leq n,
 \end{equation}
 canonical embeds in the ring of rational coinvariants:
 \begin{equation}\label{eq:bocohomology}
 \ZZ[\epsilon_1,\dots,\epsilon_n,\varpi_1,\dots,\varpi_{2n}]/I\to \QQ[\epsilon_1,\dots,\epsilon_n]/I^+.
 \end{equation}
Its image are exactly those polynomials which corresponds to integers classes by the characteristic isomorphism.

Let $\epsilon''$ denote the set of variables $(\epsilon_{r+1},\dots,\epsilon_n)$. We claim that:
\begin{equation}\label{eq:elemcomplete}
 \delta_{n-r+1}(\epsilon')=\sum_{j=1}^{n-r+1}(-1)^{n-r+1-j} c_{n-r+1-j}(\epsilon'')\delta_j(\epsilon).
 \end{equation}
 We remark that the sum starts with $j=1$, so the homogeneous polynomial $\delta_j(\epsilon)$ is never a constant.

Let us assume that (\ref{eq:elemcomplete}) holds. The fundamental theorem on symmetric polynomials allows to rewrite the relation $c_i(\epsilon)-2\varpi_i=0$ in (\ref{eq:cohom-relations-odd})
as
\begin{equation}\label{eq:relationsymm}
\delta_j(\epsilon)=2Q_j(\varpi),\quad 1\leq j\leq n-r+1,
\end{equation}
where the polynomials $Q_j$ have integral coefficients.
Thus, by (\ref{eq:elemcomplete}), (\ref{eq:relationsymm}) and the presentation of the integral cohomology in (\ref{eq:bocohomology}), we conclude that
$\frac{1}{2}\delta_{n-r+1}(\epsilon')$ corresponds to an integral cohomology class.

To finish the proof of the proposition we have to show that (\ref{eq:elemcomplete}) holds. The generating functions for elementary and complete
symmetric polynomials on a set of variables $p=(p_1,\dots,p_s)$ are, respectively:
\[{\mathbf c}_p(t)=\prod_{i=1}^s(1+p_it),\quad {\boldsymbol\delta}_p(t)=\prod_{i=1}^s\frac{1}{1-p_it}.\]
So we have the immediate equality:
 \[{\boldsymbol\delta}_{\epsilon'}(t)={\mathbf c}_{\epsilon''}(-t){\boldsymbol\delta}_{\epsilon}(t),
\]
which for each degree becomes:
\begin{equation}\label{eq:elemcomplete-full}
 \delta_i(\epsilon')=\sum_{j=0}^i(-1)^{i-j} c_{i-j}(\epsilon'')\delta_j(\epsilon).
\end{equation}
Equation (\ref{eq:elemcomplete}) now follows from (\ref{eq:elemcomplete-full}) because  $c_{n-r+1}(\epsilon'')=0$ (this is the smallest degree
for which the elementary symmetric polynomial vanishes),
and this proves the proposition.
\end{proof}

\begin{remark} If $r=1$, then $n$ is the smallest degree for which there is a $W_1$-invariant polynomial
in weight variables with integer coefficients that is non-divisible, and that corresponds to a divisible integral cohomology class. The Grassmannian
of isotropic lines is the standard (non-degenerate) odd dimensional quadric $Q\subset \PP^{2n}$. The monomial $\epsilon_1$ represents
the generator of $H^2(Q;\ZZ)$, which by the Lefschetz hyperplane theorem is the restriction of the generator of $H^2(\PP^{2n};\ZZ)$. The Lefschetz hyperplane theorem
still grants that $\epsilon_1^{n-1}$ represents the generator of $H^{2n-2}(Q;\ZZ)$, but cannot say anything about $\epsilon_1^n=\delta_1(\epsilon')$.
Note that this is a simple example showing that the Hard Lefschetz
isomorphism is false over the integers.
\end{remark}

\subsection{Proof of Theorem \ref{thm:orthoggrass}}
\label{ss:proof-thm:orthoggrass}
Let us fix $z=dz_0\in\Lambda_w^\vee$. By Proposition \ref{pro:intpolynbn} the polynomial $f=\frac{1}{2}\delta_{n-r+1}(\epsilon')$
represents an integral class in the cohomology of the Grassmannian
$\mathrm{OG}_{r}(\CC^{2n+1})$.

We distinguish two cases:

\underline{Case 1: $r<n$.} We define $\Gamma$ to be the subdiagram starting
at $\alpha_r$ and ending at $\alpha_{n-1}$. By Example \ref{ex:subgraphs}, (2),
$\Gamma$ is projective relative to $\Pi\backslash \{\alpha_r\}$. Therefore
 by
Proposition \ref{pro:graphword}  $bX_{w_\Gamma}\subset Y_{dz_0,r}$ is
a smooth subvariety of complex dimension
$n-r+1$.
For notational simplicity we shall prove the equality:
\begin{equation}\label{eq:inttwice}
 \int_{bX_{w_\Gamma}}2\kappa(f)=(-1)^{n-r+1}d,
\end{equation}
which by (\ref{eq:gen-unitary-odd}) implies the theorem.

As $bX_{w_\Gamma}$ is smooth and $f$ is a polynomial on weights we can compute the integral (\ref{eq:inttwice}) by localization. Strictly speaking,
Proposition  \ref{pro:rootlocalization} is stated for $W_r$-invariant weights and not for $W_r$-invariant polynomials. However, if $q: Y_{z,\OOO'}\to Y_{z,\OOO}$
denotes the projection (as usual $\OOO'\cong \GG/\BB$), the diagram of
of $\CC^*\times \TT$-manifolds
 \begin{equation}\label{eq:charcomm}
\xymatrix{w(0)\ar@{^{(}->}[r]\ar^{q}[d] & Y_{z,\OOO'}\ar^{q}[d] \\
  w(0)\ar@{^{(}->}[r] & Y_{z,\OOO}}
 \end{equation}
is commutative and the arrow between fixed points is an isomorphism. Hence the pullback/restriction of $\kappa(f)$ to $w(0)\in Y_{z,\OOO}$
is canonically identified with the pullback/restriction of $q^*(\kappa(f))$ to $w(0)\in Y_{z,\OOO'}$, where Proposition  \ref{pro:rootlocalization}
can be applied to each monomial.

We let $\epsilon_0\in \mathrm{Hom}(\CC,\CC)$ denote the form dual to the vector $i\in i\RR\subset \CC$ generating the Lie algebra of the first factor of $\CC^*\times\TT $.
Recall that the coweight $z$ defines a morphism  $\Phi_z:\CC^*\times \TT\to \TT$, $(\lambda,\lambda')\mapsto (\phi_z(\lambda)\lambda')$.
Therefore we have $\Phi_z^*\epsilon_i=\epsilon_i+\epsilon_i(z)\epsilon_0$ (pullbacks by the projections
of $\CC^*\times \TT$ onto its factors are implicit).
By  (\ref{eq:gen-unitary-odd}) we have:
\begin{equation}\label{eq:infinityaction}
\Phi_z^*\epsilon_i=\begin{cases} \epsilon_i&\mathrm{if} \,\,1\leq i< n,\\
                      \epsilon_n+d\epsilon_0&\mathrm{if} \,\, i=n.
                     \end{cases}
 \end{equation}
Let us denote $\Phi_z^*\epsilon_i$ by  $\overline{\epsilon}_i$, so that
$$\overline{\epsilon}_i=\epsilon_i\quad\text{for $1\leq i\leq n$},\qquad
\text{and }\overline{\epsilon}_n=\epsilon_n+d\epsilon_0.$$

According to Propositions \ref{pro:rootlocalization}  and   \ref{pro:graphword} and equation (\ref{eq:infinityaction}) we have:
\begin{equation}\label{eq:intcompletesymbo}
 \int_{bX_\Gamma}2{\kappa(f)}=
\sum_{j=0}^{n-r}\left(\frac{\delta_{n-r+1}(\epsilon_1,\dots,\epsilon_{r-1},\epsilon_{r+j})}
{\epsilon_0\prod_{i\in \{r,\dots,n\},i\neq r+j}(\epsilon_i-\epsilon_{r+j})}-
\frac{\delta_{n-r+1}(\overline{\epsilon}_1,\dots,\overline{\epsilon}_{r-1},\overline{\epsilon}_{r+j})}
{\epsilon_0\prod_{i\in \{r,\dots,n\},i\neq r+j}(\overline{\epsilon}_i-\overline{\epsilon}_{r+j})}\right).
\end{equation}
For the discussion that follows it will be convenient to refer to the $2(n-r+1)$ summands in the RHS above in the abstract form $\tfrac{Q_i}{R_i}$. We may then put the expression in common denominator
$\sum_i\tfrac{Q_i}{R_i}=\tfrac{Q}{R},$
where $Q$ and $R$ are
polynomials in $\epsilon_0\,\dots,\epsilon_n$
with coefficients in $\ZZ[d]$ (recall that $d$ is for us a formal variable), and where, more concretely,
\[
 R=\epsilon_0\prod_{r\leq l<i\leq n-1}(\epsilon_i-\epsilon_l)
\prod_{k=r}^{n-1}(\epsilon_n-\epsilon_k)(\overline{\epsilon}_n-\overline{\epsilon}_{k}).
\]
Therefore:
\[Q= \int_{bX_\Gamma}2 \kappa(f)\cdot R.\]
Now the idea to compute the value of the integral is to expand
both $Q$ and $R$ and then compare the coefficients of a particular monomial.
We will consider the following monomial:
\[\tau:=\epsilon_n^{2(n-r)}\epsilon_{n-1}^{n-r-1}\epsilon_{n-2}^{n-r-2}\cdots \epsilon_{r+2}^2\epsilon_{r+1}\epsilon_0,\]
which appears with coefficient 1 in the expansion of $R$ (this can be easily seen by looking
at the exponents in the monomial $\tau$ and observing that in order to obtain $\tau$ in the expansion
of $R$, there is only one possible choice of summand
in each linear factor $(\epsilon_i-\epsilon_l)$, $(\epsilon_n-\epsilon_k)$ and $(\overline{\epsilon}_n-\overline{\epsilon}_{k})$).
Hence the value of the integral $\int_{bX_\Gamma}2 \kappa(f)$ is equal to the coefficient of $\tau$ in $Q$.

To obtain $Q$ we need to put the fractions $\tfrac{Q_i}{R_i}$ with common denominator $R$,
$$\frac{Q_i}{R_i}=\frac{Q_iS_i}{R_iS_i},\qquad\text{ with $R_iS_i=R$, so that }\qquad
Q=\sum_iQ_iS_i.$$
Now, we have $\deg_{\epsilon_n}R_i>0$ for every $i$, and hence necessarily $\deg_{\epsilon_n}S_i<\deg_{\epsilon_n}R$.
Since $\deg_{\epsilon_n}\tau=\deg_{\epsilon_n}R$, it follows that the only summands $Q_iS_i$
which contribute non trivially to $\mathrm{Coeff}_\tau Q$
are those in which $\deg_{\epsilon_n}Q_i>0$,
%
%
namely, the numerators of the two fractions corresponding to the fixed points $w_\Gamma(0)$ and $w_\Gamma(\infty)$. Therefore:
\begin{equation}\label{eq:coeffr0}\begin{split}
\mathrm{Coeff}_\tau Q=\mathrm{Coeff}_\tau
\left((-1)^{n-r}\prod_{k=r}^{n-1}(\overline{\epsilon}_n-\overline{\epsilon}_{k})
\prod_{r\leq l<i\leq n-1}(\epsilon_i-\epsilon_{l})\delta_{n-r+1}(\epsilon_1,\dots,\epsilon_{r-1},\epsilon_n)\right)-\\
\mathrm{Coeff}_\tau\left((-1)^{n-r}\prod_{k=r}^{n-1}(\epsilon_n-\epsilon_k)\prod_{r\leq l<i\leq n-1}(\epsilon_i-\epsilon_{l})
\delta_{n-r+1}(\overline{\epsilon}_1,\dots,\overline{\epsilon}_{r-1},\overline{\epsilon}_n))\right).\end{split}
\end{equation}
The right hand side of (\ref{eq:coeffr0}) is
\[
(-1)^{n-r}\mathrm{Coeff}_{\epsilon_n^{2(n-r)}\epsilon_0}
\left(\prod_{k=r}^{n-1}(\overline{\epsilon}_n-\overline{\epsilon}_{k})\epsilon_n^{n-r+1}-
\epsilon_n^{n-r}{(\overline{\epsilon}_n)}^{n-r}(\overline{\epsilon}_1+\cdots +\overline{\epsilon}_{r-1}+\overline{\epsilon}_n)\right),
\]
which by (\ref{eq:infinityaction}) equals
\[(-1)^{n-r}(d(n-r)-d(n-r+1))=(-1)^{n-r+1}d.\]
This finishes the proof of the first case of the theorem.

\underline{Case 2: $r=n$.} If we now set $bX_\Gamma=bX_e$ and also $f=\frac{1}{2}\delta_1(\epsilon)=\zeta_n$, then
the generalized Chevalley formula implies
equation (\ref{eq:inttwice}).

\section{Orthogonal Grassmann fibrations II}
\label{s:orthogonal-II}

The minimal coadjoint orbits of the even dimensional orthogonal group $\SO(2n)$ are
the orthogonal Grassmannians $\mathrm{OGr}_r(\CC^{2n})$, $1\leq r\leq n$. The purpose of this section is to prove the following:

\begin{theorem}\label{thm:orthoggrasseven} Let $\gamma\subset \SO(2n)$ be a non-contractible loop based at the identity and let $Y_{\gamma,r}$
denote the bundle coming from the clutching construction applied to the coadjoint orbit $\mathrm{OGr}_r(\CC^{2n})$, $1\leq r\leq n$.

The characteristic isomorphism
\[\kappa:H^\bullet(Y_{e,r};\QQ)\to H^\bullet(Y_{\gamma,r};\QQ).\]
is not integral.
\end{theorem}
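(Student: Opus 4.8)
The plan is to follow closely the proof of Theorem~\ref{thm:orthoggrass} in Section~\ref{s:orthogonal-I}, taking care of the extra features of type $D_n$: the quotient $\Lambda_w^\vee/\Lambda_r^\vee$ is now $\ZZ_2\oplus\ZZ_2$ (for $n$ even) or $\ZZ_4$ (for $n$ odd) rather than $\ZZ_2$, the Weyl group consists of the signed permutations of $\epsilon_1,\dots,\epsilon_n$ with an even number of sign changes, and the two half-spin fundamental weights $\zeta_{n-1},\zeta_n$ enter the picture. Concretely, for every $z\in\Lambda_w^\vee$ representing a non-trivial class in $\Lambda_w^\vee/\Lambda_r^\vee$ I will produce a polynomial $f\in\Sym_\QQ(\Lambda_w)$ whose image under the Borel isomorphism lies in $H^\bullet(\mathrm{OGr}_r(\CC^{2n});\ZZ)$, together with a projective subdiagram $\Gamma$ relative to $\Pi\setminus\{\alpha_r\}$, such that $\int_{bX_{w_\Gamma}}\kappa(f)\notin\ZZ$; by Remark~\ref{rmk:PD-integrality} this yields the non-integrality of $\kappa$ for $Y_{z,r}$, which over all such $z$ is the theorem. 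The first step is to record the root datum of $\SO(2n)$ from \cite{OV}: orthonormal $\epsilon_1,\dots,\epsilon_n$, simple roots $\alpha_i=\epsilon_i-\epsilon_{i+1}$ for $1\le i\le n-1$ and $\alpha_n=\epsilon_{n-1}+\epsilon_n$, fundamental weights $\zeta_i=\epsilon_1+\cdots+\epsilon_i$ for $i\le n-2$ and $\zeta_{n-1}=\tfrac12(\epsilon_1+\cdots+\epsilon_{n-1}-\epsilon_n)$, $\zeta_n=\tfrac12(\epsilon_1+\cdots+\epsilon_n)$, together with explicit generators of $\Lambda_w^\vee/\Lambda_r^\vee$ and the residual Weyl group $W_r$ of $\PP_r=\PP_{\Pi\setminus\{\alpha_r\}}$; one observes in particular that for $1\le r\le n-2$ the group $W_r$ acts on $\epsilon'=(\epsilon_1,\dots,\epsilon_r)$ only by permutations, so that any symmetric polynomial in those variables is $W_r$-invariant.

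Next comes the choice of the integral class. For $1\le r\le n-2$ I would use, exactly as in Proposition~\ref{pro:intpolynbn}, one half of a complete symmetric polynomial in $\epsilon'$, namely $f=\tfrac12\,\delta_{n-r+1}(\epsilon')$, and show that its image under the Borel isomorphism lies in $H^{2(n-r+1)}(\mathrm{OGr}_r(\CC^{2n});\ZZ)$. The argument is the one of Proposition~\ref{pro:intpolynbn}: expand $\delta_{n-r+1}(\epsilon')=\sum_{j\ge1}(-1)^{n-r+1-j}c_{n-r+1-j}(\epsilon'')\,\delta_j(\epsilon)$ with $\epsilon''=(\epsilon_{r+1},\dots,\epsilon_n)$, using that $c_{n-r+1}(\epsilon'')=0$ and, from the Toda--Watanabe/Nakagawa-type presentation of $H^\bullet(\SO(2n)/T;\ZZ)$ inside $\QQ[\epsilon_1,\dots,\epsilon_n]/I_+$ analogous to the one recalled in Section~\ref{s:orthogonal-I}, that each $\delta_j(\epsilon)$ with $1\le j\le n-r+1$ is divisible by $2$ in that presentation; the only degree where this requires the degree-$n$ ``spin'' relation rather than $c_j(\epsilon)=2\varpi_j$ is $j=n$, which occurs only for $r=1$ and will be treated separately by hand. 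For $r\in\{n-1,n\}$ the orbit $\mathrm{OGr}_r(\CC^{2n})$ is one of the two spinor varieties, and there I would instead take $f=\zeta_{n-1}$ or $f=\zeta_n$ and $\Gamma$ empty, so that $bX_{w_\Gamma}=bX_e$.

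Finally I would compute the periods. For $1\le r\le n-2$ take $\Gamma$ to be the chain $\alpha_r,\alpha_{r+1},\dots,\alpha_{n-2},\alpha_{n-1}$ (or, symmetrically, with $\alpha_n$ in place of $\alpha_{n-1}$, according to which half-spin direction the given $z$ detects), which is of type $A_{n-r}$ and is projective relative to $\Pi\setminus\{\alpha_r\}$ by Example~\ref{ex:subgraphs}; by Proposition~\ref{pro:graphword} the Schubert variety $bX_{w_\Gamma}\subset Y_{z,r}$ is smooth, so that $\int_{bX_{w_\Gamma}}\kappa(f)$ is computed by localization, feeding in the $\CC^*\times\TT$-module structure at the fixed points from Propositions~\ref{pro:rootlocalization}, \ref{pro:tangentspace} and~\ref{pro:graphword}, and then evaluating the resulting rational function by the common-denominator and coefficient-extraction device of Section~\ref{s:orthogonal-I}; the outcome is a half-integer multiple of an appropriate coordinate of $z$, hence non-integral for suitable $z$. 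For $r\in\{n-1,n\}$ the generalized Chevalley formula (Proposition~\ref{pro:cap}) applied to $f=\zeta_{n-1}$ or $f=\zeta_n$ and $[bX_e]$ gives $\int_{bX_e}\kappa(f)=-f(z)$, which is not an integer for the generator $z$ of $\Lambda_w^\vee/\Lambda_r^\vee$ not killed by $\zeta_{n-1}$ (resp.\ $\zeta_n$). A short case analysis over the (finitely many) non-trivial classes of $\Lambda_w^\vee/\Lambda_r^\vee$, using linearity of the period in $z$, then covers all non-contractible loops and finishes the proof.

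\textbf{Main obstacle.} The two delicate points are: identifying the correct fractional polynomial representing an integral class, since the integral cohomology of $\SO(2n)/T$ is genuinely more involved than that of $\SO(2n+1)/T$ because of the two half-spin generators, so that the divisibility-by-$2$ computation must be carried out against the precise $D_n$ presentation (and separately for $r=1$); and carrying out the localization, where in type $D_n$ the weights at the fixed points of $bX_{w_\Gamma}$ involve both roots $\epsilon_i-\epsilon_j$ and $\epsilon_i+\epsilon_j$, so the rational function whose monomial coefficient must be extracted is considerably heavier than in Section~\ref{s:orthogonal-I}, and the computation must moreover be run for each generator of the larger quotient lattice $\Lambda_w^\vee/\Lambda_r^\vee$.
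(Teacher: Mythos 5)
Your overall skeleton is the paper's own: the same class $\tfrac12\delta_{n-r+1}(\epsilon')$ for $r\le n-2$, projective subdiagrams, localization at the $\CC^*\times\TT$-fixed points, and degree-two Chevalley computations for the spinor cases. But there are two genuine gaps. First, for the spinor varieties $r\in\{n-1,n\}$ with $n$ even your plan fails: the degree-two invariant lattice is spanned by the single fundamental weight $\zeta_{n-1}$ (resp.\ $\zeta_n$), and $\zeta_n(z_1)=n/4$, $\zeta_{n-1}(z_1)=(n-2)/4$ are integers when $n\equiv0$ (resp.\ $n\equiv2$) mod $4$, so the non-trivial class $z_1$ is then invisible to every degree-two class; a ``short case analysis using linearity'' cannot manufacture a non-integral period where none exists in degree two. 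The paper needs genuinely higher-degree input here: for $r=n$ it combines the $\zeta_n$-period with the localization formula (\ref{eq:subdiagramnasty}), and for $r=n-1$ it introduces a new $W_{n-1}$-invariant class $g=\tfrac12\delta_3(\eta)$ in the signed variables $\eta=(\epsilon_1,\dots,\epsilon_{n-1},-\epsilon_n)$ (note that $W_{n-1}$ permutes the $\eta_i$, not the $\epsilon_i$) and integrates it over $bX_{w_{\Gamma''}}$ with $\Gamma''=\{\alpha_{n-2},\alpha_{n-1}\}$, obtaining $\tfrac d2-\tfrac{d(n-2)}2$; none of this appears in your proposal.

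Second, for $1\le r\le n-2$ your claim that a single well-chosen chain yields ``a half-integer multiple of an appropriate coordinate of $z$'' is not what the computation gives. At $z=dz_1$ the chain ending at $\alpha_{n-1}$ yields $\pm\tfrac{dn}4$ and the chain through $\alpha_n$ yields $\pm\bigl(\tfrac{dn}4-\tfrac d2\bigr)$, i.e.\ quarters whose non-integrality depends on $n\bmod 4$, and neither one alone works for all $n$; the paper uses both (\ref{eq:subdiagramfine}) and (\ref{eq:subdiagramnasty}), together with (\ref{eq:gen1subdiagramnasty}) for $z=dz_0$, to deduce $\tfrac d2,\tfrac{dn}4\in\ZZ$, which is exactly what kills every non-trivial class of $\Lambda_w^\vee/\Lambda_r^\vee$ (of order $4$ when $n$ is odd). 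Which chain detects $z_1$ is governed by $n\bmod4$, not by ``which half-spin direction $z$ detects'', and the two localizations are not symmetric: the chain through $\alpha_n$ produces tangent weights of the form $\epsilon_i+\epsilon_j$ and a different coefficient extraction, so these evaluations must actually be carried out rather than asserted. A minor point in your favor: your worry about the degree-$n$ relation for $r=1$ is unnecessary, since in the $D_n$ presentation $c_n(\epsilon)=0$ is itself even, so the divisibility argument of Proposition \ref{pro:intpolynbn} carries over verbatim for all $r\neq n-1$, exactly as the paper observes.
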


We follow the same approach as for the odd orthogonal groups. The additional complications
come from the more involved algebraic structure, which is reflected
 in a more complicated Dynkin diagram. In particular we shall be needing three projective subdiagrams and two polynomials to detect the non-integrality
of the characteristic isomorphisms.

We start with a precise description of the quotient of coweight and coroot lattices,
the weight lattice and the action of the Weyl group
on the weight lattice of $\SO(2n+1)$
 \cite[Pages 292--289]{OV}.

 We realize $\mathrm{SO}(2n)$ as the intersection $\SU(2n)\cap \mathrm{SO}(2n,\CC)$, $n>1$,
 where we identify the latter is the group of matrices leaving the
quadratic form $z_1z_{n+1}+\cdots +z_nz_{2n}$ invariant. We select as maximal
torus $T$ the
diagonal matrices in $\SU(2n)\cap \mathrm{SO}(2n,\CC)$ with entries in $S^1$, i.e., the matrices  of the form
\[\theta_1e_1+\cdots\theta_ne_n+\theta_1^{-1}e_{n+1}+\cdots +\theta_n^{-1}e_{2n},\quad \theta_i\in S^1,\]
where $e_i$ denotes the matrix with 1 in the position $(ii)$ and 0 elsewhere.
Let   $\epsilon_1,\dots,\epsilon_n\in \tlie^*$ be the basis dual to $e_1,\dots,e_n$. This is an orthonormal basis for a multiple of the Killing form.
We take as set of simple roots:
\begin{equation}\label{eq:simple-roots-basis-even}
\alpha_i=\epsilon_i-\epsilon_{i+1},\,1\leq i\leq n-1,\quad \alpha_n=\epsilon_{n-1}+\epsilon_n.
\end{equation}
The corresponding fundamental weights are:
\begin{equation}\label{eq:suweightparab-even}
\zeta_i=\sum_{k=1}^i\epsilon_k,\,1\leq i\leq n-2, \quad\zeta_{n-1}=\frac{1}{2}(\epsilon_1+\cdots+\epsilon_{n-1}-\epsilon_n),\quad
\zeta_n=\frac{1}{2}\sum_{k=1}^n\epsilon_k.
\end{equation}
We fix as generators of
\begin{equation}\label{eq:groupgrass-even}
\Lambda_w^\vee/\Lambda_r^\vee\cong
 \begin{cases} \ZZ_{2}\oplus \ZZ_2=\langle z_0\rangle \oplus  \langle z_1\rangle &\mathrm{if}\,\,  n\in 2\ZZ \\
\ZZ_{4}=\langle z_1\rangle &\mathrm{if}\,\, n\notin 2\ZZ.
\end{cases}
 \end{equation}
the coweights
\begin{equation}\label{eq:gen-unitary-even}
   z_0=e_n,\quad z_1=\frac{1}{2}\sum_{k=1}^n e_k.
\end{equation}
The vectors  $\epsilon_1,\dots,\epsilon_n$ are not a basis of the weight lattice. However,
 they are appropriate to describe the Weyl group of $\SO(2n)$. The first $n-1$ simple reflections
 are the transpositions $\sigma_1,\dots,\sigma_{n-1}$, and the last simple reflection
 $s_n$ acts as follows:
\begin{equation}\label{eq:weylbneven}
s_n(\epsilon_i)=\begin{cases} \epsilon_i &\mathrm{if}\,\,1\leq i\leq n-2\\
                 -\epsilon_n &\mathrm{if}\,\,i=n-1\\
                 -\epsilon_{n-1} &\mathrm{if}\,\, i=n.
                \end{cases}
\end{equation}

Let  $\PP_r\subset \SO(2n)$ be the maximal parabolic subgroup determined by the subset $\Pi\backslash \{\alpha_r\}$. The corresponding
flag variety can be identified with the orthogonal Grassmannian $\mathrm{O}\mathrm{Gr}_r(\CC^{2n})$
(the Grassmannian of isotropic subspaces of dimension $r$ in $\CC^{2n}$ for the fixed quadratic form).
The residual Weyl Group $W_r$ is generated by:
 \begin{equation}\label{eq:residual-Weyl-even}
 \sigma_1,\dots,\sigma_{r-1},\sigma_{r+1},\dots,\sigma_{n-1},s_n.
 \end{equation}

  The following theorem ---
whose proof we defer until the end of the section --- contains the integration formulas we need.

\begin{theorem}\label{pro:dn} Let $1\leq r\leq n$, let $z\in \Lambda_w^\vee$ and let $\Gamma,\Gamma'$ and $\Gamma''$ be the
following projective graphs relative to $\Pi\backslash\{\alpha_r\}$:
\begin{itemize}
 \item For $r\neq n-1$, $\Gamma'$ is the  subdiagram of containing the $n-r$ roots  connecting  $\alpha_r$  with $\alpha_{n}$.
 \item For $r< n-1$,  $\Gamma$ is the subdiagram containing the $n-r$ roots connecting $\alpha_r$ with $\alpha_{n-1}$.
 \item For $r=n-1$,  $\Gamma''$ is the subdiagram  containing  $\alpha_{n-1}$  and $\alpha_{n-2}$.
\end{itemize}
There exist  polynomials $f,g\in \mathrm{Sym}_\QQ(\Lambda_w)$, where $f$ is defined for $r\neq n-1$ and $g$ for $r=n-1$,
that correspond to a class in $H^\bullet(\mathrm{O}\mathrm{Gr}_r(\CC^{2n});\ZZ)$ and such that:

\begin{enumerate}
 \item If $z=dz_0$  then:
\begin{equation}\label{eq:gen1subdiagramnasty}
 \int_{bX_{w_{\Gamma'}}}{\kappa(f)}=(-1)^{n-r}\frac{d}{2},\quad r\neq n-1.
\end{equation}
\item If $z=dz_1$ then:
\begin{align}\label{eq:subdiagramfine}
 \int_{bX_{w_\Gamma}}{\kappa(f)} &=(-1)^{n-r+1}\frac{dn}{4},\quad r<n-1,\\ \label{eq:subdiagramnasty}
 \int_{bX_{w_{\Gamma'}}}{\kappa(f)} &=(-1)^{n-r+1}\left(\frac{dn}{4}-\frac{d}{2}\right),\quad r\neq n-1,\\ \label{eq:subdiagramnastier}
 \int_{bX_{w_{\Gamma''}}}{\kappa(g)} &=\frac{d}{2}-\frac{d(n-2)}{2},\quad r=n-1.
\end{align}
\end{enumerate}
\end{theorem}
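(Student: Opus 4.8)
The plan is to imitate and extend the two-case argument of Section \ref{s:orthogonal-I}, but now working inside the Dynkin diagram $D_n$, whose extra fork at the right end forces us to carry two polynomials ($f$ for $r\neq n-1$, $g$ for $r=n-1$) and three projective subdiagrams ($\Gamma,\Gamma',\Gamma''$). First I would pin down the integral cohomology classes. For $r\neq n-1$ the natural candidate is $f=\tfrac12\delta_{n-r+1}(\epsilon')$ with $\epsilon'=(\epsilon_1,\dots,\epsilon_r)$ (or, in the boundary case $r=n$, $f=\tfrac12\delta_1(\epsilon)=\zeta_n$), and I would prove it lies in $H^\bullet(\mathrm{OGr}_r(\CC^{2n});\ZZ)$ exactly as in Proposition \ref{pro:intpolynbn}: the polynomial is manifestly $W_r$-invariant by (\ref{eq:residual-Weyl-even}), so it suffices to check integrality for the regular orbit, using the known presentation of $H^\bullet(\SO(2n)/T;\ZZ)$ (Toda--Watanabe / the description analogous to \cite{TW,N2} for $D_n$), together with the generating-function identity $\delta_{n-r+1}(\epsilon')=\sum_{j\geq 1}(-1)^{n-r+1-j}c_{n-r+1-j}(\epsilon'')\delta_j(\epsilon)$ where $\epsilon''=(\epsilon_{r+1},\dots,\epsilon_n)$, whose top term drops out because $c_{n-r+1}(\epsilon'')=0$; the relations $c_i(\epsilon)=2\varpi_i$ then show $\delta_j(\epsilon)=2Q_j(\varpi)$ with integral $Q_j$, giving divisibility by $2$. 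For $g$ in the case $r=n-1$ I would choose a degree-two polynomial adapted to the fork: something of the form $g=\zeta_{n-1}\zeta_n+\tfrac12(\text{a }W\text{-invariant quadric})$, and again verify $W_{n-1}$-invariance via (\ref{eq:weylbneven}) and integrality via the regular-orbit presentation, mimicking the explicit $r=n-1$ computation after Proposition \ref{pro:intpolynbn}.

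Next I would set up the localization computation. For each of the three subdiagrams, Example \ref{ex:subgraphs}(3) (bivalent-node condition for $D_n$) guarantees that $\Gamma,\Gamma',\Gamma''$ are projective relative to $\Pi\setminus\{\alpha_r\}$, so Proposition \ref{pro:graphword} makes $bX_{w_\Gamma}\subset Y_{z,\OOO}$ a smooth $\CC^*\times\TT$-variety of the stated complex dimension with fixed points $w_{(j)}(0),w_{(j)}(\infty)$ and tangent-space modules given by (\ref{eq:schubertproj0}). Since $f,g$ are polynomials in weights, I would compute $\int_{bX_{w_\Gamma}}\kappa(f)$ by Atiyah--Bott/Berline--Vergne localization, pulling back to the regular orbit via $q\colon Y_{z,\OOO'}\to Y_{z,\OOO}$ (the square (\ref{eq:charcomm}) is commutative with the fixed-point arrows isomorphisms) so that Proposition \ref{pro:rootlocalization} applies monomial by monomial. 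The key bookkeeping ingredient is the substitution $\Phi_z^*\epsilon_i=\epsilon_i+\epsilon_i(z)\epsilon_0$ at the fixed points over $\infty$; with $z=dz_0=de_n$ this gives $\overline\epsilon_i=\epsilon_i$ for $i<n$ and $\overline\epsilon_n=\epsilon_n+d\epsilon_0$ (as in (\ref{eq:infinityaction})), while with $z=dz_1=\tfrac d2\sum e_k$ it gives $\overline\epsilon_i=\epsilon_i+\tfrac d2\epsilon_0$ for every $i$ — this shift in \emph{all} variables is what produces the $\tfrac{dn}{4}$ terms in (\ref{eq:subdiagramfine})--(\ref{eq:subdiagramnasty}).

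Then, for each of the four formulas (\ref{eq:gen1subdiagramnasty})--(\ref{eq:subdiagramnastier}), I would evaluate the resulting sum of rational functions by the common-denominator/coefficient-extraction trick of Section \ref{s:orthogonal-I}: write $\sum_i Q_i/R_i=Q/R$ with $R=\epsilon_0\prod(\text{root differences})\prod(\overline\epsilon\text{-differences})$, so that $\int\kappa(f)=\mathrm{Coeff}_\tau Q$ for a cleverly chosen monomial $\tau$ (the ``staircase'' $\epsilon_n^{2(n-r)}\epsilon_{n-1}^{n-r-1}\cdots\epsilon_{r+1}\epsilon_0$, suitably modified for $\Gamma$ versus $\Gamma'$ and for the $D_n$ fork) that appears with coefficient $1$ in $R$; a degree-in-$\epsilon_n$ (or, for the fork case, degree-in-$\epsilon_{n-1}$ and $\epsilon_n$ jointly) comparison kills all but the two ``extreme'' fixed-point contributions $w_\Gamma(0)$ and $w_\Gamma(\infty)$, and expanding $\delta_{n-r+1}(\overline\epsilon_1,\dots,\overline\epsilon_{r-1},\overline\epsilon_n)$ against $\prod_{k=r}^{n-1}(\overline\epsilon_n-\overline\epsilon_k)\epsilon_n^{n-r+1}$ yields the advertised value. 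For $r=n$ (respectively the degenerate end of the $\Gamma''$ case) the subdiagram collapses to $bX_e$ and the answer follows directly from the generalized Chevalley formula (\ref{eq:cap}) applied to $\zeta_n$, exactly as in Case 2 of \S\ref{ss:proof-thm:orthoggrass}. The main obstacle I anticipate is purely combinatorial: in the $D_n$ diagram the two paths emanating from $\alpha_r$ toward $\alpha_{n-1}$ and toward $\alpha_n$ share all nodes up to $\alpha_{n-2}$, so the root labels $\alpha_{(h)}+\cdots+\alpha_{(j)}$ appearing in (\ref{eq:schubertproj0}) for $\Gamma'$ involve $\epsilon_{n-1}+\epsilon_n$ rather than a single $\epsilon_k$, which both changes the denominator $R$ and is precisely the source of the ``$-\tfrac d2$'' correction in (\ref{eq:subdiagramnasty}) and of the asymmetry between (\ref{eq:subdiagramfine}) and (\ref{eq:subdiagramnasty}); getting the fork bookkeeping right — and choosing $\tau$ so that exactly two fixed points survive — is where the real work lies. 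Everything else is a routine, if lengthy, symmetric-function manipulation.
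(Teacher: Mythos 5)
For the three formulas (\ref{eq:gen1subdiagramnasty}), (\ref{eq:subdiagramfine}) and (\ref{eq:subdiagramnasty}) your plan is essentially the paper's own proof: the same polynomial $f=\tfrac12\delta_{n-r+1}(\epsilon')$, integrality established as in Proposition \ref{pro:intpolynbn} via the $D_n$ presentation of $H^\bullet(\SO(2n)/T;\ZZ)$ (which indeed transfers verbatim because for $r\neq n-1$ the residual Weyl group permutes $\epsilon_1,\dots,\epsilon_r$), localization at the fixed points supplied by Propositions \ref{pro:graphword} and \ref{pro:rootlocalization}, the substitution $\overline\epsilon_i=\epsilon_i+\epsilon_i(z)\epsilon_0$ (only $\epsilon_n$ shifted for $z=dz_0$, all variables shifted by $\tfrac d2\epsilon_0$ for $z=dz_1$), and the common-denominator/$\mathrm{Coeff}_\tau$ extraction with a staircase monomial. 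You also correctly anticipate that the fork root $\alpha_n=\epsilon_{n-1}+\epsilon_n$ turns part of the denominator into factors $\epsilon_i+\epsilon_n$ and is the source of the $-\tfrac d2$ shift between (\ref{eq:subdiagramfine}) and (\ref{eq:subdiagramnasty}). (A small caveat: it is not always exactly two fixed points that survive the $\mathrm{Coeff}_\tau$ test --- in the paper's computation of (\ref{eq:subdiagramfine}) only $w_\Gamma(\infty)$ contributes, after an extra $\deg_{\epsilon_0}$ argument, and in the $\Gamma''$ case three fixed points over $\infty$ contribute --- but that is bookkeeping your method would resolve.)

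The genuine gap is the case $r=n-1$, formula (\ref{eq:subdiagramnastier}). Your candidate $g=\zeta_{n-1}\zeta_n+\tfrac12(\text{a }W\text{-invariant quadric})$ has degree two, whereas $w_{\Gamma''}$ is a word of length two, so $bX_{w_{\Gamma''}}$ has complex dimension three and only a degree-three class can pair nontrivially with $[bX_{w_{\Gamma''}}]$; as stated, your $g$ cannot produce the value $\tfrac d2-\tfrac{d(n-2)}2$. Moreover the integrality argument you propose (``mimicking the $r=n-1$ computation after Proposition \ref{pro:intpolynbn}'') does not transfer: for $r=n-1$ the group $W_{n-1}$ does \emph{not} act by permutations of $\epsilon_1,\dots,\epsilon_{n-1}$, since by (\ref{eq:weylbneven}) the reflection $s_n$ mixes $\epsilon_{n-1}$ with $-\epsilon_n$, so the $B_n$-style divisibility argument breaks exactly at this point. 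The idea missing from your plan is the paper's passage to the twisted variables $\eta=(\epsilon_1,\dots,\epsilon_{n-1},-\epsilon_n)$, on which $W_{n-1}$ does act by permutations, and the choice $g=\tfrac12\delta_3(\eta)$; its integrality follows from the generating-function identity relating the complete symmetric functions of $\eta'=(\epsilon_1,\dots,\epsilon_{n-1})$, of $\eta''=\epsilon_n$ and of $\eta$, combined with the fact that $\tfrac12\delta_m(\epsilon)$ is integral on $\SO(2n)/T$. With that choice the localization over the six fixed points of $bX_{w_{\Gamma''}}$ (only the three over $\infty$ survive for $\tau=\epsilon_0\epsilon_n^2\epsilon_{n-1}$) yields (\ref{eq:subdiagramnastier}). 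Without a degree-three $W_{n-1}$-invariant integral class of this kind, your plan does not prove the fourth formula, and hence does not complete the theorem.
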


Assuming that the above integration formulas hold, we can deduce the main result of this section:

\begin{proof}[Proof of Theorem \ref{thm:orthoggrasseven}]
We assume that the characteristic isomorphism is integral and  consider the following cases:

\underline{Case 1: $z=dz_0$.}
If $r\neq n-1$ then  by (\ref{eq:gen1subdiagramnasty}) $\tfrac{d}{2}\in \ZZ$.

If $r=n-1$ then the fundamental weight $\zeta_{n-1}$ is $W_{n-1}$ invariant and by (\ref{eq:cap}) and
(\ref{eq:gen-unitary-even}):
\[\int_{[bX_e]}{\kappa(\zeta_{n-1})}=\frac{d}{2}\in \ZZ,\]
and this proves the theorem in case 1.

\underline{Case 2: $z=dz_1$.}
If $r<n-1$ by (\ref{eq:subdiagramfine}) and (\ref{eq:subdiagramnasty})
   $\tfrac{dn}{4},\tfrac{d}{2}\in \ZZ$. By (\ref{eq:groupgrass-even}) this proves the theorem if $r<n-1$.

If $r=n$    by (\ref{eq:cap}) and
(\ref{eq:gen-unitary-even}):
\[\int_{[bX_e]}{\kappa(\zeta_n)}=-\frac{dn}{4}\in \ZZ,\]
Since by  (\ref{eq:subdiagramnasty}) $\tfrac{d}{2}-\tfrac{dn}{4}\in \ZZ$, the result follows again from (\ref{eq:groupgrass-even}).

If $r=n-1$ we have
\[\int_{[bX_e]}{\kappa(\zeta_{n-1})}=-\frac{d(n-2)}{4}\in \ZZ.\]
Together with (\ref{eq:subdiagramnastier}) this means that $\tfrac{d(n-2)}{4},\tfrac{d}{2}\in\ZZ$, and this finishes the proof of the theorem.
\end{proof}

\begin{proof}[Proof of Theorem \ref{pro:dn}]

The polynomial we consider for $r\neq n-1$ is the same as for the odd
orthogonal groups: $f=\tfrac{1}{2}\delta_{n-r+1}(\epsilon')$,
$\epsilon'=(\epsilon_1,\dots,\epsilon_r)$.

According to \cite[Theorem 4.5]{N2}  the integral cohomology ring of
the regular orbit can be identified with the image of the quotient ring
 \begin{equation}\label{eq:dnbocohomology}
  \ZZ[\epsilon_1,\dots,\epsilon_n,\varpi_1,\dots,\varpi_{2n-21}]/I\to \QQ[\epsilon_1,\dots,\epsilon_n]/I^+\cong H^\bullet(\mathrm{SO}(2n)/T;\QQ),
\end{equation}
where $I$ is the ideal spanned by the polynomials
\[c_i-2\varpi_i,\,1\leq i\leq n-1,\quad c_n,\quad\varpi_j,\,n\leq j\leq 2n-2,\quad \varpi_{2k}+\sum_{j=1}^{2k-1}(-1)^j\varpi_j\varpi_{2k-j},\,1\leq k\leq n-1.\]
In particular we have the relations $c_i-2\varpi_i=0$, $1\leq i\leq n$. If $r\neq n-1$, so $W_r$ acts by permutations
of the variables $\epsilon_1,\dots,\epsilon_r$,
then the proof of Proposition \ref{pro:intpolynbn}
applies verbatim, and thus $f$ represents an integral cohomology class.

\noindent
\underline{Proof of formula (\ref{eq:gen1subdiagramnasty})}. Suppose that $z=dz_0$ and $r\neq n-1$
(subdiagram $\Gamma'$).
Let $\epsilon_0\in \mathrm{Hom}(\CC,\CC)$ denote the same 1-form as in the proof for odd
orthogonal groups (Section \ref{s:orthogonal-I}), and set:
\begin{equation}\label{eq:infinityaction2}
\overline{\epsilon}_i=\begin{cases} \epsilon_i,\,1\leq i< n,\\
                      \epsilon_n+d\epsilon_0.
                     \end{cases}
 \end{equation}
By (\ref{eq:gen-unitary-even}) we have $\overline{\epsilon}_i=\Phi_z^*\epsilon_i$.
We compute $\int_{bX_{w_{\Gamma'}}}2\kappa(f)$ by localization.
According to  Propositions
 \ref{pro:graphword}  and  \ref{pro:rootlocalization}
 the integral equals:
\begin{equation}\label{eq:gen1loc}
\begin{split}
&-\left(\sum_{j=0}^{n-r-1}\frac{\delta_{n-r+1}(\epsilon_1,\dots,\epsilon_{r-1},\epsilon_{r+j})}
{\epsilon_0\prod_{i\in \{r,\dots,n-1\},i\neq r+j}(\epsilon_i-\epsilon_{r+j})(\epsilon_n+\epsilon_{r+j})}\right)+
\frac{\delta_{n-r+1}(\epsilon_1,\dots,\epsilon_{r-1},-\epsilon_n)}
{\epsilon_0\prod_{i=r}^{n-1}(\epsilon_i+\epsilon_n)}+\\
&+\left(\sum_{j=0}^{n-r-1}\frac{\delta_{n-r+1}(\overline{\epsilon}_1,\dots,\overline{\epsilon}_{r-1},\overline{\epsilon}_{r+j})}
{\epsilon_0\prod_{i\in \{r,\dots,n-1\},i\neq r+j}(\epsilon_i-\epsilon_{r+j})(\overline{\epsilon}_n+\overline{\epsilon}_{r+j})}\right)
-\frac{\delta_{n-r+1}(\overline{\epsilon}_1,\dots,\overline{\epsilon}_{r-1},-\overline{\epsilon}_{n})}
{\epsilon_0\prod_{i=r}^{n-1}(\overline{\epsilon}_i+\overline{\epsilon}_n)},
\end{split}
\end{equation}
The idea to compute this fraction will be the same as that in
Subsection \ref{ss:proof-thm:orthoggrass}. Namely, we will write it as $Q/R$ for
some polynomials $Q$ and $R$ in the variables $\epsilon_i$, and we will look for some
monomial $\tau$ which appears with coefficient $1$ in the expansion of $R$; then
$Q/R=\mathrm{Coeff}_\tau Q$.

We may write (\ref{eq:gen1loc}) in the from $Q/R$ with
\[
 R=\epsilon_0\prod_{r\leq l<i\leq n-1}(\epsilon_i-\epsilon_{l})
\prod_{k=r}^{n-1}(\epsilon_n+\epsilon_k)\prod_{k=r}^{n-1}(\overline{\epsilon}_n+\overline{\epsilon}_k).
\]
The monomial
\[\tau:=\epsilon_n^{2(n-r)}\epsilon_{n-1}^{n-r-1}\epsilon_{n-2}^{n-r-2}\cdots \epsilon_{r+2}^2\epsilon_{r+1}\epsilon_0\]
appears with coefficient 1 in the expansion of $R$.
The same arguments that in Subsection \ref{ss:proof-thm:orthoggrass} allowed us to identify the quantities in (\ref{eq:intcompletesymbo}) and in (\ref{eq:coeffr0}) (namely, considering the degree in $\epsilon_n$ of the numerators and denominators of the fractions in (\ref{eq:intcompletesymbo})), imply that (\ref{eq:gen1loc}), which is equal to $\mathrm {Coeff}_\tau Q$, can be identified with:
\[\begin{split} & \mathrm{Coeff}_\tau
\left(\prod_{k=r}^{n-1}(\overline{\epsilon}_n+\overline{\epsilon}_{k})
\prod_{r\leq l<i\leq n-1}(\epsilon_i-\epsilon_{l})\delta_{n-r+1}(\epsilon_1,\dots,\epsilon_{r-1},-\epsilon_n)\right)-\\
 & \mathrm{Coeff}_\tau\left(\prod_{k=r}^{n-1}(\epsilon_n+\epsilon_k)\prod_{r\leq l<i\leq n-1}(\epsilon_i-\epsilon_{l})
\delta_{n-r+1}(\overline{\epsilon}_1,\dots,\overline{\epsilon}_{r-1},-\overline{\epsilon}_n))\right),\end{split}\]
which on its turn equals to:
\begin{align*}
(-1)^{n-r+1} & \mathrm{Coeff}_{\epsilon_n^{2(n-r)}\epsilon_0}
\left(\prod_{k=r}^{n-1}(\overline{\epsilon}_n+\overline{\epsilon}_{k})\epsilon_n^{n-r+1}+
\epsilon_n^{n-r}\overline{\epsilon}_n^{n-r}(\overline{\epsilon}_1+
\cdots\overline{\epsilon}_{r-1}-\overline{\epsilon}_n)\right)=\\
&=(-1)^{n-r+1}d(n-r)+ \\
&\qquad\qquad (-1)^{n-r+1}\mathrm{Coeff}_{\epsilon_n^{n-r}\epsilon_0}\left(\left(\epsilon_n+d\epsilon_0\right)^{n-r}
(\epsilon_1+\cdots+\epsilon_{r-1}-\epsilon_n-d\epsilon_0)\right)\\
&=(-1)^{n-r+1}(d(n-r)-d(n-r)-d)=(-1)^{n-r}d.
\end{align*}
This finishes the proof of formula (\ref{eq:gen1subdiagramnasty}).

In the proofs of the remaining three formulas we will follow the same strategy as before: using localization we will equate the relevant integral to a sum of rational functions, and the numerical value of the sum will be computed writing it in common denominator $Q/R$, and choosing an appropriate monomial $\tau$ satisfying $\mathrm{Coeff}_{\tau}R=1$, so that $Q/R=\mathrm{Coeff}_{\tau}Q$.
We will also prove that, among the rational functions in the initial formula given by localization, only very few of them actually contribute to $\mathrm{Coeff}_{\tau}Q$, and this will lead immediately to the desired formulas.


\noindent\underline{Proof of formula (\ref{eq:subdiagramfine})}
Assume $z=dz_1$ and $r<n-1$  (subdiagram $\Gamma$).
We modify accordingly the definition in (\ref{eq:infinityaction2}) to  \[\overline{\epsilon}_i=\epsilon_i+\frac{d}{2}\epsilon_0,\]
so that $\overline{\epsilon}_i=\Phi_z^*\epsilon_i$  by (\ref{eq:gen-unitary-even}).
 By  Propositions  \ref{pro:graphword}  and  \ref{pro:rootlocalization}:
\begin{equation}\label{eq:intcompletesecond}
 \int_{bX_{w_\Gamma}}2{\kappa(f)}=
\sum_{j=0}^{n-r}\left(\frac{\delta_{n-r+1}(\epsilon_1,\dots,\epsilon_{r-1},\epsilon_{r+j})}
{\epsilon_0\prod_{i\in \{r,\dots,n\},i\neq r+j}(\epsilon_i-\epsilon_{r+j})}-
\frac{\delta_{n-r+1}(\overline{\epsilon}_1,\dots,\overline{\epsilon}_{r-1},\overline{\epsilon}_{r+j})}
{\epsilon_0\prod_{i\in \{r,\dots,n\},i\neq r+j}({\epsilon}_i-{\epsilon}_{r+j})}\right).
\end{equation}
We write the RHS as $Q/R$, where
\[
 R=\epsilon_0\prod_{r\leq l<i\leq n}(\epsilon_i-\epsilon_{l}),
\]
and we set
$\tau:=\epsilon_n^{n-r}\epsilon_{n-1}^{n-r-1}\cdots \epsilon_{r+2}^2\epsilon_{r+1}^1\epsilon_0,$
which satisfies $\mathrm{Coeff}_{\tau}R=1$, so (\ref{eq:intcompletesecond}) is equal to
$\mathrm{Coeff}_{\tau}Q$.

Using the same considerations on the degree w.r.t. $\epsilon_n$ we deduce that the only summands in
(\ref{eq:intcompletesecond}) that contribute to $\mathrm{Coeff}_{\tau}Q$ are those corresponding to
%
%
the fixed points $w_\Gamma(0)$ and $w_\Gamma(\infty)$. The same considerations using $\deg_{\epsilon_0}$ instead of $\deg_{\epsilon_n}$ imply that the summand corresponding to the point
 $w_\Gamma(0)$ does not contribute to $\mathrm{Coeff}_{\tau}Q$. Therefore the integral coincides with
\[
\mathrm{Coeff}_\tau\left(-(-1)^{n-r}
\prod_{r\leq l<i\leq n-1}(\epsilon_i-\epsilon_l)
\delta_{n-r+1}(\overline{\epsilon}_1,\dots,\overline{\epsilon}_{r-1},\overline{\epsilon}_{n})\right),
\]
which  equals
\[(-1)^{n-r+1}
\mathrm{Coeff}_{\epsilon_0\epsilon_n^{(n-r)}}
(\overline{\epsilon}_n^{n-r}(\overline{\epsilon}_1+\cdots\overline{\epsilon}_{r-1}+\overline{\epsilon}_n))
=(-1)^{n-r+1}\frac{dn}{2}.\]
This proves (\ref{eq:subdiagramfine}).

\noindent\underline{Proof of formula (\ref{eq:subdiagramnasty})}
Assume $z=dz_1$ and $r\neq n-1$ (subdiagram $\Gamma'$).
The proof  is very similar to that of formula (\ref{eq:gen1subdiagramnasty}). In
(\ref{eq:gen1loc}) the denominators are the same for either definition of $\overline{\epsilon}_i$. Therefore by making the same choice of common
denominator and monomial we deduce that:
\begin{align*}
\int_{bX_{w_{\Gamma'}}} & 2\kappa(f)=(-1)^{n-r+1}\mathrm{Coeff}_{\epsilon_n^{2(n-r)}\epsilon_0}
\left(\prod_{k=r}^{n-1}(\overline{\epsilon}_n+\overline{\epsilon}_{k})\epsilon_n^{n-r+1}+
\epsilon_n^{n-r}\overline{\epsilon}_n^{n-r}
(\overline{\epsilon}_1+\cdots\overline{\epsilon}_{r-1}-\overline{\epsilon}_n)\right)\\
&= (-1)^{n-r+1}d(n-r)+ \\ &\qquad+(-1)^{n-r+1}\mathrm{Coeff}_{\epsilon_n^{n-r}\epsilon_0}\left(\left(\epsilon_n+\frac{d}{2}\epsilon_0\right)^{n-r}
\left(\epsilon_1+\cdots+\epsilon_{r-1}-\epsilon_n+\frac{d(r-2)}{2}\epsilon_0\right)\right) \\
&=(-1)^{n-r+1}\left(d(n-r)-\frac{d(n-r)}{2}+\frac{d(r-2)}{2}\right)=(-1)^{n-r+1}\left(\frac{dn}{2}-d\right).
\end{align*}

\noindent\underline{Proof of formula (\ref{eq:subdiagramnastier})}
Assume $z=dz_1$ and $r=n-1$ (subdiagram $\Gamma''$).
The polynomial $g$ will be constructed using the same
ideas applied for $f$. We consider the variables
\[\eta=(\eta_1,\dots,\eta_{n-1},\eta_n):=(\epsilon_1,\dots,\epsilon_{n-1},-\epsilon_n).\]
As $W_{n-1}$ acts on this set of variables as the symmetric group $S_{n-1}$, the corresponding complete symmetric polynomials
are $W_{n-1}$ invariant (note that  $\tfrac{1}{2}\delta_1(\eta)=\xi_{n-1}$).

We claim that for $m\geq 1$, $\tfrac{1}{2}\delta_m(\eta)$ represents an integral
class in the cohomology of the orthogonal Grassmannian
$\mathrm{OGr}_{n-1}(\CC^{2n})$. The claim follows from these three facts: (1) defining
 generating functions ${\boldsymbol\delta}$ and ${\mathbf c}$ as in Proposition \ref{pro:intpolynbn},
 we have the identity
\[{\boldsymbol\delta}_{\eta'}(t)={\mathbf c}_{\eta''}(-t){\boldsymbol\delta}_\eta(t),\quad \text{where }
\eta'=(\epsilon_1,\dots,\epsilon_{n-1})\text{ and }\eta''=\epsilon_n;\]
 (2) we also have ${\boldsymbol\delta}_{\eta'}(t)={\boldsymbol\delta}_{\epsilon'}(t)$, and (3) the elements $\frac{1}{2}\delta_{m}(\epsilon)$ represent classes in
 $H^\bullet(\SO(2n)/T;\ZZ)$.

We set $g=\frac{1}{2}\delta_3(\eta)$.
 By  Propositions  \ref{pro:graphword}  and  \ref{pro:rootlocalization},   the evaluation of
 \[\int_{bX_{w_{\Gamma''}}}2\kappa(g)\]
 by localization is

\[
\begin{split}
&\frac{\delta_3(\epsilon_1,\dots,\epsilon_{n-2},\epsilon_{n-1},-\epsilon_{n})}
{\epsilon_0(\epsilon_{n-1}-\epsilon_{n})(\epsilon_{n-2}-\epsilon_n)}-
\frac{\delta_3(\epsilon_1,\dots,\epsilon_{n-2},\epsilon_{n},-\epsilon_{n-1})}
{\epsilon_0(\epsilon_{n-1}-\epsilon_{n})(\epsilon_{n-2}-\epsilon_{n-1})}+
\frac{\delta_3(\epsilon_1,\dots,\epsilon_{n-1},\epsilon_{n},-\epsilon_{n-2})}
{\epsilon_0(\epsilon_{n-2}-\epsilon_{n})(\epsilon_{n-2}-\epsilon_{n-1})}
\\
-&\frac{\delta_3(\overline{\epsilon}_1,\dots,\overline{\epsilon}_{n-2},\overline{\epsilon}_{n-1},-\overline{\epsilon}_{n})}
{\epsilon_0(\epsilon_{n-1}-\epsilon_{n})(\epsilon_{n-2}-\epsilon_n)}+
\frac{\delta_3(\overline{\epsilon}_1,\dots,\overline{\epsilon}_{n-2},\overline{\epsilon}_{n},-\overline{\epsilon}_{n-1})}
{\epsilon_0(\epsilon_{n-1}-\epsilon_{n})(\epsilon_{n-2}-\epsilon_{n-1})}-
\frac{\delta_3(\overline{\epsilon}_1,\dots,\overline{\epsilon}_{n-1},\overline{\epsilon}_{n},-\overline{\epsilon}_{n-2})}
{\epsilon_0(\epsilon_{n-2}-\epsilon_{n})(\epsilon_{n-2}-\epsilon_{n-1})}
\end{split}
\]
We write the previous sum of six rational functions in the form $Q/R$ with \[R=\epsilon_0(\epsilon_{n-1}-\epsilon_{n})(\epsilon_{n-2}-\epsilon_{n-1})(\epsilon_{n-2}-\epsilon_{n}).\]
Setting $\tau=\epsilon_0\epsilon_{n}^2\epsilon_{n-1}$ we have $\mathrm{Coeff}_{\tau}R=-1$.
Among the six fractions, the only ones that
contribute to $\mathrm{Coeff}_{\tau}Q$
are the ones corresponding to the three fixed points over $\infty$.

The contribution of the fraction corresponding to the fixed point $e(\infty)$ is equal to:
\[
\mathrm{Coeff}_{\tau}\left(\epsilon_{n-1}\left(-\epsilon_n-\frac{d}{2}\epsilon_0\right)^2
\left(\epsilon_1+\cdots +\epsilon_{n-1}-\epsilon_n+\frac{d(n-2)}{2}\epsilon_0\right)\right)=\frac{d(n-2)}{2}-d.\]
The fraction corresponding to the fixed point $\sigma_{n-1}(\infty)$ contributes as:
\[
\mathrm{Coeff}_{\tau}\left(-\epsilon_{n}\left(\epsilon_n+\frac{d}{2}\epsilon_0\right)\left(-\epsilon_{n-1}-\frac{d}{2}\epsilon_0\right)
\left(\epsilon_1+\cdots+\epsilon_{n-2}-\epsilon_{n-1}+\epsilon_n+\frac{d(n-2)}{2}\epsilon_0\right)\right)\]
which equals
\[\frac{d(n-2)}{2}.\]
Finally, the fraction corresponding to the fixed point $w(\infty)$ contributes as:
\[\begin{split}
&\mathrm{Coeff}_{\tau}\left(-\epsilon_{n-1}\left(\epsilon_n+\frac{d}{2}\epsilon_0\right)^2
\left(\epsilon_1+\cdots+\epsilon_{n-3}-\epsilon_{n-2}+\epsilon_{n-1}+\epsilon_n+\frac{d(n-2)}{2}\epsilon_0\right)\right)+\\
+&\mathrm{Coeff}_{\tau}\left(\epsilon_{n}\left(\epsilon_n+\frac{d}{2}\epsilon_0\right)\left(\epsilon_{n-1}+\frac{d}{2}\epsilon_0\right)
\left(\epsilon_1+\cdots+\epsilon_{n-3}-\epsilon_{n-2}+\epsilon_{n-1}+\epsilon_n+\frac{d(n-2)}{2}\epsilon_0\right)\right)
\end{split}
\]
which equals
\[-\frac{d(n-2)}{2}-d+\frac{d(n-2)}{2}+d=0.\]
Therefore, summing up the three contributions we obtain
\[\int_{bX_{w_{\Gamma''}}}\kappa(g)=\frac{d}{2}-\frac{d(n-2)}{2},\]
which is what we wanted to prove.
\end{proof}

\section{Exceptional groups}\label{s-exotic}

Let us repeat once again that, for the purposes of proving our main Theorem \ref{thm:injdiff},
among the simply connected exceptional groups  we only need to consider $E_6$ and $E_7$,
since these are the only ones with nontrivial center.

The goal of this section is to prove the following result, which is the last piece in the proof of Theorem \ref{thm:injdiff}:

\begin{theorem}\label{thm:e6} Let $\gamma$  be a non-contractible loop based at the identity in either $E_6/Z$ or $E_7/Z$, let $\OOO$ be
a minimal coadjoint orbit of the group containing $\gamma$, and let $Y_{\gamma,\OOO}$
denote the bundle coming from the clutching construction applied to  $\OOO$.

The characteristic isomorphism
\[\kappa:H^\bullet(Y_{e,\OOO};\QQ)\to H^\bullet(Y_{\gamma,\OOO};\QQ).\]
is not integral.
\end{theorem}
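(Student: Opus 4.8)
The plan is to follow exactly the scheme that worked for the classical groups in Sections \ref{sec:unitary}--\ref{s:orthogonal-II}, carried out case by case over the minimal orbits of $E_6/Z$ and $E_7/Z$. First I would pin down the lattice-theoretic data: for $E_6$ we have $\Lambda_w^\vee/\Lambda_r^\vee\cong\ZZ_3$ and for $E_7$ we have $\Lambda_w^\vee/\Lambda_r^\vee\cong\ZZ_2$ (both with $E_8,F_4,G_2$ excluded as in the excerpt), so in each case it suffices to treat the generating coweight $z$ and the orbits $Y_{z,\OOO}$ for $\OOO$ running over the minimal orbits, i.e.\ $\GG/\PP_r$ for $\PP_r$ the maximal parabolic attached to deleting one node $\alpha_r$ of the Dynkin diagram. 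By Proposition \ref{pro:nonint} the whole theorem reduces to exhibiting, for each such $(\gamma,\OOO)$, a homogeneous polynomial $f\in\Sym_\QQ(\Lambda_w)$ representing an \emph{integral} cohomology class of $\OOO$ (via the Borel isomorphism, using \cite[Theorem 2.1]{To}) together with a fibered Schubert variety $bX_w\subset Y_{z,\OOO}$ such that $\int_{[bX_w]}\kappa(f)\notin\ZZ$.

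Next, for the Schubert varieties I would use the projective subdiagrams of Definition \ref{def:projgraph}: in the $E_6,E_7$ diagrams there are always $A_k$-type subdiagrams whose unique node outside $J=\Pi\setminus\{\alpha_r\}$ sits at an end, and Proposition \ref{pro:graphword} then gives smooth Schubert varieties $bX_{w_\Gamma}\cong$ (bundle over $\PP^1$ with $\PP^k$ fibers) with completely explicit $\CC^*\times\TT$-module structure at the fixed points (formula (\ref{eq:schubertproj0})). For the ``easy'' minimal orbits one can hope to detect non-integrality already in degrees $2$ or $4$ --- e.g.\ for a fundamental weight $\zeta$ which happens to be $W_{P_r}$-invariant, the generalized Chevalley formula (\ref{eq:cap}) applied to $[bX_e]$ gives $\int_{[bX_e]}\kappa(\zeta)=-\zeta(z)$, and one checks directly that $\zeta(z)\notin\ZZ$ for the relevant generator $z$; this should dispose of several of the $E_6$ and $E_7$ nodes by hand, exactly as the symmetric-group computations did for $\PU(n+1)$. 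For the remaining nodes I would either iterate the generalized Chevalley formula along a short projective subdiagram (as in the $c_2$ computation in the proof of Theorem \ref{thm:ngrass}), or, when the Schubert variety is smooth, pass to localization on $bX_{w_\Gamma}$ using Propositions \ref{pro:rootlocalization} and \ref{pro:tangentspace}: write $\int_{[bX_{w_\Gamma}]}\kappa(f)$ as a sum of rational functions over fixed points $w_{(j)}(0),w_{(j)}(\infty)$, clear denominators to $Q/R$, and read off the integer (or non-integer) value as $\mathrm{Coeff}_\tau Q$ for a monomial $\tau$ with $\mathrm{Coeff}_\tau R=\pm1$ --- precisely the mechanism of Theorems \ref{thm:integralorthoggrass} and \ref{pro:dn}.

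The genuinely new ingredient is the construction of the integral class $f$ for those minimal orbits of $E_6$ and $E_7$ where the symmetric/Chevalley shortcut fails, since the $\ZZ$-algebra generated by the weight lattice does not capture the full integral cohomology (\cite[Theorem 2.1]{To}) and one is forced to use polynomials with fractional coefficients in the weight variables, as in the orthogonal cases. Here I expect the main obstacle: unlike $\SO(m)$, the exceptional groups lack a clean combinatorial model (Schur/complete-symmetric polynomials) for the image of integral cohomology inside the rational coinvariants, so one must either find an ad hoc divisible combination by explicit computation in the Borel presentation, or --- as the abstract announces --- delegate the search to a computer. Concretely, for the two hardest minimal orbits of $E_7$ the plan is to implement (i) an algorithm computing, in the Hasse diagram of $W^{P_r}$, a $\ZZ$-basis of the integral cohomology of $\OOO$ via the known presentation of $H^\bullet(\GG/\PP;\ZZ)$, thereby producing a suitable $f$; and (ii) the recursion of the generalized Chevalley formula (\ref{eq:cap}) on a chosen monomial-by-monomial basis to evaluate $\int_{[bX_w]}\kappa(f)$ and verify it is a non-integer. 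For $E_6$ and the remaining $E_7$ nodes everything should be doable by hand along the lines above. Once each minimal orbit of $E_6/Z$ and $E_7/Z$ has been handled, Theorem \ref{thm:e6} follows, and combined with Propositions \ref{pro:min-orbits}, \ref{pro:simplegroups} and the classical-group results it completes the proof of Theorem \ref{thm:injdiff}.
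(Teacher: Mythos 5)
Your proposal is correct and follows essentially the same route as the paper: a case-by-case analysis over the minimal parabolics, disposing of most nodes by the degree-two Chevalley evaluation $\int_{[bX_e]}\kappa(\zeta_r)=-\zeta_r(z)$, handling the intermediate nodes by iterating the generalized Chevalley formula or localizing on the smooth Schubert varieties attached to projective subdiagrams, and delegating to computer both the search for integral classes (via the known presentation of the integral cohomology) and the Chevalley recursion for the two hardest minimal orbits of $E_7$ (the paper treats $\PP_5$ and $\PP_6$ this way). The only detail left implicit in your sketch, and worked out explicitly in the paper, is the actual selection of polynomials and words for the non-easy nodes (e.g.\ $c_2$ and $c_3$ in the $t$-variables for $E_6$ nodes $3$ and $6$, and the Nakagawa-type presentation needed for $E_7/\PP_6$).
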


The calculations for all minimal orbits of $E_6$ and for five of the minimal orbits of $E_7$
are simple enough to be handled without computer. However, for two of the minimal orbits of $E_7$ we will have to use computers in two different ways:
\begin{enumerate}
 \item To compute invariant polynomials which represent integral classes.
 \item To implement an algorithm computing the generalized Chevalley formula (\ref{eq:cap})
 for certain Schubert varieties with very elementary Hasse diagram.
\end{enumerate}

Let us discuss first the question of the Schubert varieties. Let $G$ be a compact, connected simple Lie group and let $T\subset G$ be a maximal torus. Let
$\alpha_1,\dots,\alpha_n$ be a set of simple roots with Weyl group $W$. Fix $r$,  $1\leq r\leq n$,
let $f$ be a $W_r$-invariant polynomial
of degree $l+1$ and let $w\in W^r$ be a word of length $l$.
To compute
\[\int_{[bX_w]}\kappa(f)\]
 applying recursively the generalized Chevalley formula (\ref{eq:cap})
\[\kappa(\zeta)\cap [bX_w]=-(w\cdot\zeta)(z)[X_w]-
\sum_{w'\overset{s}{\rightarrow}w,\,w'\in W^P} (w'\cdot \zeta)(h_{s})[bX_{w'}]\]
 (see (\ref{eq:def-h}) for the definition of the coroot $h_s$)
 we have to know all $w'\in W$ such that $w'\leq w$, i.e, we need to know
 the Hasse diagram of $w$. In this sense it is useful to consider first those $w\in W$ whose Hasse diagram is easy to describe. The
 natural choice are elements $w\in W$ given by words
 \[s_{i_1}\cdots s_{i_{l-1}}s_r\]
with the following additional properties:
\begin{itemize}
 \item The indices $i_1,\dots,i_{l-1},r$ are in the reduced expression  are pairwise different.
  \item For each $j$ the simple reflection $s_{i_j}$ does not commute with
  at least one simple reflection to its right in $s_{i_1}\cdots s_{i_{l-1}}s_r$
  (which can be $s_r$).
\end{itemize}
\begin{lemma}\label{lem:tits}
 If $w\in W$ is given by a word as above, then the following holds:
 \begin{enumerate}
  \item There is a bijection between subwords of $s_{i_1}\cdots s_{i_{l-1}}s_r$ and elements  $w'\in W$ such that $w'\leq w$.
In particular the Hasse diagram for $w$ has $l!$ intervals from $e$ to $w$.
  \item $w\in W^r$, i.e, $X_w\subset \OOO$ is a Schubert variety of complex dimension $l$.
 \end{enumerate}
\end{lemma}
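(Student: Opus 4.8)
The plan is to exploit the Tits/subword characterization of the Bruhat order together with the hypotheses on the reduced word. First I would verify that the given word $s_{i_1}\cdots s_{i_{l-1}}s_r$ is \emph{reduced}. Since the indices $i_1,\dots,i_{l-1},r$ are pairwise distinct, this is exactly the statement (already invoked in the proof of Proposition \ref{pro:graphword}, citing \cite{Ti}) that any word in pairwise distinct simple reflections has length equal to the number of letters; alternatively one can produce $l$ positive roots sent to negative ones by $w^{-1}$ by the inductive computation $\gamma_j := s_{i_1}\cdots s_{i_{j-1}}(\alpha_{i_j})$, checking these are distinct positive roots. Once the word is reduced, the standard subword property of the Bruhat order (see e.g. \cite{BGG}) gives that the set $\{w' \in W : w' \leq w\}$ is precisely the set of elements obtained by deleting letters from the reduced word. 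The delicate point for part (1) is that deleting different subsets of letters could a priori give the same element $w'$; I would rule this out using the second hypothesis — that each $s_{i_j}$ fails to commute with some reflection to its right — which forces the word to be the \emph{unique} reduced expression for $w$ up to no nontrivial braid or commutation moves, so that each subword is itself reduced and distinct subwords give distinct elements. This yields the bijection with the $2^l$ subwords; to get that there are $l!$ saturated chains from $e$ to $w$, I would note that the Hasse diagram (the Bruhat interval $[e,w]$) is then isomorphic as a poset to the Boolean lattice $2^{\{1,\dots,l\}}$ graded by length, whose number of maximal chains is $l!$.

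For part (2), I need $w \in W^r$, i.e. $w$ has minimal length in its coset $w W_r$. Recall $W_r$ is generated by $\{s_1,\dots,s_n\}\setminus\{s_r\}$. The standard criterion is that $w\in W^r$ iff $\ell(w s_j) > \ell(w)$ for every $j \neq r$, equivalently iff $w(\alpha_j)$ is a positive root for all $j\neq r$. I would check this by a length/root computation: because the only occurrence of the index $r$ in the reduced word is in the final letter $s_r$, and because each $s_{i_j}$ ($j < l$) appears exactly once and is distinct from $s_r$, appending $s_j$ for $j\neq r$ on the right cannot cause a cancellation — the resulting word $s_{i_1}\cdots s_{i_{l-1}}s_r s_j$ is again a word in distinct simple reflections (its indices are $i_1,\dots,i_{l-1},r,j$, all distinct since $j\notin\{i_1,\dots,i_{l-1},r\}$, using that every index other than $r$ appears at most once among $i_1,\dots,i_{l-1}$), hence reduced of length $l+1 > l$. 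Therefore $\ell(ws_j) = l+1 > l = \ell(w)$ for all $j\neq r$, so $w\in W^r$, and then $X_w \subset \OOO \cong \GG/\PP_r$ is a Schubert variety of complex dimension $\ell(w) = l$ by the properties of the Bruhat decomposition of $\GG/\PP$ recalled in Subsection \ref{ss:order-orbits} and before Definition \ref{def:Bruhat}.

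The main obstacle I anticipate is the uniqueness-of-reduced-expression argument underlying part (1): translating the purely combinatorial hypothesis "each $s_{i_j}$ does not commute with at least one reflection to its right" into the statement that no sequence of braid/commutation relations produces a different reduced word, and hence that distinct subwords yield distinct group elements. I would handle this by a left-to-right induction on $l$: given two subwords producing the same element $w'$, compare their leftmost letters; if they differ, use a minimal-length/root argument (one of the two words would fail to be reduced, contradicting the subword property for the reduced word $w$) together with the non-commutation hypothesis to derive a contradiction; if they agree, strip the common leftmost letter and apply the inductive hypothesis to the shorter word $s_{i_2}\cdots s_{i_{l-1}}s_r$, which still satisfies both hypotheses. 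Everything else is a routine bookkeeping of lengths and roots.
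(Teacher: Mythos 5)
Your proof of part (2) has a genuine gap, and it is located exactly where the lemma's second hypothesis has to do its work. The criterion $l(ws_j)>l(w)$ must be verified for \emph{every} $j\neq r$, and this includes $j=i_k$ for $k<l$: your parenthetical claim that $j\notin\{i_1,\dots,i_{l-1},r\}$ is unjustified, and for $j=i_k$ the word $s_{i_1}\cdots s_{i_{l-1}}s_rs_{i_k}$ has a repeated letter, so the ``distinct letters, hence reduced'' argument collapses precisely in the only nontrivial case. Tellingly, your argument for (2) never uses the hypothesis that each $s_{i_j}$ fails to commute with some reflection to its right, yet that hypothesis is indispensable: in type $A_3$ with $r=1$ the word $s_3s_1$ has distinct letters ending in $s_r$, but $s_3s_1s_3=s_1$, so $l(ws_3)<l(w)$ and $w\notin W^1$; here the hypothesis fails only because $s_3$ commutes with $s_1$. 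The paper closes this case as follows: write the parabolic factorization $w=uv$ with $u\in W^r$, $v\in W_r$ and $l(w)=l(u)+l(v)$; if $v\neq e$ then $w$ has a reduced expression ending in some $s_j$ with $j\neq r$, and by Tits' theorem this expression is obtained from $s_{i_1}\cdots s_{i_{l-1}}s_r$ using only transpositions of adjacent \emph{commuting} letters (no cancellations between reduced words, and no braid moves are available because all letters are distinct); since commutation moves never change the relative order of a non-commuting pair, $s_j$ would have to commute with every letter to its right in the original word, contradicting the hypothesis. Equivalently, $l(ws_{i_k})<l(w)$ would produce a reduced word for $w$ ending in $s_{i_k}$ and the same commutation argument applies.

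In part (1) your key intermediate claim is also false: the non-commutation hypothesis does \emph{not} make $s_{i_1}\cdots s_{i_{l-1}}s_r$ the unique reduced expression of $w$ (in $A_3$ the word $s_1s_3s_2$ satisfies both hypotheses with $r=2$, yet $s_1s_3s_2=s_3s_1s_2$), so the induction you sketch rests on a wrong premise. Fortunately injectivity needs neither uniqueness nor that hypothesis: since the letters are pairwise distinct, every subword is a product of distinct simple reflections and hence reduced, and distinct subwords have distinct supports; since all reduced expressions of a given element of $W$ involve the same set of simple reflections (this is the consequence of Tits' theorem the paper extracts), distinct subwords represent distinct elements. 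Combined with the subword characterization of the Bruhat order, which you do invoke, this gives the bijection, and the compatibility of the Bruhat order on $[e,w]$ with inclusion of supports yields the Boolean lattice structure and the count of $l!$ maximal chains. So part (1) is recoverable by a shorter argument than the one you propose, but part (2) as written does not prove the statement.
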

\begin{proof}

A result of Tits \cite{Ti} asserts that given a word on the simple reflections $s_i$  (more generally, a word in a Coxeter group) one can pass to any reduced
expression by a sequence of moves which are cancellation of adjacent equal simple reflections  and braid moves of the form
\[\overbrace{s_is_j{\cdots}s_is_j}^{m(i,j)}\mapsto \overbrace{s_js_i\cdots s_js_i}^{m(i,j)},\]
where $m(i,j)$ is the order of $s_is_j\in W$.  In particular the subset of simple reflections appearing
in any reduced expression is the same. Since $s_{i_1}\cdots s_{i_{l-1}}s_r$ has different letters we conclude
that it is a reduced expression, that different subwords must represent different Weyl group elements, and that
any other reduced expression of  $s_{i_1}\cdots s_{i_{l-1}}s_r$  is the result
of a sequence of  transpositions of simple reflections
associated to non-adjacent roots in the Dynkin diagram.

To show that $w\in W^r$ we use that $w$ has a unique parabolic factorization $w=uv$, where $u\in W^r$, $v\in W_r$ and
$l(w)=l(u)+l(v)$ \cite[Proposition 1.10]{Hu}. In particular reduced expressions of $u$ and $v$ give rise to a reduced expression of $w$.
But if $v$ is not the identity, then we obtain a reduced expression for $w$ where the rightmost reflection is $s_j$, $j\neq r$.
By the above theorem of Tits,  these two reduced expression are related by transpositions of commuting reflections, which implies
that the reflection $s_j$ must commute with all reflections to its right in $w$, which contradicts our assumption.
Therefore, we conclude that $v$ is the identity and thus $w\in W^r$.
\end{proof}

\subsection{Proof of Theorem \ref{thm:e6} for $E_6/Z$}

Calculations for $E_6/Z$ will not require the use computers.

We start with a precise description of the quotient of coweight and coroot lattices,  the weight lattice and the action of the Weyl group
 \cite[Pages 292--289]{OV}.

The roots and weights for the dual $\tlie^*$ of the Lie algebra of a maximal torus $T\subset E_6$ can be expressed in a way similar to the case $\SU(6)$.
Let $\epsilon_1,\dots,\epsilon_6,\epsilon$ be a basis of a rational vector space and consider the quotient space by the line spanned by
$\sum_{k=1}^6\epsilon_k$. Consider the inner product on the quotient space induced by the degenerate pairing:
\[
 \langle \epsilon_i,\epsilon_j\rangle =\begin{cases}
                          5/6 &\mathrm{if}\,i=j,\\
-1/6 &\mathrm{if}\ i\neq j,
                         \end{cases}\qquad \langle \epsilon,\epsilon_i\rangle =0,\qquad \langle\epsilon,\epsilon\rangle =1/2.
\]
 Then $\tlie^*$ together with a multiple of the Killing form  can be identified
with the quotient space so that one can choose the following
set of simple roots:
\begin{equation}\label{eq:simpleroots}
 \alpha_i=\epsilon_i-\epsilon_{i+1},\, i=1,\dots,5, \qquad \alpha_6=\epsilon_4+\epsilon_5+\epsilon_6+\epsilon.
\end{equation}
In terms of this identification, the associated fundamental weights can be computed as
\begin{equation}\label{eq:fundweights}
 \zeta_i=\epsilon_1+\cdots+\epsilon_i+\mathrm{min}(i,6-i)\epsilon,\, i=1,\dots,5,\qquad
    \zeta_6=2\epsilon.
\end{equation}
As a generator of $\Lambda_w^\vee/\Lambda_r^\vee\cong \ZZ_3$ we select the following
combination of coroots:
\begin{equation}\label{eq:fundgroupe6}
 z_0=\frac{1}{3}(h_{\alpha_1}-h_{\alpha_2}+h_{\alpha_4}-h_{\alpha_5}).
\end{equation}
To describe the Weyl group action it is convenient to introduce the following basis of
$\tlie^*$  (see for example \cite[p. 266]{TW}) where
the root ordering is different):
\begin{equation}\label{eq:weylvariables}
\begin{cases}
t_1 &=\zeta_1=\epsilon_1+\epsilon,\\
t_2 &=-\zeta_1+\zeta_2=\epsilon_2+\epsilon,\\
t_3 &=-\zeta_2+\zeta_3=\epsilon_3+\epsilon,\\
t_4 &=-\zeta_3+\zeta_4+\zeta_6=\epsilon_4+\epsilon,\\
t_5 &=-\zeta_4+\zeta_5+\zeta_6=\epsilon_5+\epsilon,\\
t_6 &=-\zeta_5+\zeta_6=\epsilon_6+\epsilon,\\
t &=\zeta_6=2\epsilon.
\end{cases}
\end{equation}
These vectors do not span the weight lattice over the integers,
but they are useful for our purposes because in contrast to what happens with the basis
of $\tlie^*$ given by fundamental weights, the action of the simple reflections expressed in
terms of $\{t_1,\dots,t_6,t\}$ looks rather close to a permutation group (see for example \cite[Table (4.5)]{TW}): for any $1\leq i\leq 5$,
the reflection $\sigma_i$ exchanges $t_i$ and $t_{i+1}$, and fixes all other elements in the basis
$\{t_1,\dots,t_6,t\}$; only the action of $s_6$ is slightly more involved:
\begin{equation}\label{eq:weylaction2}
 s_6(t_i)=\begin{cases} t_{i}& \mathrm{if}\,\, i=1,2,3,\\
-(t_5+t_6-t)&\mathrm{if}\,i=4,\\
-(t_4+t_6-t)& \mathrm{if}\,i=5,\\
-(t_4+t_5-t)& \mathrm{if}\,i=6,
               \end{cases}
\end{equation}
\[s_6(t)=2t-(t_4+t_5+t_6).\]

Let $\PP_i\subset \EE_6$ be the maximal parabolic subgroup associated to the subset $\Pi\backslash \{\alpha_i\}$ of the Dynkin diagram $\Pi$, let $z=dz_0\in \Lambda_w^\vee$,
and let $\OOO=E_6/\PP_i$. We distinguish the following three cases:

\underline{Case 1:  $\PP_1,\PP_2,\PP_4$ and $\PP_5$ (arguments using degree 2 cohomology).}
The degree 2 integral cohomology of $\OOO$ is generated by the image of the weight $\zeta_i$. By (\ref{eq:fundgroupe6}) we have:
\begin{equation}\label{eq:weighteval}
 \zeta_i(z_0)=\begin{cases}
   1/3& \mathrm{if}\,\,i=1,4,\\
-1/3& \mathrm{if}\,\,i=2,5,\\
0& \mathrm{if}\,\,i=3,6.\\
                                         \end{cases}
\end{equation}

 By (\ref{eq:weighteval}) and the generalized Chevalley
formula applied to $[bX_e]\subset Y_{z,\OOO}$, we have:
\[\int_{[bX_e]}\kappa(\zeta_i)=\pm\frac{d}{3},\qquad  \mathrm{if}\,\,i=1,2,4,5.
\]

\underline{Case 2:  $\PP_3$ (arguments using degree 4 cohomology).}
  By (\ref{eq:weylvariables}) and (\ref{eq:fundgroupe6}) we have:
\[ t_i(z)=\begin{cases}\frac{d}{3}& \mathrm{if}\,\,i=1,3,4,6,\\
         -\frac{2d}{3}& \mathrm{if}\,\,i=2,5,
                             \end{cases}\]
and $t(z)=0$.
 The residual Weyl group $W_3$ acts on the variables $t_1,t_2,t_3$ by permutations.
In particular the degree $2$ elementary symmetric polynomial
$f=c_2(t_1,t_2,t_3)$ represents an integral cohomology class of $\OOO\cong E_6/\PP_3$. We
will compute the integral of $\kappa(f)$ on the smooth Schubert variety $bX_{\sigma_3}\subset Y_{z,\OOO}$ applying the generalized Chevalley formula:

\begin{align}\nonumber
 \int_{[bX_{\sigma_3}]}\kappa(f)& =-\int_{X_{\sigma_3}}\kappa(t_1)\cdot t_2(z)-\int_{X_{\sigma_3}}\kappa(t_1)\cdot t_3(z)-
 \int_{X_{\sigma_3}}\kappa(t_2)\cdot t_3(z)-\\\nonumber & -\int_{bX_{e}}\kappa(t_1)\cdot t_2(h_{\alpha_3})-\int_{bX_{e}}\kappa(t_1)\cdot t_3(h_{\alpha_3})
 -\int_{bX_{e}}\kappa(t_2)\cdot t_3(h_{\alpha_3})=\\\nonumber
& =t_1(h_{\alpha_3})\cdot t_2(z)+t_1(h_{\alpha_3})\cdot t_3(z)+ t_2(h_{\alpha_3})\cdot t_3(z)+\\\nonumber
&+t_1(z)\cdot t_2(h_{\alpha_3})+t_1(z)\cdot t_3(h_{\alpha_3})+t_2(z)\cdot t_3(h_{\alpha_3})=\\\nonumber
&= 0\cdot \left(-\frac{2d}{3}\right)+0 \cdot \frac{d}{3}+0\cdot \frac{d}{3}+\frac{d}{3}\cdot 0+\frac{d}{3}\cdot 1+\left(-\frac{2d}{3}\right)\cdot 1=-\frac{d}{3},
\end{align}
which proves Case 2.

\underline{Case 3:  $\PP_6$ (arguments using degree 6 cohomology).}
The residual Weyl group $W_6$ acts on $t_1,\dots,t_6$ by permutations, so symmetric polynomials in these variables
represent integral classes on $\OOO\cong E_6/\PP_6$. One can check that
 $\kappa(c_2(t_1,\dots,t_6))$ also represents an integral cohomology class in $Y_{z,\OOO}$. However, we will show that
for the degree $3$ elementary symmetric polynomial  $f=c_3(t_1,\dots,t_6)$ we have:
\[\int_{bX_{\sigma_3s_6}}\kappa(f)=\frac{2d}{3}\]
To compute the integral by localization we relabel the 1-form $\epsilon_0\in \Hom(\CC,\CC)$ dual to
$i\in i\RR\subset \CC$ as $t_0$ and introduce the notation
\begin{equation}
\label{eq:qaction-2}
\bar{t}_i:=\Phi_z^*t_i=t_i+t_i(z)t_0,\qquad\text{where }
 t_i(z)=\begin{cases}\frac{d}{3}& \mathrm{if}\,\,i=1,3,4,6,\\
         -\frac{2d}{3}& \mathrm{if}\,\,i=2,5,
                             \end{cases}
\end{equation}
and $\bar{t}=t$, and we extend the definition of $\bar{(\cdot)}$ to be compatible with the ring structure: namely, for any polynomial $P$ we set $$\overline{P(t_1,\dots,t_6,t)}:=P(\bar{t}_1,\dots,\bar{t}_6,\bar{t}).$$

By Proposition \ref{pro:tangentspace} the $\CC^*\times \TT$-module structure of the tangent spaces at fixed points is:
\begin{align}\nonumber
 T_{(0,e)}bX_{\sigma_3s_6} &=\CC_{\pi_1*}\oplus \CC_ {-\pi_2^*\alpha_6}\oplus \CC_ {-\pi_2^*(\alpha_6+\alpha_3)}=
t_0(t_4+t_5+t_6-t)(t_3+t_5+t_6-t),\\ \nonumber
 T_{(0,s_6)}bX_{\sigma_3s_6} &=\CC_{\pi_1*}\oplus \CC_ {\pi_2^*\alpha_6}\oplus \CC_ {-\pi_2^*\alpha_3}=
-t_0(t_4+t_5+t_6-t)(t_3-t_4),\\ \nonumber
 T_{(0,\sigma_3s_6)}bX_{\sigma_3s_6}&=\CC_{\pi_1*}\oplus \CC_ {\pi_2^*(\alpha_6+\alpha_3)}\oplus \CC_ {\pi_2^*\alpha_3}=
t_0(t_3+t_5+t_6-t)(t_3-t_4),\\ \nonumber
 T_{(\infty,e)}bX_{\sigma_3s_6}&=\CC_{-\pi_1*}\oplus \CC_ {-z^*\alpha_6}\oplus \CC_ {-z^*(\alpha_6+\alpha_3)}=
-t_0(t_4+t_5+t_6-t)(t_3+t_5+t_6-t),\\ \nonumber
 T_{(0,s_6)}bX_{\sigma_3s_6}&=\CC_{-\pi_1*}\oplus \CC_ {z^*\alpha_6}\oplus \CC_ {-z^*\alpha_3}=
t_0(t_4+t_5+t_6-t)(t_3-t_4),\\ \nonumber
 T_{(0,\sigma_3s_6)}bX_{\sigma_3s_6}&=\CC_{-\pi_1*}\oplus \CC_ {z^*(\alpha_6+\alpha_3)}\oplus \CC_ {z^*\alpha_3}=
-t_0(t_3+t_5+t_6-t)(t_3-t_4).
\end{align}

The restriction of the polynomial $\kappa(f)$ to the fibers over the fixed points defines the following equivariant cohomology classes:
\begin{align}\nonumber
 \kappa(f)|_{(0,e)} &=\pi_2^*f=c_3(t_1,\dots,t_6),\\ \nonumber
\kappa(f)|_{(0,s_6)}&=\pi_2^*(s_6\cdot f)=c_3(t_1,t_2,t_3,s_6(t_4),s_6(t_5),s_6(t_6)),\\ \nonumber
 \kappa(f)|_{(0,\sigma_3s_6)}&=\pi_2^*(\sigma_3s_6\cdot f)=
c_3(t_1,t_2,t_4,\sigma_3s_6(t_4),\sigma_3s_6(t_5),\sigma_3s_6(t_6)),\\ \nonumber
 \kappa(f)|_{(\infty,e)}&=z^*f=c_3(\bar{t}_1,\dots,\bar{t}_6),\\ \nonumber
\kappa(f)|_{(\infty,s_6)}&=z^*(s_6\cdot f)=
c_3(\bar{t}_1,\bar{t}_2,\bar{t}_3,\overline{s_6(t_4)},\overline{s_6(t_5)},\overline{s_6(t_6)})\\ \nonumber
 \kappa(f)|_{(\infty,\sigma_3s_6)}&=z^*(\sigma_3s_6\cdot f)=
c_3(\bar{t}_1,\bar{t}_2,\bar{t}_4,\overline{\sigma_3s_6(t_4)},\overline{\sigma_3s_6(t_5)},\overline{\sigma_3s_6(t_6)})
\end{align}
Therefore the integral is
\begin{equation}\label{eq:deg6integral}
 \begin{split}
  \int_{bX_{\sigma_3s_6}}\kappa(f) &=\frac{(t_3-t_4)c_3(t_1,\dots,t_6)-
(t_3+t_5+t_6-t)c_3(t_1,t_2,t_3,s_6(t_4),s_6(t_5),s_6(t_6))}
{t_0(t_3-t_4)(t_4+t_5+t_6-t)(t_3+t_5+t_6-t)}\\
& +\frac{(t_4+t_5+t_6-t))c_3(t_1,t_2,t_4,\sigma_3s_6(t_4),\sigma_3s_6(t_5),\sigma_3s_6(t_6))-
(t_3-t_4)c_3(\bar{t}_1,\dots,\bar{t}_6)}
{t_0(t_3-t_4)(t_4+t_5+t_6-t)(t_3+t_5+t_6-t)}\\
 & +\frac{(t_3+t_5+t_6-t)c_3(\bar{t}_1,\bar{t}_2,\bar{t}_3,\overline{s_6(t_4)},\overline{s_6(t_5)},\overline{s_6(t_6)})}
{t_0(t_3-t_4)(t_4+t_5+t_6-t)(t_3+t_5+t_6-t)}\\
& -\frac{(t_4+t_5+t_6-t)c_3(\bar{t}_1,\bar{t}_2,\bar{t}_4,\overline{\sigma_3s_6(t_4)},
\overline{\sigma_3s_6(t_5)},\overline{\sigma_3s_6(t_6)})}{t_0(t_3-t_4)(t_4+t_5+t_6-t)(t_3+t_5+t_6-t)}.
 \end{split}
\end{equation}

We will compute this expression follow the same strategy as in Sections \ref{s:orthogonal-I} and \ref{s:orthogonal-II}. Let us denote for convenience
\[\tau=t_0t_3t^2,\]
let $R=t_0(t_3-t_4)(t_4+t_5+t_6-t)(t_3+t_5+t_6-t)$. Note that $R$ is the denominator of all summands in the RHS of (\ref{eq:deg6integral}), and that $\mathrm{Coeff}_{\tau}R=1$. Let $Q$ be the sum of the numerators in the RHS of (\ref{eq:deg6integral}). Then
the value of (\ref{eq:deg6integral}) is equal to $\mathrm{Coeff}_\tau Q$.

Because $t_0$ appears in $\tau$ but
does not appear in the summands which correspond to fixed points over $0$, these
do not contribute to $\mathrm{Coeff}_{\tau}Q$. Consequently, we only have to consider the
three summands corresponding to fixed points over $\infty$. Because $t$ does not appear
in the summand corresponding to $(\infty,e)$, only the last two summands are relevant.

We will concentrate first on
\[A=(t_3+t_5+t_6-t)c_3(\bar{t}_1,\bar{t}_2,\bar{t}_3,\overline{s_6(t_4)},\overline{s_6(t_5)},\overline{s_6(t_6)})\]
Since $\tau$ is neither divisible by $t_5$ nor by $t_6$, we have
\begin{multline*}
\Coeff_{\tau}A=\Coeff_{\tau}t_3c_3(\bar{t}_1,\bar{t}_2,\bar{t}_3,\overline{s_6(t_4)},\overline{s_6(t_5)},\overline{s_6(t_6)})
- \\ - \Coeff_{\tau}tc_3(\bar{t}_1,\bar{t}_2,\bar{t}_3,\overline{s_6(t_4)},\overline{s_6(t_5)},\overline{s_6(t_6)}).
\end{multline*}
Now we have
\begin{multline*}
\Coeff_{\tau}t_3c_3(\bar{t}_1,\bar{t}_2,\bar{t}_3,\overline{s_6(t_4)},\overline{s_6(t_5)},\overline{s_6(t_6)})
=\\=
\sum_{4\leq j<k\leq 6}\Coeff_{\tau}t_3(\bar{t}_1+\bar{t}_2+\bar{t}_3)\overline{s_6(t_j)}\overline{s_6(t_k)}+
\Coeff_{\tau}t_3\overline{s_6(t_4)}\overline{s_6(t_5)}\overline{s_6(t_6)}
%
\end{multline*}
Indeed, $\tau$ is divisible by $t^2$ while none of the elements $\bar{t}_1$, $\bar{t}_2$ and $\bar{t}_3$
contains the variable $t$, so all contributions to $\tau$ in the expansion of
$t_3c_3(\bar{t}_1,\bar{t}_2,\bar{t}_3,\overline{s_6(t_4)},\overline{s_6(t_5)},\overline{s_6(t_6)})$
come from terms which get either two or three entries from the terms
$\overline{s_6(t_4)},\overline{s_6(t_5)},\overline{s_6(t_6)}$ in the polynomial
$c_3(\dots)$.

By (\ref{eq:qaction-2}) we have
$t_3\overline{s_6(t_j)}\overline{s_6(t_k)}(\bar{t}_1+\bar{t}_2+\bar{t}_3)=
t_3\overline{s_6(t_j)}\overline{s_6(t_k)}({t}_1+t_2+t_3)$, which contains neither the
variables $t_0$ nor $t$, so we have
$$\Coeff_{\tau}t_3(\bar{t}_1+\bar{t}_2+\bar{t}_3)\overline{s_6(t_j)}\overline{s_6(t_k)}=0$$
for every $j,k$. Similar arguments give:
\begin{align*}
\Coeff_{\tau}& t_3\overline{s_6(t_4)}\overline{s_6(t_5)}\overline{s_6(t_6)} \\
&=\Coeff_{t_0t^2}\left(t-(t_5+t_6)+\frac{d}{3}t_0\right)\left(t-(t_4+t_6)-\frac{2d}{3}t_0\right)\left(t-(t_4+t_5)+\frac{d}{3}t_0\right) \\
&=\Coeff_{t_0t^2}\left(t+\frac{d}{3}t_0\right)\left(t-\frac{2d}{3}t_0\right)\left(t+\frac{d}{3}t_0\right)=0.
\end{align*}
In conclusion,
$$\Coeff_{\tau}t_3c_3(\bar{t}_1,\bar{t}_2,\bar{t}_3,\overline{s_6(t_4)},\overline{s_6(t_5)},\overline{s_6(t_6)})=0.$$

As for the second summand in $\Coeff_\tau A$, we have
\begin{align*}
\Coeff_{\tau}tc_3&(\bar{t}_1,\bar{t}_2,\bar{t}_3,\overline{s_6(t_4)},\overline{s_6(t_5)},\overline{s_6(t_6)}) =
\Coeff_{t_0t_3t}c_3(\bar{t}_1,\bar{t}_2,\bar{t}_3,\overline{s_6(t_4)},\overline{s_6(t_5)},\overline{s_6(t_6)})\\
&=
\Coeff_{t_0t}c_2(\bar{t}_1,\bar{t}_2,\overline{s_6(t_4)},\overline{s_6(t_5)},\overline{s_6(t_6)}) \\
&=
\Coeff_{t_0t}c_2\left(\frac{d}{3}t_0,-\frac{2d}{3}t_0,
t+\frac{d}{3}t_0,t-\frac{2d}{3}t_0,t+\frac{d}{3}t_0\right)=-d,
\end{align*}
where the second equality follows from observing that $\bar{t}_3=t_3+dt/3$, while the variable
$t_3$ does not appear in any of the other polynomials
$\bar{t}_1,\bar{t}_2,\overline{s_6(t_4)},\overline{s_6(t_5)},\overline{s_6(t_6)}$,
and the third equality follows from writing each $\overline{s_6(t_i)}$ as a polynomial in the $t_j$'s and $t$ combining (\ref{eq:weylaction2}) and (\ref{eq:qaction-2}),
and ignoring all variables different from $t_0$ and $t$.


In conclusion,
$$\Coeff_\tau A=d.$$

The other relevant summand in (\ref{eq:deg6integral}) is
$$B=-(t_4+t_5+t_6-t)e_3(\bar{t}_1,\bar{t}_2,\bar{t}_4,\overline{\sigma_3s_6(t_4)},
\overline{\sigma_3s_6(t_5)},\overline{\sigma_3s_6(t_6)}).$$
We have
$$\Coeff_\tau B=\Coeff_{t_0t_3t}c_3(\bar{t}_1,\bar{t}_2,\bar{t}_4,\overline{\sigma_3s_6(t_4)},
\overline{\sigma_3s_6(t_5)},\overline{\sigma_3s_6(t_6)}).$$
Among the variables $t_0,t_3,t$ only $t_0$ appears in $\bar{t}_1,\bar{t}_2,\bar{t}_4$,
and since $\Coeff_{t_0}(\bar{t}_1+\bar{t}_2+\bar{t}_4)=0$, we have
\begin{align*}
\Coeff & _{t_0t_3t} c_3(\bar{t}_1,\bar{t}_2,\bar{t}_4,\overline{\sigma_3s_6(t_4)},
\overline{\sigma_3s_6(t_5)},\overline{\sigma_3s_6(t_6)})=
\Coeff_{t_0t_3t}\overline{\sigma_3s_6(t_4)}\overline{\sigma_3s_6(t_5)}\overline{\sigma_3s_6(t_6)} \\
&=\Coeff_{t_0t_3t}
\left(t-(t_5+t_6)+\frac{d}{3}t_0\right)\left(t-(t_3+t_6)-\frac{2d}{3}t_0\right)\left(t-(t_3+t_5)+\frac{d}{3}t_0\right)\\
&=\Coeff_{t_0t_3t}
\left(t+\frac{d}{3}t_0\right)\left(t-t_3-\frac{2d}{3}t_0\right)\left(t-t_3+\frac{d}{3}t_0\right)=-\frac{d}{3}.
\end{align*}
Hence,
$$\Coeff_\tau B=-\frac{d}{3}.$$
Adding the two computations we deduce
$$\Coeff_\tau Q=\Coeff_\tau A+\Coeff_\tau B=\frac{2d}{3},$$
which is what we wanted to prove.

\subsection{Proof of Theorem \ref{thm:e6} for $E_7/Z$}

We start with a precise description of the quotient of coweight and coroot lattices, the weight lattice and the action of the Weyl group
 \cite[Pages 292--289]{OV}.

The roots and weights for the dual of the Lie algebra of a maximal torus $T$
inside $E_7$ can be expressed in a way very similar to the case of  $E_6$.
Let $\epsilon_1,\dots,\epsilon_8$ be a basis of a rational vector space and
consider the quotient space by the line spanned by
$\sum_{k=1}^8\epsilon_k$. Let us consider the inner product on the
quotient space induced by the degenerate pairing:
\[
 \langle \epsilon_i,\epsilon_j\rangle =\begin{cases}
                          7/8 &\mathrm{if}\,i=j\\
-1/8 &\mathrm{if}\ i\neq j.
                         \end{cases}
\]
Then $\tlie^*$ together with a multiple of the Killing form  can be identified
with the  quotient space and one can choose the following
set of simple roots:
\begin{equation}\label{eq:simplerootse7}
 \alpha_i=\epsilon_i-\epsilon_{i+1},\, i=1,\dots,6, \qquad \alpha_7=\epsilon_5+\epsilon_6+\epsilon_7+
 \epsilon_8.
\end{equation}
In terms of this identification the associated fundamental weights are
\begin{equation}\label{eq:fundweightse7}
 \zeta_i=\epsilon_1+\cdots+\epsilon_i+\mathrm{min}(i,8-i)\epsilon_8,\, i=1,\dots,6,\qquad
    \zeta_7=2\epsilon_8.
\end{equation}
As a generator of $\Lambda_w^\vee/\Lambda_r^\vee\cong \ZZ_2$ we select the following combination of coroots:
\begin{equation}\label{eq:fundgroupe7}
 z_0=\frac{1}{2}(h_{\alpha_1}+h_{\alpha_3}+h_{\alpha_7}).
\end{equation}
To study the action of the Weyl group we introduce the following basis of $\tlie^*$
(see \cite[(1.1)]{TW}):
\begin{equation}\label{eq:weylvariablese7}
\begin{cases}t_1&=\zeta_1=\epsilon_1+\epsilon_8,\\
 t_2&=-\zeta_1+\zeta_2=\epsilon_2+\epsilon_8,\\
t_3&=-\zeta_2+\zeta_3=\epsilon_3+\epsilon_8,\\
t_4&=-\zeta_3+\zeta_4=\epsilon_4+\epsilon_8\\
t_5&=-\zeta_4+\zeta_5+\zeta_7=\epsilon_5+\epsilon_8,\\
t_6&=-\zeta_5+\zeta_6+\zeta_7=\epsilon_6+\epsilon_8,\\
t_7&=-\zeta_6+\zeta_7=\epsilon_7+\epsilon_8,\\
t&=\zeta_7=2\epsilon_8.
\end{cases}
\end{equation}
As happened with the analogous definitions for $E_6$, this basis
is not a basis over the integers of the weight lattice. But again the basis
has the advantage of leading to a simple description of the action of the Weyl group.
Namely (see \cite[Table (1.3)]{TW}), for $1\leq i\leq 6$ the simple reflection
$\sigma_i$ exchanges $t_i$ and $t_{i+1}$ and fixes the other elements of
$\{t_1,\dots,t_7,t\}$, and
\begin{equation}\label{eq:weylactione7}
 s_7(t_j)=\begin{cases} t_{j}& \mathrm{if}\,\, j=1,2,3,4,\\
-(t_6+t_7-t)&\mathrm{if}\,j=5,\\
-(t_5+t_7-t)& \mathrm{if}\,j=6,\\
-(t_5+t_6-t)& \mathrm{if}\,j=7
               \end{cases}
\end{equation}
\[s_7(t)=2t-(t_5+t_6+t_7).\]

Let $\PP_i\subset \EE_6$ be the maximal parabolic subgroup associated to the subset  $\Pi\backslash \{\alpha_i\}$, let $z=dz_0\in \Lambda_w^\vee$
and let $\OOO=E_7/\PP_i$:

\underline{Case 1:  $\PP_1,\PP_3$ and $\PP_7$ (arguments using degree 2 cohomology).}
The degree 2 integral cohomology of $\OOO$ is generated by the
image of the weight $\zeta_i$. By (\ref{eq:fundgroupe7}) we have:
\begin{equation}\label{eq:weightevale7}
 \zeta_i(z_0)=\begin{cases}
   1/2& \mathrm{if}\,\,i=1,3,7\\
0& \mathrm{if}\,\,i=2,4,5,6.
                                         \end{cases}
\end{equation}
%
By (\ref{eq:weightevale7}) and the generalized Chevalley
formula applied to $[bX_e]\subset \OOO\cong\mathrm{E}_7/\PP_i$ we have:
\[\int_{[bX_e]}\kappa(\zeta_i)=\pm\frac{d}{2},\,\,  \mathrm{if}\,\,i=1,3,7,
\]
which proves Case 1 of the theorem.

\underline{Case 2:  $\PP_2$ and $P_4$ (arguments using degree 4 cohomology).}
Using (\ref{eq:fundgroupe7}) and (\ref{eq:weylvariablese7})
we deduce
\begin{equation}\label{eq:qactione7}
 t_i(z)=\begin{cases}\frac{d}{2}& \mathrm{if}\,\,i=1,3,5,6,7,\\
         -\frac{d}{2}& \mathrm{if}\,\,i=2,4.
                             \end{cases}
\end{equation}
and $t(z)=0$.

The residual Weyl group $W_2$ acts on the variables $t_1,t_2$ by permutations, so
the elementary symmetric polynomial  $f=c_2(t_1,t_2)=t_1t_2$ represents an
integral cohomology class of $\OOO\cong  E_7/\PP_2$.
We have:
\begin{align}\nonumber
 \int_{[bX_{\sigma_2}]}\kappa(f)& =-\int_{X_{\sigma_2}}\kappa(t_1)\cdot t_2(z)-\int_{bX_{e}}\kappa(t_1)\cdot t_2(h_{\alpha_2})=\\\nonumber
& =t_1(h_{\alpha_2})\cdot t_2(z)+t_1(z)\cdot t_2(h_{\alpha_2})=\\\nonumber &= 0\cdot \left(-\frac{d}{2}\right)+\frac{d}{2}\cdot 1=\frac{d}{2}.
\end{align}

The residual Weyl group $W_4$ acts on the variables $t_1,t_2,t_3,t_4$
by permutations, so
the elementary symmetric polynomial  $f=c_2(t_1,t_2,t_3,t_4)$ represents an
integral cohomology class of $\OOO=E_7/\PP_4$.
We have:
\begin{align}\nonumber
 \int_{[bX_{\sigma_4}]}\kappa(f)& =-\int_{X_{\sigma_4}}\kappa(t_1)\cdot (t_2(z)+t_3(z)+t_4(z))-\int_{X_{\sigma_4}}\kappa(t_2)\cdot (t_3(z)+t_4(z))\\
 \nonumber &\qquad -\int_{X_{\sigma_4}}\kappa(t_3)\cdot t_4(z)-\int_{bX_{e}}\kappa(t_1)\cdot (t_2(h_{\alpha_4})+t_3(h_{\alpha_4})+t_4(h_{\alpha_4}))\\
 \nonumber &\qquad -\int_{bX_{e}}\kappa(t_2)\cdot (t_3(h_{\alpha_4})+t_4(h_{\alpha_4}))-\int_{bX_{e}}\kappa(t_3)\cdot t_4(h_{\alpha_4})\\
 \nonumber &=t_1(h_{\alpha_4})\cdot \frac{d}{2}+t_2(h_{\alpha_4})\cdot 0-t_3(h_{\alpha_4})\cdot \frac{d}{2}+
 \frac{d}{2}\cdot (t_2(h_{\alpha_4})+t_3(h_{\alpha_4})+t_4(h_{\alpha_4}))+\\ \nonumber
 &\qquad
-\frac{d}{2}\cdot (t_3(h_{\alpha_4})+t_4(h_{\alpha_4}))-\frac{d}{2}\cdot t_4(h_{\alpha_4})\\\nonumber &
 =\frac{d}{2}(t_1(h_{\alpha_4})+t_2(h_{\alpha_4})-t_3(h_{\alpha_4}))-\frac{d}{2} t_4(h_{\alpha_4})=0-\frac{d}{2}=-\frac{d}{2},
\end{align}
and this proves Case 2.

\underline{Case 3:  $\PP_5$ (computed assisted calculations I).}

The following polynomial
 \[
\begin{split}
f &=\zeta_1^2\zeta_2\zeta_3-\zeta_1\zeta_2^2\zeta_3-\zeta_1^2\zeta_3^2+\zeta_1\zeta_2\zeta_3^2+\zeta_1^2\zeta_3\zeta_4-
\zeta_1\zeta_2\zeta_3\zeta_4+\zeta_2^2\zeta_3\zeta_4-\zeta_2\zeta_3^2\zeta_4-\zeta_1^2\zeta_4^2+\\
&+\zeta_1\zeta_2\zeta_4^2-\zeta_2^2\zeta_4^2+\zeta_2\zeta_3\zeta_4^2-2\zeta_1^2\zeta_2\zeta_5+2\zeta_1\zeta_2^2\zeta_5
-2\zeta_2^2\zeta_3\zeta_5+2\zeta_2\zeta_3^2\zeta_5+\zeta_1^2\zeta_4\zeta_5-\\
&-\zeta_1\zeta_2\zeta_4\zeta_5+\zeta_2^2\zeta_4\zeta_5
-\zeta_2\zeta_3\zeta_4\zeta_5-\zeta_3^2\zeta_4\zeta_5+\zeta_3\zeta_4^2\zeta_5+
\zeta_1^2\zeta_5^2-\zeta_1\zeta_2\zeta_5^2+\zeta_2^2\zeta_5^2-\\
&+\zeta_2\zeta_3\zeta_5^2+
\zeta_3^2\zeta_5^2
-\zeta_3\zeta_4\zeta_5^2+\zeta_1^2\zeta_4\zeta_7-
\zeta_1\zeta_2\zeta_4\zeta_7+\zeta_2^2\zeta_4\zeta_7-\zeta_2\zeta_3\zeta_4\zeta_7+\zeta_3^2\zeta_4\zeta_7-\\
&-\zeta_3\zeta_4^2\zeta_7-\zeta_1^2\zeta_7^2+\zeta_1\zeta_2\zeta_7^2-\zeta_2^2\zeta_7^2+\zeta_2\zeta_3\zeta_7^2-\zeta_3^2\zeta_7^2+\zeta_3\zeta_4\zeta_7^2.
\end{split}
\]
is  $W_5$-invariant. As it has integral coefficients in the weight variables, it represents an integral
cohomology class in $Y_{z,\OOO}$ for $\OOO\cong E_7/\PP_5$. Furthermore,
%
\begin{equation}\label{eq:integralw5}
 \int_{bX_{\sigma_4s_6\sigma_5}}\kappa(f)=-\frac{d}{2},
\end{equation}
and this proves Case 3 of the theorem.

In contrast to the previous cases, to find the invariant polynomial $f$
we relied on computer calculations using the procedure {\tt InvE7} for {\it Singular}\footnote{Our code is typed for Singular-4.1.0 which 
is the latest distribution for Linux. It
is not compatible with some of the earlier distributions such as the one for Cygwin.}
(the reader will find the procedure {\tt InvE7} in the the {\it ancillary file} {\tt InvE7.txt}
which can be downloaded from the {\tt arXiv} page for the present paper; this applies
to the other {\it Singular} procedures which we will refer to below). Similarly, to check
equality (\ref{eq:integralw5}) we use the procedure {\tt GChevE7} located in the file
{\tt GChevE7.txt}. We next explain how to use these procedures to obtain the results above,
and afterwards we briefly describe the algorithms on which the procedures are based.

To use the procedures, enter at the Linux prompt line:
$${\tt singular\,\,InvE7.txt\,\,GChevE7.txt}$$

The procedure {\tt InvE7} computes polynomials  on the weights of E7, of a given degree, which are invariant under the residual
Weyl group for a choice of maximal parabolic subgroup.
It requires two integer arguments: the first one is the degree of the invariant polynomials and the second one, which refers to a vertex in the Dynkin diagram of E7 following the labelling in  \cite[Page 293]{OV}, specifies the parabolic subgroup for which the invariant polynomials are
required. The output are polynomials in the variables {\tt a,b,c,d,e,f,g} which correspond to the
fundamental weights $\zeta_1,\zeta_2,\zeta_3,\zeta_4,\zeta_5,\zeta_6,\zeta_7$ respectively.
Typing
$${\tt >\,\,InvE7(4,5);}$$
gives as output 8 polynomials which are homogeneous of degree 4 and $W_5$-invariant.
Our polynomial $f$ is the fourth one in the list.

The procedure {\tt GChevE7} implements the generalized Chevalley formula (\ref{eq:cap}) for the
fibration $Y_{z_0,\OOO}$, which corresponds to $d=1$.
It requires two arguments: a homogeneous polynomial of arbitrary degree $\delta$
in the variables {\tt a,...,g}, which now correspond to the
variables $t_1,\dots,t_7$ respectively, and a list of $\delta-1$ integers, each belonging
to the set $\{1,\dots,7\}$, referring from left to right to a word in the simple reflections $\sigma_1,\dots,\sigma_6,s_7$
(see formula (\ref{eq:weylactione7}) and the lines above it) specifying a cell in the generalized Bruhat
decomposition; such word is required to satisfy the properties given just before Lemma \ref{lem:tits}.

To reproduce our computations type:
$${\tt >\,\,poly\,\,q=InvE7(4,5)[4];}$$
which defines {\tt q} to be the fourth of the invariant polynomials given by
{\tt InvE7(4,5)}. As explained above, the variables {\tt a,...,g} on which {\tt q} is given
correspond to the weight variables
$\zeta_1,\dots,\zeta_7$. Hence, to apply {\tt GChevE7} to {\tt q} we need first to change the variables
from $\zeta_1,\dots,\zeta_7$ to $t_1,\dots,t_7$. For that, type:
$${\tt >\,\,poly\,\,p=cvinv(q);}$$
(the procedure {\tt cvinv} is also defined in the file {\tt InvE7.txt}).
Next type
\begin{align*}
&{\tt >\,\,list\,\,l=(4,6,5);}\\
&{\tt >\,\,GChevE7(p,l);}
\end{align*}
The output is $-\tfrac{1}{2}$, which proves (\ref{eq:integralw5})
(recall that both sides of the equality (\ref{eq:integralw5}) are
linear on $d$ and $z_0$ corresponds to a generator of $\Lambda_w^{\vee}/\Lambda_r^{\vee}$).

The above procedures are based partly on existing {\it Singular}
procedures and partly on some standard algorithms which we next describe.

{\tt InvE7} is based on the {\tt invariant\underline{ }basis} procedure in
{\it Singular}, to which we supply the concrete matrices corresponding to the simple reflections $\sigma_1,\dots,\sigma_6,s_7$; these matrices are contained in the code in {\tt InvE7.txt}.
The file {\tt InvE7.txt} also contains the description of the command {\tt cvinv} that
given a polynomial in the weight basis $\zeta_1,\dots,\zeta_7$ returns the same polynomial
in the variables to $t_1,\dots,t_7$.



{\tt GChevE7} implements in \emph{Singular} an algorithm based on the generalized Chevalley formula
to compute the cap product
\begin{equation}
 \label{eq:cap-chev}
 \kappa(f)\cap[bX_w]
\end{equation}
on the bundle $Y_{z,\OOO}\to \PP^1$
for
\begin{itemize}
 \item  $z=z_0\in \tlie\subset \mathfrak{e}_7$,
 \item any word $w\in W$ in the simple reflections $\sigma_1,\cdots,\sigma_6,s_7$ of length $\delta-1$ satisfying the properties given just before Lemma \ref{lem:tits},
 \item and any homogeneous polynomial $f\in \QQ[\tlie]$ of degree $\delta$.
\end{itemize}
Recall the generalized Chevalley formula:
\begin{equation}
\label{eq:GC-1}
\kappa(\zeta)\cap [bX_w]=-(w\cdot \zeta)(z)[X_w]-
\sum_{w'\overset{s}{\rightarrow}w,\,w'\in W^P} (w'\cdot \zeta)(h_{s})[bX_{w'}].
\end{equation}
Since (\ref{eq:cap-chev}) depends linearly on $f$, to compute
(\ref{eq:cap-chev}) it suffices to consider the case in which $f$ is a monomial.
The associativity formula for the cup/cap products gives
$$\kappa(f_1\zeta)\cap[bX_w]=\kappa(f_1)\cap(\kappa(\zeta)\cap [bX_w]),$$
and hence to compute  (\ref{eq:cap-chev}) for a monomial $f$ we may apply
recursively the Chevalley formula to compute  (\ref{eq:cap-chev}) in $\delta$ steps.
If on each application of  (\ref{eq:GC-1}) we select just one summand
on its RHS,
then  (\ref{eq:cap-chev}) can be written as a sum of contributions labeled
by a segment of length $\delta-1$ in the
Hasse diagram of $w$, together with an integer in $\{1,\dots,\delta\}$ which marks
the step on which we cap a horizontal Schubert cell
to a vertical Schubert cell (this corresponding to picking the first summand in the RHS of (\ref{eq:GC-1})).
Because  $w$ satisfies the properties given just before Lemma \ref{lem:tits}, it follows that
there are $(\delta-1)!$ such segments.

The computation in {\tt GChevE7} is done by the {\tt capspl} procedure.
First, there is a loop summing  segments of length $\delta-1$ in the Hasse diagram of the word $w$; each segment
is  described by a list $L_{(1)}$ of integers marking  the position from left to right of
the letter to be
erased at each step.
The contribution of each segment is computed recursively in $\delta$ steps.
To explain this, let us assume for simplicity that we pass from a horizontal to a vertical cell in the last step. In the
$l$-th step the inputs are a degree $\delta-l+1$ polynomial $f_{(l)}$ and a list  of $\delta-l$ integers $L_{(l)}$.
By means of standard {\tt Singular}
procedure the polynomial is decomposed  as
\[f_{(l)}=\sum_{j=1}^7 f_jt_j,\]
and the auxiliary procedure {\tt ev} is called to compute $(w' \cdot t_j)(h_{s})$, where $w'$ and $s$ are
also calculated out of $L_{(l)}$ ({\tt ev} itself calls the auxiliary procedure {\tt word} for
the explicit calculation of the transformation on $\tlie^*$ defined by $w'$).
The outputs of the $l$-th step are the polynomial
\[f_{(l-1)}=\sum_{j=1}^7 (w'\cdot t_j)(h_{s})f_j\]
and the list $L_{(l-1)}$ obtained by erasing first integer in $L_{(l)}$.
In the last step we start with a degree one polynomial $f_{(1)}=\sum_{j=1}^7 a_jt_j$ and the auxiliary procedure {\tt ev}
is called to compute $f_{(1)}(z_0)$.

Analogous ideas are applied when the
contraction from horizontal to vertical cell does not happen in the last step.

\underline{Case 4:  $\PP_6$ (computed assisted calculations II).}

The polynomial

\newpage

%

\[\begin{split}
 f&= \tfrac{3}{8}t_1^4-\tfrac{3}{32}t_1^3t_2+\tfrac{1}{4}t_1^2t_2^2-\tfrac{3}{32}t_1^3t_3+\tfrac{37}{128}t_1t_2^2t_3-\tfrac{5}{32}t_2^3t_3
 +\tfrac{1}{4}t_1^2t_3^2+\tfrac{37}{128}t_1t_2t_3^2-\tfrac{11}{128}t_2^2t_3^2-                     \\
 &-\tfrac{5}{32}t_2t_3^3+\tfrac{17}{32}t_1^3t_4+\tfrac{7}{8}t_1t_2^2t_4+\tfrac{17}{128}t_1t_2t_3t_4+\tfrac{37}{128}t_2^2t_3t_4+
\tfrac{7}{8}t_1t_3^2t_4+\tfrac{37}{128}t_2t_3^2t_4+\tfrac{15}{128}t_1t_2t_4^2-                      \\
& -\tfrac{7}{32}t_2^2t_4^2+\tfrac{15}{128}t_1t_3t_4^2+\tfrac{15}{128}t_2t_3t_4^2-\tfrac{7}{32}t_3^2t_4^2+\tfrac{83}{128}t_1t_4^3+
 \tfrac{3}{128}t_2t_4^3+\tfrac{3}{128}t_3t_4^3-\tfrac{3}{32}t_4^4+  \tfrac{193}{512}t_1t_2^2t_5-                      \\
 & -\tfrac{17}{128}t_2^3t_5+  +\tfrac{85}{256}t_1t_2t_3t_5+\tfrac{97}{512}t_2^2t_3t_5+ \tfrac{193}{512}t_1t_3^2t_5+\tfrac{97}{512}t_2t_3^2t_5
 -\tfrac{17}{128}t_3^3t_5 +\tfrac{25}{256}t_1t_2t_4t_5+                             \\
&+\tfrac{157}{512}t_2^2t_4t_5+\tfrac{25}{256}t_1t_3t_4t_5+\tfrac{179}{512}t_2t_3t_4t_5+
 \tfrac{157}{512}t_3^2t_4t_5+ \tfrac{105}{512}t_1t_4^2t_5+\tfrac{69}{512}t_2t_4^2t_5+ \tfrac{69}{512}t_3t_4^2t_5+         \\
 &+ \tfrac{3}{64}t_4^3t_5+\tfrac{81}{256}t_1t_2t_5^2 -\tfrac{163}{512}t_2^2t_5^2+\tfrac{81}{256}t_1t_3t_5^2+\tfrac{111}{512}t_2t_3t_5^2-
 \tfrac{163}{512}t_3^2t_5^2+  \tfrac{231}{256}t_1t_4t_5^2+    \tfrac{171}{512}t_2t_4t_5^2                                        \\
&+\tfrac{171}{512}t_3t_4t_5^2-\tfrac{231}{512}t_4^2t_5^2+\tfrac{77}{512}t_1t_5^3-\tfrac{39}{512}t_2t_5^3-
\tfrac{39}{512}t_3t_5^3+  \tfrac{41}{512}t_4t_5^3-\tfrac{29}{128}t_5^4
 +\tfrac{17}{32}t_1^3t_6+\tfrac{7}{8}t_1t_2^2t_6+                                                                     \\
 &+\tfrac{17}{128}t_1t_2t_3t_6+\tfrac{37}{128}t_2t_3^2t_6+\tfrac{7}{8}t_1t_3^2t_6+\tfrac{37}{128}t_2^2t_3t_6-
 \tfrac{13}{32}t_1^2t_4t_6+\tfrac{45}{128}t_2t_3t_4t_6+\tfrac{103}{128}t_1t_4^2t_6+                                    \\
 & +\tfrac{43}{128}t_2t_4^2t_6+ \tfrac{43}{128}t_3t_4^2t_6-\tfrac{29}{128}t_4^3t_6+\tfrac{25}{256}t_1t_2t_5t_6+
\tfrac{157}{512}t_2^2t_5t_6+\tfrac{25}{256}t_1t_3t_5t_6+\tfrac{179}{512}t_2t_3t_5t_6+                      \\
& +\tfrac{157}{512}t_3^2t_5t_6-\tfrac{9}{256}t_1t_4t_5t_6+\tfrac{171}{512}t_2t_4t_5t_6+\tfrac{171}{512}t_3t_4t_5t_6+
\tfrac{181}{512}t_4^2t_5t_6+\tfrac{231}{256}t_1t_5^2t_6+\tfrac{171}{512}t_2t_5^2t_6+                               \\
&+ \tfrac{171}{512}t_3t_5^2t_6+\tfrac{23}{512}t_4t_5^2t_6+\tfrac{41}{512}t_5^3t_6+\tfrac{15}{128}t_1t_2t_6^2-
 \tfrac{7}{32}t_2^2t_6^2+\tfrac{15}{128}t_1t_3t_6^2+\tfrac{15}{128}t_2t_3t_6^2-\tfrac{7}{32}t_3^2t_6^2+            \\
 &+ \tfrac{103}{128}t_1t_4t_6^2+ \tfrac{43}{128}t_2t_4t_6^2+\tfrac{43}{128}t_3t_4t_6^2-\tfrac{1}{4}t_4^2t_6^2+
 \tfrac{105}{512}t_1t_5t_6^2+\tfrac{69}{512}t_2t_5t_6^2+ \tfrac{69}{512}t_3t_5t_6^2+\tfrac{181}{512}t_4t_5t_6^2-     \\
 &-\tfrac{231}{512}t_5^2t_6^2+\tfrac{83}{128}t_1t_6^3+\tfrac{3}{128}t_2t_6^3+\tfrac{3}{128}t_3t_6^3-\tfrac{29}{128}t_4t_6^3+
 \tfrac{3}{64}t_5t_6^3-\tfrac{3}{32}t_6^4-\tfrac{5}{3}t_1^3t_7-2t_1^2t_2t_7-                                       \\
&-\tfrac{831}{512}t_1t_2^2t_7-\tfrac{691}{384}t_2^3t_7+\tfrac{85}{256}t_1t_2t_3t_7+\tfrac{97}{512}t_2^2t_3t_7-\tfrac{1087}{512}t_1t_3^2t_7-
 \tfrac{1183}{512}t_2t_3^2t_7+ \tfrac{77}{384}t_3^3t_7+                                                           \\
 &+\tfrac{25}{256}t_1t_2t_4t_7+\tfrac{157}{512}t_2^2t_4t_7-\tfrac{103}{256}t_1t_3t_4t_7- \tfrac{77}{512}t_2t_3t_4t_7-
 \tfrac{99}{512}t_3^2t_4t_7-\tfrac{1175}{512}t_1t_4^2t_7-\tfrac{1211}{512}t_2t_4^2t_7-                              \\
 &-\tfrac{187}{512}t_3t_4^2t_7+\tfrac{73}{192}t_4^3t_7-2t_1^2t_5t_7+\tfrac{51}{64}t_1t_2t_5t_7-\tfrac{459}{256}t_2^2t_5t_7+
 \tfrac{19}{64}t_1t_3t_5t_7+\tfrac{115}{256}t_2t_3t_5t_7-                                                                 \\
 & - \tfrac{587}{256}t_3^2t_5t_7+ \tfrac{1}{16}t_1t_4t_5t_7+ \tfrac{85}{256}t_2t_4t_5t_7-\tfrac{43}{256}t_3t_4t_5t_7- \tfrac{601}{256}t_4^2t_5t_7-
 \tfrac{817}{512}t_1t_5^2t_7-\tfrac{113}{64}t_2t_5^2t_7+                                                                  \\
& +\tfrac{15}{64}t_3t_5^2t_7+\tfrac{45}{128}t_4t_5^2t_7-\tfrac{2641}{1536}t_5^3t_7-2t_1^2t_6t_7+\tfrac{153}{256}t_1t_2t_6t_7-
\tfrac{867}{512}t_2^2t_6t_7+\tfrac{25}{256}t_1t_3t_6t_7+                                                                    \\
&-\tfrac{179}{512}t_2t_3t_6t_7-\tfrac{1123}{512}t_3^2t_6t_7-\tfrac{9}{256}t_1t_4t_6t_7+\tfrac{171}{512}t_2t_4t_6t_7-
\tfrac{85}{512}t_3t_4t_6t_7-\tfrac{1099}{512}t_4^2t_6t_7+    \tfrac{9}{16}t_1t_5t_6t_7  +                                \\
&+\tfrac{213}{256}t_2t_5t_6t_7+\tfrac{85}{256}t_3t_5t_6t_7+\tfrac{81}{256}t_4t_5t_6t_7-
\tfrac{211}{128}t_5^2t_6t_7-\tfrac{919}{512}t_1t_6^2t_7-\tfrac{955}{512}t_2t_6^2t_7 +\tfrac{69}{512}t_3t_6^2t_7+         \\
&+\tfrac{181}{512}t_4t_6^2t_7-\tfrac{473}{256}t_5t_6^2t_7-\tfrac{311}{192}t_6^3t_7+\tfrac{465}{256}t_1t_2t_7^2-\tfrac{163}{512}t_2^2t_7^2+
 \tfrac{337}{256}t_1t_3t_7^2+     \tfrac{623}{512}t_2t_3t_7^2-\tfrac{419}{512}t_3^2t_7^2                                    \\
& +\tfrac{487}{256}t_1t_4t_7^2+\tfrac{683}{512}t_2t_4t_7^2+\tfrac{427}{512}t_3t_4t_7^2-
 \tfrac{487}{512}t_4^2t_7^2+\tfrac{975}{512}t_1t_5t_7^2+\tfrac{111}{64}t_2t_5t_7^2+\tfrac{79}{64}t_3t_5t_7^2+\tfrac{173}{128}t_4t_5t_7^2-\\
&- \tfrac{141}{256}t_5^2t_7^2+ \tfrac{615}{256}t_1t_6t_7^2+\tfrac{939}{512}t_2t_6t_7^2+\tfrac{683}{512}t_3t_6t_7^2+
\tfrac{535}{512}t_4t_6t_7^2+\tfrac{237}{128}t_5t_6t_7^2-\tfrac{231}{512}t_6^2t_7^2-\tfrac{947}{512}t_1t_7^3 -                               \\
&-\tfrac{1063}{512}t_2t_7^3-\tfrac{39}{512}t_3t_7^3+\tfrac{41}{512}t_4t_7^3-
 \tfrac{1051}{512}t_5t_7^3-\tfrac{983}{512}t_6t_7^3-\tfrac{343}{384}t_7^4.
 \end{split}
 \]
is $W_6$-invariant modulo the ideal $I^+$ generated by homogeneous $W$-invariant polynomials of strictly positive degree.
If we change from $t_1,\cdots,t_7$ variables to weight variables, then $f$ corresponds to a polynomial
with non-integral coefficients. However, $f$ represents an integral class in the cohomology of $\OOO=E_7/\PP_6$. Furthermore,

\begin{equation}\label{eq:integralw6}
 \int_{[bX_{\sigma_4\sigma_5s_6}]}\kappa(f)=\frac{3d}{2},
\end{equation}
and this proves Case 4 of the Theorem.

To find $f$ and to check (\ref{eq:integralw6}) we used the \emph{Singular} procedures {\tt GChevE7}
and {\tt ZcohE7}, respectively.
The procedure  {\tt ZcohE7} --- located in the file {\tt ZcohE7.txt} --- computes for a given degree a basis of homogeneous polynomials
invariant under the action of the residual Weyl group $W_6$, modulo the ideal $I^+$ generated by  $W$-invariant polynomials
vanishing at the origin. We were led to use it because unlike the case of the other minimal coadjoint orbits,
our calculations of the integrals $\int_{[bX_w]}\kappa(g)$ for any word $w$
satisfying the properties given just before Lemma \ref{lem:tits},
 and any polynomial $g$ in weight variables $W_6$-invariant and with integral coefficients, produced just integer numbers.

To use the procedures, enter at the Linux (or {\tt Cygwin}, if working with Windows) prompt line:
$${\tt singular\,\,ZcohE7.txt\,\,GChevE7.txt}$$
The input of {\tt ZcohE7} is an integer $\delta\in \{1,\cdots,5\}$ and the output
is a basis of degree $\delta$ homogeneous polynomials in the variables {\tt a,b,c,d,e,f,g} --- which correspond to the variables
$t_1,\cdots t_7$ --- invariant under the action of the residual Weyl group $W_6$ modulo the ideal $I^+$. Typing
$${\tt >\,\ ZcohE7(4);}$$
gives as output two polynomials and
our $f$ is the second one in the list.

To reproduce our computations type:
\begin{align*}
&{\tt >\,\,poly\,\,p=ZcohE7(4)[2];}\\
&{\tt >\,\,list\,\,l=(4,5,6);}\\
&{\tt >\,\,GChevE7(p,l);}
\end{align*}
The output is $\tfrac{3}{2}$, which proves (\ref{eq:integralw6})
(recall that both sides of the equality (\ref{eq:integralw6}) are
linear on $d$ and $z_0$ corresponds to a generator of $\Lambda_w^{\vee}/\Lambda_r^{\vee}$).

In short, the procedure  {\tt ZcohE7} first implements a result of Nakagawa \cite[Lemma 5.1]{N1} which
shows that the integral cohomology ring of the regular orbit of $\mathfrak{e}_7^*$
is the image of the quotient weighted polynomial ring
\begin{equation}\label{eq:zcohe7}
 \ZZ[t_1,\dots,t_7,\varpi_1,\varpi_3,\varpi_4,\varpi_{5},\varpi_9]/I\to \QQ[t_1,\dots,t_7]/I^+,
 \end{equation}
where $\varpi_i$ are degree $i$ variables, $I$ is a given ideal of relations, and the morphism is uniquely determined
by the generators of $I$. Next, it computes
the subring of $W_6$-invariant elements of the LHS of (\ref{eq:zcohe7}); to do that, one does not work in
the quotient ring, but rather, in a $\ZZ$-module complementary
to the ideal $I$. Finally, the resulting $W_6$-invariant polynomials are transferred to the RHS of
(\ref{eq:zcohe7}).

As {\tt ZcohE7.txt} contains more than one thousand lines of code, we now give a more precise
description of how it is structured.

{\tt ZcohE7.txt} is divided in the following six blocks:
\begin{enumerate}
 \item The Smith normal form for a matrix with integer coefficients is implemented in the
 procedures {\tt smithunitcol} and {\tt smithcopri}. The Smith normal
 form is based on elimination by locating units (more generally, coprimes) in rows, operations
 that are basic {\it Singular}  procedures.
 \item A $\ZZ$-module $C$ of weighted homogeneous polynomials of a given degree complementary
 to the ideal of relations in the LHS of (\ref{eq:zcohe7})
 is computed by the procedure {\tt compzmod}.
 For that we  supply the description of the ideal of relations $I$ and describe weighted polynomials
 by their integer valued vector of coefficients; this is done using {\it Singular} procedures to handle weighted ideals
 and working on a base ring with integer coefficients.
 Then, we apply the  Smith normal form procedures {\tt smithunitcol} and {\tt smithcopri} in block (i) to the
 vectors of coefficients to obtain the complementary $\ZZ$-module $C$.
 \item The image of $C$ in $\QQ[t_1,\cdots,t_7]/I^+$ is calculated by the procedure  {\tt qimbed}.
 In this block we work with rational coefficients. It
 is worth noting that in {\tt ZcohE7.txt} we work with four different base rings.
 \item The action of the residual Weyl group $W_6$ on the LHS of (\ref{eq:zcohe7})
 is computed and stored in the maps $s(1),\cdots,s(5),s6$.
 By looking at the ideal of relations it is possible to extend the action
 of each simple reflection to the new weighted variables inductively on the degree.
 The complicated extension is the one of the simple reflection $s_7$ (denoted by $s6$ in the code).
By calling the procedure  {\tt compzmod} in block (ii) we arrange that $s6(\varpi_i)\subset C$.
\item A basis of $W_6$-invariant subspace of $C\otimes \QQ$ of a given degree is computed by the procedure {\tt icoh}.
The procedure {\tt compzmod} in block (ii) is used to push
the maps associated to simple reflections computed in the previous block to maps on $C$.
The $W_6$-invariant subspace is computed as a common kernel by subtracting the identity map from the simple reflections.
 \item The image in the RHS of (\ref{eq:zcohe7}) of a basis of quasi-homogeneous $W_6$-invariant polynomials of $C$
 is computed by the main procedure {\tt ZcohE7}. Denominators in the $W_6$-invariant subspace of $C\otimes \QQ$ computed in the previous block
 are cleared. The procedure  {\tt qembed} in block (iii) is called to map these polynomials
 to polynomials in the original ring $\QQ[t_1,\cdots,t_7]$.

\end{enumerate}


\begin{thebibliography}{ABCD}\frenchspacing\smallbreak

\bibitem[AB]{AB} M.F. Atiyah, R. Bott,
The moment map and equivariant cohomology,
{\em Topology} {\bf 23} (1984), no. 1, 1--28.

\bibitem[ABK]{ABK} P. Antonelli, D. Burghelea, P. J. Kahn, Gromoll groups, ${\rm Diff}S\,\sp{n}$ and bilinear constructions of exotic spheres.
{\em Bull. Amer. Math. Soc.} {\bf 76}  (1970), 772--777.

\bibitem[BGG]{BGG} I. N. Bernstein, I. M. Gelfand, S. I. Gelfand, Schubert cells,
and the cohomology of the spaces G/P. (Russian) {\em Uspehi Mat. Nauk} {\bf 28} (1973), no. 3 (171), 3--26.







\bibitem[BCG]{BCG} A. Bialynicki-Birula, J. B. Carrell, W. M. McGovern, Algebraic quotients. Torus actions and cohomology.
The adjoint representation and the adjoint action.
 Encyclopaedia of Mathematical Sciences, 131. Invariant Theory and Algebraic Transformation Groups, II. Springer-Verlag, Berlin, 2002.



 \bibitem[BP]{BP}  S. Billey, A. Postnikov, Smoothness of Schubert varieties via patterns in root subsystems.
 {\em Adv. in Appl. Math.} {\bf 34} (2005), no. 3, 447--466.

\bibitem[B]{B} A. Borel,  Sur la cohomologie des espaces fibrés principaux et des espaces homogènes de groupes de Lie compacts.
 (French)  {\em Ann. of Math. (2)}  {\bf 57}  (1953), 115--207.


\bibitem[Bo1]{Bott} R: Bott,  On induced representations. The mathematical heritage of Hermann Weyl
1--13, Proc. Sympos. Pure Math., 48, Amer. Math. Soc., Providence, RI, 1988.

\bibitem[Bo2]{Bott2}R. Bott, Homogeneous vector bundles.
{\em Ann. of Math. (2)} {\bf 66} (1957), 203--248.



\bibitem[BD]{BD}  Brocker, T.; tom Dieck, T. Representations of compact Lie groups. Graduate Texts in Mathematics, vol. 98. Springer-Verlag, New York, 1995.



%

\bibitem[BH]{BH} D. Burghelea, D. Henderson, Smoothings and homeomorphisms for Hilbert manifolds.
{\em Bull. Amer. Math. Soc.} {\bf 76} (1970), 1261--1265.


%

\bibitem[CMP]{CMP} P-E. Chaput, L. Manivel, N. Perrin,  Affine symmetries of the equivariant quantum
cohomology ring of rational homogeneous spaces. {\em Math. Res. Lett.} {\bf 16} (2009), no. 1, 7--21.


%
%





\bibitem[GH]{GH}  P. Griffiths, J. Harris, Principles of algebraic geometry. Reprint of the 1978 original. Wiley Classics Library.
John Wiley \& Sons, Inc., New York, 1994.

\bibitem[GS]{GS} V. Guillemin, S. Sternberg,  Supersymmetry and equivariant de Rham theory. With an appendix containing
two reprints by Henri Cartan . Mathematics Past and Present. Springer-Verlag, Berlin, 1999.

\bibitem[He]{He}  S. Helgason, Differential geometry, Lie groups, and symmetric spaces. Pure and Applied Mathematics, 80. Academic Press, Inc.

\bibitem[Hu]{Hu} J. Humphreys,  Reflection groups and Coxeter groups. Cambridge Studies in Advanced Mathematics, 29.
Cambridge University Press.



\bibitem[KM]{KM} J. K\c{e}dra, D. McDuff,
Homotopy properties of Hamiltonian group actions,
{\em Geometry \& Topology},
Volume {\bf 9} (2005), 121--162.



\bibitem[MT]{MT} D. McDuff, S. Tolman, On nearly semifree circle actions. Preprint  math/0503467.

\bibitem[N1]{N1}  M. Nakagawa, The integral cohomology ring of $E_7/T$. {\em J. Math. Kyoto Univ.} {\bf 41} (2001), no. 2, 303--321.

\bibitem[N2]{N2} M. Nakagawa,  A description based on Schubert classes of cohomology of flag manifolds.
{\em Fund. Math.} {\bf 199} (2008), no. 3, 273--293.

\bibitem[OV]{OV} A. Onishchik, E. Vinberg, Lie groups and algebraic groups. Translated from the Russian and with a
 preface by D. A. Leites. Springer Series in Soviet Mathematics. Springer-Verlag, Berlin, 1990.




\bibitem[Ti]{Ti} J. Tits,
Le problème des mots dans les groupes de Coxeter. (French) 1969 Symposia Mathematica (INDAM, Rome, 1967/68), Vol. 1 pp. 175--185.

 \bibitem[To]{To} H. Toda,  On the cohomology ring of some homogeneous spaces.  {\em J. Math. Kyoto Univ.}  {\bf 15}  (1975), 185--199.

 \bibitem[TW]{TW} H. Toda,  T. Watanabe,  The integral cohomology ring of ${\bf F}_{4}/{\bf T}$F4/T and ${\bf E}_{6}/{\bf T}$E6/T.
  {\em J. Math. Kyoto Univ.}  {\bf 14}  (1974), 257--286.




\bibitem[Tu]{Tu} L.  Tu, Computing characteristic numbers using fixed points. A celebration
of the mathematical legacy of Raoul Bott, 185?206, CRM Proc. Lecture Notes, 50, Amer. Math. Soc., Providence, RI, 2010.








\bibitem[V]{V} A. Vi\~{n}a, Generalized symplectic action and symplectomorphism groups of coadjoint orbits. {\em Ann. Global Anal.
Geom.} {\bf 28} (2005), no. 4, 309--318.

\bibitem[W]{W}  A. Weinstein, Cohomology of symplectomorphism groups and critical values of Hamiltonians. {\em Math. Z.} {\bf 201} (1989), no. 1, 75--82.


\bibitem[We]{We} R.O. Wells, Differential Analysis on Complex Manifolds, GTM 65, Springer (2007).









%
%



%



%














%
%
%



\end{thebibliography}
\end{document}